\setlist[enumerate, 1]{label=(\arabic*),leftmargin=4em}
\newtheorem{theorem}{Theorem}
\newtheorem*{theorem*}{Theorem}
\newtheorem*{definition*}{Definition}
\newtheorem*{question*}{Question}
\newtheorem*{conjecture*}{Conjecture}
\newtheorem*{convention*}{Convention}
\newtheorem{assumption}[theorem]{Assumption}
\newtheorem*{assumption*}{Assumption}
\newtheorem*{induction*}{Induction Hypothesis}
\newtheorem{corollary}[theorem]{Corollary}
\newtheorem*{corollary*}{Corollary}
\newtheorem{proposition}[theorem]{Proposition}
\newtheorem*{proposition*}{Proposition}
\newtheorem{lemma}[theorem]{Lemma}
\newtheorem*{lemma*}{Lemma}
\newtheorem{fact}[theorem]{Fact}
\newtheorem*{fact*}{Fact}
\newtheorem*{claim*}{Claim}
\theoremstyle{definition}
\newtheorem{example}[theorem]{Example}
\newtheorem*{example*}{Example}
\newtheorem{remark}[theorem]{Remark}
\newtheorem*{remark*}{Remark}
\numberwithin{theorem}{section}
\numberwithin{claim}{section}
\numberwithin{equation}{section}
\DeclareMathOperator{\ac}{ac}
\DeclareMathOperator{\an}{an}
\DeclareMathOperator{\cl}{cl}
\DeclareMathOperator{\dcl}{dcl}
\DeclareMathOperator{\eq}{eq}
\DeclareMathOperator{\Graph}{Gr}
\DeclareMathOperator{\h}{h}
\DeclareMathOperator{\RCF}{RCF}
\DeclareMathOperator{\res}{res}
\DeclareMathOperator{\rk}{rk}
\DeclareMathOperator{\rv}{rv}
\DeclareMathOperator{\RV}{RV}
\DeclareMathOperator{\TCVF}{TCVF}
\DeclareMathOperator{\Th}{Th}
\newcommand{\N}{\mathbb{N}}
\newcommand{\PP}{\mathbb{P}}
\newcommand{\Q}{\mathbb{Q}}
\newcommand{\R}{\mathbb{R}}
\newcommand{\T}{\mathbb{T}}
\newcommand{\cC}{\mathcal C}
\newcommand{\cE}{\mathcal E}
\newcommand{\cG}{\mathcal G}
\newcommand{\cI}{\mathcal I}
\newcommand{\cK}{\mathcal K}
\newcommand{\cL}{\mathcal L}
\newcommand{\cO}{\mathcal O}
\newcommand{\fm}{\mathfrak{m}}
\newcommand{\0}{\emptyset}
\newcommand{\bg}{\boldsymbol{g}}
\newcommand{\cld}{\cl^\der}
\newcommand{\dclL}{\dcl_\cL}
\newcommand{\f}{\operatorname{f}}
\newcommand{\inv}{^{-1}}
\newcommand{\jet}{\mbox{\small$\mathscr{J}$}_\der}
\newcommand{\oRV}[1]{\RV_{\! #1}}
\newcommand{\llp}{(\!(}
\newcommand{\rrp}{)\!)}
\newcommand{\Ld}{\cL^\der}
\newcommand{\LdO}{\cL^{\cO,\der}}
\newcommand{\LO}{\cL^{\cO}}
\newcommand{\LRV}{\cL^{\RV}}
\newcommand{\LRVeq}{\cL^{\RV^{\eq}}}
\newcommand{\rkL}{\rk_\cL}
\newcommand{\Td}{T^\der}
\newcommand{\TdG}{T^\der_{\cG}}
\newcommand{\TdO}{T^{\cO,\der}}
\newcommand{\TdOmon}{T_{\operatorname{mon}}}
\newcommand{\TdOmonG}{T^{*}_{\operatorname{mon}}}
\newcommand{\TdOmonac}{T^{\ac}_{\operatorname{mon}}}
\newcommand{\TO}{T^{\cO}}
\newcommand{\TRV}{T^{\RV}}
\newcommand{\TRVeq}{T^{\RV^{\eq}}}
\newcommand{\val}{\operatorname{v}}
\newcommand{\llangle}{\langle\!\langle}
\newcommand{\rrangle}{\rangle\!\rangle}
\renewcommand{\preceq}{\preccurlyeq}
\renewcommand{\succeq}{\succcurlyeq}
\renewcommand{\geq}{\geqslant}
\renewcommand{\leq}{\leqslant}
\renewcommand{\epsilon}{\varepsilon}
\renewcommand{\k}{\boldsymbol{k}}
\renewcommand{\d}{\operatorname{d}}
\renewcommand{\r}{\operatorname{r}}
\renewcommand{\bf}{\boldsymbol{f}}
\DeclareFontFamily{OMS}{smallo}{}
\DeclareFontShape{OMS}{smallo}{m}{n}{<->s*[.65]cmsy10}{}
\DeclareSymbolFont{smallo@m}{OMS}{smallo}{m}{n}
\DeclareMathSymbol{\smallo}{\mathord}{smallo@m}{79}
\DeclareFontFamily{U}{fsy}{}
\DeclareFontShape{U}{fsy}{m}{n}{<->s*[.9]psyr}{}
\DeclareSymbolFont{der@m}{U}{fsy}{m}{n}
\DeclareMathSymbol{\der}{\mathord}{der@m}{182}
\DeclareSymbolFont{der@m}{U}{fsy}{m}{n}
\DeclareMathSymbol{\derdelta}{\mathord}{der@m}{100}
\author{Elliot Kaplan}
\author{Nigel Pynn-Coates}
\email{ekaplan@mpim-bonn.mpg.de}
\email{nigel.pynn-coates@univie.ac.at}
\title{Monotone \texorpdfstring{$T$}{T}-convex \texorpdfstring{$T$}{T}-differential fields}
\address{Max Planck Institute for Mathematics, Bonn, Germany}
\address{Kurt G\"{o}del Research Center for Mathematical Logic, Universit\"{a}t Wien, Austria}
\begin{document}

\begin{abstract}
Let $T$ be a complete, model complete o-minimal theory extending the theory of real closed ordered fields and assume that $T$ is power bounded. Let $K$ be a model of $T$ equipped with a $T$-convex valuation ring $\cO$ and a $T$-derivation $\der$ such that $\der$ is monotone, i.e., weakly contractive with respect to the valuation induced by $\cO$. We show that the theory of monotone $T$-convex $T$-differential fields, i.e., the common theory of such $K$, has a model completion, which is complete and distal. Among the axioms of this model completion, we isolate an analogue of henselianity that we call $\Td$-henselianity. We establish an Ax--Kochen/Ershov theorem and further results for monotone $T$-convex $T$-differential fields that are $\Td$-henselian.
\end{abstract}
\maketitle

\section{Introduction}
Let $\R_{\an}$ denote the expansion of the real field $\R$ by all globally subanalytic sets. Explicitly, $\R_{\an}$ is the structure obtained by adding a new function symbol for each $n$-ary function $F$ that is real analytic on a neighborhood of $[-1,1]^n$, and by interpreting this function symbol as the restriction of $F$ to $[-1,1]^n$. Let $T_{\an}$ be the elementary theory of $\R_{\an}$ in the language extending the language of ordered rings by the function symbols described above. This theory is model complete and o-minimal~\cite{Ga68,vdD86}. Let $K$ be a model of $T_{\an}$. A map $\der\colon K\to K$ is said to be a \emph{$T_{\an}$-derivation} if it is a field derivation on $K$ that satisfies the identity
\[
\der F(u)\ =\ \frac{\partial F}{\partial Y_1}(u)\der u_1+\cdots + \frac{\partial F}{\partial Y_n}(u)\der u_n
\]
for each restricted analytic function $F$ and each $u = (u_1,\ldots,u_n) \in K^n$ with $|u_i|< 1$ for each $i$. The only $T_{\an}$-derivation on $\R_{\an}$ itself is the trivial derivation, which takes constant value zero. However, there are a number of interesting examples of nonstandard models of $T_{\an}$ with nontrivial $T_{\an}$-derivations.
\begin{enumerate}
\item\label{introex1} Consider $\R(\!(t^{1/\infty})\!)\coloneqq \bigcup_n\R(\!(t^{1/n})\!)$, the field of Puiseux series over $\R$. We totally order $\R(\!(t^{1/\infty})\!)$ by taking $t$ to be a positive infinitesimal element. Then $\R(\!(t^{1/\infty})\!)$ admits an expansion to a model of $T_{\an}$, by extending each restricted analytic function on $\R$ to the corresponding box in $\R(\!(t^{1/\infty})\!)$ via Taylor expansion. We put $x \coloneqq 1/t$, and we let $\der\colon \R(\!(t^{1/\infty})\!)\to \R(\!(t^{1/\infty})\!)$ be the derivation with respect to $x$, so
\[
\der \sum_{q\in \Q}r_qx^q\ \coloneqq\ \sum_{q\in \Q}r_qqx^{q-1}.
\]
As $\der$ commutes with infinite sums, it is routine to verify that $\der$ is indeed a $T_{\an}$-derivation. 
\item\label{introex2} The field $\T$ of logarithmic-exponential transseries also admits a canonical expansion to a model of $T_{\an}$ using Taylor expansion~\cite[Corollary 2.8]{DMM97}. The usual derivation on $\T$ is a $T_{\an}$-derivation.
\item\label{introex3} Let $\k \models T_{\an}$ and let $\der_{\k}$ be a $T_{\an}$-derivation on $\k$. Let $\Gamma$ be a divisible ordered abelian group, and consider the Hahn field $\k\llp t^\Gamma \rrp$, ordered so that $0<t<\k^>$, where $\k^> = \{ a \in \k : a>0 \}$. We expand $\k\llp t^\Gamma \rrp$ to a model of $T_{\an}$ using Taylor expansion (see \cite[Proposition 2.13]{Ka21} for details). Let $c\colon \Gamma\to \k$ be an additive map. We use $c$ to define a derivation $\der$ on $\k \llp t^\Gamma \rrp$ as follows:
\[
\der\sum_\gamma f_\gamma t^\gamma\ \coloneqq\ \sum_\gamma\big(\der_{\k}f_\gamma + f_\gamma c(\gamma)\big)t^\gamma.
\]
The map $\der$ is the unique derivation that extends $\der_{\k}$, commutes with infinite sums, and satisfies the identity $\der t^\gamma = c(\gamma)t^\gamma$ for each $\gamma\in \Gamma$. By~\cite[Proposition~3.14]{Ka21}, this derivation is even a $T_{\an}$-derivation. We denote this expansion of $\k\llp t^\Gamma\rrp$ by $\k\llp t^\Gamma\rrp_{\an,c}$.
\end{enumerate}
In each of the above examples, there is a natural convex valuation ring $\cO$ (the convex hull of $\R$ in the first two examples, the convex hull of $\k$ in the third).  These convex valuation rings are even \emph{$T_{\an}$-convex}, as defined by van den Dries and Lewenberg~\cite{DL95}, and the derivation in each example is continuous. 

In this paper, we work not just with the theory $T_{\an}$, but with any complete, model complete, power bounded o-minimal $\cL$-theory $T$ (where \emph{power boundedness} is the assumption that every definable function is eventually dominated by a power function $x \mapsto x^\lambda$). Let $K$ be a model of $T$. A \emph{$T$-derivation} is a map $\der\colon K\to K$ that satisfies the identity 
\[
\der F(u)\ =\ \frac{\partial F}{\partial Y_1}(u)\der u_1+\cdots + \frac{\partial F}{\partial Y_n}(u)\der u_n
\]
for all $\cL(\emptyset)$-definable functions $F$ which are $\cC^1$ at $u$, and a \emph{$T$-convex valuation ring} is a nonempty convex subset $\cO\subseteq K$ that is closed under all $\cL(\emptyset)$-definable continuous functions. A \textbf{$T$-convex $T$-differential field} is the expansion of $K$ by a $T$-convex valuation ring and a continuous $T$-derivation.

In examples \ref{introex1} and \ref{introex3} above, the derivation $\der$ is \textbf{monotone}:\ the logarithmic derivative $\der a/a$ is in the valuation ring $\cO$ for each nonzero $a$. The derivation in example \ref{introex2} is not monotone. In this paper, our focus is primarily on monotone $T$-convex $T$-differential fields, and in this setting, our assumption that $T$ is power bounded comes almost for free; see Remark~\ref{rem:powerbounded}. We prove the following:
\begin{theorem*}
The theory of monotone $T$-convex $T$-differential fields has a model completion. This model completion is complete and distal (in particular, it has NIP).
\end{theorem*}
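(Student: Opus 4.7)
The plan is to exhibit an explicit axiomatization of the model completion and establish it via an Ax--Kochen/Ershov style embedding theorem. The first step is to isolate the distinguished axiom beyond those of monotone $T$-convex $T$-differential fields: an analog of henselianity, which we call $\Td$-henselianity, asserting roughly that any $\Td$-polynomial equation in one variable over $\cO$ whose reduction modulo the maximal ideal has a simple zero in the residue field already has a zero in $\cO$. This plays the role that henselianity plays for valued fields and that differential-henselianity plays in the work of Scanlon and of Aschenbrenner--van den Dries--van der Hoeven.

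Next I would establish that every monotone $T$-convex $T$-differential field embeds into a $\Td$-henselian one. The construction is iterative: given a $\Td$-polynomial $P$ and an approximate zero with simple reduction, one adjoins a genuine zero inside an appropriate Hahn-type extension along the lines of example~\ref{introex3}, and verifies that the result remains a monotone $T$-convex $T$-differential field. Monotonicity is essential here, as it ensures that $\der$ extends continuously to immediate extensions and that the valuations of $\Td$-polynomials evolve predictably along approximating sequences. The core technical step is then an embedding theorem: given an extension $K \subseteq L$ of such fields and a sufficiently saturated $\Td$-henselian target $K^*$ containing $K$, any pair of compatible embeddings of the residue field and value group of $L$ into those of $K^*$ extends to an embedding of all of $L$. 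One reduces this to three cases via a tower: residue field extensions, handled by the embedding theory for $T$-convex valued fields of van den Dries--Lewenberg; value group extensions, handled through the theory of divisible ordered abelian groups; and immediate extensions, handled using $\Td$-henselianity via a pseudo-convergent sequence analysis. Model completion then follows in a standard manner from the embedding theorem.

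Completeness follows because $T$ itself is complete and the theory of divisible ordered abelian groups is complete, so the Ax--Kochen/Ershov-style transfer provided by the embedding theorem yields completeness of the whole theory. Distality propagates analogously: the residue field theory $T$ is distal (being power-bounded o-minimal), the value group theory is distal, and these two ingredients together with the quantifier elimination implicit in the embedding theorem yield distality of the model completion. The main obstacle will be the pseudo-convergent sequence analysis in the immediate-extension case. Unlike for ordinary valued fields, a $\Td$-polynomial involves definable $\cL$-functions applied to its arguments, not purely algebraic operations, so Newton-polygon and Taylor-expansion arguments must be adapted to this setting; the monotonicity of $\der$ is what allows one to control the valuations of iterated $\Td$-derivatives along pseudo-convergent sequences, and to push through the final existence and uniqueness of zeroes in $\Td$-henselian extensions.
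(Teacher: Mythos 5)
Your overall architecture (an explicit axiomatization featuring a henselianity analogue, an embedding theorem split into residue field, value group, and immediate-extension cases, with the immediate case handled by pc-sequences) does match the paper's. But there are two genuine gaps. First, your axiomatization is missing an essential ingredient: the model completion is not ``monotone plus $\Td$-henselian'' but requires in addition that the differential residue field be a model of $\TdG$, the \emph{generic} $T$-differential fields of Fornasiero--Kaplan (together with many constants and nontrivial valuation). Without genericity the residue field is not existentially closed as a $T$-differential field, so the candidate theory is neither model complete nor complete; relatedly, your completeness argument ``because $T$ itself is complete'' ignores that the residue field carries a derivation, and $\Td$ alone is far from complete --- it is precisely the completeness of $\TdG$ that makes the AKE transfer yield completeness. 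Moreover, your proposed form of $\Td$-henselianity (``the reduction has a simple zero, hence a zero lifts'') is the wrong shape for general power bounded $T$: since the relevant equations involve arbitrary $\cL(K)$-definable functions rather than polynomials, the paper's axiom instead demands a zero whenever the function is \emph{linearly approximated by a linear differential operator} on a ball, and the engine that produces such approximations is the Jacobian property (Fact~\ref{fact:rvapprox}), not a Newton polygon or Taylor expansion adapted to $\Td$-polynomials.

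Second, your distality argument does not go through as stated. Transferring distality from the residue field and value group to the valued field via the AKE/quantifier-elimination machinery is exactly what the paper says it was unable to do (see the remark closing Section~\ref{sec:AKE}: the authors note that certain aspects of the NIP-transfer proof fail for distality). The paper instead proves distality purely one-sortedly: every $\LdO$-formula is equivalent modulo the model completion to an $\LO$-formula evaluated on jets (Corollary~\ref{cor:formulaform}), and $\TO$ is distal, so distality is inherited. You would need to replace your AKE-style propagation with this (or some other working) argument.
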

Our model completion is quite similar to Scanlon's model completion for the theory of monotone valued differential fields~\cite{Sc00}. In the case $T = T_{\an}$, a model of this model completion can be constructed as follows:\ consider the $T_{\an}$-convex $T_{\an}$-differential field $\k\llp t^\Gamma\rrp_{\an,c}$ in example \ref{introex3}, where the derivation $\der_{\k}$ on $\k$ is \emph{generic}, as defined in~\cite{FK21}, and where $c$ is taken to be the zero map. 

In axiomatizing this model completion, we introduce an analogue of differential-henselianity for $T$-convex $T$-differential fields, which we call \emph{$\Td$-henselianity}.
For a $T$-convex $T$-differential field $K$, this definition has three parts, which informally are:
\begin{enumerate}
    \item $K$ has small derivation, i.e., $\der$ maps the maximal ideal $\smallo$ of $\cO$ into itself;
    \item $K$ has linearly surjective differential residue field, i.e., for any $a_0,\ldots,a_r \in \cO$, not all in $\smallo$, there is $y \in K$ such that $a_0y + a_1\der y + \ldots+a_r\der^r y-1\in \smallo$;
    \item and that  for every $\cL(K)$-definable function $F\colon K^r\to K$, the function $y\mapsto y^{(r)} - F(y,\ldots,y^{(r-1)})$ has a zero in $\smallo$ whenever it is well-approximated by a linear differential operator on $\smallo$ and $F(0)$ is sufficiently small relative to this approximation.
\end{enumerate}
This last condition is inspired by Rideau-Kikuchi's definition of \emph{$\sigma$-henselianity} for  analytic difference valued fields~\cite{Ri17}. We prove an Ax--Kochen/Ershov theorem for monotone $\Td$-henselian fields, from which we derive the following:
\begin{theorem*}
Any monotone $\Td_{\an}$-henselian field $(K,\cO,\der)$ is elementarily equivalent to some $\k\llp t^\Gamma\rrp_{\an,c}$.
\end{theorem*}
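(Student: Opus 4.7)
The proof is a corollary of the Ax--Kochen/Ershov theorem for monotone $\Td$-henselian fields established just above. The plan is to construct, from the given $K$, a Hahn $T_{\an}$-differential field $L = \k\llp t^\Gamma \rrp_{\an, c}$ as in Example~\ref{introex3} with the same AKE invariants as $K$, and then invoke AKE to conclude $K \equiv L$.

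First I take $\k$ to be the residue field of $K$, made into a $T_{\an}$-differential field as follows: since $\Td_{\an}$-henselianity of $K$ subsumes small derivation, $\der$ descends to a $T_{\an}$-derivation $\der_{\k}$ on $\cO/\smallo$; and since $\Td_{\an}$-henselianity also demands linear surjectivity of the differential residue field, $(\k,\der_{\k})$ is linearly surjective. Let $\Gamma\coloneqq v(K^\times)$ be the value group. To obtain the required additive $c\colon\Gamma\to\k$, I extract it from the logarithmic derivative: the map $a\mapsto\res(\der a/a)$, which is defined on $K^\times$ by monotonicity of $\der$, is a group homomorphism $(K^\times,\cdot)\to(\k,+)$ by the Leibniz rule, and it induces an additive map $c$ on $\Gamma$ via a choice of section $\Gamma\to K^\times$ (which is available after passing to a sufficiently saturated elementary extension, a harmless move for elementary equivalence).

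Next I form $L\coloneqq\k\llp t^\Gamma\rrp_{\an,c}$ as in Example~\ref{introex3}. By construction, $L$ is monotone, has residue $T_{\an}$-differential field $\k$, value group $\Gamma$, and yields the same induced $c$ on its value group as $K$ does. The core technical step is verifying that $L$ itself is monotone $\Td_{\an}$-henselian. Monotonicity and small derivation are immediate from $c(\gamma)\in\k$ together with the formula defining $\der$; linear surjectivity of the residue field is inherited from $\k$. The third clause of $\Td$-henselianity---that every suitably approximable $\cL_{\an}(L)$-definable function attains a zero in $\smallo$---is substantive, and I would handle it by a Hensel-type iteration exploiting that every element of $L$ admits a canonical Hahn-series representation, together with the $T_{\an}$-convex Weierstrass preparation techniques of~\cite{DL95} and~\cite{Ka21} to reduce the problem for arbitrary definable $F$ to the linear approximation controlled by the hypothesis.

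Finally, with $L$ now monotone $\Td_{\an}$-henselian and sharing the AKE invariants of $K$, the Ax--Kochen/Ershov theorem gives $K\equiv L$, proving the theorem. The main obstacle is verifying the third clause of $\Td_{\an}$-henselianity for $L$: it requires uniform control over arbitrary $\cL_{\an}(L)$-definable---not merely polynomial---functions, which is precisely where the strength of the $T_{\an}$-convex framework must be leveraged.
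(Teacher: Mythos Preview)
Your overall strategy---build a Hahn field $L=\k\llp t^\Gamma\rrp_{\an,c}$ with the same AKE data as $K$ and then invoke the Ax--Kochen/Ershov theorem---is correct and is exactly what the paper does.  However, the step you flag as the ``main obstacle,'' namely verifying the third clause of $\Td_{\an}$-henselianity for $L$, is left as a vague sketch (Hensel iteration plus Weierstrass preparation), and this is where the argument is incomplete.  The paper handles this step by a much simpler observation that you are missing: Hahn fields are spherically complete, and the paper has already proved (Corollary~\ref{cor:sphcompTdh}) that any spherically complete $\TdO$-model with linearly surjective residue field is $\Td$-henselian.  So no direct verification of the zero-finding axiom for arbitrary $\cL_{\an}(L)$-definable functions is needed; spherical completeness does all the work, and the machinery behind Corollary~\ref{cor:sphcompTdh} (the Jacobian property, not Weierstrass preparation) has already been developed in the general $T$-convex setting.

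A minor correction: you do not need to pass to a saturated elementary extension to obtain a section $s\colon\Gamma\to K^>$.  Since $\Gamma$ is a $\Lambda$-vector space (here $\Lambda=\Q$) and $K^>$ is as well, the short exact sequence $1\to\cO^\times\cap K^>\to K^>\to\Gamma\to 0$ of $\Lambda$-vector spaces always splits; this is Lemma~\ref{lem:sectionexists} in the paper.
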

Hakobyan~\cite{Ha18} previously established a similar statement for monotone differential-henselian fields.

Let $K$ be a $T$-convex $T$-differential field with small derivation and linearly surjective differential residue field. With an eye toward future applications, we develop the theory of $\Td$-henselian fields as much as possible without the assumption that $\der$ is monotone. We show that if $K$ is spherically complete, then $K$ is $\Td$-henselian, and that any $\Td$-henselian $K$ admits a \emph{lift of the differential residue field}:\ an elementary $\cL$-substructure $\k\preceq_{\cL} K$ that is closed under $\der$ and maps isomorphically onto the differential residue field $\res(K)$. An essential ingredient in this proof is the fact,  due to Garc\'{i}a~Ram\'{i}rez, that $\cL$-definable functions enjoy the \emph{Jacobian property}~\cite{Ga20}. We also show that if each immediate extension of $K$ has a property we call the \emph{$\Td$-hensel configuration property}, then $K$ has a unique spherically complete immediate $T$-convex $T$-differential field extension with small derivation. The existence of such a spherical completion was previously established by the first author~\cite[Corollary~6.4]{Ka22}, and the $\Td$-hensel configuration property is similar to the differential-henselian configuration property of van den Dries and the second author~\cite{DPC19}. Again making use of the Jacobian property, we show that monotone fields enjoy the $\Td$-hensel configuration property, thereby establishing the following:
\begin{theorem*}
Let $K$ be a monotone $T$-convex $T$-differential field with linearly surjective differential residue field. Then $K$ has a unique spherically complete immediate monotone $T$-convex $T$-differential field extension, up to isomorphism over~$K$.
\end{theorem*}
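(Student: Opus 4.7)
The plan is to deduce the theorem from the general result announced in the preceding paragraph:\ any $T$-convex $T$-differential field with linearly surjective differential residue field, all of whose immediate extensions enjoy the $\Td$-hensel configuration property, admits a unique spherically complete immediate extension with small derivation. Since the excerpt asserts that monotone fields enjoy the $\Td$-hensel configuration property, and since monotone implies small derivation (for nonzero $a \in \smallo$, $\der a = (\der a/a)\cdot a \in \cO\cdot\smallo\subseteq \smallo$), this general theorem provides a spherically complete immediate $T$-convex $T$-differential extension $L$ of $K$ with small derivation, unique up to isomorphism over $K$.

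The additional ingredient needed is the following lemma:\ if $(K,\cO,\der)$ is monotone and $M$ is an immediate $T$-convex $T$-differential extension of $K$ with small derivation, then $M$ is itself monotone. Given nonzero $c\in M$, immediacy of $K \subseteq M$ lets me pick $a \in K$ with $\val(a)=\val(c)$ and $c/a - u \in \smallo_M$ for some unit $u$ of $\cO$; after replacing $a$ by $au$, this yields $c = a(1+\epsilon)$ for some $\epsilon\in\smallo_M$. Then
\[
\frac{\der c}{c}\ =\ \frac{\der a}{a}\ +\ \frac{\der\epsilon}{1+\epsilon};
\]
the first summand lies in $\cO$ by monotonicity of $K$, and $\der\epsilon\in\smallo_M$ by small derivation of $M$, so the second lies in $\smallo_M$. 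Hence $\der c/c \in \cO_M$, and $M$ is monotone. Applying the lemma with $M=L$ gives the existence part of the theorem.

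Uniqueness then follows immediately:\ two spherically complete immediate monotone extensions of $K$ are, in particular, two spherically complete immediate extensions with small derivation, and hence isomorphic over $K$ by the general theorem. The main obstacle is therefore not the upgrade step but the underlying general theorem itself, whose proof rests on the Jacobian property and on a delicate spherical completion/back-and-forth argument invoking the $\Td$-hensel configuration property for monotone fields. Once that machinery is in place, the present theorem drops out as a short corollary via the lemma above.
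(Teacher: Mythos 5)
Your proposal is correct and follows essentially the same route as the paper: the theorem is obtained by combining the general uniqueness result conditional on the $\Td$-hensel configuration property (Theorem~\ref{thm:sphcompunique}) with the fact that monotone fields have that property (Proposition~\ref{prop:monotoneapprox}) and the existence of spherically complete immediate extensions from \cite[Corollary~6.4]{Ka22}. The only difference is that where you give a short direct computation showing that immediate small-derivation extensions of monotone fields are again monotone, the paper simply cites \cite[Corollary~6.3.6]{ADH17}; your computation is correct.
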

This uniqueness plays an essential role in our Ax--Kochen/Ershov theorem. 

One may be able to adapt our definition of $\Td$-henselianity to study compatible derivations on other tame expansions of equicharacteristic zero valued fields. One broad class where our methods may generalize is the class of \emph{Hensel minimal} expansions of valued fields, introduced by Cluckers,  Halupczok, and Rideau-Kikuchi~\cite{CHRK22}. The setting of Hensel minimal fields (1-h-minimal fields, to be precise) includes our present o-minimal setting, as well as the fields with analytic structure studied by Cluckers and Lipshitz~\cite{CL11}. As an indicator that generalizing to this class may be possible, we note that the Jacobian property, which proves so useful in this paper, holds in the setting of 1-h-minimal fields~\cite[Theorem 5.4.10]{CHRK22}.
\subsection{Organization of the paper}
Section~\ref{sec:background} contains background information on $T$-convex valuation rings and $T$-convex differential fields and results that we need, drawn from \cite{DL95,vdD97,Yi17,Ga20,FK21,ADH17,Ka22}.
In Section~\ref{sec:Tdh} we introduce $\Td$-henselianity and record basic consequences in Section~\ref{sec:Tdhbasic}.
In Section~\ref{sec:lift}, we show that $\Td$-henselianity yields a lift of the differential residue field (Theorem~\ref{thm:reslift}).
Section~\ref{sec:sphcompTdh} relates $\Td$-henselianity to immediate extensions and shows that every spherically complete $T$-convex $T$-differential field with small derivation and linearly surjective differential residue field is $\Td$-henselian (Corollary~\ref{cor:sphcompTdh}).
The next subsection develops further technical material for use in Section~\ref{sec:sphcompunique}, which establishes the uniqueness of spherically complete immediate extensions and related results, conditional on the $\Td$-hensel configuration property introduced there.
Section~\ref{sec:monotone} focuses on monotone fields.
In Section~\ref{sec:monotoneTdhc}, we show that monotone $T$-convex $T$-differential fields have the $\Td$-hensel configuration property and summarize the consequences.
The next subsection records variants of the results in Section~\ref{sec:sphcompunique}, of which only Corollary~\ref{cor:sphcompembed} is needed later, in the proof of the Ax--Kochen/Ershov theorem.
The short Section~\ref{sec:RCF} explains what $\Td$-henselianity means when $T$ is the theory of real closed ordered fields. In this case, we show that $\Td$-henselianity and differential-henselianity coincide for monotone fields.
Section~\ref{sec:AKE} contains the Ax--Kochen/Ershov theorem and related three-sorted results, while Section~\ref{sec:modelcompletion} contains the model completion result and related one-sorted results.

\section{Background}\label{sec:background}
\subsection{Notation and conventions}
Throughout, $m,n,q,r$ range over $\N \coloneqq \{0,1,2,\dots\}$. For a unital ring $R$, we let $R^\times$ denote the multiplicative group of units of $R$, and for an ordered set $S$ equipped with a distinguished element $0$, we set $S^> \coloneqq \{ s \in S : s>0 \}$.

In this paper, we fix a complete, model complete o-minimal theory $T$ extending the theory of real closed ordered fields in an appropriate language $\cL\supseteq\{0,1,+,\cdot,<\}$. Throughout, $K$ is a model of $T$. Given a subset $A \subseteq K$, we let $\dclL(A)$ denote the $\cL$-definable closure of $A$. Since $T$ has definable Skolem functions, $\dclL(A)$ is (the underlying set of) an elementary substructure of $K$. It is well-known that $\dclL$ is a pregeometry, and we denote the corresponding rank by $\rkL$. Let $M$ be a $T$-extension of $K$ (that is, a model of $T$ that extends $K$) and let $A\subseteq M$. We denote by $K\langle A\rangle$ the intermediate extension of $K$ with underlying set $\dclL(K\cup A)$. We say that $A$ is \textbf{$\cL(K)$-independent} if $A$ is independent over $K$ with respect to the pregeometry $\dclL$. Otherwise $A$ is said to be \textbf{$\cL(K)$-dependent}. If $A$ is $\cL(K)$-independent and $M = K\langle A\rangle$, then $A$ is said to be a \textbf{$\dclL$-basis for $M$ over $K$}. Given $a = (a_1,\ldots,a_n) \in M^n$, we write $K\langle a \rangle$ in place of $K\langle \{a_1,\ldots,a_n\}\rangle$, and we say that $a$ is \textbf{$\cL(K)$-independent} if the set $\{a_1,\ldots,a_n\}$ is $\cL(K)$-independent and no components are repeated. Equivalently, $a$ is  \textbf{$\cL(K)$-independent} if $\rkL(a|K) = n$.
For tuples $a,b \in K^n$, we let $a\cdot b\coloneqq a_1b_1+\cdots+a_nb_n$ denote the inner product of $a$ and~$b$.

A \textbf{power function} is an $\cL(K)$-definable endomorphism of the ordered multiplicative group $K^>$. Each power function $f$ is uniquely determined by its derivative at $1$, and if $f'(1) = \lambda$, then we suggestively write $a^\lambda$ instead of $f(a)$ for $a \in K^>$.  The collection $\Lambda \coloneqq \{f'(1): f\text{ is a power function}\}$ forms a subfield of $K$, called the \textbf{field of exponents of $K$}. If every  $\cL(K)$-definable function is eventually bounded by a power function, then $K$ is said to be \textbf{power bounded}. Suppose $K$ is power bounded. Then by Miller's dichotomy~\cite{Mi96}, every model of $T$ is power bounded and every power function is $\cL(\emptyset)$-definable, so $\Lambda$ does not depend on $K$. We just say that \text{$T$ is power bounded}, and we call $\Lambda$ the \textbf{field of exponents of $T$}. For the remainder of the paper, we assume that $T$ is power bounded with field of exponents~$\Lambda$.

\subsection{Background on \texorpdfstring{$T$}{T}-convex valuation rings}

In this subsection, let $\cO$ be a \textbf{$T$-convex valuation ring} of $K$; that is, $\cO \subseteq K$ is convex and nonempty and $F(\cO)\subseteq \cO$ for every $\cL(\0)$-definable continuous $F \colon K \to K$.
We call $(K, \cO)$ a \textbf{$T$-convex valued field}.
These structures were introduced and studied by van den Dries and Lewenberg \cite{DL95} and additionally by van den Dries \cite{vdD97}.
We briefly review notation, following \cite[Section 1]{Ka22}, and general facts we use; additional relevant facts on $T$-convex valuation rings can be found there, or in the original papers.
For valuation-theoretic notation we follow \cite[Chapter~3]{ADH17}.

Let $\LO \coloneqq \cL \cup \{\cO\}$ be the extension of $\cL$ by a unary predicate $\cO$ and $\TO$ be the theory extending $T$ by axioms stating that $\cO$ is a \emph{proper} $T$-convex valuation ring.
This $\TO$ is complete and model complete, since $T$ is \cite[Corollary~3.13]{DL95}; if $T$ has quantifier elimination and a universal axiomatization, then $\TO$ has quantifier elimination \cite[Theorem~3.10]{DL95}.
Let $\Gamma$ be the value group of the valuation $v \colon K^{\times} \to \Gamma$ induced by $\cO$, which moreover is an ordered $\Lambda$-vector space with scalar multiplication $\lambda va \coloneqq v(a^{\lambda})$ for $a \in K^>$ and $\lambda \in \Lambda$.
We extend $v$ to $v \colon K \to \Gamma \cup \{\infty\}$ by $v(0)\coloneqq \infty$, where $\infty$ is a symbol not in $\Gamma$, and extend the ordering and addition of $\Gamma$ to $\Gamma \cup \{\infty\}$ in the natural way.
We also extend $v$ to $K^n$ by $va \coloneqq \min\{va_1,\ldots,va_n\}$ for $a \in K^n$.
For $a,b \in K$, we set:
\[\begin{array}{lc}
a \preceq b\ \Leftrightarrow\ va\geq vb,\qquad a \prec b\ \Leftrightarrow\ va>vb,\\
a \asymp b\ \Leftrightarrow\ va=vb,\qquad  
a\sim b\ \Leftrightarrow\ a-b \prec b.
\end{array}\]
The relations $\asymp$ and $\sim$ are equivalence relations on $K$ and $K^{\times}$, respectively.
If $a \sim b$, then $a \asymp b$ and also $a>0$ if and only if $b>0$.
The unique maximal ideal of $\cO$ is $\smallo$ and $\res(K) \coloneqq \cO/\smallo$ is the \textbf{residue field} of $K$.
We usually denote $\res(K)$ by $\k$ (unlike in \cite{Ka22}) and let $\bar{a}$ or $\res(a)$ denote the image of $a \in \cO$ under the residue map to $\k$.
In fact, $\k$ can be expanded to a model of $T$ \cite[Remark~2.16]{DL95}, and we always construe $\k$ this way.
Related is the fact that the convex hull of an elementary $\cL$-substructure of $K$ is a $T$-convex valuation ring of $K$; cf.\ Section~\ref{sec:lift} on lifting the residue field.
If we need to indicate the dependence of these objects on $K$, we do so using a subscript, as in $\Gamma_K$ and $\k_K$. Given a $\TO$-extension $M$ of $K$, we identify $\k$ with a $T$-submodel of $\k_M$ and $\Gamma$ with an ordered $\Lambda$-subspace of $\Gamma_M$ in the natural way. If $\Gamma_M = \Gamma$ and $\k_M = \k$, then the $\TO$-extension $M$ is said to be \textbf{immediate}. As a consequence of our power boundedness assumption, we have the following analogue of the Abhyankar--Zariski inequality, established by van den Dries and referred to in the literature as the \textbf{Wilkie inequality}.

\begin{fact}[{\cite[Section~5]{vdD97}}]
\label{fact:wilkieineq}
Let $M$ be a $\TO$-extension of $K$ and suppose $\rkL(M|K)$ is finite. Then
\[
\rkL(M|K)\ \geq\ \rkL(\k_M|\k)+\dim_\Lambda(\Gamma_M/\Gamma).
\]
\end{fact}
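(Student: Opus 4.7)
The plan is to exhibit an $\cL(K)$-independent tuple in $M$ of length at least $r + s$, where $r := \rkL(\k_M|\k)$ and $s := \dim_\Lambda(\Gamma_M/\Gamma)$. Since $\rkL(M|K)$ is assumed finite, the argument below will retroactively force $r$ and $s$ to be finite as well (otherwise one could pick arbitrarily large finite sub-bases and derive a contradiction). I would first pick a $\dclL$-basis $\bar c_1, \ldots, \bar c_r$ of $\k_M$ over $\k$ in the $\cL$-structure $\k_M$, lift each to $c_i \in \cO_M$ with $\res(c_i) = \bar c_i$, and then pick $b_1, \ldots, b_s \in M^\times$ whose valuations form a $\Lambda$-basis of $\Gamma_M/\Gamma$. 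The key claim is that the tuple $(c_1, \ldots, c_r, b_1, \ldots, b_s)$ is $\cL(K)$-independent in $M$.

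To prove the claim, I would form the tower $K_0 := K$, $K_i := K_{i-1}\langle c_i\rangle$ for $1 \leq i \leq r$, and $K_{r+j} := K_{r+j-1}\langle b_j\rangle$ for $1 \leq j \leq s$, each naturally a $\TO$-extension of $K$ with valuation ring $\cO_M \cap K_l$. An induction on $l$ should yield
\[
\k_{K_l}\ =\ \dclL\bigl(\k \cup \{\bar c_1, \ldots, \bar c_{\min(l,r)}\}\bigr) \quad \text{and} \quad \Gamma_{K_l}\ =\ \Gamma + \textstyle\sum_{j=1}^{\max(l-r,0)}\Lambda\, v b_j,
\]
the residue-field closure being computed inside $\k_M$. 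Granting this, at step $i \leq r$ we have $\bar c_i \notin \k_{K_{i-1}}$, which forces $c_i \notin \dclL(K_{i-1})$; at step $r+j$ we have $vb_j \notin \Gamma_{K_{r+j-1}}$, which forces $b_j \notin \dclL(K_{r+j-1})$. Either conclusion gives $\rkL(K_l|K_{l-1}) \geq 1$, and summing the $r+s$ rank inequalities delivers $\rkL(M|K) \geq r + s$.

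The main obstacle is the inductive step in the displayed equation: one must show that adjoining a residually $\dclL$-transcendental element to a $T$-convex valued field enlarges the residue field exactly by its residue while leaving the value group unchanged, and dually that adjoining an element whose valuation is $\Lambda$-independent over $\Gamma$ enlarges the value group by a single $\Lambda$-line while leaving the residue field unchanged. Both facts are part of the analysis of simple $T$-convex extensions in \cite{DL95,vdD97}. This is precisely where the power boundedness of $T$ is essential: every one-variable $\cL$-definable function is eventually comparable to a power function $x^\lambda$ with $\lambda \in \Lambda$, so the valuation and residue of any $\cL$-term in the newly adjoined element are controlled by the $\Lambda$-action on $\Gamma_M$. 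Without power boundedness (e.g.\ in the presence of an exponential) a single new element can create infinitely many fresh value-group generators, and the inequality as stated must be amended.
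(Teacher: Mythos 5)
The paper does not prove this statement; it is quoted verbatim as a known result with a citation to \cite[Section~5]{vdD97}, so there is no internal proof to compare against. Your outline is correct and is essentially the argument given in that source: reduce to a tower of simple extensions, ordered so that the residue-field generators come first, and invoke the two simple-extension lemmas (residually new element: residue field grows by one $\dclL$-dimension, value group unchanged; valuationally new element: value group grows by one $\Lambda$-line, residue field unchanged), the second of which is exactly where power boundedness enters via the growth dichotomy. Since those two lemmas carry all the content and you correctly defer them to \cite{DL95,vdD97}, the proposal is sound.
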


An \textbf{open $v$-ball} is a set of the form $B(a, \gamma) \coloneqq \{ b \in K : v(b-a) > \gamma \}$ for $a \in K$ and $\gamma \in \Gamma$.
These open $v$-balls form a basis for the \emph{valuation topology} on $K$, which coincides with its order topology since $\cO$ is assumed to be a proper subring.
Similarly, a \textbf{closed $v$-ball} is a set of the form $\{ b \in K : v(b-a) \geq \gamma \}$; in this latter definition we allow $\gamma \in \Gamma \cup \{\infty\}$ so that singletons are closed $v$-balls.
Later we only use open $v$-balls, but closed $v$-balls are needed for the next definitions.
A collection of closed $v$-balls is \textbf{nested} if any two balls in the collection have nonempty intersection (recall that any two $v$-balls are either disjoint or one is contained in the other).
We call $K$ \textbf{spherically complete} if every nonempty nested collection of closed $v$-balls has nonempty intersection.
For the relationship between spherical completeness and immediate extensions, see the next subsection and the beginning of Section~\ref{sec:sphcompTdh}.

In places, we expand $K$ by two quotients.
We denote the quotient $K^\times/(1+\smallo)$ by $\oRV{K}^\times$, with corresponding quotient map $\rv\colon K^\times \to \oRV{K}^\times$. We set $\oRV{K}\coloneqq \oRV{K}^\times \cup \{0\}$, and we extend $\rv$ to all of $K$ by setting $\rv(0)\coloneqq 0$. The residue map $\cO^\times \to \k^\times$ induces a bijection $\rv(\cO^\times) \to \k^\times$, which we also call $\res$, by $\res \rv(a) = \bar{a}$ for $a \in \cO^\times$; conversely, in the same way we can recover $\res\colon \cO^{\times} \to \k^{\times}$ from $\res \colon \rv(\cO^\times) \to \k^\times$. We extend $\res$ to a map $\rv(\cO^\times)\cup \{0\}\to \k$ by setting $\res(0)\coloneqq 0$, and we view $\res$ as a partial map from $\oRV{K}$ to $\k$. Let $\LRV$ be the language extending $\LO$ by a sort for $\oRV{K}$ (in the language $(\cdot, \mbox{}^{-1}, 1, 0, <)$),
a sort for $\k$ (in the language $\cL$), and the maps $\rv$ and $\res$.
Let $\TRV$ be the following $\LRV$-theory.
\begin{enumerate}
    \item $(K, \cO) \models \TO$;
    \item $\oRV{K}^{\times}$ is an abelian (multiplicative) group in the language $(\cdot,\mbox{}^{-1}, 1)$;
    \item $\rv \colon K^{\times} \to \oRV{K}^{\times}$ is a surjective group homomorphism with $\ker\rv = 1+\smallo$, extended to $K$ by $\rv(0)=0$;
    \item $<$ is interpreted in $\oRV{K}$ by $\rv(a)<\rv(b)$ if $a<b$ and $a \not\sim b$, for $a, b \in K$;
    \item $\res\colon \rv(\cO^{\times}) \to \k^{\times}$ is a group isomorphism extended to $\oRV{K}$ by $\res(\oRV{K}\setminus\rv(\cO^{\times}))=\{0\}$;
    \item\label{TRV-k} $\k \models T$ and if $F \colon K^n \to K$ is an $\cL(\0)$-definable continuous function (with the corresponding function $\k^n \to \k$ also denoted by $F$), then $\res(F(a))=F(\res(a))$ for all $a \in \cO^n$.
\end{enumerate}
We denote such a structure simply by $(K, \oRV{K})$.
Yin~\cite{Yi17} introduced the language $\LRV$ and the theory $\TRV$ (he denoted them by $\cL_{T\!\RV}$ and $\TCVF$, respectively), although for Yin, the residue field $\k$ is a predicate on the sort for $\oRV{K}$, not a sort in its own right, and $\cO$ is not in the language $\LRV$ (but it is definable).

Here are some initial observations about a model $(K, \oRV{K})$ of $\TRV$.
The relation $<$ in $\oRV{K}$ is a linear order induced by $K$ via $\rv$, which makes $\oRV{K}^> \coloneqq \{ d \in \oRV{K} : d>0 \}$ an ordered abelian group.
The map $\res \colon \cO \to \k$ has kernel $\smallo$, so it induces an isomorphism $\res(K) \to \k$ of (ordered) fields.
Moreover, it follows from \ref{TRV-k} that the map $\res(K) \to \k$ is an isomorphism of $\cL$-structures by \cite[Lemma~1.13]{DL95} and \cite[Remark~2.3]{Yi17} (the latter is a syntactic manoeuvre replacing all primitives of $\cL$ except $<$ by function symbols interpreted as continuous functions).

Observe that, again using \cite[Remark~2.3]{Yi17}, $\oRV{K}$ and $\k$ are interpretable in $(K, \cO)$, so $(K, \oRV{K})$ is a reduct of $(K, \cO)^{\eq}$, the expansion of $(K, \cO)$ by all imaginaries; more precisely, $(K, \oRV{K})$ is an expansion by definitions of such a reduct.
It follows on general model-theoretic grounds that $(K, \oRV{K})$ is the unique expansion of $(K, \cO)$ to a model of $\TRV$ and that if $(M,\cO_M)$ is an elementary $\TO$-extension of $(K,\cO)$, then $(M,\oRV{M})$ is an elementary $\TRV$-extension of $(K,\oRV{K})$; these facts were first observed by Yin in \cite[Proposition~2.13]{Yi17} and \cite[Corollary~2.17]{Yi17}, respectively.

Next we consider the further extension of $\LRV$ by all the imaginary sorts coming from $\oRV{K}$. The corresponding language is denoted by $\LRVeq$ and the corresponding theory is denoted by $\TRVeq$. We let $\oRV{K}^{\eq}$ be the structure consisting of the sort $\oRV{K}$, the sort $\k$, and all of these imaginary sorts.
As before, $(K,\cO)$ admits a unique expansion to a model $(K,\oRV{K}^{\eq})$ of $\TRVeq$, and if $(M,\cO_M)$ is an elementary $\TO$-extension of $(K,\cO)$, then $(M,\oRV{M}^{\eq})$ is an elementary $\TRVeq$-extension of $(K,\oRV{K}^{\eq})$.

Also note that if $(M,\cO_M)$ is an \emph{immediate} $\TO$-extension of $(K,\cO)$, then $\oRV{M}^{\eq} = \oRV{K}^{\eq}$.
In addition, any $\LRVeq(K \cup \oRV{K}^{\eq})$-definable subset of $K^n$ is already $\LO(K)$-definable.
We use these facts in combination with the following key result, the \emph{Jacobian property} for definable functions in models of~$\TO$:

\begin{fact}[{\cite[Theorem~3.18]{Ga20}}]\label{fact:rvapprox}
Let $A \subseteq K\cup \oRV{K}^{\eq}$ and let $F\colon K^n \to K$ be an $\LRVeq(A)$-definable function. Then there is an $\LRVeq(A)$-definable map $\chi\colon K^n \to \oRV{K}^{\eq}$ such that for each $s \in \chi(K^n)$, if $\chi\inv(s)$ contains an open $v$-ball, then either $F$ is constant on $\chi\inv(s)$ or there is $d \in K^n$ such that
\[
v\big(F(x)- F(y) - d\cdot (x-y)\big) \ >\ vd+ v(x-y)
\]
for all $x, y \in \chi\inv(s)$ with $x \neq y$.
\end{fact}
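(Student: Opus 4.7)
The plan is to establish the Jacobian property by reducing to $\LO$-definable functions, applying a refined cell decomposition, and controlling the derivatives via power boundedness.

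First, I would reduce to the case where $F$ is $\LO(A')$-definable for a finite $A' \subseteq K$. Since $\oRV{K}^{\eq}$ is interpretable in $(K, \cO)$, any $\LRVeq(A)$-definable function $K^n \to K$ arises from an $\LO$-definable family indexed by finitely many $\oRV{K}^{\eq}$-parameters, and those parameters can be absorbed into the target of $\chi$. So we may assume $A \subseteq K$. Next I would apply the cell decomposition theorem for $\TO$ (from van den Dries--Lewenberg) to partition $K^n$ into finitely many $\LO(A)$-definable cells on each of which $F$ restricts to a $\cC^1$ function. To define $\chi$ I would send each $x \in K^n$ to the tuple consisting of the index of its cell together with the $\rv$-classes $\rv(\partial F/\partial x_i(x))$ for $i = 1, \ldots, n$, along with further imaginary data needed to track valuative refinements. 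On a fiber $\chi\inv(s)$ containing an open $v$-ball $B$, each partial derivative has constant $\rv$-class on $B$, so I may choose $d = (d_1, \ldots, d_n) \in K^n$ with $d_i \asymp \partial F/\partial x_i(x)$ for all $x \in B$, or $d_i = 0$ if the partial vanishes there.

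The crux is the estimate $v(F(x) - F(y) - d \cdot (x-y)) > vd + v(x-y)$ for $x \ne y$ in $\chi\inv(s)$. Working first inside a single open $v$-ball $B$, I would telescope along coordinate-parallel segments joining $y$ to $x$ and apply the one-variable mean value theorem (valid in o-minimal expansions of RCF) on each segment, obtaining $F(x) - F(y) = \sum_i \partial_i F(\xi_i)(x_i - y_i)$ for points $\xi_i \in B$. Subtracting $d \cdot (x-y)$ reduces the problem to showing $v(\partial_i F(\xi_i) - d_i) > vd_i$ uniformly in $B$. I would obtain this by iterating the cell decomposition on the $\partial_i F$ and refining $\chi$ so that each partial derivative is approximated on its fiber by its $\rv$-class with strictly smaller error. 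A further refinement of $\chi$ ensures that no fiber containing an open $v$-ball is split between a ``constant'' piece and a ``linear'' piece, so that the conclusion of the fact holds uniformly on the whole fiber.

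The main obstacle is engineering this refinement to get uniform strict approximation of each partial derivative by its $\rv$-class. This requires an induction on the dimension $n$ (reducing to one-variable statements on cells) together with careful bookkeeping of imaginary parameters so that the iterated refinements remain $\LRVeq(A)$-definable. Here power boundedness of $T$ is essential: it guarantees that an $\LO$-definable function and its derivative live on comparably slow-growing valuation scales, so the required strict inequality exists. In the absence of power boundedness, a function like $\exp$ has a derivative varying on the same valuation scale as its own value, and no such strict approximation is possible; hence power boundedness is exactly what makes the Jacobian property go through in this generality.
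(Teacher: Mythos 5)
The paper does not actually prove this statement: it is imported as a black box from Garc\'{i}a Ram\'{i}rez \cite[Theorem~3.18]{Ga20} (with \cite[Theorem 5.4.10]{CHRK22} as a more general analogue), so there is no internal proof to compare against, and a full proof is a substantial piece of work in its own right. Judged on its own terms, your sketch identifies the standard high-level strategy --- reduce to $\LO(K)$-definable functions, cell-decompose, record the $\rv$-classes of the partial derivatives in $\chi$, pick $d_i$ realizing those classes, and telescope along coordinate-parallel segments using the definable mean value theorem --- and the within-one-ball estimate you derive from it is correct: each term $(\partial_i F(\xi_i)-d_i)(x_i-y_i)$ has valuation strictly greater than $vd_i + v(x_i-y_i) \geq vd + v(x-y)$.

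The genuine gap is that the Fact asserts the estimate for \emph{all} $x \neq y$ in the fiber $\chi\inv(s)$, while your argument only covers $x,y$ lying in a common open $v$-ball contained in that fiber. A fiber of $\chi$ is in general a union of many pairwise disjoint balls, and a coordinate-parallel segment joining points in different balls leaves the fiber, so the intermediate points $\xi_i$ produced by the mean value theorem need not satisfy $v(\partial_i F(\xi_i)-d_i) > vd_i$; nor is it clear that a single $d$ works across all the balls of the fiber, or that fibers cannot mix a locus where $F$ is constant with one where it is not. Your ``further refinement of $\chi$'' at this point is an appeal to the desired conclusion rather than a construction: this ball-to-fiber uniformity is precisely the hard preparation step that occupies most of the actual proofs in \cite{Ga20} and \cite{CHRK22}, and it is not supplied by iterating cell decomposition on the $\partial_i F$. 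The closing heuristic about power boundedness is the right intuition for why the hypothesis is needed, but it does not substitute for that uniformity argument. Since the statement is quoted as a Fact, citing \cite{Ga20} is the appropriate move; as a proof, the proposal is incomplete at its central step.
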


\subsection{Background on \texorpdfstring{$T$}{T}-derivations}\label{sec:Td}
Let $\der\colon K \to K$ be a map. For $a \in K$, we use $a'$ or $\der a$ in place of $\der(a)$ and if $a\neq 0$, then we set $a^\dagger\coloneqq a'/a$. Given $r \in \N$, we write $a^{(r)}$ in place of $\der^r(a)$, and we let $\jet^r(a)$ denote the tuple $(a,a',\ldots,a^{(r)})$. We use $\jet^\infty(a)$ for the infinite tuple $(a,a',a'',\ldots)$. 

Let $F\colon U \to K$ be an $\cL(\emptyset)$-definable $\cC^1$-function with $U \subseteq K^n$ open.  We say that $\der$ is \textbf{compatible with $F$} if
\[
F(u)'\ =\ \frac{\partial F}{\partial Y_1}(u)u_1'+\cdots+\frac{\partial F}{\partial Y_n}(u)u_n'
\]
for each $u = (u_1,\ldots,u_n)\in U$. We say that $\der$ is a \textbf{$T$-derivation on $K$} if $\der$ is compatible with every $\cL(\emptyset)$-definable $\cC^1$-function with open domain. Let $\Td$ be the $\Ld \coloneqq \cL\cup\{\der\}$-theory that extends $T$ by axioms stating that $\der$ is a $T$-derivation.

The study of $T$-derivations was initiated in~\cite{FK21}. Any $T$-derivation on $K$ is a \emph{derivation} on $K$, that is, a map satisfying the identities $(a+b)' = a' +b'$ and $(ab)' = a'b+ab'$ for $a,b \in K$ (this follows from compatibility with the functions $(x,y) \mapsto x+y$ and $(x,y) \mapsto xy$). For the remainder of this subsection, $\der$ is a $T$-derivation on $K$. We also call $(K, \der)$ a \textbf{$T$-differential field}. We let $C_K\coloneqq \ker(\der)$ denote the \textbf{constant field} of $K$; if the dependence on $K$ is clear, we just write $C$. It is straightforward to verify the following fact.

\begin{fact}\label{fact:powerderivative}
If $x \mapsto x^\lambda\colon K^>\to K$ is an $\cL(\emptyset)$-definable power function on $K$, then $(y^\lambda)' = \lambda y ^{\lambda-1}y'$ for all $y \in K^>$.
\end{fact}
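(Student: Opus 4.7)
The plan is to apply the defining compatibility property of a $T$-derivation directly to the one-variable $\cL(\0)$-definable function $F\colon K^> \to K$, $F(x) = x^\lambda$, and to identify its classical derivative $F'$ as $\lambda x^{\lambda-1}$ using the fact that $F$ is a group homomorphism of $K^>$. Since $K^>$ is open in $K$ and, by o-minimality together with the fact that $F$ is monotone and definable, $F$ is even $\cC^1$ (in fact $\cC^\infty$) on $K^>$, $F$ is eligible for the compatibility condition in the definition of a $T$-derivation.

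First I would identify $F'(x)$. By the definition of $\lambda$ as the derivative at $1$ of the power function $x \mapsto x^\lambda$, we have $F'(1) = \lambda$. Using the multiplicativity identity $F(xy) = F(x)F(y)$ and differentiating with respect to $y$ gives $x F'(xy) = F(x)F'(y)$. Evaluating at $y = 1$ yields
\[
x F'(x) \ =\ F(x)F'(1) \ =\ \lambda x^{\lambda},
\]
so $F'(x) = \lambda x^{\lambda-1}$ for every $x \in K^>$.

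Finally, since $\der$ is a $T$-derivation, it is compatible with $F$, so for every $y \in K^>$,
\[
(y^\lambda)' \ =\ F(y)' \ =\ F'(y)\cdot y' \ =\ \lambda y^{\lambda-1} y',
\]
as required. There is no real obstacle: the argument is just unwinding the definition of $T$-derivation applied to $F$, together with the short multiplicativity computation identifying $F'$. The only point worth a brief remark is the $\cC^1$ regularity of $F$, which is immediate in the power-bounded o-minimal setting.
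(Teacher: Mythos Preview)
Your argument is correct and is precisely the straightforward verification the paper has in mind; the paper does not give a proof but simply remarks that the fact is easy to check, and your computation---identifying the classical derivative of the $\cL(\emptyset)$-definable power function via its multiplicativity and then applying the compatibility condition defining a $T$-derivation---is exactly what is needed.
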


Given an element $a$ in a $\Td$-extension of $K$, we write $K\llangle a \rrangle$ in place of $K\langle \jet^\infty(a)\rangle$. Then $K\llangle a \rrangle$ is itself a $\Td$-extension of $K$. We say that $a$ is \textbf{$\Td$-algebraic over $K$} if $\jet^r(a)$ is $\cL(K)$-dependent for some $r$; otherwise, $a$ is said to be \textbf{$\Td$-transcendental}. Equivalently, $a$ is $\Td$-algebraic over $K$ if $a \in \cld(K)$, where $\cld$ is the $\der$-closure pregeometry considered in~\cite[Section 3]{FK21}. More generally, a $\Td$-extension $L$ of $K$ is said to be \textbf{$\Td$-algebraic over $K$} if each $a \in L$ is $\Td$-algebraic over $K$ or, equivalently, if $L \subseteq \cld(K)$. As a consequence of~\cite[Lemma 3.2 (4)]{FK21}, we have the following:

\begin{fact}\label{fact:dependence}
Let $a$ be an element in a $\Td$-extension of $K$. If the tuple $\jet^r(a)$ is $\cL(K)$-dependent, then $K\llangle a \rrangle = K \langle \jet^{r-1} a\rangle$.
\end{fact}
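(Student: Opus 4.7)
My plan is to show directly that every derivative $a^{(n)}$ lies in $K\langle \jet^{r-1}(a)\rangle$, which together with the trivial containment yields the equality. Unpacking the definition of $\cL(K)$-dependence of a tuple, the hypothesis splits into two subcases: either some components of $\jet^r(a)$ coincide, or all its components are distinct but the set $\{a, a', \ldots, a^{(r)}\}$ is dependent in the pregeometry $\dclL$ over $K$. I would treat these separately.

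In the first subcase, pick $0 \leq i < j \leq r$ with $a^{(i)} = a^{(j)}$; since $\der$ is a function, induction on $k$ gives $a^{(i+k)} = a^{(j+k)}$, so the sequence of derivatives is eventually periodic with period $j-i$ and every $a^{(n)}$ coincides with some $a^{(m)}$, $m \leq j-1 \leq r-1$. In the second subcase, let $s \leq r$ be minimal such that $\{a, \ldots, a^{(s)}\}$ is $\dclL$-dependent over $K$; assuming $a \notin K$ so $s \geq 1$, the set $\{a, \ldots, a^{(s-1)}\}$ is $\dclL$-independent and the exchange property, combined with definable Skolem functions for $T$, produces an $\cL(K)$-definable $F$ with $a^{(s)} = F(a, a', \ldots, a^{(s-1)})$. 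By o-minimal cell decomposition, $F$ is $\cC^1$ on a dense definable open set $U \subseteq K^s$ whose complement has $\cL$-dimension $<s$ over $K$, so the $\cL(K)$-independent tuple $\jet^{s-1}(a)$ must lie in $U$, and compatibility of $\der$ with $F$ at that point gives
\[
a^{(s+1)}\ =\ \sum_{i=0}^{s-1}\frac{\partial F}{\partial Y_i}\bigl(\jet^{s-1}(a)\bigr)\cdot a^{(i+1)}.
\]
The right-hand side lies in $K\langle \jet^{s-1}(a), a^{(s)}\rangle = K\langle \jet^{s-1}(a)\rangle$, so $a^{(s+1)} \in K\langle \jet^{s-1}(a)\rangle$. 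Iterating this—at each step expressing $a^{(n)}$ for $n \geq s$ as an $\cL(K)$-definable function of $\jet^{s-1}(a)$ and differentiating via the same smoothness argument—gives $K\llangle a\rrangle = K\langle \jet^{s-1}(a)\rangle \subseteq K\langle \jet^{r-1}(a)\rangle$.

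The main obstacle I anticipate is checking carefully that the chain rule for $\der$ is applicable at $\jet^{s-1}(a)$ at each stage of the iteration: this needs the relevant $\cL(K)$-definable function to be $\cC^1$ at that specific tuple, which combines the piecewise-$\cC^1$ conclusion of o-minimal cell decomposition with the observation that an $\cL(K)$-independent tuple of length $s$ avoids every $\cL(K)$-definable subset of $K^s$ of dimension $<s$. Once that bookkeeping is settled, the remainder is a routine exercise in pregeometry exchange and the $T$-derivation chain rule.
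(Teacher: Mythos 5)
The paper does not prove this statement itself; it is quoted as a consequence of \cite[Lemma~3.2(4)]{FK21}, and your argument is essentially the standard proof one finds there: split according to whether the tuple has repeated entries or the underlying set is $\dclL$-dependent over $K$, use exchange and definable Skolem functions to write $a^{(s)}=F(\jet^{s-1}a)$ for the minimal $s$, and then differentiate repeatedly, using that an $\cL(K)$-independent $s$-tuple avoids every $\cL(K)$-definable set of dimension $<s$ to justify applying the chain rule at each stage. The structure is sound. The one inaccuracy is your displayed chain rule: since $F$ is $\cL(K)$-definable rather than $\cL(\emptyset)$-definable, differentiating $F(\jet^{s-1}a)$ produces an extra term coming from the derivatives of the parameters of $F$ --- in the notation the paper itself uses in the proof of Proposition~\ref{prop:residueext} (via \cite[Lemma~2.12]{FK21}), the correct identity is
\[
a^{(s+1)}\ =\ F^{[\der]}\bigl(\jet^{s-1}(a)\bigr)+\sum_{i=0}^{s-1}\frac{\partial F}{\partial Y_i}\bigl(\jet^{s-1}(a)\bigr)\,a^{(i+1)}.
\]
This omission is harmless for your purposes, because $F^{[\der]}$ is again an $\cL(K)$-definable function, so the corrected right-hand side still lies in $K\langle \jet^{s-1}(a), a^{(s)}\rangle = K\langle\jet^{s-1}(a)\rangle$, and the same remark applies at every step of the iteration. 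With that repair the proof is complete.
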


Any $T$-extension of $K$ can be expanded to a $\Td$-extension of $K$, and this expansion depends entirely on how the derivation is extended to a $\dclL$-basis:

\begin{fact}[{\cite[Lemma~2.13]{FK21}}]
\label{fact:transext}
Let $M$ be a $T$-extension of $K$, let $A$ be a $\dclL$-basis for $M$ over $K$, and let $s\colon A\to M$ be a map. Then there is a unique extension of $\der$ to a $T$-derivation on $M$ such that $a'= s(a)$ for all $a \in A$.
\end{fact}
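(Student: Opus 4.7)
My plan is to reduce to adding one element at a time and then to define the extension via the chain rule, with the crux being well-definedness. A Zorn's lemma argument on pairs $(B, \der_B)$, where $B \subseteq A$ and $\der_B$ extends $\der$ to $K\langle B\rangle$ with $\der_B a = s(a)$ for $a \in B$, reduces to the case $A = \{a\}$ with $a \notin \dclL(K)$; the task is then to extend $\der$ to $K\langle a\rangle$ with $a' = s \coloneqq s(a)$. For uniqueness, every $b \in K\langle a\rangle$ can be written as $b = F(k, a)$ for some $\cL(\0)$-definable $F$ that is $\cC^1$ near $(k, a)$ (such $F$ exists because $\cL$-definable functions are $\cC^1$ on a dense open subset of their domain), and compatibility with $F$ forces
\[
\der b \;=\; \sum_i \tfrac{\partial F}{\partial X_i}(k,a)\, k_i' \;+\; \tfrac{\partial F}{\partial Y}(k,a) \cdot s.
\]

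For existence I define $\der b$ by this formula; the crux is well-definedness. Suppose $b = F(k,a) = G(l,a)$, and set $\Phi(X, Y) \coloneqq F(X_1, Y) - G(X_2, Y)$ with $X = (X_1, X_2)$, so $\Phi(k, l, a) = 0$. Since $a \notin \dclL(K)$, o-minimality forces the $\cL(K)$-definable zero set $\{y : \Phi(k, l, y) = 0\}$ to contain an open interval $I = (c, d) \ni a$ with $c, d \in K \cup \{\pm\infty\}$; differentiating the identity on $I$ gives $\tfrac{\partial \Phi}{\partial Y}(k, l, y) = 0$ for all $y \in I$. For any $y_0 \in I \cap K$ (nonempty since $c, d \in K \cup \{\pm\infty\}$), compatibility of $\der$ on $K$ with the $\cL(\0)$-definable $\cC^1$-function $\Phi$ at the $K$-tuple $(k, l, y_0)$ yields
\[
0 \;=\; \Phi(k, l, y_0)' \;=\; \sum_i \tfrac{\partial \Phi}{\partial X_i}(k, l, y_0)\, (k, l)_i' \;+\; \underbrace{\tfrac{\partial \Phi}{\partial Y}(k, l, y_0)}_{=\,0}\cdot y_0'.
\]
Hence the $\cL(K)$-definable function $\eta(y) \coloneqq \sum_i \tfrac{\partial \Phi}{\partial X_i}(k, l, y)\,(k, l)_i'$ vanishes on the open interval $I \cap K$ of $K$. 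Model completeness of $T$ makes $K$ an elementary $\cL$-substructure of $M$, so $\eta$ vanishes on all of $I$ in $M$; in particular $\eta(a) = 0$. Combined with $\tfrac{\partial \Phi}{\partial Y}(k, l, a) = 0$, this yields equality of the two chain-rule values, establishing well-definedness.

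Finally, that the resulting $\der$ is a $T$-derivation follows from the $\cC^1$ chain rule applied to compositions: given an $\cL(\0)$-definable $\cC^1$-function $H$ and $u = (G_1(k,a), \ldots, G_n(k,a))$ in its domain, I have $H(u) = (H \circ G)(k,a)$, and applying the defining formula to $H \circ G$ recovers $H(u)' = \sum_j \tfrac{\partial H}{\partial U_j}(u)\, u_j'$. The main obstacle is the well-definedness step, where the appeal to model completeness---to transfer the vanishing of $\eta$ from the interval $I \cap K$ in $K$ to the full interval $I$ in $M$---is the essential move.
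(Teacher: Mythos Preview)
The paper does not give its own proof of this statement; it is recorded as a fact with citation to \cite[Lemma~2.13]{FK21}. Your argument is a correct direct proof along standard lines: reduce to a single new element via Zorn, define the extension by the chain rule, and use $a \notin \dclL(K)$ together with $K \preceq_{\cL} M$ to establish well-definedness.

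One technical point deserves a sentence of care. When you pick $y_0 \in I \cap K$ and invoke compatibility of $\der$ with $\Phi$ at the $K$-point $(k,l,y_0)$, you need $\Phi$ to be $\cC^1$ there (as a function of all variables), not merely near $(k,l,a)$. This is easily arranged: the $\cC^1$ locus of $\Phi$ is $\cL(\0)$-definable and open, so its fiber $\{y:(k,l,y)\text{ is in the locus}\}$ is an $\cL(K)$-definable open set containing $a$, hence contains an interval around $a$ with endpoints in $K\cup\{\pm\infty\}$; replace $I$ by its intersection with this interval. Relatedly, to guarantee that a representation $b=F(k,a)$ with $F$ $\cC^1$ near $(k,a)$ exists, it is cleanest to first take $k$ to be $\cL(\0)$-independent, so that $(k,a)$ is $\cL(\0)$-generic and automatically lies in the (dense open) $\cC^1$ locus of any $\cL(\0)$-definable function. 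With these adjustments your proof goes through.
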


The $T$-derivation $\der$ is said to be \textbf{generic} if for each $\cL(K)$-definable function $F\colon U\to K$ with $U \subseteq K^r$ open, there is $a \in K$ with $\jet^{r-1}(a) \in U$ such that $a^{(r)} = F(\jet^{r-1}(a))$. Let $\TdG$ be the $\Ld$-theory that extends $\Td$ by axioms stating that $\der$ is generic. In~\cite{FK21}, it was shown that the theory $\TdG$ is the model completion of $\Td$. We record here the main extension and embedding results that go into this proof, for use in Section~\ref{sec:modelcompletion}.

\begin{fact}[{\cite[Proposition~4.3 and Lemma~4.7]{FK21}}]
\label{fact:tdextend}
There is a model $M \models \TdG$ that extends $K$ with $|M| = |K|$. Given any such extension $M$ and any $|K|^+$-saturated model $M^* \models \TdG$ extending $K$, there is an $\Ld(K)$-embedding $M\to M^*$.
\end{fact}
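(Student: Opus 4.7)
The plan is to handle existence and embedding separately, both leveraging Fact~\ref{fact:transext} to freely extend the derivation on a $\dclL$-basis.

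\textbf{Existence.} I would build $M$ as the union of a chain $K = K_0 \subseteq K_1 \subseteq \cdots$ of $\Td$-extensions, arranged by bookkeeping so that every instance of the genericity condition appearing in some $K_n$ is eventually witnessed. To realize a given instance $F\colon U \to K_n$ with $U \subseteq K_n^r$ a nonempty open set, I would pass to a $T$-extension $K_{n+1} = K_n\langle b_0, \ldots, b_{r-1}\rangle$ with $(b_0, \ldots, b_{r-1}) \in U$ an $\cL(K_n)$-independent tuple (available in any sufficiently saturated $T$-extension of $K_n$), then apply Fact~\ref{fact:transext} to extend $\der$ by $\der b_i \coloneqq b_{i+1}$ for $i < r-1$ and $\der b_{r-1} \coloneqq F(b_0, \ldots, b_{r-1})$; the element $a \coloneqq b_0$ witnesses the instance. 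Each step grows the underlying set by finitely many elements, so careful bookkeeping yields $M$ with $|M| = |K|$.

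\textbf{Embedding.} By Zorn's lemma, let $f\colon N \to M^*$ be a maximal $\Ld(K)$-embedding with $K \subseteq N \subseteq M$. The crucial observation is that model completeness of $T$ makes the underlying $\cL$-embedding $\cL$-elementary, so $f(N) \preceq_\cL M^*$. Supposing for contradiction $N \subsetneq M$, pick $a \in M \setminus N$ and split into two cases. In the $\Td$-transcendental case, $\jet^r(a)$ is $\cL(N)$-independent for every $r$. In the $\Td$-algebraic case, let $r$ be minimal with $\jet^r(a)$ $\cL(N)$-dependent and write $a^{(r)} = H(\jet^{r-1}(a))$ for some $\cL(N)$-definable function $H$, taken continuous on an $\cL(N)$-definable open neighborhood of $\jet^{r-1}(a)$ by cell decomposition; Fact~\ref{fact:dependence} gives $N\llangle a \rrangle = N\langle \jet^{r-1}(a)\rangle$. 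In either case I aim to produce $a^* \in M^*$ whose $\cL$-type over $f(N)$ matches the $f$-image of the corresponding $\cL$-type of $a$; Fact~\ref{fact:transext} then extends $f$ via $a^{(i)} \mapsto (a^*)^{(i)}$, contradicting maximality.

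To find such an $a^*$, I would present the desired $\cL$-type of $\jet^\infty(a^*)$ as a partial $\Ld(f(N))$-type of size at most $|K|$ and invoke $|K|^+$-saturation of $M^*$, reducing to finite consistency. Since $\cL$-types of $\cL(N)$-independent tuples in an o-minimal theory are determined by the definable open sets containing them, a finite fragment amounts to requiring $\jet^r(a^*) \in W$ for some nonempty $\cL(f(N))$-definable open $W \subseteq (M^*)^{r+1}$ (with the additional constraint $(a^*)^{(r)} = f(H)(\jet^{r-1}(a^*))$ in the algebraic case). Nonemptiness transfers from $M$ to $M^*$ via $f(N) \preceq_\cL M^*$; picking a point $c \in W$ and a small open box around $c$ inside $W$ reduces the task to the genericity of $\der^{M^*}$, applied to the constant function equal to the last coordinate of $c$ on the first-$r$-projection of the box (respectively, to $f(H)$ restricted to a suitable open sub-box in the algebraic case).

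The main obstacle is the translation between realizing $\cL$-type fragments, naturally phrased in terms of definable open sets containing specified points, and invoking the genericity condition, which requires a specific $\cL$-definable function on an open set. Model completeness of $T$, giving $f(N) \preceq_\cL M^*$, is the glue that allows nonemptiness of definable open sets to transfer from $M$ to $M^*$, enabling the applications of genericity.
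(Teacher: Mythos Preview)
The paper does not prove this statement; it is recorded as a Fact with a citation to \cite[Proposition~4.3 and Lemma~4.7]{FK21}, so there is no in-paper argument to compare against. Your outline is essentially the argument given in \cite{FK21}: the existence half is their Proposition~4.3 (iteratively adjoining an $\cL$-independent tuple in a prescribed open set and using Fact~\ref{fact:transext} to dictate its derivatives, then closing off by bookkeeping), and the embedding half is their Lemma~4.7 (maximal partial embedding via Zorn, case split on $\Td$-transcendental versus $\Td$-algebraic, and realization of the relevant finite $\cL$-type fragments in $M^*$ by invoking genericity on a suitable box). One small point worth tightening in your write-up: in the transcendental case you should make explicit that the partial $\Ld(f(N))$-type you are realizing has size at most $|N|\leq |M|=|K|$, which is what licenses the appeal to $|K|^+$-saturation; and in both cases the passage ``nonemptiness transfers from $M$ to $M^*$ via $f(N)\preceq_{\cL} M^*$'' is exactly how \cite{FK21} uses model completeness of $T$.
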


Next we recall some terminology and notation about linear differential operators from \cite[Chapter~5]{ADH17}.
Let $K\{Y\} \coloneqq K[Y, Y', \dots]$ be the differential ring of differential polynomials over $K$ and let $K[\der]$ be the ring of linear differential operators over $K$.
An element in $K[\der]$ corresponds to a $C$-linear operator $y \mapsto A(y)$ on $K$, where $A(Y) = a_0Y+a_1Y' + \dots + a_rY^{(r)} \in K\{Y\}$ is a homogeneous linear differential polynomial.
Viewed as an operator in $K[\der]$, we write $A = a_0+a_1\der+\dots+a_r\der^r$, and we freely pass between viewing $A$ as an element of $K[\der]$ and of $K\{Y\}$.
Henceforth, $A = a_0+a_1\der+\dots+a_r\der^r \in K[\der]$.
If $a_r \neq 0$, we say that $A$ has \textbf{order} $r$.
Multiplication in the ring $K[\der]$ is given by composition.
In particular, for $g \in K^{\times}$ the linear differential operator $Ag \in K[\der]$ corresponds to the differential polynomial $A_{\times g}(Y) = A(gY) \in K\{Y\}$.
We call $K$ \textbf{linearly surjective} if every $A \in K[\der]^{\neq} \coloneqq K[\der]\setminus\{0\}$ is surjective.

\subsection{Background on \texorpdfstring{$T$}{T}-convex \texorpdfstring{$T$}{T}-differential fields}
Let $\LdO\coloneqq \LO\cup\Ld = \cL\cup\{\cO,\der\}$, and let $\TdO$ be the $\LdO$-theory that extends $\Td$ and $\TO$ by the additional axiom ``$\der \smallo \subseteq \smallo$''.
\begin{assumption}
For the remainder of this paper, let $K = (K,\cO,\der) \models \TdO$.
\end{assumption}
This additional axiom, called \textbf{small derivation}, ensures that $\der$ is continuous with respect to the valuation topology (equivalently, order topology) on $K$ \cite[Lemma~4.4.6]{ADH17}, so every model of $\TdO$ is a $T$-convex $T$-differential field, as defined in the introduction. 
Also, small derivation gives $\der\cO\subseteq\cO$ too \cite[Lemma~4.4.2]{ADH17}, so $\der$ induces a derivation on $\k$.
Moreover, $\k$ is a $T$-differential field (see \cite[p.\ 280]{Ka22}).
In this paper we are interested in the case that the derivation induced on $\k$ is nontrivial; indeed, often it will be linearly surjective or even generic.
A consequence of the main results of \cite{Ka22} provides spherically complete extensions in this case:
\begin{fact}[{\cite[Corollary~6.4]{Ka22}}]\label{fact:sphcomp}
If the derivation induced on $\k$ is nontrivial, then $K$ has a spherically complete immediate $\TdO$-extension.
\end{fact}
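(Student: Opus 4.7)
The plan is a Zorn's lemma argument built on pseudo-Cauchy sequence techniques adapted to the $T$-convex $T$-differential setting. The class of immediate $\TdO$-extensions of $K$ is closed under ascending unions: smallness of derivation is a universal $\LdO$-property and so passes to unions of chains, as do immediacy and being a $T$-convex $T$-differential field. Hence this class has a maximal element $L$, and the task reduces to showing any such $L$ is spherically complete.

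Suppose for contradiction that $L$ is not spherically complete. Then a nested family of closed $v$-balls in $L$ has empty intersection, yielding a pseudo-Cauchy sequence $(a_\rho)$ in $L$ of strictly increasing widths with no pseudo-limit in $L$. Small derivation gives continuity of $\der$, so the derivative sequence $(\der a_\rho)$ is itself pseudo-Cauchy; using that $L$ is (by induction on the construction) already rich in pseudo-limits of strictly narrower sequences, one can find $b \in L$ that is a pseudo-limit of $(\der a_\rho)$. It then suffices to exhibit a proper immediate $\TdO$-extension $L\langle a\rangle$ in which $a$ is a pseudo-limit of $(a_\rho)$ and $\der a = b$, contradicting maximality of $L$.

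The construction splits according to the type of $(a_\rho)$. In the $\cL$-transcendental case, where no nonzero $\cL(L)$-definable univariate function is eventually small on $(a_\rho)$, standard o-minimal valuation theory produces an $\cL$-transcendental pseudo-limit $a$ of $(a_\rho)$ in an immediate $T$-extension of $L$. Fact~\ref{fact:transext} lets us extend $\der$ uniquely to a $T$-derivation on $L\langle a\rangle$ by declaring $\der a \coloneqq b$, and smallness transfers from $L$ since $v(\der a) = vb \geq 0$ while $\der$ already has small derivation on $L$. In the $\cL$-algebraic case, a minimal witnessing $\cL(L)$-definable function $F$ exists; the Jacobian property (Fact~\ref{fact:rvapprox}) linearizes $F$ on a sufficiently small open $v$-ball containing cofinally many $a_\rho$, reducing the existence of a pseudo-limit of $(a_\rho)$ compatible with the $T$-derivation to the solvability of an associated linear differential equation over the residue field $\k$. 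This is where the hypothesis of nontrivial residue derivation is essential: it ensures this linear equation can be solved inside a suitable immediate differential extension of $\k$, hence inside an immediate extension of $L$, without forcing new constants that would break immediacy.

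The main obstacle is the algebraic case. One must verify that the new derivation on $L\langle a\rangle$ is compatible with \emph{every} $\cL(L)$-definable $\cC^1$-function, not merely the witness $F$; this demands repeated application of Fact~\ref{fact:rvapprox} to linearize each such function around $a$ and check that the candidate value of $\der a$ matches the affine approximation up to terms that vanish in the valuation. Dimension control via the Wilkie inequality (Fact~\ref{fact:wilkieineq}) is the tool that keeps the residue field and value group pinned down throughout, ensuring immediacy. Once these compatibilities are in place, smallness of the derivation on $L\langle a\rangle$ follows from smallness on $L$ together with $\der a = b \in \cO_L$, and the resulting proper immediate $\TdO$-extension contradicts the maximality of $L$.
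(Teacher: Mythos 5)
This statement is quoted from \cite[Corollary~6.4]{Ka22} and not proved in the paper; the cited proof runs exactly through the machinery the paper imports as Facts~\ref{prop:Ka226.1} and~\ref{prop:Ka226.2}: Zorn's lemma reduces the claim to showing that a maximal immediate $\TdO$-extension $L$ is spherically complete, and a divergent pc-sequence in $L$ is either of $\Td$-transcendental or of $\Td$-algebraic type, each case producing a proper immediate $\TdO$-extension and hence a contradiction. Your skeleton (Zorn plus a case split on a divergent pc-sequence) matches this, but your case analysis has a genuine gap. The dichotomy you use --- whether some unary $\cL(L)$-definable function of $y$ alone is eventually small along $(a_\rho)$ --- is the purely valued-field one, which is degenerate here: by \cite[Lemma~5.2]{Ka22} (quoted in Section~\ref{sec:sphcompTdh} as $Z_0(K,\ell)=\0$), no divergent pc-sequence is ``algebraic'' in that sense, so essentially everything falls into your transcendental case. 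The relevant dichotomy is $\Td$-algebraic versus $\Td$-transcendental type, which quantifies over $\cL(L)$-definable functions of the jets $\jet^r(y)=(y,y',\dots,y^{(r)})$.

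The treatment of your transcendental case then fails. The sequence $(\der a_\rho)$ need not be a pc-sequence --- small derivation only gives $\der\smallo\subseteq\smallo$ and does not control $v(\der x)$ in terms of $vx$ in general --- and a maximal immediate extension has no reason to contain a pseudolimit $b$ of it (your appeal to ``induction on the construction'' has no content, since $L$ is obtained abstractly from Zorn). Worse, even granting such a $b\in L$, setting $\der a\coloneqq b$ forces $a'\in L$, so $a$ is $\Td$-algebraic over $L$; but if $(a_\rho)$ is of $\Td$-transcendental type, Fact~\ref{prop:Ka226.1} says \emph{every} pseudolimit in \emph{every} $\TdO$-extension is $\Td$-transcendental over $L$, so your candidate $L\langle a\rangle$ cannot be an immediate $\TdO$-extension (the small-derivation verification you wave at must break). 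The genuine construction in this case adjoins infinitely many new elements $a,a',a'',\dots$, each realizing a cut chosen to preserve immediacy, and checking immediacy of that tower is where the nontriviality of the derivation on $\k$ is used --- not, as you suggest, by solving a linear equation ``inside an immediate differential extension of $\k$,'' which is vacuous since immediate extensions do not change $\k$.
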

It follows that for $K$ such that the derivation induced on $\k$ is nontrivial, $K$ is spherically complete if and only if $K$ has no proper immediate $\TdO$-extension.
The notion of an $\cL(K)$-definable function being in \emph{implicit form}, defined in the next section, plays an important role in the result above and our work here, as does the notion of \emph{vanishing}, which we define in Section~\ref{sec:sphcompTdh} when it is needed.

Now we define an important function on $\Gamma$ defined by a linear differential operator $A \in K[\der]^{\neq}$; for details see \cite[Sections~4.5, 5.6, 6.1]{ADH17}.
The valuation $v$ of $K$ induces a valuation $v$ on the additive group $K[\der]$ given by $vA \coloneqq \min\{ va_i : 0 \leq i \leq r\}$; in the same way we extend $v$ to linear differential polynomials $A(Y) \in K\{Y\}$.
Combining this with multiplicative conjugation yields a strictly increasing function $v_A \colon \Gamma \to \Gamma$ defined by $v_A(\gamma) \coloneqq v(A_{\times g})$ for any $g \in K^{\times}$ with $vg=\gamma$.
As a consequence of the Equalizer Theorem \cite[Theorem~6.0.1]{ADH17}, $v_A$ is bijective.
We extend $v_A$ to $\Gamma \cup \{\infty\}$ by $v_A(\infty)=v(A_{\times 0})\coloneqq \infty$.

\section{\texorpdfstring{$T^\der$}{T∂}-henselianity}\label{sec:Tdh}
Given an $\cL(K)$-definable function $F\colon K^{1+r}\to K$, a linear differential operator $A \in K[\der]^{\neq}$, and an open $v$-ball $B\subseteq K$, we say that \textbf{$A$ linearly approximates $F$ on $B$} if 
\[
v\big(F(\jet^r b) - F(\jet^r a) - A(b-a)\big) \ > \ vA_{\times (b-a)}
\]
for all $a,b \in B$ with $a \neq b$. Note that if $A$ linearly approximates $F$ on $B$, then $A$ linearly approximates $F$ on any open $v$-ball contained in~$B$.

In this section, $F\colon K^{1+r}\to K$ will always be an $\cL(K)$-definable function in \textbf{implicit form}, that is, \[F \ = \ \mathfrak{m}_F\big(Y_{r}-I_F(Y_0,\dots,Y_{r-1})\big)\] for some $\mathfrak{m}_F \in K^{\times}$ and $\cL(K)$-definable function $I_F \colon K^{r} \to K$; implicit form of definable functions was introduced and exploited in~\cite{Ka22}.
Additionally, let $A$ range over $K[\der]$, $a$ range over $K$, and $\gamma$ range over $\Gamma$.
We say that $K$ is \textbf{$\Td$-henselian} if:
\begin{enumerate}[label=($\Td$H\arabic*)]
    \item its differential residue field $\k$ is linearly surjective;
    \item\label{TdH2} whenever $A$ linearly approximates $F$ on $\smallo$ and $vF(0)>vA$, there is $\epsilon \in \smallo$ with $F(\jet^r \epsilon) = 0$ and $vA_{\times \epsilon}\geq vF(0)$.
\end{enumerate}
In the above definition and later, $F(0)$ means $F(0,\dots,0)$ as appropriate.
We allow valuations to be infinite, so if $F(0)=0$, then we may take $\epsilon=0$.

We now provide a more flexible equivalent formulation of \ref{TdH2}, making use of the operation of \emph{affine conjugation} from \cite[Section~2.1]{Ka22} to pass between the two.
This is the form we use throughout the paper without comment, and it
is inspired by the definition of \emph{$\sigma$-henselianity} for  analytic difference valued fields from \cite{Ri17}.
For this, we say that \textbf{$(F,A,a,\gamma)$ is in $\Td$-hensel configuration} if $A$ linearly approximates $F$ on $B(a,\gamma)$ and $vF(\jet^r a) > v_A(\gamma)$.

\begin{lemma}
The axiom \textnormal{\ref{TdH2}} is equivalent to 
\begin{enumerate}[label=($\Td$H2$'$)]
    \item\label{TdH2'} for every $(F,A,a,\gamma)$ in $\Td$-hensel configuration, there exists $b \in B(a,\gamma)$ with $F(\jet^r b) = 0$ and $vA_{\times(b-a)} \geq vF(\jet^r a)$.
\end{enumerate}
As before, if $F(\jet^r a) = 0$, we may take $b = a$.
\end{lemma}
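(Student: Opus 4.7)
The plan is to establish the equivalence via an \emph{affine conjugation}, which reduces a general $\Td$-hensel configuration to the special case at $a=0$, $\gamma=0$ encoded by \ref{TdH2}. The direction \ref{TdH2'} $\Rightarrow$ \ref{TdH2} comes from specialising to $a \coloneqq 0$, $\gamma \coloneqq 0$: then $B(0,0) = \smallo$, and since $A_{\times 1} = A$, the definition of $v_A$ gives $v_A(0) = vA$, so hypotheses and conclusions match verbatim. It remains to prove \ref{TdH2} $\Rightarrow$ \ref{TdH2'}.

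For the reverse direction, I would assume \ref{TdH2} and let $(F,A,a,\gamma)$ be in $\Td$-hensel configuration. Choose $g \in K^\times$ with $vg = \gamma$, set $\tilde A \coloneqq A_{\times g} \in K[\der]$, and define an $\cL(K)$-definable function $\tilde F\colon K^{1+r}\to K$ by
\[
\tilde F(Y_0,\dots,Y_r)\ \coloneqq\ F\bigl(\Phi_0(Y_0),\,\Phi_1(Y_0,Y_1),\,\dots,\,\Phi_r(Y_0,\dots,Y_r)\bigr),
\]
where $\Phi_i(Y_0,\dots,Y_i) \coloneqq a^{(i)} + \sum_{j=0}^{i}\binom{i}{j}g^{(j)}Y_{i-j}$ is the Leibniz expansion of $(a+gy)^{(i)}$ as a polynomial in $\jet^i y$ with coefficients in $K$. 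Since $\der$ is a $T$-derivation, this forces the defining identity $\tilde F(\jet^r y) = F(\jet^r(a+gy))$ for every $y$ in any $\Td$-extension. Expanding, $\tilde F$ is itself in implicit form with $\mathfrak{m}_{\tilde F} = \mathfrak{m}_F\cdot g$ and $I_{\tilde F}$ an explicit $\cL(K)$-definable combination of $I_F$, $a$, $g$, and their derivatives.

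Next I would verify the key identities. First, $v\tilde A = vA_{\times g} = v_A(\gamma)$ by definition. Second, $\tilde F(0) = F(\jet^r a)$, so $vF(\jet^r a) > v_A(\gamma)$ becomes $v\tilde F(0) > v\tilde A$. Third, writing $b \coloneqq a+gy$ and $b' \coloneqq a+gy'$ for $y,y' \in \smallo$, one has $\tilde A(y-y') = A(b-b')$ and $\tilde A_{\times(y-y')} = (A_{\times g})_{\times(y-y')} = A_{\times(b-b')}$, so the linear-approximation hypothesis for $(F,A)$ on $B(a,\gamma)$ transfers to the same hypothesis for $(\tilde F,\tilde A)$ on $\smallo$. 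Applying \ref{TdH2} to $(\tilde F,\tilde A)$ produces $\epsilon \in \smallo$ with $\tilde F(\jet^r \epsilon)=0$ and $v\tilde A_{\times\epsilon}\geq v\tilde F(0)$. Setting $b \coloneqq a+g\epsilon$ gives $v(b-a) = \gamma + v\epsilon > \gamma$, hence $b \in B(a,\gamma)$; also $F(\jet^r b) = \tilde F(\jet^r\epsilon)=0$ and $vA_{\times(b-a)} = v\tilde A_{\times\epsilon}\geq vF(\jet^r a)$, as required. The trivial case $F(\jet^r a)=0$ is handled by $\epsilon=0$, giving $b=a$.

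The main obstacle is the bookkeeping for the affine conjugation itself --- checking that $\tilde F$ really is $\cL(K)$-definable in implicit form with the stated $\mathfrak{m}_{\tilde F}$, and that all the identities above hold simultaneously. This is essentially the affine-conjugation operation recalled in \cite[Section~2.1]{Ka22} (translation by $a$ combined with multiplicative conjugation by $g$), and it hinges precisely on $\der$ being a $T$-derivation, so that it is compatible with the $\cL(\emptyset)$-definable $\cC^1$-functions entering $F$ and $I_F$.
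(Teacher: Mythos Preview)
Your proof is correct and follows essentially the same approach as the paper: both directions are handled identically, with the nontrivial direction \ref{TdH2} $\Rightarrow$ \ref{TdH2'} proved by affine conjugation (what the paper denotes $F_{+a,\times d}$, citing \cite[Section~2.1]{Ka22}, and you write out explicitly via the Leibniz expansion). The only difference is cosmetic---you unpack the definition of $F_{+a,\times g}$ and verify its implicit form by hand, whereas the paper simply invokes the operation as a black box.
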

\begin{proof}
Suppose that \ref{TdH2'} holds. Given $F$ and $A$ as in \ref{TdH2}, we see that $(F,A,0,0)$ is in $\Td$-hensel configuration, which gives $b \in B(0,0) = \smallo$ with $F(\jet^r b) = 0$ and $vA_{\times b} \geq vF(0)$. Now suppose that \ref{TdH2} holds and that $(F,A,a,\gamma)$ is in $\Td$-hensel configuration. Take $d \in  K^\times$ with $vd = \gamma$. Recall from~\cite[Section~2.1]{Ka22} the operation of \emph{affine conjugation}---this associates to $F$ the $\cL(K)$-definable function $F_{+a,\times d}\colon K^{1+r}\to K$ with the property that $F_{+a,\times d}(\jet^r y) = F(\jet^r(dy+a))$ for all $y \in K$. Since $A$ linearly approximates $F$ on $B(a,\gamma)$, it is routine to check that $A_{\times d}$ linearly approximates $F_{+a,\times d}$ on $\smallo$. Additionally, $vF_{+a,\times d}(0) = vF(\jet^ra) > v_A(\gamma) = vA_{\times d}$, so applying \ref{TdH2} to $F_{+a,\times d}$ and $A_{\times d}$ yields $\epsilon \in \smallo$ with $F_{+a,\times d}(\jet^r\epsilon) = 0$ and $v(A_{\times d})_{\times \epsilon}\geq vF_{+a,\times d}(0)$. Putting $b \coloneqq d\epsilon + a \in B(a,\gamma)$, we see that $F(\jet^rb)= 0$ and that $vA_{\times(b-a)} = vA_{\times d\epsilon} \geq vF(\jet^ra)$.
\end{proof}

\subsection{Basic consequences}\label{sec:Tdhbasic}
Our notion of $\Td$-henselianity is an analogue of differential-henselianity, as considered in~\cite{Sc00,ADH17} (abbreviated by ``$\d$-henselianity,'' and likewise with ``$\d$-henselian''). We discuss the precise connection between these notions in Section~\ref{sec:RCF}. 
For now, we list some  consequences of $\Td$-henselianity needed later.
The proofs are mostly routine adaptations of results from \cite[Sections 7.1 and 7.5]{ADH17}, using $\Td$-henselianity instead of $\d$-henselianity, so we give a couple as an illustration and omit most of the others.

For the next lemma, we call $A \in K[\der]^{\neq}$ \textbf{neatly surjective} if for every $b \in K^{\times}$ there is $a \in K^{\times}$ with $A(a)=b$ and $v_A(va)=vb$.
\begin{lemma}\label{lem:Tdhneatsurj}
Suppose that $K$ is $\Td$-henselian. Then every $A \in K[\der]^{\neq}$ is neatly surjective.
\end{lemma}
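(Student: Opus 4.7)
The plan is to construct an approximate solution via the Equalizer Theorem and linear surjectivity of $\k$, then upgrade it to an exact solution using $\Td$-henselianity applied to $F(Y_0,\dots,Y_r) \coloneqq A(Y_0,\dots,Y_r) - b$. Note that if $A = a_0 + a_1\der + \cdots + a_r\der^r$ with $a_r \neq 0$, then $F$ is already in implicit form with $\mathfrak{m}_F = a_r$, and, being affine in its jet variables, $A$ linearly approximates $F$ trivially (with zero error) on every open $v$-ball.

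Fix $b \in K^\times$. Since $v_A \colon \Gamma \to \Gamma$ is bijective by the Equalizer Theorem, there is a unique $\gamma \in \Gamma$ with $v_A(\gamma) = vb$. Fix $g \in K^\times$ with $vg = \gamma$. It will suffice to produce $a \in K^\times$ with $A(a) = b$ and $va = \gamma$, since then $v_A(va) = vb$ automatically.

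Next I would construct an approximate solution $a_0$ with $v a_0 = \gamma$ and $v\bigl(A(a_0) - b\bigr) > vb$. By choice of $g$ the operator $A_{\times g}$ satisfies $vA_{\times g} = vb$; pick a coefficient $c$ of $A_{\times g}$ with $vc = vb$. Then $c^{-1} A_{\times g} \in \cO[\der]$ has a unit coefficient, so its reduction $\overline{c^{-1} A_{\times g}} \in \k[\der]$ is nonzero. Linear surjectivity of $\k$ (axiom $(\Td\mathrm{H}1)$) yields $\bar y_0 \in \k$ with $\overline{c^{-1} A_{\times g}}(\bar y_0) = \overline{c^{-1} b}$, and since $\overline{c^{-1} b} \neq 0$ while $\overline{c^{-1} A_{\times g}}(0) = 0$, we have $\bar y_0 \neq 0$. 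Lifting to $y_0 \in \cO^\times$ gives $v\bigl(A_{\times g}(y_0) - b\bigr) > vb$; setting $a_0 \coloneqq g y_0$ yields $v a_0 = \gamma$ and $v\bigl(A(a_0) - b\bigr) > vb$.

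Finally, $vF(\jet^r a_0) = v(A(a_0) - b) > vb = v_A(\gamma)$, so $(F,A,a_0,\gamma)$ is in $\Td$-hensel configuration. By \ref{TdH2'}, there is $a \in B(a_0,\gamma)$ with $F(\jet^r a) = 0$, i.e.\ $A(a) = b$. Since $v(a-a_0) > \gamma = v a_0$, we get $va = v a_0 = \gamma$, hence $v_A(va) = vb$. The only nontrivial step is synchronising the choice of $g$ (via the Equalizer Theorem) with the residue-field approximation so that both point at the same valuation $\gamma$; after that, the $\Td$-hensel configuration is automatic and $\Td$-henselianity does the rest.
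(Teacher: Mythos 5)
Your proof is correct and follows essentially the same route as the paper's: both use the bijectivity of $v_A$ to locate the right valuation $\gamma$, use linear surjectivity of $\k$ (after multiplicative conjugation by an element of valuation $\gamma$ and normalization to valuation zero) to produce an approximate solution, and then invoke $\Td$-henselianity with the operator serving as its own exact linear approximation of the affine function $A-b$. The only cosmetic difference is that the paper first forms $D=b^{-1}A\phi$ and applies \ref{TdH2} at the point $u$ with ball parameter $0$, whereas you apply \ref{TdH2'} directly at $a_0=gy_0$ with ball parameter $\gamma$ — the two are interchangeable by the paper's own equivalence lemma.
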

\begin{proof}
    Let $b \in K^{\times}$ and take $\alpha \in \Gamma$ with $v_A(\alpha) = \beta \coloneqq vb$. We need to find $a \in K^{\times}$ with $va=\alpha$ and $A(a)=b$.
	Take $\phi \in K^{\times}$ with $v\phi = \alpha$ and let $D \coloneqq b^{-1}A\phi \in K[\der]^{\neq}$, so $v(D)=0$.
	Let $A \in K[\der]^{\neq}$ have order $r$ and take an $\cL(K)$-definable function $F \colon K^{1+r} \to K$ in implicit form such that $F(\jet^{r} a)=D(a)-1$ for all $a \in K$.
	Since $\k$ is linearly surjective, we have $u \in \cO^{\times}$ with $D(u)-1 \prec 1$.
	As $D$ linearly approximates $F$ on $K$, $(F, D, u, 0)$ is in $\Td$-hensel configuration, and thus we have $y \sim u$ with $F(\jet^{r}y)=0$.
	Then $a \coloneqq \phi y$ works.
\end{proof}

The next corollary follows from Lemma~\ref{lem:Tdhneatsurj} as \cite[Corollary~7.1.9]{ADH17} follows from \cite[Lemma~7.1.8]{ADH17}.
\begin{corollary}\label{cor:7.1.9}
If $K$ is $\Td$-henselian, then $\smallo = (1+\smallo)^{\dagger}$.
\end{corollary}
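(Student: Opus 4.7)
The equality $\smallo = (1+\smallo)^{\dagger}$ splits into two inclusions. The inclusion $(1+\smallo)^{\dagger}\subseteq\smallo$ is immediate from small derivation: for $\epsilon\in\smallo$, we have $\epsilon'\in\smallo$ while $1+\epsilon\asymp 1$, so $(1+\epsilon)^{\dagger} = \epsilon'/(1+\epsilon)\in\smallo$.

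For the reverse inclusion, I fix $a\in\smallo$ and seek $u = 1+\epsilon\in 1+\smallo$ with $u^{\dagger}=a$. Expanding $u' = au$ with $u = 1+\epsilon$ converts this into $\epsilon' - a\epsilon = a$, i.e., into solving $A(\epsilon) = a$ for the linear differential operator $A \coloneqq -a + \der \in K[\der]^{\neq}$. Since $va>0$, the valuations of the coefficients of $A$ are $va>0$ and $0$, so $vA = 0$, and in particular $v_A(0)=0$.

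Now I apply Lemma~\ref{lem:Tdhneatsurj}: $A$ is neatly surjective, so there exists $\epsilon\in K^{\times}$ with $A(\epsilon)=a$ and $v_A(v\epsilon) = va$. Because $v_A\colon\Gamma\to\Gamma$ is strictly increasing (as recalled at the end of Section~\ref{sec:background}) and $v_A(v\epsilon) = va > 0 = v_A(0)$, I conclude that $v\epsilon>0$, so $\epsilon\in\smallo$. Then $u \coloneqq 1+\epsilon\in 1+\smallo$ satisfies $u' = \epsilon' = a + a\epsilon = au$, hence $u^{\dagger} = a$, as desired.

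I do not anticipate a real obstacle: the argument mirrors the passage from \cite[Lemma~7.1.8]{ADH17} to \cite[Corollary~7.1.9]{ADH17} almost verbatim, with $\Td$-henselianity entering only through Lemma~\ref{lem:Tdhneatsurj}. The one thing to keep in mind is that the whole point of the neat, rather than merely plain, surjectivity of $A$ is precisely to pin down the valuation of the preimage $\epsilon$ and thereby to force $\epsilon\in\smallo$ via strict monotonicity of $v_A$.
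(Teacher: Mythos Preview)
Your proof is correct and follows exactly the route the paper indicates: derive the corollary from Lemma~\ref{lem:Tdhneatsurj} just as \cite[Corollary~7.1.9]{ADH17} is derived from \cite[Lemma~7.1.8]{ADH17}. The only cosmetic omission is the trivial case $a=0$, where $u=1$ works directly (neat surjectivity is stated for $b\in K^{\times}$).
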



Under the assumption of monotonicity this yields additional information as in \cite[Corollary~7.1.11]{ADH17}.
We say that $K$ has \textbf{many constants} if $v(C^{\times})=\Gamma$.
Note that if $K$ has many constants (and small derivation), then $K$ is monotone.
\begin{corollary}\label{cor:7.1.11}
Suppose that $K$ is $\Td$-henselian and monotone, and $(\k^{\times})^{\dagger} = \k$.
Then $K$ has many constants and $(K^{\times})^{\dagger} = (\cO^{\times})^{\dagger} = \cO$.
\end{corollary}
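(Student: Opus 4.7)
The plan is to mirror the strategy of \cite[Corollary~7.1.11]{ADH17}, using Corollary~\ref{cor:7.1.9} as the crucial lifting tool in place of its $\d$-henselian analogue.

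First I would establish the easy containment $(K^\times)^\dagger\subseteq\cO$: this is just the definition of monotonicity, and in particular it gives $(\cO^\times)^\dagger\subseteq\cO$. Then I would show the nontrivial containment $\cO\subseteq(\cO^\times)^\dagger$. Given $a\in\cO$, pass to the residue field. By the hypothesis $(\k^\times)^\dagger=\k$, there is $v\in\k^\times$ with $v^\dagger=\bar a$ (noting that the derivation induced on $\k$ is well defined since $\der\cO\subseteq\cO$ by small derivation). Lift $v$ to some $u_0\in\cO^\times$ with $\overline{u_0}=v$. Then $a-u_0^\dagger\in\smallo$ (using that $u_0^\dagger\in\cO$ by monotonicity and that the residue map is a $\Td$-homomorphism on $\cO$). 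By Corollary~\ref{cor:7.1.9}, write $a-u_0^\dagger=(1+\epsilon)^\dagger$ for some $\epsilon\in\smallo$. Setting $u\coloneqq u_0(1+\epsilon)\in\cO^\times$, the additivity of the logarithmic derivative gives $u^\dagger=u_0^\dagger+(1+\epsilon)^\dagger=a$, as required. Combined with monotonicity, this yields $(\cO^\times)^\dagger=\cO$.

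Next, to obtain many constants, I would take an arbitrary $\gamma\in\Gamma$ and a witness $b\in K^\times$ with $vb=\gamma$. By monotonicity, $b^\dagger\in\cO$, so by the step above there is $u\in\cO^\times$ with $u^\dagger=b^\dagger$. Then $(b/u)^\dagger=0$, so $c\coloneqq b/u\in C^\times$, and since $u\asymp 1$ we have $vc=vb=\gamma$. Thus $v(C^\times)=\Gamma$. Finally, for the full identity $(K^\times)^\dagger=\cO$, the inclusion $\subseteq$ is monotonicity, while $\supseteq$ already holds since $(\cO^\times)^\dagger=\cO\subseteq(K^\times)^\dagger$.

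Since the argument reduces entirely to the lifting step through Corollary~\ref{cor:7.1.9} and a one-line computation, there is no serious obstacle; the only thing to be careful about is verifying that the residue map commutes with $\dagger$ on $\cO^\times$, which follows from $\der\cO\subseteq\cO$ (which in turn follows from small derivation, as noted in the background on $\TdO$).
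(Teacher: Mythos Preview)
Your proposal is correct and follows exactly the approach the paper indicates: the paper simply states that the result follows ``as in \cite[Corollary~7.1.11]{ADH17}'' from Corollary~\ref{cor:7.1.9}, and you have spelled out precisely that argument, using Corollary~\ref{cor:7.1.9} to lift residues and then deducing many constants from $(\cO^\times)^\dagger=\cO$.
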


Rather opposite to monotonicity, we say that $K$ has \textbf{few constants} if $C \subseteq \cO$.
Now we record several lemmas about $K$ with few constants adapted from \cite[Section~7.5]{ADH17}.
By \cite[Lemma~9.1.1]{ADH17} and Lemma~\ref{lem:Tdhneatsurj}, any $\Td$-henselian $K$ with few constants is \textbf{asymptotic} in the sense of \cite[Chapter~9]{ADH17}; that is, for all nonzero $a, b \in \smallo$, we have $a \prec b \iff a' \prec b'$.
Conversely, any asymptotic $K$ obviously satisfies $C \subseteq \cO$.

The next lemma is analogous to \cite[Lemma~7.5.1]{ADH17} but has a simpler proof.
\begin{lemma}\label{lem:7.5.1}
Suppose that $K$ is $\Td$-henselian and let $A \in K[\der]^{\neq}$.
If $A(1) \prec A$, then $A(y)=0$ for some $y \in K$ with $y \sim 1$ and $vA_{\times (y-1)} \geq vA(1)$.
\end{lemma}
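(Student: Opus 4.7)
The plan is to recast $A$ itself as an $\cL(K)$-definable function in implicit form, pick a suitable radius $\gamma>0$ to obtain a $\Td$-hensel configuration based at $1$, and read off $y$ from \ref{TdH2'}. First handle the trivial case: if $A(1)=0$, take $y=1$. Otherwise, write $A = a_0+a_1\der+\cdots+a_r\der^r$ with $a_r\neq 0$; the hypothesis $A(1)\prec A$ forces $r\geq 1$, since for $r=0$ one has $A(1)=a_0=A$. Define $F\colon K^{1+r}\to K$ in implicit form by
\[
F \ = \ a_r\bigl(Y_r - I_F(Y_0,\ldots,Y_{r-1})\bigr), \qquad I_F(Y_0,\ldots,Y_{r-1}) \ = \ -a_r^{-1}\bigl(a_0Y_0+\cdots+a_{r-1}Y_{r-1}\bigr),
\]
so that $F(\jet^r y)=A(y)$ for all $y\in K$. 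Because $F$ acts on jets as the $C$-linear operator $A$, we have $F(\jet^r b)-F(\jet^r a) = A(b-a)$ \emph{exactly}, and so $A$ linearly approximates $F$ on every open $v$-ball in $K$ (the ``error'' is literally $0$). This is the simplification over the $\d$-henselian analogue \cite[Lemma~7.5.1]{ADH17}, where the same function must be approximated valuation-theoretically.

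Next I would locate $\gamma>0$ with $v_A(\gamma)<vA(1)$, which will make $(F,A,1,\gamma)$ a $\Td$-hensel configuration since $vF(\jet^r 1)=vA(1)$. The function $v_A\colon\Gamma\to\Gamma$ is strictly increasing and bijective (by the Equalizer Theorem) with $v_A(0)=vA$, so the hypothesis $A(1)\prec A$ gives $v_A^{-1}(vA(1))>0$ in $\Gamma$. As $T$ extends $\RCF$, $\Lambda\supseteq\Q$, and $\Gamma$ is a divisible ordered $\Lambda$-vector space; if $\Gamma=\{0\}$ then $A(1)\prec A$ forces $A(1)=0$ and we are already done. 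Otherwise $\Gamma$ is densely ordered, and I pick any $\gamma$ with $0<\gamma<v_A^{-1}(vA(1))$, so $v_A(\gamma)<vA(1)$.

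Applying \ref{TdH2'} to $(F,A,1,\gamma)$ produces $y\in B(1,\gamma)$ with $F(\jet^r y)=A(y)=0$ and $vA_{\times(y-1)}\geq vF(\jet^r 1)=vA(1)$, which are the conditions sought; and $v(y-1)>\gamma>0$ forces $y-1\in\smallo$, i.e.\ $y\sim 1$. The only point that requires any care is the existence of $\gamma$, handled by the Equalizer Theorem together with density of $\Gamma$; there is no genuine obstacle, and the simpler argument compared to the $\d$-henselian setting stems entirely from the fact that writing $A$ itself in implicit form gives a linear approximation with zero error.
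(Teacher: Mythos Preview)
Your proof is correct and follows essentially the same approach as the paper: put $A$ in implicit form as $F$, observe that $A$ linearly approximates $F$ exactly, and apply \ref{TdH2'}. The only difference is that you take a detour to find $\gamma>0$ with $v_A(\gamma)<vA(1)$ via the Equalizer Theorem and density of $\Gamma$, whereas the paper simply takes $\gamma=0$: since $v_A(0)=vA_{\times 1}=vA$, the hypothesis $A(1)\prec A$ already says $vF(\jet^r 1)=vA(1)>vA=v_A(0)$, so $(F,A,1,0)$ is in $\Td$-hensel configuration and $B(1,0)=1+\smallo$ gives $y\sim 1$ directly. Your route is not wrong, just longer than necessary.
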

\begin{proof}
Suppose $A$ has order $r$ and take an $\cL(K)$-definable function $F \colon K^{1+r} \to K$ in implicit form such that $F(\jet^r a) = A(a)$ for all $a \in K$.
If $A(1)\prec A$, then $(F, A, 1, 0)$ is in $\Td$-hensel configuration, so $\Td$-henselianity yields the desired~$y$.
\end{proof}

The next lemma follows from Lemma~\ref{lem:7.5.1} as \cite[Lemma~7.5.2]{ADH17} follows from \cite[Lemma~7.5.1]{ADH17}.
\begin{lemma}\label{lem:7.5.2}
Suppose that $K$ is $\Td$-henselian and $C \subseteq \cO$.
Let $A \in K[\der]^{\neq}$ have order $r$.
There do not exist $b_0, \dots, b_r \in K^{\times}$ such that $b_0 \succ b_1 \succ \dots \succ b_r$ and $A(b_i) \prec Ab_i$ for $i=0, \dots, r$.
\end{lemma}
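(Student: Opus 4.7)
The plan is to proceed by induction on the order $r$ of $A$, mirroring the proof of \cite[Lemma~7.5.2]{ADH17} with Lemma~\ref{lem:7.5.1} playing the role of \cite[Lemma~7.5.1]{ADH17}. The base case $r=0$ is immediate: then $A$ is multiplication by some $a_0 \in K^\times$, so $A(b_0) = a_0b_0 \asymp A b_0$, contradicting $A(b_0) \prec A b_0$.

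For the inductive step, suppose the statement holds in order $<r$, and assume for contradiction that $b_0 \succ b_1 \succ \dots \succ b_r$ are as in the lemma. First I would apply Lemma~\ref{lem:7.5.1} to the multiplicatively conjugated operator $A_{\times b_0}$: its ``value at $1$'' is $A(b_0)$, and by assumption $A(b_0) \prec Ab_0 = A_{\times b_0}$. The lemma thus yields $y \sim 1$ with $A_{\times b_0}(y) = A(b_0 y) = 0$. Setting $c \coloneqq b_0 y$, we get a nonzero kernel element of $A$ with $c \asymp b_0$. Using that $C \subseteq \cO$ (so $c \notin C$ as $c \succ 1$ is impossible only if $b_0 \preceq 1$; either way $c^\dagger$ makes sense), I factor $A = B \cdot (\der - c^\dagger)$ in $K[\der]$, where $B$ has order $r-1$ and $B \neq 0$.

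Next I would form $d_i \coloneqq (\der - c^\dagger)(b_i) = b_i' - c^\dagger b_i$ for $i=1,\dots,r$, so that $A(b_i) = B(d_i)$ and the product decomposition gives $A \cdot b_i = B \cdot (b_i \der + d_i)$. The task is then to verify two things for the operator $B$ of order $r-1$: that the $r$ elements $d_1,\dots,d_r$ satisfy $d_1 \succ d_2 \succ \dots \succ d_r$, and that $B(d_i) \prec B d_i$ for each $i$. For the former, I would use that $K$ is asymptotic (Lemma~\ref{lem:Tdhneatsurj} together with few constants gives asymptoticity as noted before Lemma~\ref{lem:7.5.1}) to control $v(b_i^\dagger)$ versus $v(c^\dagger)$ and conclude that $d_i \asymp c^\dagger b_i$ or $d_i \asymp b_i'$, in either case preserving the strict $\asymp$-chain. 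For the latter, the factorization $A \cdot b_i = B \cdot (b_i\der + d_i)$ reduces comparing $v(A \cdot b_i)$ with $v(B \cdot d_i)$ to valuation estimates on products of operators, again exploiting asymptoticity. Once both conditions are checked, $B$ and $d_1,\dots,d_r$ contradict the inductive hypothesis, completing the proof.

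The main obstacle will be the bookkeeping step confirming $B(d_i) \prec B d_i$: unlike in the first-order reduction, the comparison is no longer between $A(b_i)$ and a single multiplicative conjugate, but between the image of $B$ and the composition $B \cdot d_i$, and the passage goes through $A \cdot b_i = B \cdot (b_i \der + d_i)$. Handling this requires the estimate $v(B \cdot d_i) = v(B_{\times d_i}) = v_B(v(d_i))$ together with the relation $v_A = v_B \circ v_{\der - c^\dagger}$, and asymptoticity is precisely what makes these estimates behave. Everything else is routine once the inductive setup and factorization are in place.
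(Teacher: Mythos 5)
Your inductive route is not the one the paper has in mind, and as sketched it breaks down at exactly the step you flag as the main obstacle. Write $D \coloneqq \der - c^\dagger$, so $A = BD$ and $d_i = D(b_i)$. Your starting inequality is $v(B(d_i)) = v(A(b_i)) > v(A_{\times b_i}) = v_B\big(v_D(vb_i)\big)$, and $v_D(vb_i) = v(b_i\der + d_i) = \min(vb_i, vd_i)$. To conclude $B(d_i) \prec B_{\times d_i}$ you need $\min(vb_i, vd_i) \geq vd_i$, i.e.\ $d_i \succeq b_i$, i.e.\ $(b_i/c)^\dagger \succeq 1$; if instead $d_i \prec b_i$ you only get $v(B(d_i)) > v_B(vb_i)$, which is strictly weaker than the required $v(B(d_i)) > v_B(vd_i)$. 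The inequality $(b_i/c)^\dagger \succeq 1$ is \emph{not} a consequence of asymptoticity: asymptotic fields can contain $u \not\asymp 1$ with $u^\dagger \prec 1$ (e.g.\ $u = x^{-1}$ in $\T$). What actually rescues the step is a consequence of $\Td$-henselianity: by Corollary~\ref{cor:7.1.9} we have $\smallo = (1+\smallo)^\dagger$, so if $u \not\asymp 1$ had $u^\dagger \prec 1$ there would be $w \sim 1$ with $w^\dagger = u^\dagger$, making $u/w$ a nonzero constant of nonzero valuation, contradicting $C^{\neq} \subseteq \cO^\times$ (which \emph{does} follow from asymptoticity). Your sketch of the chain $d_1 \succ \dots \succ d_r$ is also shaky: the dichotomy ``$d_i \asymp c^\dagger b_i$ or $d_i \asymp b_i'$'' fails when $b_i^\dagger \sim c^\dagger$, where cancellation occurs; the correct route is $d_i = b_i(b_i/c)^\dagger = c\,(b_i/c)'$ followed by asymptoticity applied to $b_i/c,\, b_{i+1}/c \in \smallo^{\neq}$. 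Finally, the parenthetical justifying $c^\dagger$ is confused: $c^\dagger$ makes sense simply because $c \neq 0$.

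For comparison, the argument the paper is pointing to (that of Lemma~7.5.2 in the cited source) is much shorter and needs neither the factorization nor induction: apply Lemma~\ref{lem:7.5.1} to $A_{\times b_i}$ for \emph{each} $i$ to obtain $y_i \sim 1$ with $A(b_iy_i) = 0$. The $r+1$ elements $b_0y_0, \dots, b_ry_r$ of $\ker A$ have pairwise distinct valuations, and since nonzero constants are units of $\cO$, elements of $\ker A$ with pairwise distinct valuations are linearly independent over $C$ (a nontrivial $C$-linear combination has valuation equal to the minimum of the valuations of its terms, hence is nonzero). This contradicts the standard fact that $\dim_C \ker A \leq r$. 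Your approach can be repaired along the lines above, but it requires the extra input from Corollary~\ref{cor:7.1.9} that you did not identify.
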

The proof of Lemma~\ref{lem:7.5.5} is adapted from that of \cite[Lemma~7.5.5]{ADH17}.
\begin{lemma}\label{lem:7.5.5}
Suppose that $K$ is $\Td$-henselian and $C \subseteq \cO$.
Let $F \colon K^{1+r} \to K$ be an $\cL(K)$-definable function in implicit form and $A \in K[\der]^{\neq}$ have order $q$.
There do not exist $y_0, \dots, y_{q+1} \in K$ such that
\begin{enumerate}
	\item\label{lem:7.5.5i} $y_{i-1}-y_i \succ y_i-y_{i+1}$ for $i = 1, \dots, q$ and $y_q \neq y_{q+1}$;
	\item\label{lem:7.5.5ii} $F(\jet^ry_i) = 0$ for $i=0, \dots, q+1$;
	\item\label{lem:7.5.5iii} $(F, A, y_{q+1}, \gamma)$ is in $\Td$-hensel configuration and $v(y_0-y_{q+1})>\gamma$ for some $\gamma \in \Gamma$.
\end{enumerate}
\end{lemma}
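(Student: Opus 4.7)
The plan is to derive a contradiction with Lemma~\ref{lem:7.5.2} by producing $q+1$ nonzero elements $b_0 \succ b_1 \succ \cdots \succ b_q$ in $K$ satisfying $A(b_i) \prec A b_i$ for each $i$, noting that $A$ has order $q$ and $K$ has few constants.

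Assume for contradiction that $y_0, \ldots, y_{q+1}$ satisfying \ref{lem:7.5.5i}--\ref{lem:7.5.5iii} exist, and set $b_i \coloneqq y_i - y_{i+1}$ for $i = 0, \ldots, q$. Condition \ref{lem:7.5.5i} gives $b_0 \succ b_1 \succ \cdots \succ b_q$ directly, and $y_q \neq y_{q+1}$ makes $b_q$, and hence every $b_i$, nonzero.

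Before invoking the linear approximation, I would verify that each $y_i$ lies in the ball $B(y_{q+1}, \gamma)$, which is where $A$ linearly approximates $F$. Writing $y_i - y_{q+1} = b_i + b_{i+1} + \cdots + b_q$ and using $v b_i < v b_{i+1} < \cdots < v b_q$, the ultrametric inequality gives $v(y_i - y_{q+1}) = v b_i$. In particular, $v b_0 = v(y_0 - y_{q+1}) > \gamma$ by \ref{lem:7.5.5iii}, so $v b_i \geq v b_0 > \gamma$ for every $i$, as needed.

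Then, for each $i$, applying the defining property of linear approximation on $B(y_{q+1}, \gamma)$ to the pair $y_i, y_{i+1}$ and using $F(\jet^r y_i) = F(\jet^r y_{i+1}) = 0$ from \ref{lem:7.5.5ii} yields
\[
v A(b_i)\ =\ v\bigl(F(\jet^r y_{i+1}) - F(\jet^r y_i) - A(y_{i+1}-y_i)\bigr)\ >\ v A_{\times b_i},
\]
that is, $A(b_i) \prec A b_i$ in the notation of Lemma~\ref{lem:7.5.2}. Applying that lemma to $A$ of order $q$ then gives the desired contradiction. The argument is thus essentially a choice of the $b_i$ and the bookkeeping check that all the $y_i$ stay inside the ball on which $A$ approximates $F$; I do not anticipate any deeper obstacle, since the hard work has been absorbed into Lemma~\ref{lem:7.5.2}.
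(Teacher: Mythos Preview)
Your proof is correct and follows essentially the same strategy as the paper's: reduce to Lemma~\ref{lem:7.5.2} by exhibiting $q+1$ elements $b_0\succ\cdots\succ b_q$ with $A(b_i)\prec Ab_i$, obtained from the linear approximation and the vanishing $F(\jet^r y_i)=0$. The only cosmetic difference is that you take $b_i=y_i-y_{i+1}$ and compare adjacent $y_i,y_{i+1}$, whereas the paper sets $b_i=y_i-y_{q+1}$ (after translating by $y_{q+1}$) and compares each $y_i$ with $y_{q+1}$; the paper then uses $b_i\sim y_i-y_{i+1}$ to get the dominance chain, while you get it immediately from \ref{lem:7.5.5i} but must check ball membership for each $y_i$, which you do correctly via the telescoping sum.
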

\begin{proof}
Suppose towards a contradiction that we have $y_0, \dots, y_{q+1} \in K$ and $\gamma \in \Gamma$ satisfying \ref{lem:7.5.5i}--\ref{lem:7.5.5iii}.
Below, $i$ ranges over $\{0, \dots, q\}$.
Set $G \coloneqq F_{+y_{q+1}}$ and $b_i \coloneqq y_i-y_{q+1}$.
Then $b_i \sim y_i-y_{i+1}$, so $b_0 \succ b_1 \succ \dots \succ b_q \neq 0$.
Also, $G(\jet^r b_i) = F(\jet^r y_i)  = 0$ and $G(0) = F(\jet^r y_{q+1}) = 0$.
Since $vb_0>\gamma$, by \ref{lem:7.5.5iii} we have $\varepsilon_i$ such that $v\varepsilon_i > vA_{\times b_i}$ and $0=G(\jet^r b_i) = A(b_i)+\varepsilon_i$.
In particular, $vA(b_i)=v(\varepsilon_i)> vA_{\times b_i}$, contradicting Lemma~\ref{lem:7.5.2} with $q$ in the role of $r$.
\end{proof}


\begin{remark}\label{rem:powerbounded}
What of our assumption that $T$ is power bounded? The definitions of $T$-convex valuation ring, $T$-derivation, and $\Td$-henselianity do not use it, nor do the above consequences of $\Td$-henselianity.
Suppose temporarily that $T$ is not power bounded. Then by Miller's dichotomy~\cite{Mi96}, we have an $\cL(\0)$-definable exponential function $E \colon K \to K^>$ (i.e., $E$ is an isomorphism from the ordered additive group of $K$ to the ordered multiplicative group $K^>$ that is equal to its own derivative). 
As we now explain, this $E$ is not compatible with monotonicity, at least if we suppose in addition that the derivation of $\k$ is nontrivial and that $K$ is nontrivially valued. Let $a \in K$ with $a \succ 1$. If $a' \succ 1$, then $E(a)' = a'E(a) \succ E(a)$, so $K$ is not monotone. If $a'\preceq 1$, taking  $u \in K$ with $u \asymp u' \asymp 1$ yields $(au)'=a'u+au' \sim au' \asymp a$, so $b\coloneqq au$ satisfies $b' \succ 1$ and $E(b)' \succ E(b)$.
\end{remark}

\subsection{Lifting the residue field}\label{sec:lift}
In this subsection we show that every $\Td$-henselian $\TdO$-model admits a lift of its differential residue field as an $\Ld$-structure in the sense defined before Corollary~\ref{cor:reslift} (cf.\ \cite[Proposition~7.1.3]{ADH17}).
A \textbf{partial $T$-lift of the residue field $\k$} is a $T$-submodel $E\subseteq K$ that is contained in $\cO$. If $E$ is a partial $T$-lift of $\k$, then the residue map induces an $\cL$-embedding $E \to \k$. If this embedding is surjective onto $\k$, then $E$ is called a \textbf{$T$-lift of~$\k$}. By \cite[Theorem~2.12]{DL95}, $\k$ always admits a $T$-lift.

\begin{lemma}\label{lem:closesamechi}
Let $E\subseteq K$ be a partial $T$-lift of $\k$ and let $a,b$ be tuples in $\cO^n$ with $a-b \in \smallo^n$. Suppose that $a$ is $\cL(E)$-independent and that $E\langle a \rangle$ is a partial $T$-lift of $\k$. Then $a$ and $b$ have the same $\LRV$-type over $E \cup \rv\!\big(E\langle a \rangle\big)$. In particular, $E\langle b\rangle$ is also a partial $T$-lift of $\k$.
\end{lemma}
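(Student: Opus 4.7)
The plan is to reduce to showing $\rv(F(a)) = \rv(F(b))$ for every $\cL(E)$-definable function $F \colon K^n \to K$ defined at $a$. This suffices: since $\LRVeq$-definable subsets of $K^n$ are $\LO$-definable, every atomic $\LRV(E \cup \rv(E\langle a \rangle))$-formula in a $K$-variable amounts to a condition of the form $\rv(F(x)) = r$, $\rv(F(x)) > r$, or $\rv(F(x)) \in \rv(\cO^\times) \cup \{0\}$ for some $\cL(E)$-definable $F$ and some $\oRV{K}$-parameter $r$ from $\rv(E\langle a \rangle)$, so the equalities $\rv(F(a)) = \rv(F(b))$ force $\LRV$-type equality. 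For the ``in particular'' part, $E\langle a \rangle \subseteq \cO$ translates to $\rv(F(a)) \in \rv(\cO^\times) \cup \{0\}$ for every such $F$, which then transfers to $F(b) \in \cO$, giving $E\langle b \rangle \subseteq \cO$.

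For the main claim, I would apply the Jacobian property (Fact~\ref{fact:rvapprox}) to $F$ with $A = E$, obtaining an $\LRVeq(E)$-definable map $\chi \colon K^n \to \oRV{K}^{\eq}$ such that on each fiber $\chi^{-1}(s)$ containing an open $v$-ball, either $F$ is constant or there is a linear approximant $d \in K^n$ of $F$. The two key steps are then: (a) the fiber $\chi^{-1}(\chi(a))$ contains the open ball $B(a, 0) = a + \smallo^n$, so in particular $\chi(b) = \chi(a)$; and (b) in the linearly approximated case, the coefficient $d$ can be taken in $\cO^n$.

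Step (a) is the crux. The guiding intuition is that, up to refining $\chi$, the fiber $\chi^{-1}(\chi(a))$ is essentially an intersection of $\rv$-balls $\{y : \rv(y_i - p) = \rv(a_i - p)\}$ around the $\cL(E)$-algebraic points $p$ driving the features of $F$; the partial $T$-lift hypothesis forces $v(a_i - p) = 0$ for each such $p$ (otherwise $1/(a_i - p) \notin \cO$ contradicts $E\langle a \rangle \subseteq \cO$), so each such ball coincides with $B(a_i, 0)$ and the fiber contains $B(a, 0)$. More formally, the failure set $\{x \in K^n : \chi^{-1}(\chi(x)) \not\supseteq B(x, 0)\}$ is $\LO(E)$-definable and has $\LO$-dimension strictly less than $n$; $a$ avoids it by $\cL(E)$-independence combined with the identification $\dcl^{\TO}(E) \cap K = \dclL(E)$ (and Wilkie's inequality, Fact~\ref{fact:wilkieineq}, matching $\cL$- and $\LO$-dimension). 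For (b), the Jacobian-property coefficient $d$ differs from the gradient $\nabla F(a)$ by an error of strictly higher valuation, and each $\partial F / \partial Y_i (a) \in E\langle a \rangle \subseteq \cO$, so $d \in \cO^n$.

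To conclude: if $F \equiv 0$ then $F(a) = F(b) = 0$. Otherwise $\cL(E)$-independence of $a$ gives $F(a) \neq 0$, and the partial $T$-lift applied to $F$ and $1/F$ yields $F(a) \in \cO^\times$. In the constant case, $F(a) = F(b)$; in the linearly approximated case, $v(F(a) - F(b)) \geq vd + v(a - b) > 0 = v(F(a))$ (using $d \in \cO^n$ and $v(a - b) > 0$), so $F(a) \sim F(b)$, yielding $\rv(F(a)) = \rv(F(b))$. The main obstacle is step (a): extracting from the partial $T$-lift hypothesis the precise valuation-scale separation of $a$ from all $\cL(E)$-algebraic features of $F$ in a form compatible with the $\LO$-cell structure underlying $\chi$.
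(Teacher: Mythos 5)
There is a genuine gap, and it sits exactly where you flag it: step (a). Your claim that the failure set $\{x \in K^n : \chi^{-1}(\chi(x)) \not\supseteq B(x,0)\}$ has $\cL$-dimension $<n$ is false in general. Already for $n=1$ and $F(x)=x$, a valid output of Fact~\ref{fact:rvapprox} is $\chi=\rv$, whose fiber at $x$ is the ball $\{y: y\sim x\}=B(x,vx)$; this fails to contain $B(x,0)$ for every nonzero $x\in\smallo$, so the failure set is full-dimensional. Thus $\cL(E)$-independence of $a$ cannot by itself place $a$ outside the failure set; what is really needed is the partial $T$-lift hypothesis applied to \emph{all} $\cL(E)$-definable functions of $a$, and converting that into the containment $\chi^{-1}(\chi(a))\supseteq a+\smallo^n$ is essentially the content of the lemma you are trying to prove. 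Indeed, in the paper this containment is \emph{deduced from} Lemma~\ref{lem:closesamechi} (in the proof of Lemma~\ref{lem:dependentlift}: points of $\jet^{n-1}(a)+\smallo^n$ have the same $\LRV$-type over $E\cup\rv E\langle\jet^{n-1}(a)\rangle$ as $\jet^{n-1}(a)$, hence the same $\chi$-value), so your route is close to circular. Your heuristic about $v(a_i-p)=0$ for $\cL(E)$-algebraic points $p$ is the right intuition, but the fibers of $\chi$ are not handed to you in that explicit form, and no precise argument is given. (Step (b) is also only sketched, but it could be repaired once (a) is in place, by comparing $d$ with the gradient of $F$ at $a$, which lies in $E\langle a\rangle\subseteq\cO$.)

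The paper's proof needs none of this machinery: it is a direct induction on $n$. Given the inductive partial elementary $\LRV$-embedding on the first $n-1$ coordinates, one takes $g=G(a_1,\dots,a_{n-1})\in E\langle a_1,\dots,a_{n-1}\rangle$; continuity of $G$ near $(a_1,\dots,a_{n-1})$ (available because $a$ is $\cL(E)$-independent) together with \cite[Lemma~1.13]{DL95} gives $G(a_1,\dots,a_{n-1})-G(b_1,\dots,b_{n-1})\prec 1$, while the partial $T$-lift hypothesis gives $a_n-G(a_1,\dots,a_{n-1})\asymp 1$; combined with $a_n-b_n\prec 1$ this yields $a_n-g\sim b_n-\imath(g)$, so the cut of $a_n$ is preserved and the embedding extends, with $\rv$-preservation checked the same way. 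If you want to keep your global strategy of proving $\rv(F(a))=\rv(F(b))$ for all $\cL(E)$-definable $F$, you should replace the Jacobian-property step by this kind of induction on the number of variables; as written, the argument does not go through.
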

\begin{proof}
We proceed by induction on $n$, with the case $n = 0$ holding trivially. Assume that $(a_1,\ldots,a_{n-1})$ and $(b_1,\ldots,b_{n-1})$ have the same $\LRV$-type over $E \cup \rv\!\big(E\langle a_1,\ldots,a_{n-1} \rangle\big)$. This assumption yields a partial elementary $\LRV$-embedding $\imath\colon E\langle a_1,\ldots,a_{n-1} \rangle \to E\langle b_1,\ldots,b_{n-1}\rangle$ that fixes $E$ and $\rv\!\big(E\langle a_1,\ldots,a_{n-1} \rangle\big)$ and sends $a_i$ to $b_i$ for each $i<n$. Let $g \in E\langle a_1,\ldots,a_{n-1} \rangle$. We claim that $g < a_n \Longleftrightarrow \imath(g)<b_n$. Take some $\cL(E)$-definable function $G$ with $g = G(a_1,\ldots,a_{n-1})$. Since $a$ is $\cL(E)$-independent, the function $G$ is continuous on some $\cL(E)$-definable neighborhood $U$ of $(a_1,\ldots,a_{n-1})$. Note that $(b_1,\ldots,b_{n-1})\in U$ as well, so $G(a_1,\ldots,a_{n-1}) - G(b_1,\ldots,b_{n-1}) \prec 1$ by~\cite[Lemma 1.13]{DL95}. Since $E\langle a\rangle$ is a partial $T$-lift of $\k$, we have $a_n - G(a_1,\ldots,a_{n-1}) \asymp 1$. By assumption, $a_n - b_n \prec 1$, so
\[
a_n - g\ =\ a_n - G(a_1,\ldots,a_{n-1})\ \sim\ b_n - G(b_1,\ldots,b_{n-1})\ =\ b_n - \imath(g).
\]
In particular, $a_n - g$ and $b_n - \imath(g)$ have the same sign. This allows us to extend $\imath$ to an $\cL(E)$-embedding $\jmath\colon E \langle a \rangle \to E\langle b\rangle$ by mapping $a_n$ to $b_n$. To see that $\jmath$ is even an elementary $\LRV$-embedding over $E \cup \rv\!\big(E\langle a \rangle\big)$, it suffices to show that $\rv h = \rv \jmath(h)$ for each $h \in E \langle a \rangle$. We may assume that $h \neq 0$, so $h \asymp 1$. Take some $\cL(E)$-definable function $H$ with $h = H(a)$. Again, $H$ is continuous on an open set containing $a$ and $b$, so we may use~\cite[Lemma 1.13]{DL95} to get that $H(a) -H(b) \prec 1$. Thus, $H(a) \sim H(b)$, so 
\[
\rv h\ =\ \rv H(a) \ =\ \rv H(b)\ =\ \rv\jmath(h).\qedhere
\]
\end{proof}

\begin{lemma}\label{lem:dependentlift}
Let $n$ be given, let $E$ be a partial $T$-lift of $\k$, let $a \in \cO^\times$ with $\bar{a} \not\in \res(E)$, and suppose that $\jet^{n-1}(a)$ is $\cL(E)$-independent and that $E\langle\jet^{n-1}(a) \rangle$ is a partial $T$-lift of $\k$. Let $G\colon K^{1+n}\to K$ be an $\cL(E)$-definable function in implicit form with $\fm_G = 1$. Then there is $A \in K[\der]$ with $vA = 0$ that linearly approximates $G$ on $a+\smallo$.
\end{lemma}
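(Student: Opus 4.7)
My plan is to construct $A$ via the Jacobian property applied to $I_G$, and then verify the required estimate through a direct valuation calculation.

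First, I apply the Jacobian property (Fact~\ref{fact:rvapprox}) to $I_G \colon K^n \to K$ to obtain an $\LRVeq(E)$-definable map $\chi \colon K^n \to \oRV{K}^{\eq}$ such that on each fiber $\chi^{-1}(s)$ containing an open $v$-ball, either $I_G$ is constant or there is $d^* \in K^n$ with $v(I_G(x) - I_G(y) - d^* \cdot (x-y)) > vd^* + v(x-y)$ for all distinct $x, y$ in the fiber. The crucial intermediate claim is that $\jet^{n-1}a + \smallo^n$ is contained in a single fiber, namely $\chi^{-1}(\chi(\jet^{n-1}a))$. For each $\epsilon \in \smallo^n$, Lemma~\ref{lem:closesamechi} produces a partial elementary $\LRV$-embedding sending $\jet^{n-1}a$ to $\jet^{n-1}a + \epsilon$ and fixing $E \cup \rv(E\langle \jet^{n-1}a\rangle)$; extending this canonically to $\LRVeq$ and using that the imaginary $\chi(\jet^{n-1}a) \in \oRV{K}^{\eq}$ is coded by elements of $\rv(E\langle \jet^{n-1}a \rangle)$ then yields $\chi(\jet^{n-1}a + \epsilon) = \chi(\jet^{n-1}a)$. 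This is the main obstacle, requiring care with the $\LRVeq$-coding of the imaginary $\chi(\jet^{n-1}a)$.

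Next, I show $d^* \in \cO^n$ (taking $d^* = 0$ if $I_G$ is constant on the fiber). Since $\jet^{n-1}a$ is $\cL(E)$-independent, o-minimal cell decomposition gives that $I_G$ is $\cC^1$ at $\jet^{n-1}a$, with gradient $d_i \coloneqq (\partial I_G/\partial Y_i)(\jet^{n-1}a) \in K$. Because $E\langle\jet^{n-1}a\rangle$ is a partial $T$-lift of $\k$, hence closed under $\cL(E)$-definable functions and contained in $\cO$, we get $d_i \in \cO$ for each $i$. Specializing the Jacobian bound to $y = \jet^{n-1}a$ and $x = y + t e_i$ for $t \in \smallo^{\neq}$ and comparing with the $\cC^1$-Taylor expansion $I_G(y + te_i) = I_G(y) + t d_i + o(t)$ forces $v(d^*_i - d_i) > vd^*$ for each $i$. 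Were $vd^* < 0$, choosing $i$ with $vd^*_i = vd^*$ would then yield $v(d^*_i - d_i) = vd^*_i = vd^* < 0$ (since $vd_i \geq 0 > vd^*_i$), contradicting the strict inequality. Hence $d^* \in \cO^n$.

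Define $A \coloneqq \der^n - \sum_{i=0}^{n-1} d^*_i \der^i$, so $vA = 0$ since $d^*_i \in \cO$ and the leading coefficient is $1$. For $p, q \in a + \smallo$ with $p \neq q$, small derivation ensures $\jet^{n-1}p, \jet^{n-1}q \in \jet^{n-1}a + \smallo^n \subseteq \chi^{-1}(\chi(\jet^{n-1}a))$. Using $\fm_G = 1$,
\[
G(\jet^n p) - G(\jet^n q) - A(p-q)\ =\ -\bigl(I_G(\jet^{n-1}p) - I_G(\jet^{n-1}q) - d^* \cdot (\jet^{n-1}p - \jet^{n-1}q)\bigr),
\]
which by the Jacobian bound has valuation $> vd^* + v(\jet^{n-1}(p-q)) \geq v(\jet^{n-1}(p-q))$. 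To conclude, I claim $v(A_{\times(p-q)}) \leq v(\jet^{n-1}(p-q))$: setting $g \coloneqq p - q$ and writing $v(\jet^{n-1}g) = vg^{(m)}$ for the least $m \in \{0,\ldots,n-1\}$ achieving the minimum, the coefficient of $Y^{(n-m)}$ in $A_{\times g}$ equals $\binom{n}{n-m} g^{(m)}$ plus terms $-d^*_{j+(n-m)}\binom{j+(n-m)}{n-m} g^{(j)}$ for $j < m$, each with valuation $\geq vg^{(j)} > vg^{(m)}$ by the minimality of $m$. No cancellation at valuation $vg^{(m)}$ can occur, so this coefficient has valuation exactly $vg^{(m)} = v(\jet^{n-1}g)$, giving the required strict inequality.
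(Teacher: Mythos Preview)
Your proof is correct and follows the same overall architecture as the paper's: apply the Jacobian property to $I_G$, use Lemma~\ref{lem:closesamechi} to show the fiber of $\chi$ through $\jet^{n-1}a$ contains $\jet^{n-1}a+\smallo^n$, define $A$ from the resulting $d^*$, show $vA=0$, and verify the linear approximation inequality. The step you flag as ``the main obstacle'' (passing from same $\LRV$-type to equality of $\chi$-values) is handled just as briefly in the paper, so you are not missing anything the paper supplies.

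Two sub-arguments are done genuinely differently. For $vA=0$, the paper argues by contradiction purely from values: if some $d_i\succ 1$, pick $x,y$ in the fiber differing only in coordinate $i$ with $x_i-y_i\succ d_i^{-1}$, and use that $I_G(x),I_G(y)\preceq 1$ (since $E\langle x\rangle$ is a partial $T$-lift by Lemma~\ref{lem:closesamechi}) to contradict the Jacobian estimate. Your route via $\cC^1$-differentiability and comparison with the actual partials $(\partial I_G/\partial Y_i)(\jet^{n-1}a)\in\cO$ is correct but heavier, requiring you to unpack ``$o(t)$'' valuation-theoretically. For the final inequality $vA_{\times g}\leq v(\jet^{n-1}g)$, the paper splits into the cases $(u-t)'\preceq (u-t)$ and $(u-t)'\succ(u-t)$, invoking \cite[Lemma~4.5.3]{ADH17} and \cite[Lemma~6.4.1]{ADH17} respectively; your direct computation of the coefficient of $Y^{(n-m)}$ in $A_{\times g}$ is cleaner, avoids the case split, and uses only that $A$ is monic of order $n$ with coefficients in $\cO$.
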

\begin{proof}
By applying Fact~\ref{fact:rvapprox} to the function $I_G$, we find an $\LRVeq(E)$-definable map $\chi\colon K^n \to \oRV{K}^{\eq}$ such that for each $s \in \chi(K^n)$, if $\chi\inv(s)$ contains an open $v$-ball, then either $I_G$ is constant on $\chi\inv(s)$ or there is $d \in K^n$ such that
\begin{equation}\label{eq:dependentlift}
v\big(I_G(x)- I_G(y) - d\cdot (x-y)\big) \ >\ vd+ v(x-y)
\end{equation}
for all $x, y \in \chi\inv(s)$ with $x \neq y$. Let $s_0\coloneqq \chi\big(\jet^{n-1}(a)\big)$ and let $U\coloneqq \chi\inv(s_0)$. Note that if $x\in \jet^{n-1}(a)+\smallo^n$, then $x$ and $\jet^{n-1}(a)$ have the same $\LRV$-type over $E \cup \rv E\langle \jet^{n-1}(a)\rangle$ by Lemma~\ref{lem:closesamechi}, so $x \in U$. In particular, $\jet^{n-1}(u) \in U$ for all $u \in a+\smallo$, since $K$ has small derivation. 

We choose $A$ as follows:\ if $I_G$ is constant on $U$, then we let $A$ be the linear differential operator $\der^n$. If $I_G$ is not constant on $U$, then we let $A$ be
\[
\der^n - d_n \der^{n-1} - \cdots-d_1\ \in \ K[\der],
\]
where $d = (d_1,\ldots,d_n) \in K^n$ is chosen such that (\ref{eq:dependentlift}) holds for $x,y \in U$ with $x \neq y$. We claim that $vA = 0$. This is clear if $A = \der^n$, so we may assume that $I_G$ is not constant on $U$, and we need to show that $d_i \preceq 1$ for each $i =1,\ldots,n$. Take $i \leq n$ with $vd_i=vd$ and suppose towards contradiction that $d_i \succ 1$. Take tuples $x,y \in \jet^{n-1}(a)+\smallo^n$ such that $x_i - y_i \succ d_i\inv$ and $x_j = y_j$ for $j \leq n$ with $j\neq i$. Then $d \cdot (x-y) = d_i(x_i-y_i) \succ 1$. Since $E\langle x\rangle$ is a partial $T$-lift of $K$ by Lemma~\ref{lem:closesamechi}, we have $I_G(x) \preceq 1$. Likewise, $I_G(y) \preceq 1$, so 
\[
v\big(I_G(x)- I_G(y) - d\cdot (x-y)\big) \ = \ vd_i+v(x_i-y_i)\ =\ vd+ v(x-y).
\]
This is a contradiction, since both $x$ and $y$ are in $U$. 

Now, we will show that $A$ linearly approximates $G$ on $a+\smallo$. Again, this is clear if $I_G$ is constant on $U$, so we may assume that $I_G$ is not constant on $U$. Let $t,u \in a+\smallo$ with $t\neq u$. We have
\[
v\big(G(\jet^n u)- G(\jet^n t) - A(u-t)\big) \ = \ v\big(I_G(\jet^{n-1}u)- I_G(\jet^{n-1}t)- d\cdot \jet^{n-1}(u-t)\big) \ >\ vd + v\big(\jet^{n-1}(u-t)\big),
\]
so it suffices to show that $vd+ v\big(\jet^{n-1}(u-t)\big)\geq vA_{\times(u-t)}$. We consider two cases. First, if $(u-t)' \preceq u-t$, then $(u-t)^{(m)} \preceq(u-t)$ for all $m$ and $vA_{\times (u-t)} = vA + v(u-t)$ by~\cite[Lemma 4.5.3]{ADH17}. Using also that $vA \leq vd$, this yields 
\[
vd + v\big(\jet^{n-1}(u-t)\big)\ =\ vd+ v(u-t)\ \geq \ vA + v(u-t)\ =\ vA_{\times (u-t)},
\]
in which case we are done. Next, suppose that $(u-t)' \succ (u-t)$, so $(u-t)^{(m)} \asymp (u-t)\big((u-t)^\dagger\big)^m$ for each $m$ by~\cite[Lemma 6.4.1]{ADH17}. In particular, $(u-t)^{(i)}\prec(u-t)^{(n)}$ whenever $i<n$, since $(u-t)^\dagger \succ 1$. Since $vA=0$, this gives
\[
A_{\times (u-t)}\ \succeq\ A(u-t) \ =\ (u-t)^{(n)} - d_n (u-t)^{(n-1)} - \cdots-d_1(u-t)\ \asymp\ (u-t)^{(n)}.
\]
Having already established $v(\jet^{n-1}(u-t))=v((u-t)^{(n)})$ and $vd\geq 0$, we conclude that
\[
vA_{\times(u-t)}\ \leq \ v\big((u-t)^{(n)}\big) \ = \ v\big(\jet^{n-1}(u-t)\big)\ \leq\ vd+v\big(\jet^{n-1}(u-t)\big),
\]
as desired.
\end{proof}

\begin{proposition}\label{prop:resext}
Suppose that $K$ is $\Td$-henselian, let $E$ be a 
$T$-convex $T$-differential subfield of $K$, and let $a \in \k \setminus \res(E)$. Then there is $b \in \cO$ with $\res(b) = a$ and the following properties:
\begin{enumerate}
\item\label{prop:resexti} $\rk_{\cL}\!\big(E\llangle b\rrangle \big| E\big)=\rk_{\cL}\!\big(\res(E)\llangle a\rrangle \big| \res(E)\big)$;
\item\label{prop:resextii} $v\big(E\llangle b\rrangle^\times\big) = v(E^\times)$;
\item\label{prop:resextiii} If $K^*$ is any $\Td$-henselian $\TdO$-extension of $E$, then any $\Ld(\res(E))$-embedding $\imath\colon \res(E)\llangle a\rrangle \to \res(K^*)$ lifts to an $\LdO(E)$-embedding $\jmath\colon E\llangle b\rrangle \to K^*$;
\end{enumerate}
\end{proposition}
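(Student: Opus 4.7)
The plan is to split into two cases based on whether $a$ is $\Td$-algebraic over $\res(E)$. As preparation, apply \cite[Theorem~2.12]{DL95} to the $T$-convex valued subfield $(E, E \cap \cO)$ to obtain a partial $T$-lift $E_0 \subseteq E \cap \cO$ of $\k$ with $\res(E_0) = \res(E)$. Since $E_0 \subseteq E$, it lies in every $\TdO$-extension of $E$, which is essential for~(iii).

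First suppose $a$ is $\Td$-transcendental over $\res(E)$. Pick any $b \in \cO$ with $\res(b) = a$. Small derivation gives $b^{(r)} \in \cO$ with $\res(b^{(r)}) = a^{(r)}$, so the residue field of $E\langle\jet^r(b)\rangle$ contains $\res(E)\langle\jet^r(a)\rangle$, which has $\cL$-rank $r+1$ over $\res(E)$. Combined with the trivial upper bound $\rkL(E\langle\jet^r(b)\rangle|E) \leq r+1$, Wilkie's inequality (Fact~\ref{fact:wilkieineq}) forces equality, makes the residue field extension equal $\res(E)\langle\jet^r(a)\rangle$, and leaves the value group unchanged. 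In particular $\jet^r(b)$ is $\cL(E)$-independent, so letting $r \to \infty$ yields (i) and (ii).

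Otherwise, let $n \geq 1$ be minimal with $\jet^n(a)$ being $\cL(\res(E))$-dependent, so $a^{(n)} = I(\jet^{n-1}(a))$ for some $\cL(\res(E))$-definable $I$. Pull the parameters of $I$ back through $\res\colon E_0 \to \res(E)$ to obtain an $\cL(E_0)$-definable $\tilde I$, and put $G \coloneqq Y_n - \tilde I(Y_0, \ldots, Y_{n-1})$, in implicit form with $\fm_G = 1$. Take any $b_0 \in \cO$ with $\res(b_0) = a$; then $vG(\jet^n b_0) > 0$ since $G(\jet^n a) = 0$ in $\k$. The Wilkie argument above, applied to $E_0$ and the first $n$ jets, shows $\jet^{n-1}(b_0)$ is $\cL(E_0)$-independent and $E_0\langle\jet^{n-1}(b_0)\rangle$ has trivial value group over $E_0$, hence lies in $\cO$ and is a partial $T$-lift of $\k$. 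Then Lemma~\ref{lem:dependentlift} applied to $E_0$ and $b_0$ produces $A \in K[\der]$ with $vA = 0$ linearly approximating $G$ on $b_0 + \smallo$, so $(G, A, b_0, 0)$ is in $\Td$-hensel configuration; $\Td$-henselianity then yields $b \in b_0 + \smallo$ with $G(\jet^n b) = 0$. By Fact~\ref{fact:dependence}, $E\llangle b\rrangle = E\langle\jet^{n-1}(b)\rangle$, and Wilkie's inequality again gives (i) (with rank $n$) and (ii).

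For (iii), run the analogous construction inside $K^*$ (using $E_0 \subseteq E \subseteq K^*$ and the same $\tilde I$ in the algebraic case) to produce $b^* \in K^*$ lifting $\imath(a)$, and define $\jmath\colon E\llangle b\rrangle \to K^*$ by $b^{(r)} \mapsto (b^*)^{(r)}$. Preservation of $\cO$ and $\der$ by $\jmath$ is built into the construction, so the main obstacle is verifying that the tuples $\jet^r(b)$ in $K$ and $\jet^r(b^*)$ in $K^*$ realize the same $\cL(E)$-type. Both sit in $\cO$ with residues related by the $\Ld(\res(E))$-elementary $\imath$, and the value groups of $E\llangle b\rrangle$ and $E\llangle b^*\rrangle$ over $E$ are both trivial by~(ii). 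The hardest step is to upgrade Lemma~\ref{lem:closesamechi} from $E_0$-parameters to $E$-parameters: given any $e \in E$, compare $e$ against a jet coordinate via a lift in $E_0$ of $\res(e)$ (when $e \in \cO$) or directly (when $e \notin \cO$), thereby reducing signs and $\LRV$-level relations to residue-level computations. Combined with the elementarity of $\imath$, this should yield the desired equality of types.
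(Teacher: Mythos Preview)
Your proposal follows essentially the same approach as the paper: the case split on whether $a$ is $\Td$-transcendental or $\Td$-algebraic over $\res(E)$, the use of Lemma~\ref{lem:dependentlift} together with $\Td$-henselianity to adjust an arbitrary lift of $a$ to a zero of the implicit-form function in the algebraic case, and the Wilkie inequality for (i) and (ii) all match the paper's argument.

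For (iii), your plan of producing $b^*$ in $K^*$ by the analogous construction and then verifying that $\jet^r(b)$ and $\jet^r(b^*)$ realize the same $\cL(E)$-type is also what the paper does. However, the route you propose via ``upgrading Lemma~\ref{lem:closesamechi} from $E_0$-parameters to $E$-parameters'' is a detour: that lemma compares two tuples in the \emph{same} model that agree modulo $\smallo$, which is not the situation here (you are comparing tuples in $K$ and $K^*$). The paper's ``straightforward induction on $n$'' instead rests on the standard fact about $T$-convex valued fields that the $\LO$-type over a submodel $F$ of an element $c \in \cO$ with $\bar c \notin \res(F)$ is determined by the cut of $\bar c$ in $\res(F)$. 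Applying this with $F = E\langle\jet^{r-1}(b)\rangle$ and $c = b^{(r)}$ (so $\bar c = a^{(r)}$, which lies outside $\res(F) = \res(E)\langle\jet^{r-1}a\rangle$ by minimality of $n$), and using that $\imath$ carries the cut of $a^{(r)}$ to the cut of $\imath(a)^{(r)}$, extends the $\LO(E)$-isomorphism one jet at a time. This simultaneously handles the $\cL$-type and the valuation, so your claim that ``preservation of $\cO$ is built into the construction'' is premature---it comes out of this inductive step, not before it.
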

\begin{proof}
First, consider the case that $a$ is $\Td$-transcendental over $\res(E)$.
Let $b\in K$ be a lift of $a$. We first show that \ref{prop:resexti} holds. We claim that $b$ is $\Td$-transcendental over $E$. Suppose not, and take $n$ and an $\cL(E)$-definable function $G\colon K^n\to K$ with $b^{(n)} = G(\jet^{n-1}b)$. Then $\jet^n(b)$ belongs to $\Graph(G)$, the graph of $G$, so $\jet^n(a)$ belongs to $\res(\Graph(G))$. By~\cite[Corollaries 1.13 and 1.14]{vdD97}, the set $\res(\Graph(G))$ is $\res(E)$-definable of $\cL$-dimension at most $n$, contradicting that $a$ is $\Td$-transcendental over $\res(E)$.
For each $n$, let $E_n\coloneqq E\langle \jet^{n-1} b\rangle$, so $E\llangle b\rrangle = \bigcup_n E_n$ and
\[
\rk_{\cL}(E_n | E)\ =\ \rk_{\cL}\!\big(\res(E_n)| \res(E)\big)\ =\ n.
\]
The Wilkie inequality gives that $v(E_n^\times) = v(E)$, so $v\big(E\llangle b\rrangle^\times\big) = v(E^\times)$ as well, proving \ref{prop:resextii}.
Now, let $K^*$ and $\imath$ be as in \ref{prop:resextiii} and let $b^* \in K^*$ be a lift of $\imath(a)$. Let $\jmath\colon E\llangle b\rrangle \to K^*$ be the map that fixes each element of $E$ and sends $\jet^\infty(b)$ to $\jet^\infty(b^*)$. By \ref{prop:resexti}, \ref{prop:resextii}, and a straightforward induction on $n$, we see that the restriction of $\jmath$ to $E_n$ is an $\LO(E)$-embedding, so $\jmath$ itself is an $\LdO(E)$-embedding lifting $\imath$.

Now assume that $a$ is $\Td$-algebraic over $\res(E)$, and let $n$ be minimal such that $a^{(n)} \in \res(E)\langle\jet^{n-1} a\rangle$. Let $b \in K$ be a lift of $a$. By the argument in the previous case, we get that $\jet^{n-1}(b)$ is $\cL(E)$-independent. Let $E_0\subseteq E$ be a $T$-lift of $\res(E)$, so $E_0\langle \jet^{n-1}b\rangle$ is a $T$-lift of $\res(E)\langle\jet^{n-1} a\rangle$. Since $a^{(n)} \in \res(E)\langle\jet^{n-1} a\rangle$, there is some $\cL(E_0)$-definable function $G\colon K^n\to K$ with $b^{(n)} - G(\jet^{n-1}b)\prec 1$.
Let $H\colon K^{n+1}\to K$ be the function 
\[
H(Y_0,\ldots,Y_n)\ \coloneqq\ Y_n - G(Y_0,\ldots,Y_{n-1}),
\]
so $H$ is in implicit form with $\fm_H = 1$ and $H(\jet^nb) \prec 1$. Applying Lemma~\ref{lem:dependentlift}, we get a linear differential operator $A$ with $vA = 0$ that linearly approximates $H$ on $b+\smallo$. Then $(H,A,b,0)$ is in $\Td$-hensel configuration, and since $K$ is assumed to be $\Td$-henselian, we may replace $b$ with some element in $b+\smallo$ and arrange that $H(\jet^nb) = 0$. Note that then $E\langle \jet^{n-1} b \rangle = E\llangle b \rrangle$, so \ref{prop:resexti} holds.
As above, \ref{prop:resextii} follows from \ref{prop:resexti} and the Wilkie inequality. For \ref{prop:resextiii}, let $K^*$ and $\imath$ be given. Arguing as we did with $K$, we can take a lift $b^*$ of $\imath(a)$ in $K^*$ such that $H(\jet^nb^*) = 0$.
Let $\jmath\colon E\llangle b\rrangle \to K^*$ be the map that fixes each element of $E$ and sends $\jet^{n-1}(b)$ to $\jet^{n-1}(b^*)$.
As above, this map is easily seen to be an $\LdO(E)$-embedding lifting~$\imath$.
\end{proof}

A \textbf{partial $\Td$-lift of the differential residue field $\k$} is a $\Td$-submodel $E\subseteq K$ that is contained in $\cO$. Equivalently, a partial $\Td$-lift is a partial $T$-lift that is closed under $\der$. If $E$ is a partial $\Td$-lift of $\k$, then the residue map induces an $\Ld$-embedding $E \to \k$. If $\res(E) = \k$, then $E$ is called a \textbf{$\Td$-lift of $\k$}.

\begin{corollary}\label{cor:reslift}
Suppose that $K$ is $\Td$-henselian. Then any partial $\Td$-lift of $\k$ can be extended to a $\Td$-lift of $\k$.
\end{corollary}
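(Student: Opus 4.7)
The plan is to deduce this corollary from Proposition~\ref{prop:resext} via a straightforward Zorn's lemma argument. Let $E_0 \subseteq K$ be the given partial $\Td$-lift of $\k$, and consider the poset $\mathcal{P}$ of partial $\Td$-lifts of $\k$ extending $E_0$, ordered by inclusion. This poset is nonempty since $E_0 \in \mathcal{P}$, and any chain in $\mathcal{P}$ has an upper bound given by its union: the union of a chain of $T$-submodels of $K$ is an elementary $\cL$-substructure of $K$ by model completeness of $T$, it is closed under $\der$ since each member of the chain is, and it remains contained in $\cO$. So Zorn's lemma supplies a maximal element $E \in \mathcal{P}$.

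The crux is then to show that $E$ is actually a full $\Td$-lift, i.e., that $\res(E) = \k$. I argue by contradiction: if $\res(E) \subsetneq \k$, pick some $a \in \k \setminus \res(E)$. Since $E$ is a $T$-convex $T$-differential subfield of $K$ (with valuation ring $E \cap \cO = E$, as $E$ is a field contained in $\cO$), Proposition~\ref{prop:resext} applies and yields $b \in \cO$ with $\res(b) = a$ such that $v\bigl(E\llangle b\rrangle^\times\bigr) = v(E^\times)$.

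Because $E \subseteq \cO$ and $E$ is a field, every nonzero element of $E$ is a unit of $\cO$, so $v(E^\times) = \{0\}$; combined with the previous equality this gives $v\bigl(E\llangle b\rrangle^\times\bigr) = \{0\}$, i.e., $E\llangle b\rrangle \subseteq \cO$. Since $E\llangle b\rrangle$ is a $\Td$-extension of $E$ by definition of the $\llangle\cdot\rrangle$ notation, it is a $\Td$-submodel of $K$ contained in $\cO$, hence a partial $\Td$-lift of $\k$ extending $E_0$. Moreover the extension is proper, since $b \notin E$ (otherwise $a = \res(b) \in \res(E)$). This contradicts the maximality of $E$ in $\mathcal{P}$, so $\res(E) = \k$ after all, and $E$ is the desired $\Td$-lift.

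There is no real obstacle here beyond checking these bookkeeping facts; all the work has been absorbed into Proposition~\ref{prop:resext}, whose part~(ii) on preservation of the value group is precisely what ensures that the enlargement $E\llangle b\rrangle$ stays inside $\cO$ and hence remains a partial $\Td$-lift.
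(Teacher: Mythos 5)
Your proof is correct and follows essentially the same route as the paper: apply Proposition~\ref{prop:resext} to extend a partial $\Td$-lift by one residue at a time and conclude by Zorn's lemma. You have merely spelled out the bookkeeping (the chain condition and the use of part~(ii) to see that $E\llangle b\rrangle$ stays in $\cO$) that the paper leaves implicit.
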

\begin{proof}
Let $E$ be a partial $\Td$-lift of $\k$ and let $a \in \k \setminus \res(E)$. Proposition~\ref{prop:resext} gives us $b \in \cO$ with $\res(b) = a$ such that $E\llangle b\rrangle$ is a partial $\Td$-lift of $\k$. The corollary follows by Zorn's lemma.
\end{proof}

Note that the prime model of $T$ (identified with $\dclL(\emptyset) \subseteq K$ and equipped with the trivial derivation) is a partial $\Td$-lift of $\k$, so Corollary~\ref{cor:reslift} has the following consequence:

\begin{theorem}\label{thm:reslift}
Suppose that $K$ is $\Td$-henselian. Then $\k$ admits a $\Td$-lift.
\end{theorem}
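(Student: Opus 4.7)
The plan is to exhibit an explicit partial $\Td$-lift of $\k$, and then invoke Corollary~\ref{cor:reslift} to extend it to a full $\Td$-lift. The natural candidate is the prime model $E \coloneqq \dclL(\emptyset)$, viewed as a subset of $K$.

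First I would verify that $E$ is an elementary $\cL$-substructure of $K$: this is immediate from the fact recorded in the background section that, since $T$ has definable Skolem functions, $\dclL(A)$ is an elementary substructure of $K$ for any $A \subseteq K$. Next I would show $E \subseteq \cO$. This follows from $T$-convexity of $\cO$: for any $c \in E$, the constant function $F \colon K \to K$ with $F(x) = c$ is $\cL(\emptyset)$-definable and continuous, so $c = F(0) \in F(\cO) \subseteq \cO$.

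The remaining point is that $E$ is closed under $\der$ (so that $E$ is a $\Td$-submodel of $K$ contained in $\cO$, i.e.\ a partial $\Td$-lift of $\k$ in the sense of Section~\ref{sec:lift}), and moreover that $\der$ restricts to the trivial derivation on $E$. For any $c \in E$, the same constant function $F(x) = c$ is $\cL(\emptyset)$-definable and $\cC^1$ with $\partial F / \partial Y = 0$; since $\der$ is a $T$-derivation, compatibility with $F$ at the point $0 \in K$ gives
\[
\der c \ = \ \der F(0) \ = \ \frac{\partial F}{\partial Y}(0)\cdot \der 0 \ = \ 0.
\]
Hence $E$ is a $\Td$-submodel of $K$ contained in $\cO$.

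Finally, since $K$ is $\Td$-henselian by hypothesis, Corollary~\ref{cor:reslift} applies to the partial $\Td$-lift $E$ and produces a $\Td$-lift of $\k$ extending $E$, as desired. There is no real obstacle here; the only point requiring a moment's care is checking that the $T$-derivation $\der$ automatically kills $\cL(\emptyset)$-definable constants, and this is an instance of the compatibility axiom applied to a constant function.
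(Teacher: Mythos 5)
Your proposal is correct and is exactly the paper's argument: the paper proves Theorem~\ref{thm:reslift} by observing that the prime model $\dclL(\emptyset)$, with the trivial derivation, is a partial $\Td$-lift of $\k$ and then applying Corollary~\ref{cor:reslift}. Your write-up merely spells out the routine verifications (containment in $\cO$ via $T$-convexity, and $\der c = 0$ via compatibility with constant functions) that the paper leaves implicit.
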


\subsection{Spherically complete implies \texorpdfstring{$T^\der$}{T∂}-henselian}\label{sec:sphcompTdh}

In this subsection we assume that the derivation on $\k$ is nontrivial.
To establish the result claimed in the subsection heading, under the assumption that $\k$ is linearly surjective, we work as usual with pseudocauchy sequences, which we abbreviate as \emph{pc-sequences}; see \cite[Sections~2.2 and 3.2]{ADH17} for definitions and basic facts about them.
In particular, recall that $K$ is spherically complete if and only if every pc-sequence in $K$ has a pseudolimit in $K$; if $a$ is a pseudolimit of $(a_\rho)$, we write $a_\rho \leadsto a$.
Also, if $L$ is an immediate $\TdO$-extension of $K$, then every element of $L \setminus K$ is the pseudolimit of a divergent pc-sequence in $K$ (i.e., a pc-sequence in $K$ that has no pseudolimit in $K$).
We are interested in the behavior of $\cL(K)$-definable functions along pc-sequences, which will allow us to study immediate $\TdO$-extensions of~$K$.

\begin{lemma}\label{lem:successorapprox}
Suppose that $\k$ is linearly surjective, $(F,A,a,\gamma)$ is in $\Td$-hensel configuration, and $F(\jet^r a) \neq 0$.
Then there is $b \in B(a,\gamma)$ such that $(F,A,b,\gamma)$ is in $\Td$-hensel configuration, $vA_{\times(b-a)} = vF(\jet^r a)$, and $F(\jet^r b) \prec F(\jet^r a)$. Moreover, if $b^*$ is any element of $K$ with $b^* - a \sim b- a$, then $(F,A,b^*,\gamma)$ is in $\Td$-hensel configuration, $vA_{\times(b^*-a)} = vF(\jet^r a)$, and $F(\jet^r b^*) \prec F(\jet^r a)$.
\end{lemma}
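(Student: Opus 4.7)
The plan is to perform a Newton-style step along the linear approximation $A$. Since $A$ linearly approximates $F$ on $B(a,\gamma)$, any $b = a + c \in B(a,\gamma)$ with $b \neq a$ satisfies
\[
F(\jet^r b)\ =\ F(\jet^r a) + A(c) + \varepsilon \qquad \text{with } v\varepsilon > vA_{\times c}.
\]
So the scheme is: choose $c$ with $v(c) = \beta \coloneqq v_A^{-1}(vF(\jet^r a))$, which is well-defined since $v_A\colon \Gamma \to \Gamma$ is a strictly increasing bijection, and which satisfies $\beta > \gamma$ because $vF(\jet^r a) > v_A(\gamma)$; and then arrange $A(c) \sim -F(\jet^r a)$, so that $F(\jet^r b)$ becomes a sum of two terms of valuation strictly greater than $vA_{\times c} = vF(\jet^r a)$.

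The key step is producing such a $c$, and here the hypothesis that $\k$ is linearly surjective must replace the full force of Lemma~\ref{lem:Tdhneatsurj}. I would fix $\phi \in K^\times$ with $v\phi = \beta$ and $\pi \in K^\times$ with $v\pi = vF(\jet^r a)$, and set $D \coloneqq \pi^{-1}A_{\times\phi}$, so $D \in \cO[\der]$ with $vD = 0$, while the target $-\pi^{-1}F(\jet^r a)$ lies in $\cO^\times$. Reducing modulo $\smallo$, the operator $\bar D \in \k[\der]$ is nonzero, so linear surjectivity of $\k$ yields $\bar u \in \k$ with $\bar D(\bar u) = -\overline{\pi^{-1}F(\jet^r a)}$; any lift $u \in \cO$ of $\bar u$ then satisfies $D(u) + \pi^{-1}F(\jet^r a) \in \smallo$, and $u$ is automatically a unit since $\bar D(0) = 0$ and the right-hand side is nonzero in $\k$. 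Unwinding gives $A(\phi u) \sim -F(\jet^r a)$. With $c \coloneqq \phi u$ and $b \coloneqq a+c$, the desired properties $b \in B(a,\gamma)$, $vA_{\times(b-a)} = vF(\jet^r a)$, and $F(\jet^r b) \prec F(\jet^r a)$ follow from the scheme, and $\Td$-hensel configuration of $(F,A,b,\gamma)$ is then immediate since linear approximation is preserved on $B(b,\gamma) \subseteq B(a,\gamma)$ and $vF(\jet^r b) > vF(\jet^r a) > v_A(\gamma)$.

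For the ``moreover'' clause, suppose $b^* - a \sim b - a$. Then $v(b^*-a) = v(b-a) = \beta$, so $b^* \in B(a,\gamma)$, and $v(b^* - b) > \beta$. The equality $vA_{\times(b^*-a)} = v_A(\beta) = vF(\jet^r a)$ follows. Applying the linear approximation of $F$ by $A$ on $B(a,\gamma)$ to the pair $(b,b^*)$ yields
\[
v\big(F(\jet^r b^*) - F(\jet^r b) - A(b^* - b)\big) \ >\ vA_{\times (b^*-b)} \ = \ v_A(v(b^*-b)) \ >\ v_A(\beta)\ =\ vF(\jet^r a),
\]
using strict monotonicity of $v_A$. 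Since also $vA(b^*-b) \geq vA_{\times(b^*-b)} > vF(\jet^r a)$, combined with $vF(\jet^r b) > vF(\jet^r a)$ from the first part, all three summands of $F(\jet^r b^*)$ have valuation exceeding $vF(\jet^r a)$. Thus $F(\jet^r b^*) \prec F(\jet^r a)$, and $(F,A,b^*,\gamma)$ is in $\Td$-hensel configuration because $vF(\jet^r b^*) > vF(\jet^r a) > v_A(\gamma)$.

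The main obstacle I anticipate is bookkeeping rather than a deep idea: one must juggle the valuations coming from (a) the linear approximation of $F$ by $A$, (b) the inequality $vA(y) \geq vA_{\times y}$ relating the operator to the operator applied to an element, and (c) the reduction to $\k$ for the approximate solve. The only real content beyond these manipulations is the use of linear surjectivity of $\k$, which---being weaker than the neat surjectivity of Lemma~\ref{lem:Tdhneatsurj}---must be exploited via an explicit multiplicative conjugation by $\phi$ to normalize the operator to $\cO[\der]$ before residuating.
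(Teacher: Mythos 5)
Your proposal is correct and takes essentially the same approach as the paper: a Newton step of size $v_A^{-1}(vF(\jet^r a))$, using linear surjectivity of $\k$ on the residue of a multiplicatively conjugated, normalized operator to make the leading terms cancel. The only cosmetic differences are that the paper normalizes by $F(\jet^r a)^{-1}$ itself rather than a separate $\pi$, and handles the ``moreover'' clause by rerunning the argument with $u^*\sim u$ instead of applying the linear approximation to the pair $(b,b^*)$; both variants work.
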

\begin{proof}
Let $\alpha \coloneqq vF(\jet^r a)> v_A(\gamma)$ and take $g\in K$ with $v_A(vg)= \alpha$, so $vg>\gamma$. Then for $y \in \cO^{\times}$, we have $a+gy \in B(a,\gamma)$ and
\[
F\big(\jet^r(a+gy)\big)\ =\ F(\jet^r a) + A(gy) + \epsilon,
\]
where $v\epsilon>vA_{\times gy} = v_A(vg) = \alpha$. Let $D$ be the linear differential operator $F(\jet^r a)\inv A_{\times g}$, so $D \asymp 1$. Since $\k$ is linearly surjective, we can find $u \in \cO^\times$ with $1+D(u) \prec 1$. Let $b\coloneqq a+gu$, so 
\[
F(\jet^r b) \ =\ F(\jet^r a) + A(gu)+\epsilon \ =\ F(\jet^r a)\big(1+D(u)\big)+\epsilon\ \prec\ F(\jet^r a).
\]
Since $b-a \asymp g$, we have $vA_{\times(b-a)} = v_A(vg)=\alpha$.
Since $B(b,\gamma) = B(a,\gamma)$ and $vF(\jet^r b)>vF(\jet^r a) > v_A(\gamma)$, the tuple $(F,A,b,\gamma)$ is in $\Td$-hensel configuration.

Now, let $b^* \in K$ with $b^* - a \sim b- a = gu$.
Then we have $u^* \in K$ with $b^* =a+ gu^*$ and $u^* \sim u$.
In particular, $1+D(u^*) \prec 1$, so the same argument as above gives that $(F,A,b^*,\gamma)$ is in $\Td$-hensel configuration, $vA_{\times(b^*-a)} = vF(\jet^r a)$, and $F(\jet^r b^*) \prec F(\jet^r a)$.
\end{proof}

\begin{lemma}\label{lem:Tdhcdivpc}
Suppose that $\k$ is linearly surjective, $(F,A,a,\gamma)$ is in $\Td$-hensel configuration, and $F(\jet^r b) \neq 0$ for all $b \in B(a,\gamma)$ with $vA_{\times(b-a)} \geq vF(\jet^r a)$. Then there is a divergent pc-sequence $(a_\rho)$ in $K$ such that $F(\jet^r a_\rho) \leadsto 0$.
\end{lemma}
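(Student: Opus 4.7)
The plan is to construct the desired pc-sequence $(a_\rho)$ by transfinite recursion, iteratively applying Lemma~\ref{lem:successorapprox} at successor stages and handling limit stages either by taking a pseudolimit in $K$ (if one exists) or by terminating to produce the divergent pc-sequence we want.

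Starting with $a_0 \coloneqq a$, at a successor step I would invoke Lemma~\ref{lem:successorapprox} on $(F,A,a_\rho,\gamma)$ to obtain $a_{\rho+1}$ with $(F,A,a_{\rho+1},\gamma)$ still in $\Td$-hensel configuration, with $vA_{\times(a_{\rho+1}-a_\rho)} = vF(\jet^r a_\rho)$ and $F(\jet^r a_{\rho+1}) \prec F(\jet^r a_\rho)$. A short inductive check shows $v(a_\rho - a_0) = v(a_1 - a_0) > \gamma$ for every $\rho \geq 1$, so $vA_{\times(a_\rho - a_0)} = vF(\jet^r a_0)$, and the standing hypothesis of the lemma being proved then forces $F(\jet^r a_\rho) \neq 0$. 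This guarantees the successor step can always be iterated.

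At a limit stage $\lambda$, the widths $v(a_{\rho+1}-a_\rho) = v_A\inv\!\bigl(vF(\jet^r a_\rho)\bigr)$ are strictly increasing in $\rho$, so $(a_\rho)_{\rho<\lambda}$ is a pc-sequence. If this pc-sequence has no pseudolimit in $K$, we are done: it is divergent, and since $vF(\jet^r a_\rho)$ is strictly increasing, $0$ is a pseudolimit of $\bigl(F(\jet^r a_\rho)\bigr)$. Otherwise, pick a pseudolimit $a_\lambda \in K$. The most delicate point of the proof---and the one that crucially uses the ``moreover'' clause of Lemma~\ref{lem:successorapprox}---is verifying that the recursion extends to $a_\lambda$. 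Using strict increase of the widths, a standard pc-sequence argument gives $a_\lambda - a_\rho \sim a_{\rho+1} - a_\rho$ for all sufficiently large $\rho < \lambda$; applying the ``moreover'' clause with $b = a_{\rho+1}$ and $b^* = a_\lambda$ then yields that $(F,A,a_\lambda,\gamma)$ is in $\Td$-hensel configuration and $F(\jet^r a_\lambda) \prec F(\jet^r a_\rho)$. A final check that $vA_{\times(a_\lambda - a_0)} = vF(\jet^r a_0)$ lets us invoke the hypothesis again to conclude $F(\jet^r a_\lambda) \neq 0$, so the recursion continues.

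Since the $a_\rho$ are pairwise distinct (distinguished by the values $vF(\jet^r a_\rho)$), the recursion cannot continue past any ordinal of cardinality exceeding $|K|$, and by the successor-step analysis the only way it can terminate is at a limit stage where no pseudolimit exists in $K$. That limit stage provides the divergent pc-sequence with $F(\jet^r a_\rho) \leadsto 0$. I expect the main obstacle to be the bookkeeping at the limit step---specifically, showing that a pseudolimit in $K$, if one exists, still satisfies the conditions required to continue the recursion---which is exactly what the refined ``moreover'' formulation in Lemma~\ref{lem:successorapprox} is tailored to handle.
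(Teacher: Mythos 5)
Your proposal is correct and follows essentially the same transfinite-recursion argument as the paper: successor steps via Lemma~\ref{lem:successorapprox}, limit steps via its ``moreover'' clause applied to a pseudolimit, terminating at a divergent limit stage. If anything, you are slightly more explicit than the paper about why $F(\jet^r a_\rho) \neq 0$ persists (so the successor step can be applied) and about why the recursion must terminate.
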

\begin{proof}
Suppose we have a nonzero ordinal $\lambda$ and a sequence $(a_\rho)_{\rho<\lambda}$ in $B(a,\gamma)$ satisfying
\begin{enumerate}
\item\label{divpcseq1} $a_0 = a$ and $(F,A,a_\rho,\gamma)$ is in $\Td$-hensel configuration for each $\rho<\lambda$;
\item\label{divpcseq2} $v_A(\gamma_\rho)= vF(\jet^r a_\rho)$ whenever $\rho+1<\lambda$, where $\gamma_\rho\coloneqq v(a_{\rho+1}-a_\rho)$;
\item\label{divpcseq3} $vF(\jet^r a_\rho)$ is strictly increasing as a function of $\rho$, for $\rho<\lambda$.
\end{enumerate}
Such a sequence exists when $\lambda=1$, so it suffices to extend $(a_\rho)_{\rho<\lambda}$ to a sequence $(a_\rho)_{\rho<\lambda+1}$ in $B(a, \gamma)$ satisfying \ref{divpcseq1}--\ref{divpcseq3} with $\lambda+1$ in place of $\lambda$.

If $\lambda = \mu+1$ is a successor ordinal, then we use Lemma~\ref{lem:successorapprox} to find $a_\lambda \in B(a,\gamma)$ such that $(F,A,a_\lambda,\gamma)$ is in $\Td$-hensel configuration, $vA_{\times(a_\lambda-a_\mu)} = vF(\jet^r a_\mu)$, and $F(\jet^r a_\lambda) \prec F(\jet^r a_\mu)$. This extended sequence $(a_\rho)_{\rho<\lambda+1}$ satisfies conditions \ref{divpcseq1}--\ref{divpcseq3}.

Suppose that $\lambda$ is a limit ordinal. Then $v_A(\gamma_\rho)$ is strictly increasing as a function of $\rho$, so $\gamma_\rho$ is also strictly increasing by~\cite[Lemma 4.5.1(iii)]{ADH17}. Hence, $(a_\rho)_{\rho<\lambda}$ is a pc-sequence with $F(\jet^r a_\rho) \leadsto 0$. If $(a_\rho)_{\rho<\lambda}$ is divergent, then we are done. Otherwise, let $a_\lambda$ be any pseudolimit of $(a_\rho)_{\rho<\lambda}$ in $K$. Since $a_\lambda- a_\rho\sim a_{\rho+1} - a_\rho$ for each $\rho< \lambda$, Lemma~\ref{lem:successorapprox} gives that $(F,A,a_\lambda,\gamma)$ is in $\Td$-hensel configuration and that $F(\jet^r a_\lambda) \prec F(\jet^r a_\rho)$ for each $\rho<\lambda$. Thus, the extended sequence $(a_\rho)_{\rho<\lambda+1}$ satisfies \ref{divpcseq1}--\ref{divpcseq3}.
\end{proof}

\begin{corollary}\label{cor:sphcompTdh}
If $\k$ is linearly surjective and $K$ is spherically complete, then $K$ is $\Td$-henselian.
\end{corollary}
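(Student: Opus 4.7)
The plan is to verify the two defining axioms of $\Td$-henselianity directly. The first axiom is given by hypothesis: $\k$ is linearly surjective. For the second axiom, I will use the equivalent form \ref{TdH2'}, so suppose $(F,A,a,\gamma)$ is in $\Td$-hensel configuration. If $F(\jet^r a) = 0$, then $b = a$ works, so assume $F(\jet^r a) \neq 0$.

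Seeking a contradiction, suppose there is no $b \in B(a,\gamma)$ with $F(\jet^r b) = 0$ and $vA_{\times(b-a)} \geq vF(\jet^r a)$. Then in particular $F(\jet^r b) \neq 0$ for all $b \in B(a,\gamma)$ with $vA_{\times(b-a)} \geq vF(\jet^r a)$, so the hypotheses of Lemma~\ref{lem:Tdhcdivpc} are satisfied. That lemma then yields a divergent pc-sequence $(a_\rho)$ in $K$.

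This is the contradiction I want: since $K$ is spherically complete, every pc-sequence in $K$ has a pseudolimit in $K$, so no divergent pc-sequences exist. This completes the verification of \ref{TdH2'} and hence of $\Td$-henselianity.

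There is no real obstacle here — the heavy lifting has already been done in Lemmas~\ref{lem:successorapprox} and~\ref{lem:Tdhcdivpc}, which together transform a failure of the $\Td$-henselian approximation property into a pc-sequence that must diverge. Spherical completeness then rules this out immediately. The only thing to watch is the trivial case $F(\jet^r a)=0$, which is already handled by the statement of \ref{TdH2'}.
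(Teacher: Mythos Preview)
Your proof is correct and is exactly the argument the paper intends: the corollary is stated without proof precisely because it follows immediately from Lemma~\ref{lem:Tdhcdivpc} by the contrapositive, using that spherical completeness rules out divergent pc-sequences.
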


In fact, the divergent pc-sequence we build in Lemma~\ref{lem:Tdhcdivpc} is of a specific form analogous to ``differential-algebraic type'' in valued differential fields with small derivation (see \cite[Sections 4.4 and 6.9]{ADH17}). To introduce this notion and refine Corollary~\ref{cor:sphcompTdh}, let $(a_\rho)$ be a divergent pc-sequence in $K$, and let $\ell$ be a pseudolimit of $(a_\rho)$ in some $\TdO$-extension $L$ of $K$. Then $v(\ell-K) \subseteq \Gamma$ has no greatest element, and we say that a property holds \textbf{for all $y \in K$ sufficiently close to $\ell$} if there exists $\gamma \in v(\ell-K)$ such that the property holds for all $y \in K$ with $v(\ell-y)>\gamma$. Under the assumptions that in this paper $K$ has small derivation and in this section $\k$ has nontrivial derivation, the definition of vanishing from \cite{Ka22} simplifies as follows. We say that $F$ \textbf{vanishes at $(K, \ell)$} if whenever $a \in K$ and $d \in K^{\times}$ satisfy $\ell-a \prec d$, we have $I_{F_{+a, \times d}}(\jet^{r-1}y) \prec 1$ for all $y \in K$ sufficiently close to $d^{-1}(\ell-a)$. Let $Z_r(K, \ell)$ be the set of all $F$ of arity $1+r$ that vanish at $(K, \ell)$ and $Z(K, \ell) \coloneqq \bigcup_{r} Z_r(K, \ell)$; we have $Z_0(K, \ell)=\0$ by \cite[Lemma~5.2]{Ka22}.

We say that $(a_\rho)$ is of \textbf{$\Td$-algebraic type over $K$} if $Z(K,\ell)\neq \0$. Otherwise, $(a_\rho)$ is said to be of \textbf{$\Td$-transcendental type over $K$}. Note that $Z(K,\ell)$ does not depend on $\ell$ or even $(a_\rho)$, only on $v(\ell-K) \subseteq
\Gamma$. Thus, if $(a_\rho)$ is of $\Td$-algebraic type over $K$, then so is $(b_\sigma)$ for any pc-sequence $(b_\sigma)$ in $K$ equivalent to $(a_\rho)$ (recall the various characterizations of equivalence of pc-sequences in \cite[Lemma~2.2.17]{ADH17}). If $(a_\rho)$ is of $\Td$-algebraic type over $K$ and $r$ is minimal with $Z_r(K,\ell) \neq \0$, then any member of $Z_r(K,\ell)$ is called a \textbf{minimal $\Ld$-function} of $(a_\rho)$ over $K$.
We use the following two propositions to construct immediate extensions.

\begin{fact}{{\cite[Proposition~6.1]{Ka22}}}\label{prop:Ka226.1}
Suppose that $(a_\rho)$ is of $\Td$-transcendental type over $K$. Then $\ell$ is $\Td$-transcendental over $K$ and $K\llangle\ell\rrangle$ is an immediate $\TdO$-extension of $K$.
If $b$ is a pseudolimit of $(a_\rho)$ in a $\TdO$-extension $M$ of $K$, then there is a unique $\LdO(K)$-embedding $K\llangle\ell\rrangle \to M$ sending $\ell$ to~$b$.
\end{fact}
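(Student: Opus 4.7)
\emph{$\Td$-transcendence of $\ell$.} Suppose for a contradiction that $\ell$ is $\Td$-algebraic over $K$. By Fact~\ref{fact:dependence}, fix $n$ minimal such that $\jet^n\ell$ is $\cL(K)$-dependent; minimality yields an $\cL(K)$-definable function $G$, $\cC^1$ near $\jet^{n-1}\ell$, with $\ell^{(n)} = G(\jet^{n-1}\ell)$. Put $F(Y_0,\ldots,Y_n) \coloneqq Y_n - G(Y_0,\ldots,Y_{n-1})$; this is in implicit form with $\fm_F = 1$ and $I_F = G$, and satisfies $F(\jet^n\ell)=0$. I claim $F \in Z_n(K,\ell)$, contradicting the $\Td$-transcendental type hypothesis. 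Given $a \in K$ and $d \in K^{\times}$ with $\ell - a \prec d$, set $y_0 \coloneqq d^{-1}(\ell-a) \in \smallo_L$, where $L$ is the ambient $\TdO$-extension containing $\ell$. Affine conjugation preserves implicit form and $I_{F_{+a,\times d}}(\jet^{n-1}y_0) = y_0^{(n)}$; small derivation on $L$ forces $y_0^{(n)} \prec 1$, and continuity of the $\cL$-definable $I_{F_{+a,\times d}}$ around $\jet^{n-1}y_0$ (together with continuity of $\der$) transfers this smallness to $y \in K$ sufficiently close to $y_0$.

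\emph{Immediateness of $K \subseteq K\llangle\ell\rrangle$.} Let $c \in K\llangle\ell\rrangle^{\times}$ and write $c = G(\jet^n\ell)$ for an $\cL(K)$-definable function $G$ continuous at $\jet^n\ell$. The Jacobian property (Fact~\ref{fact:rvapprox}) applied to $G$ yields a definable set $U \subseteq K^{n+1}$ containing $\jet^n\ell$ on which either $G$ is constant or there exists $d \in K^{n+1}$ with
\[
v\bigl(G(x)-G(y)-d\cdot(x-y)\bigr)\ >\ vd + v(x-y)
\]
for all $x \neq y$ in $U$. Small derivation and pseudo-convergence place $\jet^n a_\rho \in U$ for large $\rho$. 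In the constant case, $c = G(\jet^n a_\rho) \in K$ eventually, so $vc \in \Gamma_K$ trivially; in the linearized case, $v(G(\jet^n a_\rho) - c) \geq vd + v(\jet^n(a_\rho - \ell))$ tends to $\infty$, so $(G(\jet^n a_\rho))$ pseudo-converges to $c$ in $K$, and since $c \neq 0$ we obtain $vG(\jet^n a_\rho) = vc$ eventually, proving $vc \in \Gamma_K$. The same argument applied to $cg^{-1}$ for $g \in K^{\times}$ with $vg=vc$ yields $\overline{cg^{-1}} \in \k$.

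\emph{Universal property.} Given a pseudolimit $b$ of $(a_\rho)$ in $M$, the transcendental-type hypothesis and the first paragraph's argument (run inside $M$) give $\cL(K)$-independence of $\jet^\infty b$. Iterating Fact~\ref{fact:transext} builds $\cL(K)$-embeddings $K\langle\jet^n\ell\rangle \to M$ sending $\jet^n\ell \mapsto \jet^n b$; their union $\jmath \colon K\llangle\ell\rrangle \to M$ is an $\Ld(K)$-embedding, since by Fact~\ref{fact:transext} the $T$-derivation on $K\llangle\ell\rrangle$ is determined by its values on $\jet^\infty\ell$. Immediateness lets us write any nonzero $u \in K\llangle\ell\rrangle$ as $a(1+\epsilon)$ with $a \in K^{\times}$ and $\epsilon \prec 1$, so $u \in \cO$ iff $a \in \cO$; the same representation with the same $a$ holds for $\jmath(u)$ in $M$, confirming that $\jmath$ preserves $\cO$. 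Uniqueness is automatic since $\jet^\infty\ell$ $\dclL$-generates $K\llangle\ell\rrangle$ over $K$.

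The main obstacle is the middle step: a careful application of the Jacobian property is required both to establish pseudo-convergence of $G(\jet^n a_\rho)$ to $c$ (invoking small derivation so that derivatives of pseudo-convergent sequences remain pseudo-convergent, hence eventually land in the linearization cell $U$), and to rule out the constant case unless $c \in K$. Once this pseudo-convergence machinery is in place, the other two paragraphs amount to bookkeeping with implicit form, affine conjugation, and Fact~\ref{fact:transext}.
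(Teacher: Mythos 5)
This statement is not proved in the paper at all: it is imported as a Fact from \cite[Proposition~6.1]{Ka22}, so the only available comparison is with the proof there, whose overall strategy (implicit form plus the Jacobian property) your sketch correctly identifies. Your first paragraph is essentially sound. The genuine gaps are in the second and third paragraphs, precisely at the points you defer to ``small derivation and pseudo-convergence'' or ``bookkeeping''. First, the claim that $\jet^n a_\rho \in U$ for large $\rho$ does not follow from small derivation and pseudo-convergence: $U=\chi^{-1}(s_0)$ is only known to contain the point $\jet^n\ell$ of $L^{n+1}$, and a definable set with that property need not contain $\jet^n y$ for any $y\in K$ close to $\ell$, nor need it contain an open $v$-ball (which Fact~\ref{fact:rvapprox} requires before the constant/linear dichotomy even applies). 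Ruling this out is the content of \cite[Lemma~5.6]{Ka22}, and it is exactly where the hypothesis $Z_q(K,\ell)=\emptyset$ for all $q$ gets consumed: if $U$ is pinched around the graph of a definable function of the first $n$ coordinates, membership of $\jet^n y$ in $U$ is itself a vanishing condition. Second, with only small derivation one does not have $v(\epsilon^{(i)})\geq v(\epsilon)$ (that is monotonicity, not a standing assumption here), so $v\big(\jet^n(a_\rho-\ell)\big)$ is not controlled from below by $v(a_\rho-\ell)$; your inequality therefore does not yield $G(\jet^n a_\rho)\leadsto c$, let alone $G(\jet^n a_\rho)\sim c$ eventually, and ``tends to $\infty$'' has no meaning in $\Gamma$. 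This step is the substance of \cite[Proposition~5.4]{Ka22}, quoted in the paper inside the proof of Lemma~\ref{lem:pconvvanish}.

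The universal property also has a gap. Fact~\ref{fact:transext} extends derivations; it does not produce $\cL(K)$-embeddings. Knowing that $\jet^\infty\ell$ and $\jet^\infty b$ are each $\cL(K)$-independent does not imply they have the same $\cL$-type over $K$: you must show that $G(\jet^n\ell)$ and $G(\jet^n b)$ realize the same cut over $K$ for every $\cL(K)$-definable $G$, and to get an $\LO(K)$-embedding you need them to be $\sim$ to a common element of $K^\times$ --- both of which again reduce to $G(\jet^n a_\rho)\sim G(\jet^n\ell)\sim G(\jet^n b)$ eventually, i.e., to the same machinery as the immediateness step. So the architecture is right, but as written the argument does not close at the three points where the $\Td$-transcendental-type hypothesis must actually be deployed.
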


\begin{fact}{{\cite[Proposition~6.2]{Ka22}}}\label{prop:Ka226.2}
Suppose that $(a_\rho)$ is of $\Td$-algebraic type over $K$, and let $F$ be a minimal $\Ld$-function of $(a_\rho)$ over $K$.
Then $K$ has an immediate $\TdO$-extension $K\llangle a\rrangle$ with $F(\jet^{r}a)=0$ and $a_\rho\leadsto a$.
If $b$ is a pseudolimit of $(a_\rho)$ in a $\TdO$-extension $M$ of $K$ with $F(\jet^{r}b)=0$, then there is a unique $\LdO(K)$-embedding $K\llangle a\rrangle \to M$ sending $a$ to~$b$.
\end{fact}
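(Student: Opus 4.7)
My approach is to construct $a$ directly inside a suitable $\TdO$-extension of $K$ and then take $K\llangle a\rrangle$ to be the $\Td$-subfield generated by $a$. The argument splits into building $a$ from a pseudolimit of $(a_\rho)$, verifying immediacy, and matching $\LdO(K)$-types for uniqueness.

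First, I would realize a pseudolimit $\ell$ of $(a_\rho)$ in some sufficiently saturated $\TdO$-extension $M^*$ of $K$. Since $F$ is a minimal $\Ld$-function of $(a_\rho)$, we have $F \in Z_r(K,\ell)$, so $F$ vanishes at $(K,\ell)$. To produce $a$ with $F(\jet^r a) = 0$ and $a_\rho \leadsto a$, I would run a pseudo-Newton iteration modelled on the proof of Lemma~\ref{lem:Tdhcdivpc}: at each stage invoke the Jacobian property (Fact~\ref{fact:rvapprox}) to extract a linear differential operator approximating $F$ near the current approximation, and then use vanishing to produce a next approximation at which $vF(\jet^r\,\cdot\,)$ strictly increases; saturation of $M^*$ then yields a pseudolimit $a$ with $F(\jet^r a) = 0$. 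I expect this step to be the main obstacle, since one must carefully exploit the vanishing condition without prematurely producing a $\Td$-algebraic relation of order less than $r$, which minimality of $F$ forbids.

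Once $a$ is in hand, I would verify that $\jet^{r-1}(a)$ is $\cL(K)$-independent (otherwise Fact~\ref{fact:dependence} would make $a$ be $\Td$-algebraic of order less than $r$, contradicting minimality), so $K\llangle a\rrangle = K\langle\jet^{r-1}(a)\rangle$ and, by Fact~\ref{fact:transext}, the derivation is determined by $\der a^{(i)} = a^{(i+1)}$ for $i<r-1$ and $\der a^{(r-1)} = I_F(\jet^{r-1} a)$. For immediacy, each $c \in K\llangle a\rrangle$ has the form $P(\jet^{r-1} a)$ for some $\cL(K)$-definable continuous $P$, and since $a_\rho \leadsto a$, the elements $P(\jet^{r-1} a_\sigma) \in K$ pseudoconverge to $c$; hence $\res c \in \k$ and $vc \in \Gamma$, so no new residues or valuations appear.

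For uniqueness, given a $\TdO$-extension $M$ of $K$ and a pseudolimit $b$ of $(a_\rho)$ in $M$ with $F(\jet^r b)=0$, the same minimality argument shows that $\jet^{r-1}(b)$ is $\cL(K)$-independent. Fact~\ref{fact:transext} then provides a unique $\cL(K)$-embedding $K\langle \jet^{r-1} a\rangle \to M$ sending $a^{(i)}$ to $b^{(i)}$ for $i<r$; this is automatically $\LdO$-compatible because both derivations satisfy the same defining relations on generators and the pseudolimit condition on $b$ forces $\cO$ to be preserved. This map then extends uniquely to the desired $\LdO(K)$-embedding of $K\llangle a\rrangle$ into $M$.
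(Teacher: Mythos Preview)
The paper does not prove this statement: it is recorded as a Fact with a citation to \cite[Proposition~6.2]{Ka22}, so there is no in-paper proof to compare against. That said, your sketch has genuine gaps that would prevent it from going through as written.

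The most serious problem is your immediacy argument. You assert that for every $\cL(K)$-definable continuous $P$, the sequence $P(\jet^{r-1} a_\sigma)$ pseudoconverges to $P(\jet^{r-1} a)$. This is precisely the hard content of \cite[Section~5]{Ka22}: continuity in the order or valuation topology does \emph{not} imply that definable functions respect pseudoconvergence, and establishing this for the relevant $P$ requires the full vanishing machinery together with the minimality hypothesis $Z_q(K,\ell)=\emptyset$ for $q<r$ (cf.\ \cite[Proposition~5.4]{Ka22}, quoted in this paper in the proof of Lemma~\ref{lem:pconvvanish}). Without that, you cannot conclude that $K\llangle a\rrangle$ is immediate.

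Your uniqueness argument also has a gap. Fact~\ref{fact:transext} concerns extending a $T$-derivation once an $\cL$-embedding already exists; it does not produce an $\cL(K)$-embedding $K\langle\jet^{r-1} a\rangle \to M$. For that you must first show that $\jet^{r-1}(a)$ and $\jet^{r-1}(b)$ realize the same $\cL$-type over $K$, which again rests on the analysis of definable functions along the pc-sequence under the minimality hypothesis. Finally, your construction of $a$ via a ``pseudo-Newton iteration'' in a saturated extension is vague: saturation yields pseudolimits, not zeros of $F$, and invoking the Jacobian property at this stage presupposes approximation results (akin to the $\Td$-hensel configuration property) that the paper only establishes later and only under extra hypotheses such as monotonicity.
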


Let us connect pc-sequences of $\Td$-algebraic type to $\Td$-algebraic $\TdO$-extensions.
We say that $K$ is \textbf{$\Td$-algebraically maximal} if $K$ has no proper immediate $\TdO$-extension that is $\Td$-algebraic over~$K$.

\begin{lemma}\label{lem:TdalgmaxTdalgtype}
The following are equivalent:
\begin{enumerate}
	\item $K$ is $\Td$-algebraically maximal;
	\item every pc-sequence of $\Td$-algebraic type over $K$ has a pseudolimit in $K$.
\end{enumerate}
\end{lemma}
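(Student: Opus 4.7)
The plan is to prove each direction by contrapositive, using the existence results for immediate extensions (Facts~\ref{prop:Ka226.1} and \ref{prop:Ka226.2}) in one direction and a vanishing argument in the other.

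For $(1) \Rightarrow (2)$, suppose $(a_\rho)$ is a pc-sequence of $\Td$-algebraic type over $K$ with no pseudolimit in $K$, and let $F$ be a minimal $\Ld$-function of $(a_\rho)$ over $K$, say of arity $1+r$. Fact~\ref{prop:Ka226.2} provides an immediate $\TdO$-extension $K\llangle a\rrangle$ with $F(\jet^r a) = 0$ and $a_\rho \leadsto a$. Since $(a_\rho)$ has no pseudolimit in $K$, we have $a \notin K$, so $K\llangle a\rrangle$ is a \emph{proper} immediate extension. By construction $a$ satisfies $F(\jet^r a) = 0$ with $F$ in implicit form, so $\jet^r(a)$ is $\cL(K)$-dependent, and hence $K\llangle a\rrangle$ is $\Td$-algebraic over $K$. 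Thus $K$ is not $\Td$-algebraically maximal.

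For $(2) \Rightarrow (1)$, suppose $K$ has a proper immediate $\Td$-algebraic $\TdO$-extension $L$, and pick any $a \in L \setminus K$. Since $L$ is an immediate extension, $v(a-K) \subseteq \Gamma$ has no maximum, so by a standard construction we obtain a divergent pc-sequence $(a_\rho)$ in $K$ with $a_\rho \leadsto a$. It then suffices to show that $(a_\rho)$ is of $\Td$-algebraic type over $K$, i.e., $Z(K,a) \neq \0$. Let $r$ be minimal such that $\jet^r(a)$ is $\cL(K)$-dependent; by Fact~\ref{fact:dependence} and the choice of $r$, there is an $\cL(K)$-definable function $G \colon K^r \to K$ with $a^{(r)} = G(\jet^{r-1} a)$. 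Set
\[
F(Y_0, \ldots, Y_r)\ \coloneqq\ Y_r - G(Y_0, \ldots, Y_{r-1}),
\]
so that $F$ is in implicit form with $\fm_F = 1$ and $F(\jet^r a) = 0$. I would then verify $F \in Z_r(K,a)$ directly from the definition: given $a' \in K$ and $d \in K^\times$ with $a - a' \prec d$, unfold $F_{+a', \times d}$ via the Leibniz rule (note $F_{+a',\times d}(\jet^r y) = F(\jet^r(dy+a'))$ for $y \in K$), and compare the value of $I_{F_{+a',\times d}}(\jet^{r-1} y)$ at $y$ close to $d^{-1}(a-a')$ with the identity $F(\jet^r a) = 0$, using small derivation and continuity of the $\cL$-definable functions involved to conclude $I_{F_{+a',\times d}}(\jet^{r-1} y) \prec 1$.

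The main obstacle is precisely this last verification that $F \in Z_r(K,a)$. The affine-conjugation formula involves derivatives of $d$ and $a'$ that do not a priori cancel, so one has to carefully track valuations using small derivation and the fact that $a - a' \prec d$ forces $(dy+a') - a$ and its derivatives to be suitably small whenever $y \in K$ is sufficiently close to $d^{-1}(a-a')$. The rest of the argument is essentially bookkeeping once this key vanishing fact is in hand.
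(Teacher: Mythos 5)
Your $(1)\Rightarrow(2)$ direction is correct and matches the paper: Fact~\ref{prop:Ka226.2} applied to a divergent pc-sequence of $\Td$-algebraic type produces a proper immediate $\Td$-algebraic $\TdO$-extension.

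The $(2)\Rightarrow(1)$ direction, however, has a genuine gap exactly where you flag the ``main obstacle.'' You propose to exhibit a specific $F$ in implicit form with $F(\jet^r a)=0$ and then verify $F\in Z_r(K,a)$ directly from the definition of vanishing. This verification is not bookkeeping: vanishing at $(K,a)$ is a uniform condition on $I_{F_{+a',\times d}}$ over \emph{all} pairs $(a',d)$ with $a-a'\prec d$ and all $y\in K$ sufficiently close to $d^{-1}(a-a')$, and the affine conjugate $F_{+a',\times d}$ involves the derivatives of $d$ and $a'$ in a way that the Leibniz-rule unfolding does not obviously control. The implication ``$F(\jet^r a)=0$ and $Z_q(K,a)=\0$ for $q<r$ implies $F\in Z_r(K,a)$'' is essentially \cite[Proposition~5.4]{Ka22} (used in Lemma~\ref{lem:pconvvanish}), and you would additionally need to know that $Z_q(K,a)=\0$ for all $q$ below your chosen $r$, which you have not addressed; so as written the argument re-derives nontrivial machinery from \cite{Ka22} and does not complete it. The paper avoids all of this with a one-line appeal to the contrapositive of Fact~\ref{prop:Ka226.1}: if the divergent pc-sequence $(a_\rho)$ with $a_\rho\leadsto a$ were of $\Td$-transcendental type, then $a$ would be $\Td$-transcendental over $K$, contradicting that the extension is $\Td$-algebraic over $K$. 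Hence $Z(K,a)\neq\0$ and $(a_\rho)$ is of $\Td$-algebraic type. I recommend replacing your direct vanishing verification with this appeal to Fact~\ref{prop:Ka226.1}.
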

\begin{proof}
If $(a_\rho)$ is of $\Td$-algebraic type over $K$, then Fact~\ref{prop:Ka226.2} provides a proper immediate $\TdO$-extension of $K$ that is $\Td$-algebraic over~$K$. Conversely, if $a$ is an element in a proper immediate $\TdO$-extension that is $\Td$-algebraic over $K$, then any divergent pc-sequence in $K$ with pseudolimit $a$ is necessarily of $\Td$-algebraic type over $K$ by Fact~\ref{prop:Ka226.1}.
\end{proof}

Now we show that the divergent pc-sequence constructed in Lemma~\ref{lem:Tdhcdivpc} is of $\Td$-algebraic type over~$K$.

\begin{lemma}\label{lem:pconvvanish}
If $Z_q(K, \ell) = \0$ for all $q<r$ and $F(\jet^r a_\rho) \leadsto 0$, then $F \in Z_r(K, \ell)$.
In particular, if $F(\jet^r a_\rho) \leadsto 0$, then $Z(K, \ell)\neq\0$.
\end{lemma}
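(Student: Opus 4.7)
Fix $a \in K$ and $d \in K^\times$ with $\ell - a \prec d$; set $G \coloneqq F_{+a,\times d}$, $b \coloneqq d^{-1}(\ell-a)$, and $\tilde{a}_\rho \coloneqq d^{-1}(a_\rho-a)$. Then $b \prec 1$, $(\tilde{a}_\rho)$ is a divergent pc-sequence in $K$ with $\tilde{a}_\rho \leadsto b$, and $G(\jet^r\tilde{a}_\rho) = F(\jet^r a_\rho) \leadsto 0$. The goal is to show $I_G(\jet^{r-1}y) \prec 1$ for all $y \in K$ sufficiently close to $b$. Because $G$ is in implicit form, a direct computation gives $I_G(\jet^{r-1}y) = y^{(r)} - G(\jet^r y)/\fm_G$, and for $y$ close enough to $b$ we have $y \prec 1$ and hence $y^{(r)} \prec 1$ by small derivation. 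Thus it suffices to show $vG(\jet^r y) > v\fm_G$ for all such $y$.

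Apply the Jacobian property (Fact~\ref{fact:rvapprox}) to $G\colon K^{1+r}\to K$. For $\rho$ sufficiently large the tuples $\jet^r\tilde{a}_\rho$ all lie in a single fiber $\chi\inv(s_0)$ of the resulting partition---successive such tuples eventually lie in a common small open $v$-ball, which is contained in a single fiber---and the same holds for $\jet^r y$ when $y$ is close enough to $b$. If $G$ is constant on $\chi\inv(s_0)$, that constant equals $G(\jet^r\tilde{a}_\rho)$ for $\rho$ large, and it must be $0$ since $vG(\jet^r\tilde{a}_\rho)$ is strictly increasing cofinally; hence $G(\jet^r y) = 0$ for $y$ near $b$, which suffices. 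Otherwise $G$ admits a linear approximation on $\chi\inv(s_0)$: there is $A = \sum_{i=0}^r e_i\der^i \in K[\der]$ with
\[ G(\jet^r y) \ = \ G(\jet^r\tilde{a}_\rho) + A(y-\tilde{a}_\rho) + \delta,\qquad v\delta > ve + v\jet^r(y-\tilde{a}_\rho), \]
for $y$ close to $b$ and $\rho$ large. Applying this identity to pairs of indices yields $A(\tilde{a}_\rho-\tilde{a}_\sigma) \leadsto 0$, and the Equalizer Theorem then bounds $vA(y-\tilde{a}_\rho)$ from below in terms of $v(y - \tilde{a}_\rho)$. Combining the high valuation of $G(\jet^r\tilde{a}_\rho)$, the error $\delta$, and this bound gives $vG(\jet^r y) > v\fm_G$ for $y$ close enough to $b$.

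The main obstacle is ruling out degenerate configurations of $A$, and this is precisely where the hypothesis $Z_q(K,\ell)=\0$ for $q<r$ enters. If the effective order of $A$ were strictly less than $r$---for example, if the top coefficient $e_r$ were dominated by the lower-order contributions when applied to differences $\tilde{a}_\rho-\tilde{a}_\sigma$---one could extract from $A$, suitably normalized and put in implicit form, an $\cL(K)$-definable function of arity at most $r$ that, together with a pc-sequence derived from $(\tilde{a}_\rho)$ by the previously established pseudoconvergence, would witness membership in $Z_q(K,\ell)$ for some $q<r$, contradicting the minimality hypothesis. Minimality therefore forces $A$ into the shape needed to complete the estimate. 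The ``in particular'' clause is immediate: if $Z_q(K,\ell) \neq \0$ for some $q<r$, then $Z(K,\ell) \neq \0$ directly; otherwise the main statement gives $F \in Z_r(K,\ell) \subseteq Z(K,\ell)$.
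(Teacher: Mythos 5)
Your proposal does not follow the paper's route, and as written it has genuine gaps. The paper proves the contrapositive in one line: assuming $Z_q(K,\ell)=\0$ for $q<r$ and $F\notin Z_r(K,\ell)$, it invokes \cite[Proposition~5.4]{Ka22}, which yields $F(\jet^r\ell)\neq 0$ and $F(\jet^r a_\rho)\sim F(\jet^r\ell)$ eventually, whence $F(\jet^r a_\rho)\not\leadsto 0$. You instead try to verify the definition of vanishing directly, which in effect means re-deriving the substance of that cited result; the sketch does not supply the needed ingredients.

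Two steps are unjustified in a way that matters. First, you assert that the jets $\jet^r\tilde{a}_\rho$ (and $\jet^r y$ for $y$ near $b$) all land in a single fiber $\chi\inv(s_0)$ of the Jacobian-property partition, ``because successive tuples lie in a common small open $v$-ball contained in a single fiber.'' Fibers of $\chi$ are not unions of open $v$-balls, and Fact~\ref{fact:rvapprox} only says something about a fiber if it happens to contain an open $v$-ball somewhere; the set of jets is a thin subset of $K^{1+r}$, so neither containment is automatic. In the paper this is exactly the point of \cite[Lemma~5.6]{Ka22} (used in Proposition~\ref{prop:monotoneapprox}), whose proof is where the hypothesis $Z_q(K,\ell)=\0$ for $q<r$ actually does its work. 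Second, in the non-constant case the Equalizer Theorem only gives $vA(y-\tilde{a}_\rho)\geq v_A\big(v(y-\tilde{a}_\rho)\big)$, and there is no reason this lower bound ever exceeds $v\fm_G$: the image $v_A\big(v(b-K)\big)$ is increasing without maximum but need not be cofinal past $v\fm_G$. Your third paragraph acknowledges that something must be ruled out here but replaces the argument with ``minimality forces $A$ into the shape needed,'' without saying what shape, how a witness in $Z_q(K,\ell)$ with $q<r$ would actually be manufactured from a degenerate $A$, or how the final estimate closes. The ``in particular'' clause is fine, but the main claim is not established by this outline; the clean fix is the paper's contrapositive argument via \cite[Proposition~5.4]{Ka22}.
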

\begin{proof}
Suppose that $Z_q(K, \ell) = \0$ for all $q<r$ and $F \not\in Z_r(K, \ell)$.
By \cite[Proposition~5.4]{Ka22}, $F(\jet^r \ell) \neq 0$ and $F(\jet^r a_\rho) \sim F(\jet^r \ell)$ for all sufficiently large $\rho$, and thus $F(\jet^r a_\rho) \not\leadsto 0$.
\end{proof}

In light of Lemmas~\ref{lem:TdalgmaxTdalgtype} and~\ref{lem:pconvvanish}, Lemma~\ref{lem:Tdhcdivpc} yields the following refinement of Corollary~\ref{cor:sphcompTdh}.
\begin{corollary}\label{cor:TdalgmaxTdh}
If $\k$ is linearly surjective and $K$ is $\Td$-algebraically maximal, then $K$ is $\Td$-henselian.
\end{corollary}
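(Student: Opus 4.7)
The plan is to verify the two defining axioms of $\Td$-henselianity directly, using contradiction for the nontrivial one and chaining together the three preceding lemmas. Axiom \textnormal{($\Td$H1)}, that $\k$ is linearly surjective, holds by hypothesis. So everything reduces to establishing \textnormal{\ref{TdH2'}}.

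For that, let $(F, A, a, \gamma)$ be in $\Td$-hensel configuration, with $F$ of arity $1+r$. Suppose toward a contradiction that there is no $b \in B(a,\gamma)$ with $F(\jet^r b) = 0$ and $vA_{\times(b-a)} \geq vF(\jet^r a)$. Equivalently, $F(\jet^r b) \neq 0$ for all $b \in B(a,\gamma)$ satisfying $vA_{\times(b-a)} \geq vF(\jet^r a)$, which is exactly the hypothesis of Lemma~\ref{lem:Tdhcdivpc}. That lemma applies (we have $\k$ linearly surjective) and yields a divergent pc-sequence $(a_\rho)$ in $K$ with $F(\jet^r a_\rho) \leadsto 0$.

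The next step is to promote this pc-sequence to one of $\Td$-algebraic type. Pick a pseudolimit $\ell$ of $(a_\rho)$ in some $\TdO$-extension of $K$ (which exists by passing to a spherical completion, or simply by Fact~\ref{prop:Ka226.1} applied to any divergent pc-sequence). Let $q \leq r$ be minimal such that $Z_q(K,\ell) \neq \0$ if such a $q$ exists; otherwise, Lemma~\ref{lem:pconvvanish}, applied with our $F$ and its arity $1+r$, forces $F \in Z_r(K,\ell)$ from $F(\jet^r a_\rho) \leadsto 0$. Either way, $Z(K,\ell) \neq \0$, so $(a_\rho)$ is of $\Td$-algebraic type over $K$.

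Finally, Lemma~\ref{lem:TdalgmaxTdalgtype} applied to our $\Td$-algebraically maximal $K$ says that every pc-sequence of $\Td$-algebraic type has a pseudolimit in $K$; applied to $(a_\rho)$ this contradicts the fact that $(a_\rho)$ is divergent in $K$. Hence \textnormal{\ref{TdH2'}} holds, proving the corollary. The genuine work is buried in Lemmas~\ref{lem:Tdhcdivpc} and~\ref{lem:pconvvanish}; no additional obstacle arises here beyond tracking that the pc-sequence produced by the former is of the shape recognized by the latter.
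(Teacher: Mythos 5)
Your proof is correct and follows exactly the route the paper intends: Lemma~\ref{lem:Tdhcdivpc} to produce a divergent pc-sequence with $F(\jet^r a_\rho)\leadsto 0$, Lemma~\ref{lem:pconvvanish} to see it is of $\Td$-algebraic type, and Lemma~\ref{lem:TdalgmaxTdalgtype} to contradict divergence. One tiny quibble: the parenthetical appeal to Fact~\ref{prop:Ka226.1} for the existence of a pseudolimit $\ell$ is misplaced, since that fact only applies to sequences already known to be of $\Td$-transcendental type; the correct justification is the one you also give, namely a spherically complete immediate extension via Fact~\ref{fact:sphcomp} (available because linear surjectivity of $\k$ forces its derivation to be nontrivial).
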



\subsection{More on behavior along pc-sequences}\label{sec:Tdhc}

In this subsection, we prove some technical lemmas regarding behavior along pc-sequences. For the remainder of this subsection, let $(a_\rho)$ be a divergent pc-sequence in $K$, let $L$ be a $\TdO$-extension of $K$, and let $\ell \in L$ be a pseudolimit of $(a_\rho)$. Set $\gamma_{\rho} \coloneqq v(a_{\rho+1}-a_{\rho})$, and set $B_\rho \coloneqq B(a_{\rho+1},\gamma_\rho)$. In this context, \emph{eventually} means for all sufficiently large $\rho$.
In our definition of linear approximation, we quantify over (pairs of) elements of $K$, so linear approximation in $K$ need not imply linear approximation in $L$. Thus, we will often need to work in $L$ instead of in $K$; when we do this, we implicitly identify $F$ with its natural extension $F^L \colon L^{1+r} \to L$ and $A$ with the obvious element of $L[\der]$.
The next lemma is an analogue of \cite[Lemma~6.8.1]{ADH17}. 
\begin{lemma}\label{lem:6.8.1}
Suppose that the derivation of $\k$ is nontrivial and $A$ linearly approximates $F$ on $B_{\rho}^L$, eventually.
Then there is a pc-sequence $(b_\rho)$ in $K$ equivalent to $(a_\rho)$ such that $F(\jet^r b_\rho) \leadsto F(\jet^r \ell)$.
\end{lemma}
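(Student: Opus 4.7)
The plan is to construct $(b_\rho)$ by perturbing each $a_\rho$ by a small element of $K$, chosen to force a non-vanishing condition at the residue level so that the linear approximation pins down the valuation of $F(\jet^r \ell) - F(\jet^r b_\rho)$ exactly.

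Fix $\rho_0$ large enough that $A$ linearly approximates $F$ on $B_{\rho_0}^L$; any $b \in K$ with $v(\ell - b) > \gamma_{\rho_0}$ then satisfies $b, \ell \in B_{\rho_0}^L$, so the approximation applies to $(\ell, b)$. For each $\rho > \rho_0$, pick $\phi_\rho, \psi_\rho \in K^\times$ with $v\phi_\rho = \gamma_\rho$ and $v\psi_\rho = v_A(\gamma_\rho)$, and set $D_\rho \coloneqq \psi_\rho^{-1} A_{\times \phi_\rho} \in K[\der]$. Since $v A_{\times \phi_\rho} = v_A(\gamma_\rho) = v\psi_\rho$, we have $vD_\rho = 0$, so the reduction $\bar D_\rho \in \k[\der]$ is a nonzero linear differential operator of order at most $r$. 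For any $b \in K$ with $v(\ell - b) = \gamma_\rho$, setting $y \coloneqq (\ell - b)/\phi_\rho \in L^\times$ gives $vy = 0$ and $A(\ell - b) = \psi_\rho D_\rho(y)$, so $vA(\ell - b) = v_A(\gamma_\rho)$ \emph{precisely} when $\bar D_\rho(\bar y) \neq 0$ in $\k_L$.

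The main step is to produce $b_\rho$ of this form with the non-vanishing condition. Starting from $y_\rho \coloneqq (\ell - a_\rho)/\phi_\rho$, replacing $a_\rho$ by $a_\rho + \phi_\rho u$ with $u \in \cO$ changes $y_\rho$ to $y_\rho - u$, and hence $\bar y_\rho$ to $\bar y_\rho - \bar u$; as $u$ varies, $\bar u$ ranges over $\k$. We therefore need $c \in \k$ with $\bar D_\rho(c) \neq \bar D_\rho(\bar y_\rho)$; the set of $c \in \k$ that fail this is either empty or a coset of $\ker \bar D_\rho \cap \k$, which is a $C_\k$-subspace of $\k$ of dimension at most $r$ by the standard Wronskian bound. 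Here the hypothesis on $\k$ enters: any element of $\k$ algebraic over $C_\k$ lies in $C_\k$ (differentiate its minimal polynomial over $C_\k$ and use separability), so nontriviality of the derivation forces $\k$ to be transcendental, hence infinite-dimensional, over $C_\k$. Thus no finite-dimensional coset exhausts $\k$, so a suitable $c_\rho$ exists; lift it to $u_\rho \in \cO$ and set $b_\rho \coloneqq a_\rho + \phi_\rho u_\rho \in K$.

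It remains to verify the conclusions. The non-vanishing $\bar D_\rho(\bar y_\rho - c_\rho) \neq 0$ forces $\bar y_\rho - c_\rho \neq 0$, so $v(\ell - b_\rho) = \gamma_\rho$ and $vA(\ell - b_\rho) = v_A(\gamma_\rho)$. The linear approximation at the pair $(\ell, b_\rho)$ gives $F(\jet^r \ell) - F(\jet^r b_\rho) = A(\ell - b_\rho) + \varepsilon_\rho$ with $v\varepsilon_\rho > v_A(\gamma_\rho)$, so $v(F(\jet^r \ell) - F(\jet^r b_\rho)) = v_A(\gamma_\rho)$, which is strictly increasing in $\rho$ since $v_A$ is strictly increasing. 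Hence $F(\jet^r b_\rho) \leadsto F(\jet^r \ell)$, and since $v(\ell - b_\rho) = \gamma_\rho$ is strictly increasing, $(b_\rho) \leadsto \ell$, so $(b_\rho)$ is a pc-sequence in $K$ equivalent to $(a_\rho)$. The main obstacle is the residue-level adjustment: without nontriviality of the derivation on $\k$, one cannot prevent $\bar D_\rho(\bar y_\rho)$ from vanishing, in which case $vA(\ell - a_\rho)$ might strictly exceed $v_A(\gamma_\rho)$ by an uncontrolled amount, breaking strict monotonicity.
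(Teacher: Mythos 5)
Your proof is correct and follows essentially the same route as the paper's: the paper sets $b_\rho = \ell + g_\rho y_\rho$ with $vg_\rho = \gamma_\rho$ and $vA(g_\rho y_\rho) = v_A(\gamma_\rho)$, citing the proof of \cite[Lemma~6.8.1]{ADH17} for the choice of $y_\rho$, and then concludes exactly as you do. Your residue-level perturbation argument (non-vanishing of $\bar D_\rho$ via the Wronskian bound and the infinite $C_{\k}$-dimension of $\k$ when its derivation is nontrivial) is precisely the construction the paper outsources to \cite{ADH17}, so you have simply written out the details it leaves implicit.
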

\begin{proof}
By removing some initial terms of the sequence, we can assume that for all $\rho$, $A$ linearly approximates $F$ on $B_{\rho}^L$ and $\gamma_\rho = v(\ell-a_\rho)$, and also that $\gamma_\rho$ is strictly increasing as a function of $\rho$.
Take $g_\rho \in K$ and $y_\rho \in \cO_L^{\times}$ as in the proof of \cite[Lemma~6.8.1]{ADH17} (with $\ell$, $F$, and $A$ replacing $a$, $G$, and $P$, respectively) so that $vg_\rho = \gamma_\rho$, $b_\rho \coloneqq \ell + g_{\rho}y_{\rho} \in K$, and $vA(g_\rho y_\rho) = v_A(\gamma_\rho)$.
Then $(b_\rho)$ is a pc-sequence equivalent to $(a_\rho)$ and
\[
F(\jet^r b_\rho) - F(\jet^r \ell)\ =\ A(g_\rho y_\rho) + \varepsilon_\rho,
\]
where $\varepsilon_\rho \in L$ with $v\varepsilon_\rho > vA_{\times g_\rho y_\rho} = v_A(\gamma_\rho)$.
Since $\gamma_\rho$ is strictly increasing and $vA(g_\rho y_\rho) = v_A(\gamma_\rho)$, we have $F(\jet^r b_\rho) \leadsto F(\jet^r \ell)$, as desired.
\end{proof}

We say that $(F, A, (a_\rho))$ is in \textbf{$\Td$-hensel configuration} if there is an index $\rho_0$ such that $(F, A, a_{\rho'}, \gamma_\rho)$ is in $\Td$-hensel configuration for all $\rho'>\rho\geq\rho_0$.
Tacitly, this is relative to $K$; if this also holds relative to $L$, then we say that  $(F, A, (a_\rho))$ is in \textbf{$\Td$-hensel configuration in $L$}. Note that $(F, A, (a_\rho))$ is in $\Td$-hensel configuration  in $L$ if and only if $(F, A, (a_\rho))$ is in $\Td$-hensel configuration and $A$ linearly approximates $F$ on $B^L_\rho$, eventually. 
Modulo passing to cofinal subsequences, being in $\Td$-hensel configuration is preserved by equivalence of pc-sequences:
\begin{lemma}\label{lem:Tdhcequivpcseq}
Suppose that $(F, A, (a_\rho))$ is in $\Td$-hensel configuration in $L$ and let $(b_{\sigma})$ be a pc-sequence in $K$ equivalent to $(a_\rho)$.
There exists a cofinal subsequence $(b_{\lambda})$ of $(b_{\sigma})$ such that $(F, A, (b_{\lambda}))$ is in $\Td$-hensel configuration in $L$.
\end{lemma}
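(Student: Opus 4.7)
The plan is to exploit that equivalent pc-sequences share pseudolimits, so $\ell$ is a pseudolimit of $(b_\sigma)$ too, and that by the characterizations in \cite[Lemma~2.2.17]{ADH17} the valuations $\gamma_\rho$ and $\delta_\sigma \coloneqq v(b_{\sigma+1}-b_\sigma)$ are mutually cofinal in $v(\ell-K) \subseteq \Gamma$. After discarding initial terms from both sequences, I may arrange that $\gamma_\rho$ and $\delta_\sigma$ are strictly increasing, that $v(\ell-a_\rho)=\gamma_\rho$ and $v(\ell-b_\sigma)=\delta_\sigma$ throughout, and that $(F,A,a_{\rho'},\gamma_\rho)$ is in $\Td$-hensel configuration in $L$ for every $\rho'>\rho$. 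Since $\ell \in B(a_{\rho'},\gamma_\rho)$, the open ball $B(a_{\rho'},\gamma_\rho)$ equals $B(\ell,\gamma_\rho)$, so the configuration hypothesis says $A$ linearly approximates $F$ on $B(\ell,\gamma_\rho)^L$ for each $\rho$.

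The central intermediate claim is that $vF(\jet^r \ell) > v_A(\gamma_\rho)$ for every $\rho$. I would prove this by fixing $\rho$, picking any $\rho'>\rho$, and applying the linear approximation of $F$ by $A$ to the pair $(a_{\rho'},\ell)$, both lying in $B(\ell,\gamma_\rho)^L$, to obtain
\[
F(\jet^r \ell) \ =\ F(\jet^r a_{\rho'}) + A(\ell-a_{\rho'}) + \eta, \qquad v\eta \ >\ v_A(\gamma_{\rho'}).
\]
Each summand on the right has valuation strictly above $v_A(\gamma_\rho)$: the first by the hensel configuration condition on $(F,A,(a_\rho))$, the second because $vA(x) \geq v_A(vx)$ and $\gamma_{\rho'} > \gamma_\rho$, and the third by strict monotonicity of $v_A$.

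To finish, I would pick $\sigma_0$ with $\delta_{\sigma_0} > \gamma_0$ (available by cofinality) and set $(b_\lambda) \coloneqq (b_\sigma)_{\sigma \geq \sigma_0}$, a cofinal subsequence of $(b_\sigma)$. For $\lambda'>\lambda\geq\sigma_0$ the ball $B(b_{\lambda'},\delta_\lambda)^L = B(\ell,\delta_\lambda)^L$ is contained in $B(\ell,\gamma_0)^L$, so $A$ linearly approximates $F$ on it. For the remaining valuation inequality I expand as before around $\ell$,
\[
F(\jet^r b_{\lambda'}) \ =\ F(\jet^r \ell) + A(b_{\lambda'}-\ell) + \eta', \qquad v\eta' \ >\ v_A(\delta_{\lambda'}),
\]
and bound each summand from below by $v_A(\delta_\lambda)$: the $A$-term via $vA(b_{\lambda'}-\ell) \geq v_A(\delta_{\lambda'}) > v_A(\delta_\lambda)$, the error immediately, and the $F(\jet^r\ell)$-term by choosing $\rho$ with $\gamma_\rho \geq \delta_\lambda$ by cofinality and invoking the central claim to get $vF(\jet^r\ell) > v_A(\gamma_\rho) \geq v_A(\delta_\lambda)$.

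The main technical point is this last use of cofinality: the central claim only bounds $vF(\jet^r\ell)$ directly against $v_A(\gamma_\rho)$, and in the case $F(\jet^r\ell) \neq 0$ one needs to know $vF(\jet^r\ell)$ exceeds every $v_A(\delta_\lambda)$ too; that is exactly what mutual cofinality of $\{\gamma_\rho\}$ and $\{\delta_\sigma\}$ in $v(\ell-K)$, combined with strict monotonicity of $v_A$, delivers.
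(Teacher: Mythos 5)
Your proof is correct, but it is organized differently from the paper's. The paper never touches the pseudolimit $\ell$: it discards initial terms, matches up indices so that $A$ linearly approximates $F$ on a common ball $B^L_{\rho_0}\supseteq B^L_{\sigma_0}$, and then argues by contradiction — assuming $vF(\jet^r b_\sigma)\leq v_A(\delta_{\sigma_1})$, it applies the approximation identity to the pair $(b_\sigma,a_\rho)$ for large $\rho$ and contradicts $v\big(F(\jet^r b_\sigma)-F(\jet^r a_\rho)-A(b_\sigma-a_\rho)\big)>vA_{\times(b_\sigma-a_\rho)}$; iterating produces the cofinal subsequence. You instead anchor everything at $\ell\in L$: your central claim $vF(\jet^r\ell)>v_A(\gamma_\rho)$ for all $\rho$ is essentially the first step of the paper's proof of Lemma~\ref{lem:mindiffzero} (that $(F,A,\ell,\gamma_\rho)$ is in $\Td$-hensel configuration in $L$), and you then transfer from $\ell$ to the $b_{\lambda'}$ by one more ultrametric expansion, using mutual cofinality of $\{\gamma_\rho\}$ and $\{\delta_\sigma\}$ in $v(\ell-K)$. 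The underlying estimates ($vA(x)\geq vA_{\times x}=v_A(vx)$, strict monotonicity of $v_A$, and the error bound from linear approximation) are identical in both arguments. What your route buys is a direct (non-contradiction) proof and a slightly cleaner conclusion — a tail of $(b_\sigma)$ works, rather than an iteratively extracted subsequence; what the paper's route buys is independence from any particular choice of pseudolimit and a proof that lives entirely among the terms of the two sequences. Both are valid because linear approximation is assumed to hold over $L$, so applying it to pairs involving $\ell$ is legitimate.
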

\begin{proof}
By discarding some initial terms, we can assume that $\gamma_\rho$ and $\delta_\sigma \coloneqq v(b_{\sigma+1}-b_{\sigma})$ are strictly increasing.


First, take $\rho_0$ and $\sigma_0$ large enough that $(F, A, a_{\rho'}, \gamma_{\rho})$ is in $\Td$-hensel configuration in $L$ for all $\rho'>\rho \geq \rho_0$ and $v(a_\rho-b_\sigma)>\gamma_{\rho_0}$ for all $\rho>\rho_0$ and $\sigma>\sigma_0$.
By increasing $\sigma_0$, we can arrange that $\delta_{\sigma_0} \geq \gamma_{\rho_0}$.
Then for any $\rho>\rho_0$ and $\sigma>\sigma_0$, $A$ linearly approximates $F$ on 
$B^L_{\rho_0}\supseteq B^L_{\sigma_0}$.

Second, take $\sigma_1 > \sigma_0$ and $\rho_1 > \rho_0$ sufficiently large that $v(b_{\sigma}-a_{\rho})>\delta_{\sigma_1}$ for all $\sigma>\sigma_1$ and $\rho>\rho_1$.
Let $\sigma>\sigma_1$.
To show that $(F, A, b_{\sigma}, \delta_{\sigma_1})$ is in $\Td$-hensel configuration in $L$, suppose towards a contradiction that $vF(\jet^{r}b_{\sigma}) \leq v_A(\delta_{\sigma_1})$.
By increasing $\rho_1$, we can assume that $\gamma_{\rho_1}>\delta_{\sigma_1}$.
Then for $\rho>\rho_1$ we have $vF(\jet^{r}a_{\rho})>v_A(\gamma_{\rho_1})>v_A(\delta_{\sigma_1})$ and $vA(b_\sigma-a_\rho) \geq vA_{\times(b_\sigma-a_\rho)}>v_A(\delta_{\sigma_1})$, and thus
\[
v\big( F(\jet^{r}b_{\sigma})-F(\jet^{r}a_{\rho}) - A(b_{\sigma}-a_{\rho}) \big)\ =\ vF(\jet^{r}b_{\sigma})\ \leq\ v_A(\delta_{\sigma_1})\ <\ vA_{\times (b_{\sigma}-a_{\rho})},
\]
contradicting that $A$ linearly approximates $F$ on $B^L_{\rho_0}$.
Iterating this second step yields a cofinal subsequence of $(b_{\sigma})$ with the desired property.
\end{proof}

If $(F, A, (a_\rho))$ is in $\Td$-hensel configuration in $L$, then by definition, $A$ linearly approximates $F$ on $B_{\rho}^L$, eventually. The converse holds, under the assumption that $F(\jet^ra_\rho)\leadsto 0$.

\begin{lemma}\label{lem:Tdhc}
Suppose that $F(\jet^ra_\rho)\leadsto 0$ and that $A$ linearly approximates $F$ on $B_{\rho}^{L}$, eventually. Then $(F, A, (a_\rho))$ is in $\Td$-hensel configuration in $L$. 
\end{lemma}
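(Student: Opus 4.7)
The plan is to produce an index $\rho_0$ such that $(F,A,a_{\rho'},\gamma_\rho)$ is in $\Td$-hensel configuration in $L$ for every $\rho'>\rho\geq\rho_0$. After discarding initial terms, I would arrange that three things hold simultaneously for $\rho\geq\rho_0$: $\gamma_\rho$ is strictly increasing (as $(a_\rho)$ is a divergent pc-sequence), $A$ linearly approximates $F$ on $B_\rho^L$ (by hypothesis), and $vF(\jet^r a_\rho)$ is strictly increasing, which is precisely the content of $F(\jet^r a_\rho)\leadsto 0$.

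Fix now $\rho'>\rho\geq\rho_0$. The linear-approximation clause in the definition of $\Td$-hensel configuration is essentially free: strict monotonicity of $(\gamma_\sigma)$ gives $v(a_{\rho'}-a_{\rho+1})\geq\gamma_{\rho+1}>\gamma_\rho$, so $a_{\rho'}\in B_\rho^L$, and therefore $B(a_{\rho'},\gamma_\rho)^L = B_\rho^L$, on which $A$ approximates $F$. The real work is the valuation inequality $vF(\jet^r a_{\rho'})>v_A(\gamma_\rho)$. My approach is to apply linear approximation on $B_\rho^L$ to the consecutive pair $a_{\rho'+1},a_{\rho'}$, obtaining
\[
v\bigl(F(\jet^r a_{\rho'+1})-F(\jet^r a_{\rho'})-A(a_{\rho'+1}-a_{\rho'})\bigr)\ >\ vA_{\times(a_{\rho'+1}-a_{\rho'})}\ =\ v_A(\gamma_{\rho'}),
\]
and to combine this with the elementary inequality $vA(g)\geq v_A(vg)$, which holds because $A(g)=A_{\times g}(1)$ equals the constant coefficient of the operator $A_{\times g}$ and that coefficient has valuation at least $vA_{\times g}$. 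Together these give $v\bigl(F(\jet^r a_{\rho'+1})-F(\jet^r a_{\rho'})\bigr)\geq v_A(\gamma_{\rho'})$. Strict monotonicity of $vF(\jet^r a_\sigma)$ forces $v\bigl(F(\jet^r a_{\rho'+1})-F(\jet^r a_{\rho'})\bigr)=vF(\jet^r a_{\rho'})$, and since $v_A$ is strictly increasing and $\gamma_{\rho'}>\gamma_\rho$,
\[
vF(\jet^r a_{\rho'})\ \geq\ v_A(\gamma_{\rho'})\ >\ v_A(\gamma_\rho),
\]
as required.

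I do not anticipate a serious obstacle here: the whole argument is bookkeeping with the three monotonicity conditions guaranteed at $\rho_0$, plus the trivial operator inequality $vA(g)\geq v_A(vg)$. In contrast to Lemma~\ref{lem:Tdhcequivpcseq}, no passage to a cofinal subsequence is needed, because working with the consecutive pair $(a_{\rho'+1},a_{\rho'})$ already supplies the bound $v_A(\gamma_{\rho'})$, which strictly exceeds $v_A(\gamma_\rho)$ for free.
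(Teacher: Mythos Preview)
Your proof is correct and follows essentially the same approach as the paper's. The only difference is cosmetic: the paper applies the linear-approximation inequality to the pair $(a_{\rho'},a_\rho)$ to obtain $vF(\jet^r a_\rho)\geq v_A(\gamma_\rho)$ and then uses $vF(\jet^r a_{\rho'})>vF(\jet^r a_\rho)$, whereas you apply it to $(a_{\rho'+1},a_{\rho'})$ to obtain $vF(\jet^r a_{\rho'})\geq v_A(\gamma_{\rho'})$ and then use $v_A(\gamma_{\rho'})>v_A(\gamma_\rho)$; either choice yields the desired strict inequality by the same mechanism.
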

\begin{proof}
Take $\rho_0$ such that $A$ linearly approximates $F$ on $B_{\rho_0}^L$. By increasing $\rho_0$, we may assume that $vF(\jet^ra_\rho)$ is strictly increasing for $\rho>\rho_0$. Let $\rho' > \rho> \rho_0$ be given. We need to show that $(F,A,a_{\rho'},\gamma_\rho)$ is in $\Td$-hensel configuration in $L$. Since $A$ linearly approximates $F$ on $B_\rho^L$, it suffices to show that $vF(\jet^ra_{\rho'}) > v_A(\gamma_\rho)$. Since $a_{\rho'}$ and $a_\rho$ are in $B_{\rho_0}$, we have
\[
v\big(F(\jet^ra_{\rho'}) - F(\jet^ra_\rho) - A(a_{\rho'}-a_\rho)\big) \ > \ vA_{\times (a_{\rho'}-a_\rho)}\ =\ v_A(\gamma_\rho). 
\]
Since $vA(a_{\rho'}- a_\rho) \geq v_A(\gamma_\rho)$, we must have $v\big(F(\jet^ra_{\rho'}) - F(\jet^ra_\rho)\big) \geq v_A(\gamma_\rho)$ as well. This gives $vF(\jet^ra_\rho)\geq v_A(\gamma_\rho)$, since $F(\jet^ra_{\rho'}) \prec F(\jet^ra_\rho)$, so $vF(\jet^ra_{\rho'}) > v_A(\gamma_\rho)$, as desired.
\end{proof}

The next lemma shows that $\Td$-hensel configuration in the presence of $\Td$-henselianity allows us to find a pseudolimit that is also a zero of the definable function, a key step towards our results.
\begin{lemma}\label{lem:mindiffzero}
Suppose that $L$ is $\Td$-henselian and that $(F, A, (a_\rho))$ is in $\Td$-hensel configuration in $L$. Then there exists $b \in L$ such that $a_\rho \leadsto b$ and $F(\jet^rb)=0$.
\end{lemma}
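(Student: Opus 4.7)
The plan is to apply $\Td$-henselianity in $L$ directly and then verify, with some work, that the zero produced is also a pseudolimit of $(a_\rho)$.

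First, I would show that $vF(\jet^r a_\rho)$ is eventually strictly increasing (so $F(\jet^r a_\rho) \leadsto 0$ in $L$). By the definition of $\Td$-hensel configuration in $L$, $A$ linearly approximates $F$ on $B_\rho^L$ for $\rho \geq \rho_0$, so writing
\[
F(\jet^r a_{\rho+1}) - F(\jet^r a_\rho) - A(a_{\rho+1}-a_\rho) \ =\ \epsilon_\rho \quad\text{with}\quad v\epsilon_\rho > v_A(\gamma_\rho),
\]
and combining this with the strict hensel-configuration bound $vF(\jet^r a_{\rho+1}) > v_A(\gamma_\rho)$, a short case analysis (on whether $vA(a_{\rho+1}-a_\rho) = v_A(\gamma_\rho)$ or is larger) forces $v_A(\gamma_{\rho-1}) < vF(\jet^r a_\rho) \leq v_A(\gamma_\rho)$ eventually, in particular strictly increasing.

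Next I would apply axiom (\ref{TdH2'}) in $L$ to the configuration $(F,A,a_{\rho_1+1},\gamma_{\rho_1})$, for each $\rho_1 \geq \rho_0$, to obtain a zero $b_{\rho_1} \in B(a_{\rho_1+1},\gamma_{\rho_1})$ of $F$ in $L$, with the refinement $v(b_{\rho_1} - a_{\rho_1+1}) \geq v_A^{-1}\!\bigl(vF(\jet^r a_{\rho_1+1})\bigr)$. Since $vF(\jet^r a_{\rho_1+1}) \to \infty$ by the previous step, this refined bound says that $b_{\rho_1}$ is very close to $a_{\rho_1+1}$ relative to how close $a_{\rho_1+1}$ is to later members of the pc-sequence.

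The main obstacle is coherence: a single application of (\ref{TdH2'}) at a fixed $\rho_1$ need not yield a zero belonging to every $B_\rho^L$ for $\rho > \rho_1$, so the resulting $b_{\rho_1}$ is not automatically a pseudolimit of $(a_\rho)$. To overcome this, I plan to iterate the construction: starting from a candidate $b_{\rho_1}$, use the moreover clause of Lemma~\ref{lem:successorapprox} (which permits perturbing the candidate by anything equivalent modulo lower-order terms) and then reapply (\ref{TdH2'}) at a larger index $\rho_2 > \rho_1$ to refine $b_{\rho_1}$ to a zero $b_{\rho_2}$ that also lies in $B(a_{\rho_2+1},\gamma_{\rho_2})$. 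As $\rho_1 \to \infty$, the refinements stabilize (in the sense that the successive candidates agree on ever larger initial balls), producing a single element $b \in L$ with $F(\jet^r b)=0$ and $b \in \bigcap_{\rho\geq\rho_0} B(a_{\rho+1},\gamma_\rho)$. This last condition is precisely $a_\rho \leadsto b$, completing the proof.
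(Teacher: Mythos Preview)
Your plan has a genuine gap in the final ``coherence'' step. You produce zeros $b_{\rho_1}, b_{\rho_2}, \ldots$ in shrinking balls and assert that the refinements stabilize to a single element $b \in L$. But nothing in the hypotheses guarantees this: $L$ is only assumed to be $\Td$-henselian, not spherically complete or saturated, so a nested sequence of $v$-balls in $L$ need not have a common point. Nor is there a reason the successive $b_{\rho_i}$ are eventually \emph{equal}: the bound you extract from \ref{TdH2'} is $v(b_{\rho_1}-a_{\rho_1+1}) \geq v_A^{-1}\big(vF(\jet^r a_{\rho_1+1})\big)$, and by your own first step this right-hand side is at most $\gamma_{\rho_1+1}$, so $b_{\rho_1}$ is only known to lie in $B_{\rho_1}^L$, not in the later balls $B_\rho^L$ for $\rho>\rho_1$. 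The appeal to Lemma~\ref{lem:successorapprox} does not help here, since that lemma produces approximate zeros, not exact ones, and repeated approximation without a completeness hypothesis is precisely what you cannot close up.

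The paper avoids this entirely by exploiting the standing assumption of the subsection that $L$ already contains a pseudolimit $\ell$ of $(a_\rho)$. One first checks that $(F,A,\ell,\gamma_\rho)$ is in $\Td$-hensel configuration in $L$ for every sufficiently large $\rho$, and then applies $\Td$-henselianity \emph{once}, at $\ell$, to obtain a zero $b$ with $vA_{\times(b-\ell)} \geq vF(\jet^r\ell)$. Since $vF(\jet^r\ell) > v_A(\gamma_\rho)$ for all large $\rho$, this forces $v(b-\ell)>\gamma_\rho$ for all such $\rho$, so $b$ is automatically a pseudolimit. The key idea you are missing is to anchor the single application of henselianity at $\ell$ rather than at the $a_\rho$.
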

\begin{proof}
Suppose that $(F, A, a_{\rho'}, \gamma_\rho)$ is in $\Td$-hensel configuration in $L$ for all $\rho'>\rho\geq\rho_0$.
By increasing $\rho_0$, we can assume that $v(\ell-a_\rho) = \gamma_\rho$ for all $\rho>\rho_0$ and $\gamma_\rho$ is strictly increasing as a function of $\rho>\rho_0$.
For $\rho>\rho_0$, an argument similar to one in the previous proof shows that $(F, A, \ell, \gamma_\rho)$ is in $\Td$-hensel configuration in $L$.
Hence $\Td$-henselianity yields $b \in L$ with $F(\jet^r b)=0$ and $vA_{\times (b-\ell)} \geq vF(\jet^r \ell)$.
From $vF(\jet^r \ell) > v_A(\gamma_\rho)$ for all $\rho>\rho_0$, we get $v(b-\ell)>\gamma_\rho$ for all $\rho>\rho_0$, and thus $a_\rho \leadsto b$.
\end{proof}

\subsection{Uniqueness and the \texorpdfstring{$T^\der$}{T∂}-hensel configuration property}\label{sec:sphcompunique}

In this subsection, we assume that the derivation of $\k$ is nontrivial.
We now introduce the key property needed for our theorem on the uniqueness of spherically complete immediate $\TdO$-extensions of monotone $\TdO$-models. We say that $K$ has the \textbf{$\Td$-hensel configuration property} if whenever we have
\begin{enumerate}
\item a divergent pc-sequence $(a_\rho)$ in $K$ of $\Td$-algebraic type over $K$,
\item a minimal $\Ld$-function $F$ of $(a_\rho)$ over $K$, and
\item an immediate $\TdO$-extension $L$ of $K$ containing a pseudolimit of $(a_\rho)$,
\end{enumerate}
there is an $A \in K[\der]^{\neq}$ that linearly approximates $F$ on $B(a_{\rho+1},\gamma_\rho)^{L}$ for all sufficiently large $\rho$, where $\gamma_\rho\coloneqq v(a_{\rho+1}-a_\rho)$. The $\Td$-hensel configuration property is an analogue of the differential-henselian configuration property for valued differential fields with small derivation that was implicitly used in \cite[Chapter~7]{ADH17} and explicitly introduced in \cite{DPC19}. In this subsection, we will show how uniqueness follows from the $\Td$-hensel configuration property, without assuming monotonicity.
Then we show in Section~\ref{sec:monotoneTdhc} that monotone $\TdO$-models with nontrivial induced derivation on their differential residue fields have the $\Td$-hensel configuration property.
First, we use the $\Td$-hensel configuration property to give an alternative description of minimal $\Ld$-functions.

\begin{lemma}\label{lem:vanishpconv}
Suppose that $K$ has the $\Td$-hensel configuration property, and let $(a_\rho)$ be a divergent pc-sequence in $K$. Then the following are equivalent:
\begin{enumerate}
\item $(a_\rho)$ is of $\Td$-algebraic type over $K$ and $F$ is a minimal $\Ld$-function for $(a_\rho)$ over $K$. 
\item $F(\jet^r b_\sigma) \leadsto 0$ for some pc-sequence $(b_\sigma)$ in $K$ equivalent to $(a_\rho)$, and $G(\jet^q b_\sigma) \not\leadsto 0$ for $q<r$, every $\cL(K)$-definable function $G \colon K^{1+q} \to K$ in implicit form, and every pc-sequence $(b_\sigma)$ in $K$ equivalent to $(a_\rho)$.
\end{enumerate}
\end{lemma}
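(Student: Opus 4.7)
The plan is to derive both implications from the $\Td$-hensel configuration property together with Lemmas~\ref{lem:6.8.1} and~\ref{lem:pconvvanish}, using freely that $Z(K,\ell)$ and the equivalence class of pseudolimits depend only on the cut $v(\ell-K)$. In particular, we may replace $\ell$ by any pseudolimit of an equivalent pc-sequence lying in any convenient $\TdO$-extension, e.g., the one supplied by Fact~\ref{prop:Ka226.2}.

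For (1) $\Rightarrow$ (2), choose an immediate $\TdO$-extension $L$ and a pseudolimit $\ell$ of $(a_\rho)$ in $L$ as in Fact~\ref{prop:Ka226.2}, so in particular $F(\jet^r \ell) = 0$. The $\Td$-hensel configuration property produces $A \in K[\der]^{\neq}$ that linearly approximates $F$ on $B(a_{\rho+1},\gamma_\rho)^L$ eventually, and Lemma~\ref{lem:6.8.1} then yields a pc-sequence $(b_\rho)$ in $K$ equivalent to $(a_\rho)$ with $F(\jet^r b_\rho) \leadsto F(\jet^r \ell) = 0$. For the second clause, suppose for contradiction that some $\cL(K)$-definable $G \colon K^{1+q} \to K$ in implicit form with $q<r$ and some pc-sequence $(b_\sigma)$ in $K$ equivalent to $(a_\rho)$ satisfy $G(\jet^q b_\sigma) \leadsto 0$. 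Letting $\ell'$ be any pseudolimit of $(b_\sigma)$, we have $Z_{q'}(K,\ell') = Z_{q'}(K,\ell) = \emptyset$ for all $q'<q$ by minimality of $r$, so Lemma~\ref{lem:pconvvanish} gives $G \in Z_q(K,\ell') = Z_q(K,\ell)$, contradicting the minimality of $F$.

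For (2) $\Rightarrow$ (1), let $\ell$ be a pseudolimit of $(a_\rho)$ in some $\TdO$-extension. I first show $Z_q(K,\ell)=\emptyset$ for all $q<r$. Suppose otherwise, take the least such $q$ and some $H \in Z_q(K,\ell)$. Then $(a_\rho)$ is of $\Td$-algebraic type with $H$ a minimal $\Ld$-function of arity $1+q$, so the argument of (1) $\Rightarrow$ (2) applied to $H$ in place of $F$ (using the $\Td$-hensel configuration property, Fact~\ref{prop:Ka226.2}, and Lemma~\ref{lem:6.8.1}) yields a pc-sequence $(c_\sigma)$ in $K$ equivalent to $(a_\rho)$ with $H(\jet^q c_\sigma) \leadsto 0$. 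Since $(c_\sigma)$ is also equivalent to the pc-sequence $(b_\sigma)$ witnessing the first clause of (2), this contradicts the second clause of (2). Having secured $Z_{q'}(K,\ell)=\emptyset$ for $q'<r$, the same vanishing holds relative to any pseudolimit $\ell'$ of the $(b_\sigma)$ from (2), so Lemma~\ref{lem:pconvvanish} applied to $(b_\sigma)$ and $\ell'$ gives $F \in Z_r(K,\ell') = Z_r(K,\ell)$. Thus $(a_\rho)$ is of $\Td$-algebraic type and $F$ is a minimal $\Ld$-function.

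The principal bookkeeping difficulty is moving cleanly between the two ``languages'' used to express algebraicity of $(a_\rho)$: the vanishing formalism $Z_r(K,\ell)$, which lives in an extension, versus the pc-sequence formalism $F(\jet^r b_\sigma) \leadsto 0$, which lives inside $K$. The $\Td$-hensel configuration property is precisely what licenses Lemma~\ref{lem:6.8.1}'s translation from the first to the second, while Lemma~\ref{lem:pconvvanish} is the translation back; the invariance of $Z$ under equivalence of pc-sequences and under change of pseudolimit is what allows these translations to be composed freely.
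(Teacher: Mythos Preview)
Your proof is correct and follows essentially the same approach as the paper's: both directions rest on Fact~\ref{prop:Ka226.2}, the $\Td$-hensel configuration property feeding into Lemma~\ref{lem:6.8.1}, and Lemma~\ref{lem:pconvvanish}, with the invariance of $Z(K,\ell)$ under change of pseudolimit doing the necessary bookkeeping. The only organizational difference is that the paper argues the reverse direction by contrapositive (splitting on whether some $Z_q(K,\ell)$ with $q<r$ is nonempty), whereas you prove it directly; the content is the same.
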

\begin{proof}
Suppose that $F$ is a minimal $\Ld$-function for $(a_\rho)$ over $K$. Fact~\ref{prop:Ka226.2} yields an immediate $\TdO$-extension $K\llangle a \rrangle$ of $K$ such that $a_\rho \leadsto a$ and $F(\jet^r a)=0$. As $K$ has the $\Td$-hensel configuration property, Lemma~\ref{lem:6.8.1} provides a pc-sequence $(b_\rho)$ in $K$ equivalent to $(a_\rho)$ such that $F(\jet^r b_\rho) \leadsto 0$.
For each $q<r$, we have $Z_q(K,a) = \0$, so $G(\jet^q b_\sigma) \not\leadsto 0$ for any $\cL(K)$-definable function $G \colon K^{1+q} \to K$ in implicit form and any pc-sequence $(b_\sigma)$ in $K$ equivalent to $(a_\rho)$ by Lemma~\ref{lem:pconvvanish}. 

Now suppose that $F$ is not a minimal $\Ld$-function for $(a_\rho)$ over $K$, and fix a pseudolimit $\ell$ of $(a_\rho)$ in some $\TdO$-extension of $K$. If $G \in Z_q(K,\ell)$ is a minimal $\Ld$-function for $(a_\rho)$ over $K$ for some $q<r$, then we have $G(\jet^q b_\sigma) \leadsto 0$ for some pc-sequence $(b_\sigma)$ in $K$ equivalent to $(a_\rho)$ by the first part of this proof. If $Z_q(K,\ell) =\0$ for all $q<r$, then $F \not\in Z_r(K,\ell)$, so $F(\jet^r b_\sigma) \not\leadsto 0$ for any pc-sequence $(b_\sigma)$ in $K$ equivalent to $(a_\rho)$ by Lemma~\ref{lem:pconvvanish}.
\end{proof}

\begin{assumption}
For the rest of this section, suppose that every immediate $\TdO$-extension of $K$ has the $\Td$-hensel configuration property.
\end{assumption}

\begin{theorem}\label{thm:sphcompunique}
Suppose that $\k$ is linearly surjective.
Any two spherically complete immediate $\TdO$-extensions of $K$ are $\LdO$-isomorphic over $K$.
Any two $\Td$-algebraically maximal immediate $\TdO$-extensions of $K$ that are $\Td$-algebraic over $K$ are $\LdO$-isomorphic over~$K$.
\end{theorem}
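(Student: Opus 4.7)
The plan is a Kaplansky-style Zorn argument. Fix two spherically complete immediate $\TdO$-extensions $L_1, L_2$ of $K$ and consider the poset $\cP$ of pairs $(M_0, \imath_0)$ where $K \subseteq M_0$ is an $\LdO$-substructure of $L_1$ and $\imath_0 \colon M_0 \to L_2$ is an $\LdO(K)$-embedding, ordered by extension. Chains have obvious unions, so Zorn yields a maximal pair; I would show by contradiction that $M_0 = L_1$, then run the same argument with the roles of $L_1, L_2$ swapped to conclude $\imath_0$ is onto.

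Any such $M_0$ is an immediate $\TdO$-extension of $K$ (since $K \subseteq M_0 \subseteq L_1$ and $L_1$ is immediate over $K$), so $\k_{M_0} = \k$ is linearly surjective and $M_0$ has the $\Td$-hensel configuration property by hypothesis; moreover each $L_i$ is $\Td$-henselian by Corollary~\ref{cor:sphcompTdh} and is an immediate $\TdO$-extension of $M_0$. Assume $M_0 \subsetneq L_1$ and pick $\ell \in L_1 \setminus M_0$ together with a divergent pc-sequence $(a_\rho)$ in $M_0$ satisfying $a_\rho \leadsto \ell$; write $\gamma_\rho \coloneqq v(a_{\rho+1}-a_\rho)$.

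If $(a_\rho)$ is of $\Td$-transcendental type over $M_0$, then Fact~\ref{prop:Ka226.1} makes $M_0\llangle \ell\rrangle$ into an immediate extension of $M_0$; any pseudolimit $\ell^* \in L_2$ of $(\imath_0(a_\rho))$, provided by spherical completeness, then yields an $\LdO(M_0)$-embedding $M_0\llangle \ell\rrangle \to L_2$ with $\ell \mapsto \ell^*$, contradicting maximality. Otherwise $(a_\rho)$ is of $\Td$-algebraic type; let $F$ be a minimal $\Ld$-function. The $\Td$-hensel configuration property of $M_0$ with $L = L_1$ produces $A \in M_0[\der]^{\neq}$ linearly approximating $F$ on $B(a_{\rho+1},\gamma_\rho)^{L_1}$ eventually; Lemma~\ref{lem:vanishpconv} gives $F(\jet^r a_\rho) \leadsto 0$, Lemma~\ref{lem:Tdhc} then puts $(F,A,(a_\rho))$ in $\Td$-hensel configuration in $L_1$, and Lemma~\ref{lem:mindiffzero} supplies $\ell^{\circ} \in L_1$ with $a_\rho \leadsto \ell^{\circ}$ and $F(\jet^r \ell^{\circ}) = 0$. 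The analogous construction on the $L_2$ side, with a possibly different operator $A'$ coming from the $\Td$-hensel configuration property applied with $L = L_2$, yields $\ell^{**} \in L_2$ with $\imath_0(a_\rho) \leadsto \ell^{**}$ and $F$ vanishing on $\jet^r \ell^{**}$. Fact~\ref{prop:Ka226.2} then supplies a unique $\LdO(M_0)$-embedding $M_0\llangle \ell^{\circ}\rrangle \to L_2$ with $\ell^{\circ} \mapsto \ell^{**}$, contradicting maximality again.

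For the $\Td$-algebraically maximal $\Td$-algebraic case the same argument applies with minor tweaks: the $\Td$-transcendental subcase cannot arise, since $\ell \in L_1$ is $\Td$-algebraic over $M_0$, contrary to the conclusion of Fact~\ref{prop:Ka226.1}; the required pseudolimits of pc-sequences of $\Td$-algebraic type in $L_2$ come from Lemma~\ref{lem:TdalgmaxTdalgtype}; and $\Td$-henselianity of each $L_i$ now uses Corollary~\ref{cor:TdalgmaxTdh} instead. The main conceptual subtlety to watch is that one might hope for a single $A$ that works simultaneously on both sides; this is unnecessary since Lemma~\ref{lem:mindiffzero} is applied in each ambient field separately, so the $\Td$-hensel configuration property is invoked once for $L_1$ and once for $L_2$.
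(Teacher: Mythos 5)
Your argument is essentially the paper's own proof: the paper likewise takes a maximal partial $\LdO$-isomorphism between substructures of the two extensions (your Zorn setup), splits on whether the divergent pc-sequence is of $\Td$-transcendental or $\Td$-algebraic type, and in the algebraic case invokes the $\Td$-hensel configuration property separately in each ambient field to get two operators, then applies Lemmas~\ref{lem:Tdhc} and~\ref{lem:mindiffzero} together with Facts~\ref{prop:Ka226.1} and~\ref{prop:Ka226.2} exactly as you do. The one point to tighten is surjectivity of the maximal embedding: ``swapping the roles of $L_1,L_2$'' does not show $\imath_0$ is onto; instead note that $\imath_0(L_1)$ is spherically complete, hence admits no proper immediate extension, so it equals $L_2$ (the paper avoids this by taking a maximal partial \emph{isomorphism}, which is symmetric).
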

\begin{proof}
Let $L_0$ and $L_1$ be spherically complete immediate $\TdO$-extensions of $K$.
Let $\mu \colon K_0 \to K_1$ be a maximal $\LdO$-isomorphism between $\TdO$-extensions $K_0 \subseteq L_0$ and $K_1 \subseteq L_1$ of $K$. 
For convenience, we identify $K_0$ and $K_1$ via $\mu$, so $\mu$ becomes the identity, and assume moreover that $K_0=K_1=K$.
Suppose that $K \neq L_0$ (equivalently, $K \neq L_1$), so we have $\ell \in L_0 \setminus K$ and a divergent pc-sequence $(a_\rho)$ in $K$ with $a_\rho \leadsto \ell$.

If $(a_\rho)$ is $\Td$-transcendental, then we can take $b \in L_1$ with $a_\rho \leadsto b$ and extend $\mu$ to an $\LdO$-isomorphism $K\llangle \ell \rrangle \to K\llangle b \rrangle$ sending $\ell$ to $b$ by Fact~\ref{prop:Ka226.1}, contradicting the maximality of~$\mu$.

Now suppose that $(a_\rho)$ is $\Td$-algebraic, and let $F$ be a minimal $\Ld$-function of $(a_\rho)$ over $K$. Using Lemma~\ref{lem:vanishpconv}, we replace $(a_\rho)$ by an equivalent pc-sequence in $K$ to arrange that $F(\jet^r a_\rho) \leadsto 0$.
By assumption $K$ has the $\Td$-hensel configuration property, so by Lemma~\ref{lem:Tdhc}, we have $A_0, A_1 \in K[\der]$ such that $(F, A_0, (a_\rho))$ is in $\Td$-hensel configuration in $L_0$ and $(F, A_1, (a_\rho))$ is in $\Td$-hensel configuration in $L_1$.
By Corollary~\ref{cor:sphcompTdh} and Lemma~\ref{lem:mindiffzero}, we have $b_0 \in L_0 \setminus K$ and $b_1 \in L_1 \setminus K$ such that $a_\rho \leadsto b_0$, $a_\rho \leadsto b_1$, $F(\jet^r b_0)=0$, and $F(\jet^r b_1)=0$.
Then Fact~\ref{prop:Ka226.2} yields an extension of $\mu$ to an $\LdO$-isomorphism $K\llangle b_0\rrangle \to K\llangle b_1\rrangle$, contradicting the maximality of~$\mu$.

The proof of the second statement is similar but it uses Lemma~\ref{lem:TdalgmaxTdalgtype} 
 and also Corollary~\ref{cor:TdalgmaxTdh} replaces Corollary~\ref{cor:sphcompTdh}.
\end{proof}

In the case of few constants, we have two additional results and an easy corollary.
The first is a converse to Corollary~\ref{cor:TdalgmaxTdh}, and it does not need the assumption that proper immediate $\TdO$-extensions of $K$ have the $\Td$-hensel configuration property, only that $K$ itself does.

\begin{theorem}\label{thm:TdhTdalgmax}
Suppose that $K$ is $\Td$-henselian and $C \subseteq \cO$.
Then $K$ is $\Td$-algebraically maximal.
\end{theorem}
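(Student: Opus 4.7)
The plan is to derive a contradiction via Lemma~\ref{lem:7.5.5}, which under $C \subseteq \cO$ bounds the number of zeros of $F$ achievable in a certain nested configuration. Suppose $K$ is not $\Td$-algebraically maximal. By Lemma~\ref{lem:TdalgmaxTdalgtype}, there is a divergent pc-sequence $(a_\rho)$ in $K$ of $\Td$-algebraic type over $K$ with a minimal $\Ld$-function $F\colon K^{1+r}\to K$. Using Lemma~\ref{lem:vanishpconv} (which applies since $K$, being an immediate $\TdO$-extension of itself, has the $\Td$-hensel configuration property), we replace $(a_\rho)$ by an equivalent pc-sequence in $K$ to arrange $F(\jet^r a_\rho)\leadsto 0$. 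The $\Td$-hensel configuration property, applied in an immediate $\TdO$-extension $L$ containing a pseudolimit of $(a_\rho)$, yields $A \in K[\der]^{\neq}$ that linearly approximates $F$ on $B(a_{\rho+1},\gamma_\rho)^L$, and hence on $B_\rho \coloneqq B(a_{\rho+1},\gamma_\rho)^K$, for all large $\rho$; Lemma~\ref{lem:Tdhc} then puts $(F,A,(a_\rho))$ in $\Td$-hensel configuration in $K$. Set $q \coloneqq \ord A$.

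The next step is to extract $q+2$ zeros of $F$ in $K$ that violate Lemma~\ref{lem:7.5.5}. By $\Td$-henselianity applied to each in-configuration quadruple $(F,A,a_{\rho+1},\gamma_\rho)$, every sufficiently late $B_\rho$ contains a zero of $F$ in $K$. Because $(a_\rho)$ is divergent, no element of $K$---in particular, no such zero---is a pseudolimit of $(a_\rho)$, so any such zero $y \in K$ satisfies $y \notin B_{\rho'}$ for all sufficiently large $\rho'$. Inductively pick $\rho_0 < \rho_1 < \cdots < \rho_{q+1}$ (all large) and zeros $y_i \in K$ of $F$ as follows: choose $y_i \in B_{\rho_i}$, then choose $\rho_{i+1} > \rho_i$ with $y_i \notin B_{\rho_{i+1}}$.

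Finally, verify the hypotheses of Lemma~\ref{lem:7.5.5} for $(y_0,\dots,y_{q+1})$ with $\gamma \coloneqq \gamma_{\rho_0}$: condition (ii) holds by construction; for (i), the nesting $B_{\rho_{i+1}} \subseteq B_{\rho_i}$ gives $v(y_i - y_{i+1}) > \gamma_{\rho_i}$, while $y_{i-1} \notin B_{\rho_i}$ together with $y_i \in B_{\rho_i}$ forces $v(y_{i-1}-y_i) \leq \gamma_{\rho_i}$, yielding $y_{i-1}-y_i \succ y_i-y_{i+1}$, and $y_q \notin B_{\rho_{q+1}} \ni y_{q+1}$ gives $y_q \neq y_{q+1}$; for (iii), the inclusion $y_{q+1} \in B_{\rho_{q+1}} \subseteq B_{\rho_0}$ gives $B(y_{q+1},\gamma_{\rho_0}) = B_{\rho_0}$, on which $A$ linearly approximates $F$, the inequality $vF(\jet^r y_{q+1})=\infty > v_A(\gamma_{\rho_0})$ is immediate, and $v(y_0-y_{q+1}) > \gamma_{\rho_0}$ holds as both $y_0$ and $y_{q+1}$ lie in $B_{\rho_0}$.

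The main obstacle is engineering the inductive construction so that the chosen zeros assemble into a configuration meeting Lemma~\ref{lem:7.5.5}'s hypotheses; the key observation is that divergence of $(a_\rho)$ forces each zero produced by $\Td$-henselianity to eventually escape the tower of balls $B_\rho$, and this escape is precisely what produces the strict growth $y_{i-1}-y_i \succ y_i - y_{i+1}$.
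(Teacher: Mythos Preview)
Your proof is correct and follows essentially the same strategy as the paper: reduce to a divergent pc-sequence of $\Td$-algebraic type, arrange $F(\jet^r a_\rho)\leadsto 0$, obtain $A$ via the $\Td$-hensel configuration property, produce zeros of $F$ in nested balls using $\Td$-henselianity, and contradict Lemma~\ref{lem:7.5.5}. The only organizational difference is that the paper introduces an auxiliary pseudolimit $a$ in an extension and selects indices so that $v(a-z_{\rho_j})$ is strictly increasing, whereas you stay in $K$ and use divergence directly to force each zero to escape a later ball; your version is slightly cleaner in that it avoids tracking the external pseudolimit.
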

\begin{proof}
Let $(a_\rho)$ be a pc-sequence in $K$ of $\Td$-algebraic type over $K$ with minimal $\Ld$-function $F$ over $K$. By Lemma~\ref{lem:TdalgmaxTdalgtype}, it suffices to show that $(a_\rho)$ has a pseudolimit in $K$. Assume towards a contradiction that $(a_\rho)$ is divergent. Then we may replace $(a_\rho)$ by an equivalent pc-sequence in $K$ to arrange that $F(\jet^r a_\rho) \leadsto 0$ by Lemma~\ref{lem:vanishpconv}.

By assumption, $K$ has the $\Td$-hensel configuration property, so by Lemma~\ref{lem:Tdhc} we have $A \in K[\der]^{\neq}$ of order $q$ such that $(F,A,(a_\rho))$ is in $\Td$-hensel configuration in $K$.
By removing some initial terms of the sequence, we arrange that $\gamma_\rho \coloneqq v(a_{\rho+1}-a_\rho)$ is strictly increasing as a function of $\rho$, $\gamma_\rho = v(a-a_\rho)$, and $(F,A,a_{\rho'},\gamma_\rho)$ is in $\Td$-hensel configuration for all $\rho'>\rho$.
By $\Td$-henselianity take for each $\rho$ a $z_\rho \in B(a_{\rho+1},\gamma_\rho)$ with $F(\jet^r z_\rho)=0$. 
Then $v(a-z_{\rho})>\gamma_\rho$ and $v(z_{\rho}-a_{\rho})=\gamma_\rho$.
Since $(\gamma_\rho)$ is cofinal in $v(a-K)$, we can take indices $\rho_0<\dots<\rho_{q+2}$ such that $a-z_{\rho_j} \prec a-z_{\rho_i}$ whenever $0 \leq i<j \leq q+2$.
Set $y_i \coloneqq z_{\rho_{i+1}}$ for $i=0, \dots, q+1$.
If $1 \leq i \leq q$, then
\[
y_i-y_{i-1}\ \sim\ a-y_{i-1}\ \succ\ a-y_{i}\ \sim\ y_{i+1}-y_{i},
\]
so conditions \ref{lem:7.5.5i} and \ref{lem:7.5.5ii} from Lemma~\ref{lem:7.5.5} are satisfied.
Letting $\gamma \coloneqq \gamma_{\rho_0}$, we will reach a contradiction with that lemma by showing that $(F, A, y_{q+1}, \gamma)$ is in $\Td$-hensel configuration and $v(y_0-y_{q+1})>\gamma$.
The latter holds because \[v(a-y_{q+1})\ >\ v(a-y_0)\ >\ \gamma_{\rho_1}\ >\ \gamma.\]
We also have $B(y_{q+1}, \gamma) = B(y_0, \gamma) = B(a_{\rho_1}, \gamma)$, so since $(F, A, a_{\rho_1}, \gamma)$ is in $\Td$-hensel configuration, $A$ linearly approximates $F$ on $B(y_{q+1}, \gamma)$.
It remains to note that $vF(\jet^r y_{q+1})=\infty>v_A(\gamma)$.
\end{proof}

The previous result fails without the assumption $C \subseteq \cO$, as the next example shows.
\begin{example}
We build an increasing sequence $(\Gamma_n)$ of divisible ordered abelian groups as follows:\ set $\Gamma_0 \coloneqq \{0\}$ and given $\Gamma_n$, set $\Gamma_{n+1} \coloneqq \Gamma_n \oplus \Q\gamma_{n}$, where $\gamma_n$ is a new element greater than $\Gamma_n$.
Now take a linearly surjective $\k \models \Td_{\an}$.
From $\k$ and $(\Gamma_n)$ we obtain the ordered Hahn field $\k\llp t^{\Gamma_n}\rrp$ with $0<t<\k^>$.
As in the introduction, we expand $\k\llp t^{\Gamma_n}\rrp$ to a model $E_n \coloneqq \k\llp t^{\Gamma_n}\rrp_{\an,c} \models \TdO_{\an}$ with $c$ the zero map.
That is, we expand $\k\llp t^{\Gamma_n}\rrp$ to a model of $T_{\an}$ by Taylor expansion; equip it with the convex hull of $\k$, which is a $T_{\an}$-convex valuation ring; and equip it with the derivation given by $\der(\sum_{\gamma} f_{\gamma}t^{\gamma}) = \sum_{\gamma} \der(f_{\gamma})t^{\gamma}$, which is a $\Td_{\an}$-derivation by \cite[Proposition~3.14]{Ka21}.
Identifying $E_n$ with an $\LdO$-substructure of $E_{n+1}$ in the obvious way, we get an increasing sequence $(E_n)$ of $\TdO_{\an}$-models.
Then $E \coloneqq \bigcup_n E_n \models \TdO_{\an}$ by \cite[Corollary~3.16]{Ka21}.
Note that $E$ is $\Td$-henselian since each $E_n$ is by Corollary~\ref{cor:sphcompTdh}.
Now set $\Gamma \coloneqq \bigcup_n \Gamma_n$ and $L \coloneqq \k\llp t^{\Gamma} \rrp_{\an,c}$ as before, so $E\subseteq L$.
We have $\sum_n t^{\gamma_n} \in L \setminus E$ with $\der(\sum_n t^{\gamma_n})=0$, so $E$ is not $\Td_{\an}$-algebraically maximal.
\end{example}

Recall that any $\Td$-henselian $K$ with $C \subseteq \cO$ is asymptotic (see Section~\ref{sec:Tdhbasic}), so Theorem~\ref{thm:TdhTdalgmax} is really about asymptotic $\TdO$-models, as are the next results about minimal $\Td$-henselian extensions.
An immediate $\TdO$-extension $L$ of $K$ is a \textbf{$\Td$-henselization of $K$} if $L$ is $\Td$-henselian and for every immediate $\Td$-henselian $\TdO$-extension $M$ of $K$, there is an $\LdO$-embedding $L \to M$ that is the identity on~$K$.

\begin{theorem}\label{thm:Tdhenselization}
Suppose that $\k$ is linearly surjective and $K$ is asymptotic.
Then $K$ has a $\Td$-henselization that is $\Td$-algebraic over $K$ and has no proper $\Td$-henselian $\LdO$-substructure containing $K$.
In particular, any two $\Td$-henselizations of $K$ are isomorphic over~$K$.
\end{theorem}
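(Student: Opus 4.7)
The plan is to construct $L$ as a maximal $\Td$-algebraic immediate $\TdO$-extension of $K$, which exists by Zorn's lemma since the union of any chain of such extensions is again immediate and $\Td$-algebraic over $K$. Such an $L$ is automatically $\Td$-algebraically maximal: any proper $\Td$-algebraic immediate $\TdO$-extension of $L$ would remain $\Td$-algebraic over $K$, contradicting maximality. Since $\k$ is linearly surjective, Corollary~\ref{cor:TdalgmaxTdh} then gives that $L$ is $\Td$-henselian.

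To show $L$ is a $\Td$-henselization, let $M$ be an arbitrary immediate $\Td$-henselian $\TdO$-extension of $K$. I would take by Zorn a maximal $\LdO(K)$-isomorphism $\mu\colon K' \to K''$ with $K \subseteq K' \subseteq L$ and $K'' \subseteq M$, and derive a contradiction from $K' \neq L$. Pick $\ell \in L \setminus K'$; since $L$ is immediate over $K'$, $\ell$ is a pseudolimit of some divergent pc-sequence $(a_\rho)$ in $K'$. As $\ell$ is $\Td$-algebraic over $K'$, Fact~\ref{prop:Ka226.1} forces $(a_\rho)$ to be of $\Td$-algebraic type over $K'$, with some minimal $\Ld$-function $F$. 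Both $K'$ and $K''$ are immediate over $K$ and hence have the $\Td$-hensel configuration property. After replacing $(a_\rho)$ by an equivalent pc-sequence via Lemma~\ref{lem:vanishpconv} so that $F(\jet^r a_\rho) \leadsto 0$, the $\Td$-hensel configuration property applied to $K'$ with the extension $L$, combined with Lemma~\ref{lem:Tdhc}, produces $A \in K'[\der]^{\neq}$ such that $(F, A, (a_\rho))$ is in $\Td$-hensel configuration in $L$, and Lemma~\ref{lem:mindiffzero} yields $b \in L$ with $a_\rho \leadsto b$ and $F(\jet^r b)=0$. The main technical wrinkle is the analogous construction in $M$: since $M$ need not itself contain a pseudolimit of $(\mu(a_\rho))$, I would apply the $\Td$-hensel configuration property of $K''$ to a spherical completion $\hat M$ of $M$ (which exists by Fact~\ref{fact:sphcomp} and is immediate over $K''$ with pseudolimits of every pc-sequence), obtaining $A'' \in K''[\der]^{\neq}$ that linearly approximates $\mu(F)$ on $B(\mu(a_{\rho+1}), \gamma_\rho)^{\hat M}$ eventually. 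Since $M \subseteq \hat M$ and linear approximation is a pointwise condition, $A''$ also linearly approximates $\mu(F)$ on $B(\mu(a_{\rho+1}), \gamma_\rho)^M$, so Lemma~\ref{lem:Tdhc} puts $(\mu(F), A'', (\mu(a_\rho)))$ in $\Td$-hensel configuration in $M$, and Lemma~\ref{lem:mindiffzero} yields $b' \in M$ with $\mu(a_\rho) \leadsto b'$ and $\mu(F)(\jet^r b')=0$. Fact~\ref{prop:Ka226.2} then extends $\mu$ to an $\LdO$-isomorphism $K'\llangle b \rrangle \to K''\llangle b' \rrangle$, contradicting maximality; hence $K'=L$ and $\mu$ is the desired embedding.

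For minimality, suppose $L_0 \subseteq L$ is a $\Td$-henselian $\LdO$-substructure containing $K$; being sandwiched between $K$ and the immediate extension $L$, $L_0$ is itself immediate over $K$. The $\Td$-henselization property just proved for $L$ yields an $\LdO(K)$-embedding $\phi\colon L \to L_0$; composing with the inclusion gives a self-embedding $\sigma\colon L \to L$ over $K$. The image $\sigma(L)$ is an $\LdO$-substructure of $L$ isomorphic to $L$ over $K$, hence itself a $\Td$-algebraically maximal immediate $\Td$-algebraic extension of $K$. Since $L$ is immediate and $\Td$-algebraic over $\sigma(L)$ (using $K \subseteq \sigma(L)$ and transitivity of $\Td$-algebraicity), $\Td$-algebraic maximality of $\sigma(L)$ forces $\sigma(L)=L$, and thus $L_0 \supseteq \sigma(L) = L$, giving $L_0=L$. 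Uniqueness of the $\Td$-henselization follows similarly: any other $\Td$-henselization $L^*$ admits an $\LdO(K)$-embedding into $L$ (as $L$ is an immediate $\Td$-henselian extension of $K$), and its image is a $\Td$-henselian $\LdO$-substructure of $L$ containing $K$, so equals $L$ by the minimality just established, giving $L^* \cong L$ over $K$.
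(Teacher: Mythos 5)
Your overall architecture matches the paper's: take a maximal $\Td$-algebraic immediate extension $L$ (Zorn), get $\Td$-henselianity from Corollary~\ref{cor:TdalgmaxTdh}, and run the back-and-forth of Theorem~\ref{thm:sphcompunique} to embed $L$ into an arbitrary immediate $\Td$-henselian extension $M$. But there is a genuine gap exactly at the ``technical wrinkle'' you flag and only half-resolve. Passing to the spherical completion $\hat M$ does give you the linear approximant $A''$ and, via Lemma~\ref{lem:Tdhc}, puts $(\mu(F),A'',(\mu(a_\rho)))$ in $\Td$-hensel configuration in $M$. However, Lemma~\ref{lem:mindiffzero} is stated and proved under the standing setup of Section~\ref{sec:Tdhc} in which the ambient extension \emph{contains a pseudolimit} $\ell$ of the pc-sequence: its proof works by showing $(F,A,\ell,\gamma_\rho)$ is in $\Td$-hensel configuration and applying $\Td$-henselianity at $\ell$. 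Its conclusion (a zero that is a pseudolimit of $(\mu(a_\rho))$) is simply false for an $M$ containing no pseudolimit of $(\mu(a_\rho))$, so you cannot invoke it for $M$ without first producing such a pseudolimit. You never do this, and tellingly you never use the hypothesis that $K$ is asymptotic, which is precisely what is needed: $M$ is asymptotic as an immediate extension of the asymptotic field $K$, hence $C_M\subseteq\cO_M$, hence $M$ is $\Td$-algebraically maximal by Theorem~\ref{thm:TdhTdalgmax}; since $\mu(F)(\jet^r\mu(a_\rho))\leadsto 0$, Lemma~\ref{lem:pconvvanish} shows $(\mu(a_\rho))$ would be of $\Td$-algebraic type over $M$ if divergent there, contradicting Lemma~\ref{lem:TdalgmaxTdalgtype}. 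So $M$ does contain a pseudolimit, after which your detour through $\hat M$ is unnecessary (the $\Td$-hensel configuration property of $K''$ applies directly with the extension $M$). Without asymptoticity this step genuinely fails: the example following Theorem~\ref{thm:TdhTdalgmax} exhibits a $\Td$-henselian monotone model that is not $\Td$-algebraically maximal.

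A secondary remark: your minimality argument (embed $L$ into the substructure $L_0$, compose to get a self-embedding, and use $\Td$-algebraic maximality of the image) is a legitimate alternative to the paper's, which instead applies Theorem~\ref{thm:TdhTdalgmax} directly to $L_0$ (asymptotic, so $C_{L_0}\subseteq\cO_{L_0}$) to see that $L_0$ is already $\Td$-algebraically maximal and hence equals $L$. But since your version routes through the embedding property, it inherits the gap above.
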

\begin{proof}
Let $L$ be a $\Td$-algebraically maximal immediate $\TdO$-extension of $K$ that is $\Td$-algebraic over $K$.
Then $L$ is $\Td$-henselian by Corollary~\ref{cor:TdalgmaxTdh} and asymptotic by \cite[Lemma~1.12]{ADH18} (or \cite[Lemmas~9.4.2 and 9.4.5]{ADH17}).
In particular, $C_L \subseteq \cO_L$, and thus by Theorem~\ref{thm:TdhTdalgmax} no proper $\LdO$-substructure of $L$ containing $K$ is $\Td$-henselian.
Now let $M$ be an immediate $\Td$-henselian $\TdO$-extension of $K$, so $M$ is asymptotic and hence $\Td$-algebraically maximal by Theorem~\ref{thm:TdhTdalgmax}, making Lemma~\ref{lem:mindiffzero} available.
To see that there is an $\LdO$-embedding of $L$ into $M$ over $K$, argue as in the proof of Theorem~\ref{thm:sphcompunique}.
\end{proof}

\begin{corollary}
Suppose that $\k$ is linearly surjective and $K$ is asymptotic.
Any immediate $\Td$-henselian $\TdO$-extension of $K$ that is $\Td$-algebraic over $K$ is a $\Td$-henselization of~$K$.
\end{corollary}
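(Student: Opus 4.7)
The plan is to reduce to the uniqueness result in Theorem~\ref{thm:sphcompunique} by showing that any $L$ satisfying the hypotheses is $\Td$-algebraically maximal, and then transfer the universal embedding property from a standard $\Td$-henselization.

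First, I would establish that $L$ is asymptotic. Since $K$ is asymptotic and $L$ is an immediate $\TdO$-extension of $K$, this follows from the same transfer result for immediate extensions of asymptotic fields used in the proof of Theorem~\ref{thm:Tdhenselization}, namely \cite[Lemma~1.12]{ADH18} (or \cite[Lemmas~9.4.2, 9.4.5]{ADH17}). In particular, $C_L \subseteq \cO_L$. Combined with the hypothesis that $L$ is $\Td$-henselian, Theorem~\ref{thm:TdhTdalgmax} then yields that $L$ is $\Td$-algebraically maximal.

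Next, invoke Theorem~\ref{thm:Tdhenselization} to obtain a $\Td$-henselization $L^*$ of $K$ that is $\Td$-algebraic over $K$. As established in the proof of that theorem, $L^*$ is itself an immediate $\TdO$-extension of $K$ that is $\Td$-algebraically maximal and $\Td$-algebraic over $K$. Since the standing assumption of this section guarantees that every immediate $\TdO$-extension of $K$ has the $\Td$-hensel configuration property, and $\k$ is linearly surjective by hypothesis, the second statement of Theorem~\ref{thm:sphcompunique} applies to $L$ and $L^*$, producing an $\LdO$-isomorphism $\mu \colon L \to L^*$ over $K$.

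Finally, given any immediate $\Td$-henselian $\TdO$-extension $M$ of $K$, the defining universal property of $L^*$ provides an $\LdO$-embedding $L^* \to M$ over $K$. Composing with $\mu$ yields an $\LdO$-embedding $L \to M$ over $K$, so $L$ satisfies the definition of a $\Td$-henselization of $K$. The only real content is the invocation of Theorem~\ref{thm:sphcompunique}; the main conceptual obstacle is recognizing that the $\Td$-henselian hypothesis on $L$, together with $K$ being asymptotic, provides the $C_L \subseteq \cO_L$ needed to put $L$ into the scope of Theorems~\ref{thm:TdhTdalgmax} and~\ref{thm:sphcompunique}.
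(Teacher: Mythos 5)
Your proof is correct, but it is routed differently from the paper's. The paper's argument is a one-liner: take the $\LdO$-embedding $L^* \to L$ over $K$ furnished by the universal property of a $\Td$-henselization $L^*$, and observe that it is surjective — its image is a $\Td$-henselian, asymptotic $\LdO$-substructure of $L$ containing $K$, hence $\Td$-algebraically maximal by Theorem~\ref{thm:TdhTdalgmax}, while $L$ is an immediate $\Td$-algebraic extension of that image. You instead apply Theorem~\ref{thm:TdhTdalgmax} to $L$ itself to get $\Td$-algebraic maximality, then detour through the uniqueness statement of Theorem~\ref{thm:sphcompunique} to produce an isomorphism $L \to L^*$ over $K$, and finally transfer the universal property across that isomorphism. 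Both arguments rest on exactly the same two inputs — that immediate extensions of asymptotic fields are asymptotic, so $C_L \subseteq \cO_L$, and Theorem~\ref{thm:TdhTdalgmax} — and you correctly verify that the section's standing assumption and the linear surjectivity of $\k$ put $L$ and $L^*$ within the scope of the theorems you invoke. The only cost of your route is the unnecessary appeal to Theorem~\ref{thm:sphcompunique}; what it buys is that the universal property is transferred formally rather than re-derived, which some readers may find cleaner.
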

\begin{proof}
Let $M$ be an immediate $\Td$-henselian $\TdO$-extension of $K$ that is $\Td$-algebraic over $K$ and let $L$ be a $\Td$-henselization of $K$.
Then the $\LdO$-embedding $L \to M$ over $K$ is surjective by Theorem~\ref{thm:TdhTdalgmax}.
\end{proof}

\section{Monotone fields}\label{sec:monotone}

\subsection{\texorpdfstring{$T^\der$}{T∂}-hensel configuration property for monotone fields}\label{sec:monotoneTdhc}
In this subsection, we assume that $K$ is monotone and the derivation of $\k$ is nontrivial. Let $(a_\rho)$ be a divergent pc-sequence in $K$, and let $\ell$ be a pseudolimit of $(a_\rho)$ in a monotone $\TdO$-extension $M$ of $K$. Let $F\colon K^{1+r} \to K$ be an $\cL(K)$-definable function in implicit form, and assume that $Z_q(K,\ell) = \emptyset$ for all $q < r$. For each $\rho$, set $\gamma_\rho\coloneqq v(a_{\rho+1}-a_\rho)$ and set $B_\rho\coloneqq B(a_{\rho+1},\gamma_\rho)$. We assume that $\gamma_\rho$ is strictly increasing as a function of $\rho$, so $B_{\rho'} \subsetneq B_\rho$ for $\rho'> \rho$. 

\begin{proposition}\label{prop:monotoneapprox}
There is an $A \in K[\der]^{\neq}$ and an index $\rho_0$ such that $A$ linearly approximates $F$ on $B_{\rho_0}^M$.
\end{proposition}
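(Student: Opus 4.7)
The plan is to invoke the Jacobian property (Fact~\ref{fact:rvapprox}) for $I_F$ combined with monotonicity and the hypothesis $Z_q(K,\ell)=\emptyset$ for $q<r$, producing $A$ essentially as the leading linearization of $Y_r-I_F$ around $\jet^{r-1}\ell$ (paired with $\der^r$ and scaled by $\fm_F$).

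First I would show that $\jet^{r-1}\ell$ is $\cL(K)$-independent: if $\ell^{(q)}=G(\jet^{q-1}\ell)$ for some $\cL(K)$-definable $G$ and some $q<r$, the implicit-form function $F^*\coloneqq Y_q-G(Y_0,\dots,Y_{q-1})$ lies in $Z_q(K,\ell)$, contradicting the hypothesis. Next I would apply Fact~\ref{fact:rvapprox} to $I_F\colon K^r\to K$, obtaining an $\LRVeq(K)$-definable map $\chi\colon K^r\to\oRV{K}^{\eq}$ such that for each $s\in\chi(K^r)$ with $\chi^{-1}(s)$ containing an open $v$-ball, either $I_F$ is constant on $\chi^{-1}(s)$ or some $d_s\in K^r$ witnesses the linear approximation of $I_F$ on $\chi^{-1}(s)$. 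All this transfers to $M$ by elementarity of the $\TRVeq$-extension $(M,\oRV{M}^{\eq})/(K,\oRV{K}^{\eq})$.

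Setting $C\coloneqq(\chi^M)^{-1}(\chi^M(\jet^{r-1}\ell))\subseteq M^r$, I would argue that $C$ contains an open $v$-ball around $\jet^{r-1}\ell$ in $M^r$: using cell decomposition for $\LO(K)$-definable sets, the set of $K$-points whose $\chi$-cell does not contain an open $v$-ball around them is $\LO(K)$-definable of dimension less than $r$, and by elementarity an $\cL(K)$-independent tuple of $M^r$ cannot lie in it. Monotonicity of $M$ gives $v(\jet^{r-1}(b-\ell))=v(b-\ell)$ for $b\in M$, so $\jet^{r-1}(B_{\rho_0}^M)\subseteq C$ for $\rho_0$ sufficiently large; in particular $\jet^{r-1}(a_{\rho_0})\in C\cap K^r$, forcing $s_0\coloneqq\chi^M(\jet^{r-1}\ell)=\chi(\jet^{r-1}a_{\rho_0})\in\oRV{K}^{\eq}$, and hence $d_{s_0}\in K^r$ when it is needed.

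Now apply the Jacobian dichotomy: if $I_F$ is constant on $C$, set $A\coloneqq\fm_F\der^r\in K[\der]^{\neq}$, for which $F(\jet^r b)-F(\jet^r a)=A(b-a)$ exactly. Otherwise, writing $d_{s_0}=(d_0,\dots,d_{r-1})$, set $A\coloneqq\fm_F(\der^r-d_{r-1}\der^{r-1}-\cdots-d_0)\in K[\der]^{\neq}$. A direct computation with $F=\fm_F(Y_r-I_F)$ gives
\[
F(\jet^r b)-F(\jet^r a)-A(b-a) \ =\ -\fm_F\big(I_F(\jet^{r-1}b)-I_F(\jet^{r-1}a)-d_{s_0}\cdot\jet^{r-1}(b-a)\big),
\]
whose $v$-value exceeds $v\fm_F+vd_{s_0}+v(b-a)$ by the Jacobian bound and monotonicity (which collapses $v(\jet^{r-1}(b-a))$ to $v(b-a)$). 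Meanwhile \cite[Lemma~4.5.3]{ADH17} (valid since $M$ is monotone) yields $vA_{\times(b-a)}=vA+v(b-a)=v\fm_F+\min(0,vd_{s_0})+v(b-a)\leq v\fm_F+vd_{s_0}+v(b-a)$, so the required strict inequality $v(F(\jet^r b)-F(\jet^r a)-A(b-a))>vA_{\times(b-a)}$ holds. The main obstacle is the cell-structure step---ensuring that $C$ contains an open $v$-ball around $\jet^{r-1}\ell$ and that $d_{s_0}$ can be found in $K^r$---which rests on the interplay between $\cL(K)$-independence (forced by $Z_q(K,\ell)=\emptyset$ for $q<r$), monotonicity (propagating closeness of $b$ to $\ell$ upward through $\jet^{r-1}$), and dimension theory for $\LO(K)$-definable sets.
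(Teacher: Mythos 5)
Your overall strategy is the paper's: apply the Jacobian property (Fact~\ref{fact:rvapprox}) to $I_F$, locate the fiber of $\chi$ containing $\jet^{r-1}\ell$, read off $A$ from the witness $d$ (or take $\der^r$ in the constant case), and close with monotonicity plus \cite[Corollary~4.5.4]{ADH17} exactly as you compute at the end. The final inequality chain is correct. The difference, and the problem, is in how you justify the two facts that make the middle of the argument work: that the fiber $C$ absorbs $\jet^{r-1}(B_{\rho_0}^M)$ for some $\rho_0$, and that the parameter $d_{s_0}$ can be taken in $K^r$.

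Your cell-decomposition argument, even granting it in full, only yields that $C$ contains \emph{some} open $v$-ball around $\jet^{r-1}\ell$, of some radius $\gamma^*\in\Gamma_M$. That is not enough: $v(\ell-K)$ can be bounded above in $\Gamma$ (it merely has no greatest element), so $\gamma^*$ may exceed every $\gamma_\rho$. In that case the ball contains no tuple $\jet^{r-1}(y)$ with $y\in K$, so you cannot conclude $\jet^{r-1}(a_{\rho+1})\in C$ (hence not that $s_0\in\oRV{K}^{\eq}$, hence not that $d_{s_0}\in K^r$), and you also cannot conclude $\jet^{r-1}(B_{\rho_0}^M)\subseteq C$, since $B_{\rho_0}^M$ always contains points at distance exactly $\gamma_{\rho_0+1}$ from $\ell$. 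What is needed is a ball whose radius lies in $v(\ell-K)$, i.e., the statement ``$\jet^{r-1}(y)\in U$ for all $y\in M$ sufficiently close to $\ell$'' in the paper's technical sense; this is precisely \cite[Lemma~5.6]{Ka22}, which the paper invokes and which rests on the stronger input from \cite[Lemma~5.5]{Ka22} that $K\langle\jet^{r-1}\ell\rangle$ is an \emph{immediate} $\TO$-extension of $K$ (not merely that $\jet^{r-1}\ell$ is $\cL(K)$-independent). Immediacy also settles $s_0\in\oRV{K}^{\eq}$ at once, since $\oRV{}^{\eq}$ does not grow along immediate extensions, with no detour through the $a_\rho$. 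A secondary soft spot: your one-line claim that $\ell^{(q)}=G(\jet^{q-1}\ell)$ would put $Y_q-G$ in $Z_q(K,\ell)$ is not immediate from the definition of vanishing, which constrains $I_{F_{+a,\times d}}$ on $K$-points near $d^{-1}(\ell-a)$ rather than asserting $F(\jet^q\ell)=0$; this implication again lives in \cite[Section~5]{Ka22}. So the skeleton is right, but the step you yourself flag as the main obstacle is genuinely not closed by the argument you give.
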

\begin{proof}
Note that if $A$ linearly approximates $\fm_F\inv F$ on an open $v$-ball $B$, then $\fm_FA$ linearly approximates $F$ on $B$, so we may assume that $\fm_F = 1$.
By applying Fact~\ref{fact:rvapprox} to the function $I_F$, we find an $\LRVeq(K)$-definable map $\chi\colon K^r \to \oRV{K}^{\eq}$ such that for each $s \in \chi(K^r)$, if $\chi\inv(s)$ contains an open $v$-ball, then either $I_F$ is constant on $\chi\inv(s)$ or there is $d \in K^r$ such that
\begin{equation}\label{eq:monotoneapprox}
v\big(I_F(x)- I_F(y) - d\cdot (x-y)\big) \ >\ vd+ v(x-y)
\end{equation}
for all $x, y \in \chi\inv(s)$ with $x \neq y$.
By \cite[Lemma~5.5]{Ka22}, $L \coloneqq K\langle\jet^{r-1}\ell\rangle \subseteq M$ is an immediate $\TO$-extension of $K$.
Let $\chi^L$ and $\chi^M$ denote the natural extensions of $\chi$ to $L^r$ and $M^r$, respectively, and let 
\[
s_0\ \coloneqq\ \chi^L(\jet^{r-1} \ell)\ \in\ \oRV{L}^{\eq}\ =\ \oRV{K}^{\eq}.
\]
Let $U\coloneqq \chi\inv(s_0) \subseteq K^r$. Then $U$ is $\LO(K)$-definable by~\cite[Corollary 2.18]{Yi17}.

Since $\jet^{r-1}(\ell) \in U^M$, we can apply~\cite[Lemma 5.6]{Ka22} to get that $U$ has nonempty interior and that $\jet^{r-1}(y) \in U$ for all $y \in M$ sufficiently close to $\ell$. Take $\rho_0$ such that $\jet^{r-1}(y) \in U^M$ whenever $v(\ell-y) > \gamma_{\rho_0}$, so $B_{\rho_0}^M \subseteq U^M$. We choose the linear differential operator $A$ as follows:\ If $I_F$ is constant on $U$, then we let $A$ be $\der^r \in K[\der]$. If $I_F$ is not constant on $U$, then we let $A$ be 
\[
\der^r - d_r \der^{r-1} - \cdots-d_1\ \in \ K[\der],
\]
where $d = (d_1,\ldots,d_r) \in K^r$ is chosen such that (\ref{eq:monotoneapprox}) holds for $x,y \in U$ with $x \neq y$. We claim that $A$ linearly approximates $F$ on $B_{\rho_0}^M$. This is clear if $I_F$ is constant on $U$, for then $I_F$ is  constant on $U^M$ as well. Suppose that $I_F$ is not constant on $U$, and let $a,b \in B_{\rho_0}^M$ with $a \neq b$. Then
\[
v\big(F(\jet^rb)- F(\jet^r a) - A(b-a)\big) \ = \ v\big(I_F(\jet^{r-1}b)- I_F(\jet^{r-1}a)- d\cdot \jet^{r-1}(b-a)\big) \ >\ vd + v\big(\jet^{r-1}(b-a)\big),
\]
where the inequality holds since $M$ is an elementary $\TO$-extension of $K$. 
We have $v\big(\jet^{r-1}(b-a)\big) = v(b-a)$ since $M$ is monotone. By~\cite[Corollary 4.5.4]{ADH17}, $vA_{\times (b-a)}= vA +v(b-a)$. Using that $vA \leq vd$, we get
\[
v\big(F(\jet^rb)- F(\jet^r a) - A(b-a)\big) \ >\ vd + v\big(\jet^{r-1}(b-a)\big)\ \geq\ vA + v(b-a)\ =\ vA_{\times (b-a)}. \qedhere
\]
\end{proof}

Note that every immediate extension of $K$ is monotone by \cite[Corollary~6.3.6]{ADH17}.
Requiring $M$ to be an immediate extension of $K$ in the previous result thus shows that every monotone $\TdO$-model with nontrivial induced derivation on its differential residue field has the $\Td$-hensel configuration property.
Combining this with the previous section yields the following results (for monotone $K$).
\begin{theorem}\label{thm:sphcompuniquemono}
Suppose that $\k$ is linearly surjective.
Any two spherically complete immediate $\TdO$-extensions of $K$ are $\LdO$-isomorphic over $K$.
Any two $\Td$-algebraically maximal immediate $\TdO$-extensions of $K$ that are $\Td$-algebraic over $K$ are $\LdO$-isomorphic over~$K$.
\end{theorem}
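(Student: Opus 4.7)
The plan is to deduce Theorem~\ref{thm:sphcompuniquemono} from Theorem~\ref{thm:sphcompunique} by verifying its hypothesis, namely that every immediate $\TdO$-extension of $K$ has the $\Td$-hensel configuration property. Since we are already under the standing assumptions of this subsection that $K$ is monotone and the derivation of $\k$ is nontrivial, it suffices to observe two things about an arbitrary immediate $\TdO$-extension $K'$ of $K$: first, $K'$ is itself monotone (by \cite[Corollary~6.3.6]{ADH17}, every immediate extension of a monotone valued differential field is monotone); second, the derivation of $\k_{K'}$ is nontrivial, which is automatic because $\k_{K'} = \k$ as $K'$ is immediate over $K$.

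Given these observations, Proposition~\ref{prop:monotoneapprox} applied to $K'$ in place of $K$ produces, for any divergent pc-sequence $(a_\rho)$ in $K'$, any monotone $\TdO$-extension $M$ of $K'$ containing a pseudolimit $\ell$ of $(a_\rho)$, and any $\cL(K')$-definable function $F$ in implicit form with $Z_q(K',\ell) = \emptyset$ for all $q < r$, a linear differential operator $A \in K'[\der]^{\neq}$ that linearly approximates $F$ on $B(a_{\rho+1},\gamma_\rho)^M$ for all sufficiently large $\rho$. Specializing this to the case where $F$ is a minimal $\Ld$-function of $(a_\rho)$ over $K'$ (so that the vanishing hypothesis $Z_q(K',\ell) = \emptyset$ for $q < r$ holds by minimality) and $M$ is an immediate $\TdO$-extension of $K'$ containing a pseudolimit of $(a_\rho)$ (such $M$ is monotone, again by \cite[Corollary~6.3.6]{ADH17}) gives exactly the $\Td$-hensel configuration property for $K'$.

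With the hypothesis of Theorem~\ref{thm:sphcompunique} thus verified, both conclusions of that theorem apply to $K$, yielding both statements of Theorem~\ref{thm:sphcompuniquemono}. There is no real obstacle here, since all the substantive work has been done: Proposition~\ref{prop:monotoneapprox} is the heart of the matter and the preservation of monotonicity under immediate extensions is a standard fact. The only subtlety worth flagging is ensuring that when invoking Proposition~\ref{prop:monotoneapprox} for $K'$, the extension $M$ in the definition of the $\Td$-hensel configuration property is indeed monotone, so that the proposition applies; this is guaranteed by the observation above.
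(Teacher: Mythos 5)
Your proposal is correct and follows essentially the same route as the paper: the paper likewise observes (immediately after Proposition~\ref{prop:monotoneapprox}) that since immediate extensions of monotone fields are monotone by \cite[Corollary~6.3.6]{ADH17}, every monotone $\TdO$-model with nontrivial induced derivation on its residue field has the $\Td$-hensel configuration property, and then cites Theorem~\ref{thm:sphcompunique}. Your extra care about the minimality of $F$ giving $Z_q(K',\ell)=\emptyset$ for $q<r$ and about $M$ being monotone is exactly the (implicit) content of the paper's one-line deduction.
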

\begin{theorem}\label{thm:TdhTdalgmaxmono}
Suppose $\k$ is linearly surjective.
If $K$ is $\Td$-algebraically maximal, then $K$ is $\Td$-henselian.
If $K$ is $\Td$-henselian and $C\subseteq\cO$, then $K$ is $\Td$-algebraically maximal.
\end{theorem}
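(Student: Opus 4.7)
The proof will be short since the heavy lifting has been done by Corollary~\ref{cor:TdalgmaxTdh}, Theorem~\ref{thm:TdhTdalgmax}, and Proposition~\ref{prop:monotoneapprox}. The plan is simply to cite these results in the right order and verify the hypothesis on immediate extensions needed by Theorem~\ref{thm:TdhTdalgmax}.

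For the first implication (from $\Td$-algebraically maximal to $\Td$-henselian), the plan is just to appeal to Corollary~\ref{cor:TdalgmaxTdh}, which does not use monotonicity at all and needs only that $\k$ is linearly surjective.

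For the converse, the plan is to invoke Theorem~\ref{thm:TdhTdalgmax}. Its standing hypothesis, inherited from Section~\ref{sec:sphcompunique}, is that every immediate $\TdO$-extension of $K$ has the $\Td$-hensel configuration property. The key step is therefore to verify that this holds in the monotone setting. I would argue as follows: every immediate $\TdO$-extension $L$ of $K$ is again monotone by \cite[Corollary~6.3.6]{ADH17} (as already noted just after Proposition~\ref{prop:monotoneapprox}), and since $L$ has the same differential residue field as $K$, the derivation induced on $\k_L = \k$ remains nontrivial. Thus the standing assumption of Section~\ref{sec:monotoneTdhc} holds for $L$, and Proposition~\ref{prop:monotoneapprox} applied to each divergent pc-sequence in $L$ of $\Td$-algebraic type (with a minimal $\Ld$-function in implicit form) yields a linear differential operator linearly approximating it on some $B_{\rho_0}^M$, which is exactly the $\Td$-hensel configuration property for $L$.

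With the hypotheses of Theorem~\ref{thm:TdhTdalgmax} now verified, the assumption $C \subseteq \cO$ gives that $K$ is $\Td$-algebraically maximal. The only mild subtlety, and thus the main thing to be careful about, is the bookkeeping of standing assumptions between Sections~\ref{sec:sphcompunique} and~\ref{sec:monotoneTdhc}: we need the $\Td$-hensel configuration property to hold in every immediate extension of $K$, not merely in $K$ itself, but this is ensured by the stability of monotonicity and of the differential residue field under immediate extensions.
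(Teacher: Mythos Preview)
Your proposal is correct and follows essentially the same route as the paper: the paper simply notes that every immediate extension of a monotone $K$ is monotone (by \cite[Corollary~6.3.6]{ADH17}), so Proposition~\ref{prop:monotoneapprox} gives the $\Td$-hensel configuration property throughout, and then the theorems of Section~\ref{sec:sphcompunique} apply. One small efficiency you missed: the paper explicitly remarks that Theorem~\ref{thm:TdhTdalgmax} needs only that $K$ itself has the $\Td$-hensel configuration property, not all its immediate extensions, so your bookkeeping for the converse direction is more than is strictly required---but of course still valid.
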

\begin{theorem}\label{thm:Tdhenselizationmono}
Suppose that $K$ is asymptotic, and $\k$ is linearly surjective.
Then $K$ has a $\Td$-henselization that is $\Td$-algebraic over $K$ and has no proper $\Td$-henselian $\LdO$-substructure containing $K$.
In particular, any two $\Td$-henselizations of $K$ are $\LdO$-isomorphic over~$K$.
\end{theorem}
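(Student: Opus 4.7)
The plan is to adapt the proof of Theorem~\ref{thm:Tdhenselization} to the monotone setting, using Theorem~\ref{thm:TdhTdalgmaxmono} in place of Corollary~\ref{cor:TdalgmaxTdh} and Theorem~\ref{thm:TdhTdalgmax}, and Theorem~\ref{thm:sphcompuniquemono} (or rather its proof) in place of Theorem~\ref{thm:sphcompunique}. The monotone assumption on $K$ is preserved under immediate extensions by \cite[Corollary~6.3.6]{ADH17}, so every immediate $\TdO$-extension appearing in the argument will automatically be monotone, which is what activates the monotone versions of the earlier results.

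First I would construct a candidate $L$ by applying Zorn's lemma (using Fact~\ref{prop:Ka226.2} to provide extensions at each step) to obtain a $\Td$-algebraically maximal immediate $\TdO$-extension $L$ of $K$ that is $\Td$-algebraic over $K$. Such an $L$ is monotone by \cite[Corollary~6.3.6]{ADH17}, and $\k_L = \k$ is linearly surjective, so by Theorem~\ref{thm:TdhTdalgmaxmono} $L$ is $\Td$-henselian. Moreover, $L$ is asymptotic, e.g.\ by \cite[Lemma~1.12]{ADH18} (or \cite[Lemmas~9.4.2, 9.4.5]{ADH17}), so $C_L \subseteq \cO_L$.

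Next I would show $L$ has no proper $\Td$-henselian $\LdO$-substructure containing $K$. Suppose $K \subseteq L' \subsetneq L$ with $L'$ $\Td$-henselian. Since $L'$ is contained in the asymptotic $L$, it is itself asymptotic and monotone, so Theorem~\ref{thm:TdhTdalgmaxmono} applies to give that $L'$ is $\Td$-algebraically maximal. But $L$ is an immediate $\TdO$-extension of $L'$ (as $L$ is an immediate extension of $K$) and $L$ is $\Td$-algebraic over $K \subseteq L'$, contradicting $\Td$-algebraic maximality of $L'$.

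Finally, to verify the universal property, let $M$ be any immediate $\Td$-henselian $\TdO$-extension of $K$. Then $M$ is monotone and asymptotic with linearly surjective differential residue field, so by Theorem~\ref{thm:TdhTdalgmaxmono} again, $M$ is $\Td$-algebraically maximal, which is the condition that makes Lemma~\ref{lem:mindiffzero} applicable. I would then run exactly the maximal-partial-isomorphism argument from the proof of Theorem~\ref{thm:sphcompunique}: take a maximal $\LdO$-isomorphism $\mu \colon K_0 \to K_1$ between $\TdO$-extensions $K_0 \subseteq L$ and $K_1 \subseteq M$ of $K$, and assume for contradiction that $K_0 \neq L$. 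Picking an element $\ell \in L \setminus K_0$ with divergent pc-sequence $(a_\rho)$ from $K_0$, split into the $\Td$-transcendental case (use Fact~\ref{prop:Ka226.1} and the fact that $\k$ is linearly surjective to find a pseudolimit in $M$) and the $\Td$-algebraic case (replace $(a_\rho)$ by an equivalent pc-sequence via Lemma~\ref{lem:vanishpconv} so that $F(\jet^r a_\rho) \leadsto 0$ for a minimal $\Ld$-function $F$, apply Proposition~\ref{prop:monotoneapprox} in both $L$ and $M$ to produce linear approximations, then apply Lemma~\ref{lem:Tdhc} and Lemma~\ref{lem:mindiffzero} to get zeros in $L$ and $M$, and extend $\mu$ via Fact~\ref{prop:Ka226.2}), in each case contradicting maximality. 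The last sentence of the theorem follows because two $\Td$-henselizations each embed into the other over $K$, and by the minimality property such an embedding has to be onto.

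The step I expect to require the most care is verifying the $\Td$-algebraic case in the embedding argument, specifically that Proposition~\ref{prop:monotoneapprox} applies in both $L$ and $M$: one needs that $L$ and $M$ are monotone (true by immediacy over $K$) and that the hypothesis ``$Z_q(K,\ell) = \emptyset$ for $q<r$'' is guaranteed by $F$ being a \emph{minimal} $\Ld$-function. Everything else is a transcription of the non-monotone arguments with the monotone-tailored results substituted.
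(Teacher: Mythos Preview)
Your proposal is correct and follows essentially the same route as the paper, which simply observes that monotone fields and their immediate extensions satisfy the $\Td$-hensel configuration property via Proposition~\ref{prop:monotoneapprox} and \cite[Corollary~6.3.6]{ADH17}, so the standing assumption of Section~\ref{sec:sphcompunique} holds and Theorem~\ref{thm:Tdhenselization} applies verbatim; you are just unpacking that argument with the monotone-tagged results substituted.

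One minor remark on the embedding step: the $\Td$-transcendental case is in fact vacuous. Since $L$ is $\Td$-algebraic over $K\subseteq K_0$, any $\ell\in L\setminus K_0$ is $\Td$-algebraic over $K_0$, and by (the contrapositive of) Fact~\ref{prop:Ka226.1} the associated divergent pc-sequence in $K_0$ must be of $\Td$-algebraic type. This is fortunate, because your proposed handling of that case---invoking linear surjectivity of $\k$ to find a pseudolimit in $M$---would not work: $\Td$-algebraic maximality of $M$ gives no control over pseudolimits of $\Td$-transcendental pc-sequences, and linear surjectivity is irrelevant there. So you should either drop that case or note explicitly why it cannot occur.
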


\subsection{The \texorpdfstring{$T^\der$}{T∂}-hensel configuration property in a class of models}\label{sec:Tdhcclass}

In Section~\ref{sec:sphcompunique}, we introduced the $\Td$-hensel configuration property, and used it to establish results about spherically complete immediate extensions. This property referred to immediate $\TdO$-extensions of $K$, and now we generalize it by considering other kinds of extensions. 
Let $\cC$ be a class of $\TdO$-models.
We say that $(K, \cC)$ has the \textbf{$\Td$-hensel configuration property} if whenever we have:
\begin{enumerate}
\item a divergent pc-sequence $(a_\rho)$ in $K$ of $\Td$-algebraic type over $K$,
\item a minimal $\Ld$-function $F$ of $(a_\rho)$ over $K$, and
\item a $\TdO$-extension $L\in \cC$ of $K$ containing a pseudolimit of $(a_\rho)$,
\end{enumerate}
there is an $A \in K[\der]^{\neq}$ that linearly approximates $F$
on $B(a_{\rho+1},\gamma_\rho)^{L}$ for all sufficiently large $\rho$, where, again, $\gamma_\rho\coloneqq v(a_{\rho+1}-a_\rho)$. We say that $\cC$ has the \textbf{$\Td$-hensel configuration property} if for every $K \in \cC$ with nontrivial derivation on $\k$, $(K, \cC)$ has the $\Td$-hensel configuration property.
In these terms, the previous subsection established:
\begin{corollary}\label{cor:monotoneTdhc2}
The class of monotone $\TdO$-models has the $\Td$-hensel configuration property.
\end{corollary}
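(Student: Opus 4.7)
The plan is to observe that Corollary~\ref{cor:monotoneTdhc2} is essentially a direct restatement of Proposition~\ref{prop:monotoneapprox}, repackaged in the terminology of Section~\ref{sec:Tdhcclass}. The work has already been done; it only remains to verify that the hypotheses line up.

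First, I would fix an arbitrary $K$ in the class $\cC$ of monotone $\TdO$-models with nontrivial induced derivation on $\k$, and then fix data witnessing the hypotheses of the $\Td$-hensel configuration property for $(K, \cC)$: a divergent pc-sequence $(a_\rho)$ in $K$ of $\Td$-algebraic type over $K$, a minimal $\Ld$-function $F \colon K^{1+r} \to K$ of $(a_\rho)$ over $K$, and a monotone $\TdO$-extension $L$ of $K$ containing a pseudolimit $\ell$ of $(a_\rho)$. After discarding an initial segment of $(a_\rho)$, I may assume that $\gamma_\rho \coloneqq v(a_{\rho+1} - a_\rho)$ is strictly increasing. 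By minimality of $F$, we have $Z_q(K, \ell) = \emptyset$ for every $q < r$, which is the vanishing-rank hypothesis required by Proposition~\ref{prop:monotoneapprox}.

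Next, I would apply Proposition~\ref{prop:monotoneapprox} with $M = L$. This produces an index $\rho_0$ and an $A \in K[\der]^{\neq}$ such that $A$ linearly approximates $F$ on $B_{\rho_0}^L = B(a_{\rho_0+1}, \gamma_{\rho_0})^L$. For every $\rho \geq \rho_0$, the ball $B(a_{\rho+1}, \gamma_\rho)^L$ is contained in $B(a_{\rho_0+1}, \gamma_{\rho_0})^L$ because $(\gamma_\rho)$ is increasing and $v(a_{\rho+1} - a_{\rho_0+1}) \geq \gamma_{\rho_0}$; hence, as noted after the definition of linear approximation, $A$ linearly approximates $F$ on $B(a_{\rho+1}, \gamma_\rho)^L$ for all $\rho \geq \rho_0$. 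This is precisely the conclusion required.

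There is no real obstacle here: all the technical work—use of the Jacobian property (Fact~\ref{fact:rvapprox}) to build the $\LRVeq(K)$-definable partition, extraction of the linear coefficients $d$, and the crucial identity $v(\jet^{r-1}(b-a)) = v(b-a)$ coming from monotonicity of $L$ (which itself is automatic for immediate extensions but is part of the standing hypothesis on $L$ here)—has already been carried out inside the proof of Proposition~\ref{prop:monotoneapprox}. The corollary is simply the restatement of that proposition under the observation that the minimal-$\Ld$-function hypothesis supplies the needed vanishing condition $Z_q(K, \ell) = \emptyset$ for $q<r$.
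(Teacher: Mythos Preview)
Your proposal is correct and matches the paper's approach exactly: the paper does not give a separate proof but simply states that ``the previous subsection established'' the corollary, and your write-up is precisely the unpacking of that remark via Proposition~\ref{prop:monotoneapprox}. One tiny imprecision: to conclude $B(a_{\rho+1},\gamma_\rho)^L \subseteq B(a_{\rho_0+1},\gamma_{\rho_0})^L$ you need the strict inequality $v(a_{\rho+1}-a_{\rho_0+1}) > \gamma_{\rho_0}$ (open balls), which does hold since for $\rho>\rho_0$ the pc-sequence gives $v(a_{\rho+1}-a_{\rho_0+1})=\gamma_{\rho_0+1}>\gamma_{\rho_0}$.
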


Theorem~\ref{thm:sphcompunique} only required the assumption that the class $\cI(K)$ of immediate extensions of $K$ has the $\Td$-hensel configuration property, but its proof yields a variant involving non-immediate extensions whose Corollary~\ref{cor:sphcompembed} is needed later in the proof of the Ax--Kochen/Ershov theorem.
\begin{theorem}\label{thm:Tdhcclassembed}
Suppose that $\k$ is linearly surjective, $\cC$ has the $\Td$-hensel configuration property, and $\cI(K) \subseteq \cC$.
Let $L \in \cC$ be a $\Td$-henselian $\TdO$-extension of $K$.
If $L$ is $|\Gamma|^+$-saturated, then any immediate $\TdO$-extension of $K$ can be embedded in $L$ over~$K$.
\end{theorem}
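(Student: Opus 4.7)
The plan is to adapt the Zorn's-lemma-and-maximal-embedding argument used for Theorem~\ref{thm:sphcompunique}, with the $\Td$-hensel configuration property now being used to transport linear approximations directly into $L$. Fix an immediate $\TdO$-extension $K^*$ of $K$ and pick, by Zorn's lemma, a maximal pair $(K_0,\mu)$ with $K \subseteq K_0 \subseteq K^*$ a $\TdO$-intermediate structure and $\mu\colon K_0 \to L$ an $\LdO$-embedding over $K$. Since $K \subseteq K_0 \subseteq K^*$ is immediate, $K_0 \in \cI(K) \subseteq \cC$, so $(K_0,\cC)$ has the $\Td$-hensel configuration property. Identify $K_0$ with $\mu(K_0)\subseteq L$, so that $K_0 \subseteq L$ and $L \in \cC$ is a $\TdO$-extension of $K_0$. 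Suppose for contradiction $K_0 \subsetneq K^*$ and fix $\ell \in K^* \setminus K_0$ together with a divergent pc-sequence $(a_\rho)$ in $K_0$ with $a_\rho\leadsto \ell$.

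Set $\gamma_\rho\coloneqq v(a_{\rho+1}-a_\rho)$. After thinning $(a_\rho)$ to an equivalent pc-sequence whose length is the coinitiality of $v(\ell - K_0) \subseteq \Gamma$ (hence bounded by $|\Gamma|$), the $|\Gamma|^+$-saturation of $L$ realizes the type of a pseudolimit of $(a_\rho)$ in $L$. If $(a_\rho)$ is of $\Td$-transcendental type over $K_0$, picking any such pseudolimit $b \in L$ and invoking Fact~\ref{prop:Ka226.1} yields an $\LdO(K_0)$-embedding $K_0\llangle \ell\rrangle \to L$ sending $\ell$ to $b$, contradicting maximality of $\mu$.

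If instead $(a_\rho)$ is of $\Td$-algebraic type over $K_0$ with minimal $\Ld$-function $F$, use Lemma~\ref{lem:vanishpconv} to replace $(a_\rho)$ by an equivalent pc-sequence with $F(\jet^r a_\rho) \leadsto 0$ (again thinned so that $L$ supplies a pseudolimit). Because $L \in \cC$ is a $\TdO$-extension of $K_0$ containing a pseudolimit of $(a_\rho)$, the $\Td$-hensel configuration property for $(K_0,\cC)$ yields $A \in K_0[\der]^{\neq}$ linearly approximating $F$ on $B(a_{\rho+1},\gamma_\rho)^L$ for sufficiently large $\rho$. By Lemma~\ref{lem:Tdhc}, $(F,A,(a_\rho))$ is then in $\Td$-hensel configuration in $L$, so Lemma~\ref{lem:mindiffzero} together with $\Td$-henselianity of $L$ produces $b \in L$ with $a_\rho \leadsto b$ and $F(\jet^r b)=0$. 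A final application of Fact~\ref{prop:Ka226.2} extends $\mu$ to an $\LdO(K_0)$-embedding $K_0\llangle a\rrangle \to L$ sending $a$ to $b$, again contradicting maximality. Hence $K_0 = K^*$.

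The main obstacle I expect is bookkeeping around two interacting issues: first, checking that $|\Gamma|^+$-saturation really does supply pseudolimits of the relevant pc-sequences — this is where the standard trick of thinning a pc-sequence to a cofinal subsequence of length bounded by the coinitiality of $v(\ell - K_0)$ is needed — and second, verifying that identifying $K_0$ with $\mu(K_0) \subseteq L$ makes the hypothesis of the $\Td$-hensel configuration property for $(K_0,\cC)$ literally applicable to the $\TdO$-extension $L$, despite $L$ a priori being presented as an extension of $K$ rather than of $K_0$.
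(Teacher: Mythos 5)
Your overall strategy---Zorn's lemma on partial embeddings into $L$, saturation to supply pseudolimits in $L$, and the class version of the $\Td$-hensel configuration property to transport linear approximations into $L$---is exactly the paper's: its proof is a one-line reference back to Theorem~\ref{thm:sphcompunique}, with saturation replacing spherical completeness on the $L$ side. The $\Td$-transcendental case and the bookkeeping you flag (thinning the pc-sequence to length at most $|\Gamma|$, $K_0\in\cI(K)\subseteq\cC$ so that $(K_0,\cC)$ has the property, identifying $K_0$ with $\mu(K_0)$) are all fine.

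However, your $\Td$-algebraic case has a genuine gap. Fact~\ref{prop:Ka226.2} produces an \emph{abstract} immediate extension $K_0\llangle a\rrangle$ with $F(\jet^r a)=0$ and $a_\rho\leadsto a$, together with an embedding of it into $L$; it does not produce a copy of $K_0\llangle a\rrangle$ \emph{inside} $K^*$. The element $\ell\in K^*$ you started from is a pseudolimit of $(a_\rho)$ but need not satisfy $F(\jet^r\ell)=0$, so you cannot take $a=\ell$, and an arbitrary immediate extension $K^*$ (which need not be $\Td$-henselian) need not contain any zero of $F$ that pseudolimits $(a_\rho)$. Consequently the embedding $K_0\llangle a\rrangle\to L$ does not enlarge the domain of $\mu$ within $K^*$, and no contradiction with the maximality of $(K_0,\mu)$ in your poset is obtained. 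The repair, implicit in the paper's reduction to Theorem~\ref{thm:sphcompunique}, is to first replace $K^*$ by a spherically complete immediate $\TdO$-extension $M\supseteq K^*$ (Fact~\ref{fact:sphcomp}); $M$ is still an immediate extension of $K$, and an embedding $M\to L$ over $K$ restricts to one of $K^*$. Then $M$ is $\Td$-henselian by Corollary~\ref{cor:sphcompTdh}, and since $M\in\cI(K_0)\subseteq\cC$, the same combination of the $\Td$-hensel configuration property, Lemma~\ref{lem:Tdhc}, and Lemma~\ref{lem:mindiffzero} applied to $M$ yields $b^*\in M$ with $a_\rho\leadsto b^*$ and $F(\jet^r b^*)=0$. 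The new domain is then $K_0\llangle b^*\rrangle\subseteq M$, which properly contains $K_0$ because $(a_\rho)$ diverges in $K_0$, and Fact~\ref{prop:Ka226.2} extends $\mu$ to $K_0\llangle b^*\rrangle\to K_0\llangle b\rrangle\subseteq L$, giving the desired contradiction.
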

\begin{proof}
Argue as in the proof of Theorem~\ref{thm:sphcompunique}, but use saturation instead of spherical completeness to obtain pseudolimits in~$L$.
\end{proof}
\begin{corollary}\label{cor:sphcompembed}
Suppose that $K$ is monotone and $\k$ is linearly surjective, and let $L$ be a monotone $\Td$-henselian $\TdO$-extension of $K$.
If $L$ is $|\Gamma|^+$-saturated, then any immediate $\TdO$-extension of $K$ can be embedded in $L$ over~$K$.
\end{corollary}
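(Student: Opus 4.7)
The plan is to deduce Corollary~\ref{cor:sphcompembed} as an immediate application of Theorem~\ref{thm:Tdhcclassembed}, taking $\cC$ to be the class of all monotone $\TdO$-models. For this, I need to verify the three hypotheses of that theorem relative to this choice of $\cC$: that $\cC$ has the $\Td$-hensel configuration property; that $\cI(K)\subseteq \cC$, i.e., every immediate $\TdO$-extension of $K$ is monotone; and that the given $L$ sits in $\cC$, is $\Td$-henselian, and is $|\Gamma|^+$-saturated.

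The first condition is precisely Corollary~\ref{cor:monotoneTdhc2}, which was obtained from the behavior established in Proposition~\ref{prop:monotoneapprox}. The second condition holds because any immediate $\TdO$-extension of the monotone field $K$ is again monotone---this is the content of \cite[Corollary~6.3.6]{ADH17}, invoked explicitly in the discussion preceding Theorem~\ref{thm:sphcompuniquemono}. The third condition is verbatim the hypothesis of the corollary: $L$ is a monotone $\Td$-henselian $\TdO$-extension of $K$ that is $|\Gamma|^+$-saturated.

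With all three hypotheses of Theorem~\ref{thm:Tdhcclassembed} verified, that theorem directly produces an $\LdO(K)$-embedding of any immediate $\TdO$-extension of $K$ into $L$. There is no real obstacle in the argument---the substantive content has already been packaged into Corollary~\ref{cor:monotoneTdhc2} and Theorem~\ref{thm:Tdhcclassembed}, and the corollary is just the instantiation of the abstract framework at the concrete class of monotone fields.
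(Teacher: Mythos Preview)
Your proposal is correct and is exactly the paper's approach: the paper's proof consists of the single observation that every immediate $\TdO$-extension of $K$ is monotone (by \cite[Corollary~6.3.6]{ADH17}), leaving implicit that this, together with Corollary~\ref{cor:monotoneTdhc2} and the hypotheses on $L$, feeds directly into Theorem~\ref{thm:Tdhcclassembed} with $\cC$ the class of monotone $\TdO$-models. You have simply made all of these verifications explicit.
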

\begin{proof}
Note that every immediate extension of $K$ is monotone by \cite[Corollary~6.3.6]{ADH17}.
\end{proof}

These definitions also lead to improvements of Theorem~\ref{thm:Tdhenselization} and Theorem~\ref{thm:Tdhenselizationmono}.
We call $L$ a \textbf{$\cC$-$\Td$-henselization of $K$} if $L \in \cC$ is an immediate $\TdO$-extension of $K$ that is $\Td$-henselian and for any $\Td$-henselian $\TdO$-extension $M \in \cC$ of $K$, there is an $\LdO$-embedding $L \to M$ that is the identity on $K$.
The proof of Theorem~\ref{thm:Tdhenselization} shows:
\begin{theorem}
Suppose that $\k$ is linearly surjective, $\cC$ has the $\Td$-hensel configuration property, $\cI(K) \subseteq \cC$, and every $M \in \cC$ is asymptotic.
Then $K$ has a $\cC$-$\Td$-henselization that is $\Td$-algebraic over $K$ and has no proper $\Td$-henselian $\LdO$-substructure containing $K$.
In particular, any two $\cC$-$\Td$-henselizations of $K$ are isomorphic over~$K$. 
\end{theorem}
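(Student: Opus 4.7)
The plan is to follow the template of Theorem~\ref{thm:Tdhenselization}, substituting the class $\cC$ for $\cI(K)$. First I would use Zorn's lemma, taking unions at limit stages and Fact~\ref{prop:Ka226.2} to enlarge at non-maximal stages, to construct a maximal element $L$ in the class of immediate $\Td$-algebraic $\TdO$-extensions of $K$. By the transitivity of being immediate and of being $\Td$-algebraic, $L$ has no proper such extension and is therefore $\Td$-algebraically maximal. Since $L$ is immediate over $K$, $L \in \cI(K) \subseteq \cC$, and $\k_L = \k$ is linearly surjective; moreover every immediate extension of $L$ is itself immediate over $K$ and so lies in $\cI(K) \subseteq \cC$, supplying the $\Td$-hensel configuration property needed to apply Corollary~\ref{cor:TdalgmaxTdh}. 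Hence $L$ is $\Td$-henselian, and it is asymptotic by \cite[Lemma~1.12]{ADH18} since $K$ is.

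Next I would establish minimality. Any $\Td$-henselian $\LdO$-substructure $L'$ of $L$ containing $K$ is sandwiched between $K$ and the immediate extension $L$, hence immediate over $K$, so $L' \in \cI(K) \subseteq \cC$; its immediate extensions are in turn immediate over $K$ and hence in $\cC$. Also, $L'$ is asymptotic as a substructure of $L$, giving $C_{L'} \subseteq \cO_{L'}$, so Theorem~\ref{thm:TdhTdalgmax} applies and $L'$ is $\Td$-algebraically maximal. But $L$ is a $\Td$-algebraic immediate extension of $L'$, forcing $L' = L$. Combined with the embedding clause below, this minimality yields the uniqueness-up-to-isomorphism statement, since any mutual pair of embeddings between $\cC$-$\Td$-henselizations composes to the identity on both sides.

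Finally, for each $\Td$-henselian $M \in \cC$ I would build an $\LdO$-embedding $L \to M$ over $K$ by back-and-forth, exactly as in the proof of Theorem~\ref{thm:sphcompunique}. Take a maximal $\LdO$-isomorphism $\mu\colon K' \to M'$ over $K$ between substructures of $L$ and $M$; if $K' \neq L$, pick $\ell \in L \setminus K'$ and a divergent pc-sequence $(a_\rho)$ in $K'$ with $a_\rho \leadsto \ell$. Since $L$ is $\Td$-algebraic over $K$, $\ell$ is $\Td$-algebraic over $K'$, and Fact~\ref{prop:Ka226.1} forces $(a_\rho)$ to be of $\Td$-algebraic type over $K'$ with some minimal $\Ld$-function $F$; by Lemma~\ref{lem:vanishpconv} I may assume $F(\jet^r a_\rho) \leadsto 0$. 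The $(K', \cC)$-property applied to $L \in \cC$ gives a linear differential operator $A$ putting $(F, A, (a_\rho))$ in $\Td$-hensel configuration in $L$. To transport this to $M$, I use asymptoticity of $M$ together with Theorem~\ref{thm:TdhTdalgmax} to get that $M$ is $\Td$-algebraically maximal, so the transported sequence $(\mu a_\rho)$ already has a pseudolimit in $M$; then the $(M', \cC)$-property with $M$ as the extension supplies an approximant putting the image quadruple in $\Td$-hensel configuration in $M$, and Lemma~\ref{lem:mindiffzero} yields $b \in M$ with $\mu a_\rho \leadsto b$ and $\mu F(\jet^r b) = 0$. Fact~\ref{prop:Ka226.2} then extends $\mu$ by $\ell \mapsto b$, contradicting maximality. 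The main obstacle is this last application of Theorem~\ref{thm:TdhTdalgmax} to $M$: it needs the $\Td$-hensel configuration property for $\cI(M)$, so in the abstract setting one either reads into the hypotheses on $\cC$ the closure $\cI(M) \subseteq \cC$ for $M \in \cC$ (automatic in the monotone instance underlying Corollary~\ref{cor:sphcompembed}) or furnishes this extra input separately.
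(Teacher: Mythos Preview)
Your argument follows the paper's exactly: the paper merely asserts that the proof of Theorem~\ref{thm:Tdhenselization} carries over. You have in fact gone further and correctly flagged a subtlety the paper does not address. To conclude that the target $M \in \cC$ is $\Td$-algebraically maximal via Theorem~\ref{thm:TdhTdalgmax}, one needs $M$ itself to have the $\Td$-hensel configuration property (the paper remarks just before that theorem that only this is required, not the property for all of $\cI(M)$---so your phrasing ``for $\cI(M)$'' overshoots slightly, but the substance is the same). Deriving this from the hypothesis that $\cC$ has the property requires producing, for each divergent pc-sequence of $\Td$-algebraic type in $M$, an extension in $\cC$ containing a pseudolimit; the natural candidate from Fact~\ref{prop:Ka226.2} lies in $\cI(M)$, so one indeed wants $\cI(M) \subseteq \cC$. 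Without such a pseudolimit in $M$ one also cannot invoke Lemma~\ref{lem:mindiffzero} there, since that lemma's proof uses an existing pseudolimit $\ell$ in the ambient field.

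Your proposed remedy---reading the closure condition $\cI(M) \subseteq \cC$ for all $M \in \cC$ into the hypotheses---is correct, and it holds automatically in the monotone asymptotic class underlying the subsequent corollary: immediate extensions of monotone fields are monotone by \cite[Corollary~6.3.6]{ADH17}, and immediate extensions of asymptotic fields are asymptotic by \cite[Lemma~1.12]{ADH18}. So the applications are unaffected.
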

\begin{corollary}
Suppose that $K$ is monotone and asymptotic, and $\k$ is linearly surjective.
Then the $\Td$-henselization of $K$ embeds into every monotone, asymptotic, $\Td$-henselian $\TdO$-extension of~$K$.
\end{corollary}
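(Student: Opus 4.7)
The plan is to apply the immediately preceding theorem with $\cC$ taken to be the class of all monotone, asymptotic $\TdO$-models. Of its four hypotheses, linear surjectivity of $\k$ is given, and ``every $M \in \cC$ is asymptotic'' is immediate from the definition of $\cC$. To verify $\cI(K) \subseteq \cC$, I would argue that any immediate $\TdO$-extension of $K$ is monotone by \cite[Corollary~6.3.6]{ADH17}, and asymptotic by the same preservation result under immediate extensions cited in the proof of Theorem~\ref{thm:Tdhenselization} (namely \cite[Lemma~1.12]{ADH18}); hence every immediate extension of $K$ lies in $\cC$.

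For the $\Td$-hensel configuration property, the class of all monotone $\TdO$-models has it by Corollary~\ref{cor:monotoneTdhc2}. This property is inherited by any subclass, since the definition of $(K, \cC)$ having the $\Td$-hensel configuration property only quantifies over pseudolimits lying in $\TdO$-extensions $L \in \cC$ of $K$; shrinking $\cC$ only reduces the collection of $L$ for which a linearly approximating $A \in K[\der]^{\neq}$ must be produced.

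With these hypotheses in place, the preceding theorem yields a $\cC$-$\Td$-henselization $L$ of $K$: an immediate, $\Td$-algebraic, $\Td$-henselian, monotone, asymptotic $\TdO$-extension of $K$ that $\LdO$-embeds over $K$ into every $\Td$-henselian member of $\cC$, that is, into every monotone, asymptotic, $\Td$-henselian $\TdO$-extension of $K$. To finish, I would identify $L$ with the $\Td$-henselization of $K$ produced by Theorem~\ref{thm:Tdhenselizationmono}: the latter is itself an immediate extension, hence monotone and asymptotic, so it lies in $\cC$ and is $\Td$-henselian, which gives an $\LdO$-embedding $L \to L'$ over $K$; conversely, the $\Td$-henselization property of $L'$ gives an $\LdO$-embedding $L' \to L$ over $K$, since $L$ is an immediate $\Td$-henselian $\TdO$-extension of $K$. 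Both embeddings must be $K$-isomorphisms by the ``no proper $\Td$-henselian $\LdO$-substructure containing $K$'' clauses of Theorems~\ref{thm:Tdhenselizationmono} and the preceding theorem, yielding the corollary. The only potential friction is the observation that the $\Td$-hensel configuration property passes to subclasses, but this is purely formal from the definition.
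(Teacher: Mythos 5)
Your proposal is correct and is exactly the intended argument: the paper states this corollary without proof as the specialization of the preceding theorem to $\cC$ the class of monotone asymptotic $\TdO$-models, with the hypotheses verified just as you do (immediate extensions are monotone and asymptotic, and the $\Td$-hensel configuration property trivially passes to subclasses). Your closing identification of the $\cC$-$\Td$-henselization with the $\Td$-henselization of Theorem~\ref{thm:Tdhenselizationmono} is also sound, though one could note more directly that the $\cC$-$\Td$-henselization is already a $\Td$-henselization in the original sense, and then invoke its uniqueness.
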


\subsection{The \texorpdfstring{$c$}{c}-map}\label{sec:cmap}
In this subsection, assume that $K$ is monotone.
A \textbf{section for $K$} is a $\Lambda$-vector space embedding $s\colon \Gamma\to K^>$ such that $v(s\gamma) = \gamma$ for all $\gamma \in \Gamma$. An \textbf{angular component for $K$} is a $\Lambda$-linear map $\ac\colon K^>\to \k^>$ such that $\ac(a) = \res(a)$ whenever $a \asymp 1$. We extend any angular component map $\ac$ to a map $K\to \k$ (also denoted by $\ac$) by setting $\ac(0)\coloneqq 0$ and $\ac(-a)\coloneqq -\ac(a)$ for $a \in K^>$. Given any section $s\colon\Gamma\to K^>$, the map $a \mapsto \res\!\big(a/s(va)\big)\colon K^>\to \k^>$ is an angular component for $K$, which we say is \textbf{induced by $s$}. 

\begin{lemma}\label{lem:sectionexists}
Let $A \subseteq K^>$ be a $\Lambda$-subspace such that $\{va:a \in A\} = \Gamma$. Then there is a section $s\colon \Gamma\to K^>$ with image contained in $A$.
\end{lemma}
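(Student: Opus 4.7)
The plan is essentially linear algebra: view $v|_{K^>}$ as a $\Lambda$-linear surjection from the $\Lambda$-vector space $K^>$ (with scalar action given by power functions, written multiplicatively as $\lambda\cdot a = a^\lambda$) onto the $\Lambda$-vector space $\Gamma$, and split it through~$A$.

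First I would observe that $A$ is by hypothesis a $\Lambda$-subspace of $K^>$ and, by Fact~\ref{fact:powerderivative} and the definition of the $\Lambda$-vector space structure on $\Gamma$, the restriction $v|_A \colon A \to \Gamma$ is $\Lambda$-linear; it is surjective by the hypothesis $\{va : a \in A\} = \Gamma$. Now choose a $\Lambda$-basis $(\gamma_i)_{i \in I}$ of $\Gamma$, which exists since $\Gamma$ is a $\Lambda$-vector space, and for each $i \in I$ pick $a_i \in A$ with $v(a_i) = \gamma_i$. Define $s \colon \Gamma \to K^>$ as the unique $\Lambda$-linear map sending $\gamma_i \mapsto a_i$; its image lies in $A$ because $A$ is a $\Lambda$-subspace closed under the operations used to form $\Lambda$-linear combinations.

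It remains to verify that $s$ is an embedding with $v \circ s = \mathrm{id}_\Gamma$. The latter is a $\Lambda$-linear identity that holds on the basis $(\gamma_i)$ by construction, hence on all of $\Gamma$; injectivity of $s$ is then immediate since $v \circ s$ is the identity.

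I do not anticipate any genuine obstacle: the statement is essentially that a surjective $\Lambda$-linear map onto a free $\Lambda$-module splits, together with the observation that the splitting can be arranged to land in any prescribed $\Lambda$-subspace mapping onto the target. Monotonicity of $K$ is not used in the argument; it appears in the subsection-wide hypothesis presumably for later applications of the resulting section.
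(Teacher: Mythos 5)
Your proof is correct and is essentially the paper's argument: the paper phrases it as the splitting of the exact sequence $1\to \cO^{\times}\cap A\to A\to \Gamma\to 0$ of $\Lambda$-vector spaces, and your basis-lifting construction is precisely how that splitting is produced. (The appeal to Fact~\ref{fact:powerderivative} is unnecessary --- the $\Lambda$-linearity of $v$ on $K^>$ is immediate from the definition $\lambda va = v(a^\lambda)$ --- but this is cosmetic.)
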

\begin{proof}
The inclusion $\cO^{\times}\cap A\to A$ and the restricted valuation map $A\to \Gamma$ yield an exact sequence 
\[
1\to \cO^{\times}\cap A\to A\to \Gamma\to 0
\]
of $\Lambda$-vector spaces. This exact sequence splits, yielding a section $s\colon \Gamma\to A$ as claimed.
\end{proof}

\begin{corollary}\label{cor:acinduced}
Let $\ac$ be an angular component for $K$. Then $\ac$ is induced by a section for $K$.
\end{corollary}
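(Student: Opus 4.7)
The plan is to apply Lemma~\ref{lem:sectionexists} to the $\Lambda$-subspace $A \coloneqq \ker(\ac) = \{ a \in K^> : \ac(a) = 1 \}$ of $K^>$, which is a $\Lambda$-subspace because $\ac$ is $\Lambda$-linear into the multiplicative group $\k^>$. The main task is to show that $v(A) = \Gamma$; once we have this, Lemma~\ref{lem:sectionexists} yields a section $s\colon \Gamma\to K^>$ landing in $A$, and we check that the angular component $\ac_s$ induced by $s$ equals $\ac$.

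To see $v(A) = \Gamma$, fix $\gamma \in \Gamma$ and choose any $b \in K^>$ with $vb = \gamma$. Set $c \coloneqq \ac(b) \in \k^>$. The residue map $\cO^\times \to \k^\times$ is surjective, and since $c^{-1} \in \k^>$ we may choose $u \in \cO^\times$ with $u > 0$ and $\res(u) = c^{-1}$. Then $u \asymp 1$, so $\ac(u) = \res(u) = c^{-1}$, and hence $\ac(bu) = \ac(b)\ac(u) = 1$ while $v(bu) = \gamma$. Thus $bu \in A$ with valuation $\gamma$.

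Now let $s\colon \Gamma \to K^>$ be a section given by Lemma~\ref{lem:sectionexists} with $s(\Gamma) \subseteq A$, and let $\ac_s$ be the angular component it induces. For any $a \in K^>$, write $a = s(va) \cdot \bigl(a/s(va)\bigr)$, noting that $a/s(va) \asymp 1$. Then
\[
\ac(a) \ = \ \ac\bigl(s(va)\bigr) \cdot \ac\bigl(a/s(va)\bigr) \ =\ 1\cdot \res\bigl(a/s(va)\bigr) \ =\ \ac_s(a),
\]
using $s(va) \in A$ and the defining property of $\ac$ on elements $\asymp 1$. Since $\ac$ and $\ac_s$ are both extended from $K^>$ to $K$ by the same rules ($0 \mapsto 0$ and $\ac(-a) = -\ac(a)$), they agree on all of $K$, proving that $\ac$ is induced by $s$.

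There is no serious obstacle here: the only content is the correct choice of $A$ (namely the kernel of $\ac$), after which $v(A) = \Gamma$ is a one-line consequence of surjectivity of $\res$ on $\cO^\times$ and the compatibility of $\ac$ with $\res$ on units.
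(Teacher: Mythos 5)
Your proof is correct and follows exactly the paper's approach: the paper's entire proof is ``Apply the previous lemma to the set $A \coloneqq \{a \in K^>:\ac(a) = 1\}$,'' and you have simply filled in the details (that $A$ is a $\Lambda$-subspace, that $v(A)=\Gamma$ via surjectivity of $\res$ on $\cO^\times$, and that the induced angular component agrees with $\ac$), all of which check out.
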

\begin{proof}
Apply the previous lemma to the set $A \coloneqq \{a \in K^>:\ac(a) = 1\}$.
\end{proof}

Let $s\colon \Gamma \to K^>$ be a section for $K$. We define a map $c\colon \Gamma\to \k$ by setting $c(\gamma) \coloneqq \res\!\big(s(\gamma)^\dagger\big)$ for $\gamma \in \Gamma$. Since $(a^\lambda)^\dagger = \lambda a^\dagger$ for $a \in K^>$ by Fact~\ref{fact:powerderivative}, the map $c$ is $\Lambda$-linear. If $K$ has many constants, then by Lemma~\ref{lem:sectionexists} with $C^>$ in place of $A$, we can choose $s$ so that its image is contained in the constant field. In this case, $c$ is the zero map.


The argument above tells us that we can associate to any monotone $\TdO$-model $K$ a structure $(\k,\Gamma,c)$ where $\k\models \Td$, $\Gamma$ is an ordered $\Lambda$-vector space, and $c\colon \Gamma\to \k$ is $\Lambda$-linear. As a converse, we show below that any such structure $(\k,\Gamma,c)$ comes from a monotone $\TdO$-model. We need the following fact.

\begin{fact}[{\cite[Proposition 2.6]{Ka23}}]\label{fact:smallderivsim}
Let $E$ be a $T$-convex valued field, let $M$ be a $\TO$-extension of $E$, let $a \in M$ with $a\not\sim f$ for all $f \in E$, and let $F\colon E\to E$ be an $\cL(E)$-definable function. Then $F'(a) \preceq a\inv F(a)$.
\end{fact}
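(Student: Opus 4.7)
The plan is to use the preparation theorem for $\cL(E)$-definable functions, available in power-bounded $o$-minimal structures, to decompose $F$ locally as a power of the distance to a center $f_0 \in E$ times a slowly varying unit, and then reduce the desired estimate to the hypothesis $a \not\sim f_0$. First, reduce to the case $a \neq 0$ (the case $a = 0$ is trivial) and $F(a) \neq 0$: indeed, $a \not\sim a$ fails for $a \in E^\times$, forcing $a \notin E$; and isolated zeros of the $\cL(E)$-definable function $F$ in $M$ lie in $\dclL(E) = E$ by $o$-minimality, so $F(a) = 0$ would force $F$ to vanish on an open neighborhood of $a$ in $M$, giving $F'(a) = 0$.

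For the main case, invoke preparation to partition $E$ into finitely many $\cL(E)$-definable cells on each of which
\[
F(y) \ =\ c\,(y-f_0)^{\lambda}\,U(y),
\]
with $c \in E^\times$, $f_0 \in E$, $\lambda \in \Lambda$, and $U$ a \emph{strong unit}, i.e., $vU(y) = 0$ and $(y-f_0)U'(y)/U(y) \prec 1$ throughout the cell. Transfer to $M$ by elementarity and let the cell containing $a$ have center $f_0 \in E$. Logarithmic differentiation gives
\[
\frac{y F'(y)}{F(y)} \ =\ \frac{\lambda y}{y-f_0} + \frac{y U'(y)}{U(y)},
\]
and the strong-unit condition makes the second summand $\prec y/(y-f_0)$, while $\lambda y/(y-f_0) \preceq y/(y-f_0)$ since $\lambda \in \Lambda \subseteq \cO$. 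Hence $a F'(a)/F(a) \preceq a/(a-f_0)$, and $a/(a-f_0) \preceq 1$ amounts to $v(a-f_0) \leq va$, equivalently $a \not\sim f_0$, which holds by hypothesis applied to $f = f_0$.

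The main obstacle is to secure the strong-unit form of the preparation theorem in the generality of the power-bounded theory $T$ used here; this is standard in polynomially bounded or analytic settings but requires care in general power-bounded $o$-minimal expansions. An alternative route is via the Jacobian property (Fact~\ref{fact:rvapprox}): since $a \notin E$, the fiber of the associated $\chi$-partition containing $a$ has an open $v$-ball around $a$, yielding a linear approximation with slope $d \in M$ satisfying $d \sim F'(a)$ via the $o$-minimal derivative. Translating this into the estimate $d \preceq a^{-1}F(a)$ requires a careful choice of comparison point $f$ in the ball and uses the hypothesis $a \not\sim f$ to compare $vF(f)$ with $vd + va$; ensuring that such an $f \in E$ lies in the relevant ball is itself the delicate step of this alternative approach.
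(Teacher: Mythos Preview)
The paper does not prove this statement; it is recorded as a Fact with a citation to \cite[Proposition~2.6]{Ka23}, so there is no in-paper proof to compare against. I can only assess your proposal on its own terms and against what the cited reference does.

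Your preparation-theorem route is the right strategy and is essentially how the result is established in \cite{Ka23}. The reduction to $a\neq 0$, $F(a)\neq 0$ is clean (isolated zeros of $F$ lie in $E$, and on an interval of zeros $F'(a)=0$). The logarithmic-derivative computation correctly reduces the claim to $a/(a-f_0)\preceq 1$, which is exactly the hypothesis $a\not\sim f_0$. You have also correctly isolated the one nontrivial ingredient: you need the prepared unit $U$ to satisfy $(y-f_0)U'(y)/U(y)\prec 1$, not merely $U\asymp 1$. This stronger preparation is indeed available in the power-bounded setting and is what \cite{Ka23} invokes; your hesitation about its availability is understandable but unwarranted. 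So the first approach is sound once you cite the appropriate preparation statement rather than flagging it as an obstacle.

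Your Jacobian-property alternative, by contrast, has a genuine gap. The argument you sketch needs the fibre $\chi^{-1}(\chi(a))$ to contain an open $v$-ball around $a$. In the paper's uses of Fact~\ref{fact:rvapprox} (e.g.\ Proposition~\ref{prop:monotoneapprox}), this is obtained because the relevant extension is \emph{immediate}, so $\chi(a)\in\oRV{K}^{\eq}$, the fibre is $\LO(K)$-definable, and \cite[Lemma~5.6]{Ka22} applies. Here $M$ is an arbitrary $\TO$-extension of $E$, so $\chi^M(a)$ need not lie in $\oRV{E}^{\eq}$, the fibre need not be $\LO(E)$-definable, and you have no mechanism to produce a $v$-ball around $a$ inside it. The step ``ensuring that such an $f\in E$ lies in the relevant ball'' that you flag as delicate is in fact the step that fails. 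Stick with the preparation argument.
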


\begin{proposition}\label{prop:fieldbuilding}
Let $\k\models \Td$, let $\Gamma$ be an ordered $\Lambda$-vector space, and let $c\colon \Gamma \to \k$ be a $\Lambda$-linear map. Then there is a 
monotone $\TdO$-model $K$ with differential residue field $\k$ and value group $\Gamma$ and a section $s\colon \Gamma \to K^>$ such that $\res\!\big(s(\gamma)^\dagger \big)=c(\gamma)$ for all $\gamma \in \Gamma$. 
\end{proposition}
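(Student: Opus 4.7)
The plan is to generalize the construction of $\k\llp t^\Gamma\rrp_{\an,c}$ from Example~(3) of the introduction to the abstract power-bounded setting. Let $K\coloneqq \k\llp t^\Gamma\rrp$ be the Hahn series field, ordered so that $0<t<\k^>$. By the Hahn-field expansion for models of power-bounded $T$ (specializing to \cite[Proposition~2.13]{Ka21} in the analytic case), $K$ carries a canonical expansion to a model of $T$ whose $T$-convex valuation ring is the convex hull $\cO$ of $\k$; the residue field is (isomorphic to) $\k$ as a $T$-model, and the value group is $\Gamma$.

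Next, define $\der\colon K\to K$ by
\[
\der\Big(\sum_\gamma f_\gamma t^\gamma\Big)\ \coloneqq\ \sum_\gamma\big(\der_{\k}f_\gamma+c(\gamma)f_\gamma\big)t^\gamma.
\]
This is well defined, additive, commutes with strongly summable infinite sums, and satisfies the Leibniz rule (using the Leibniz rule for $\der_\k$ and the $\Lambda$-linearity of $c$); in particular $\der(t^\gamma)=c(\gamma)t^\gamma$. To verify that $\der$ is a $T$-derivation, I would follow \cite[Proposition~3.14]{Ka21}: compatibility with an arbitrary $\cL(\emptyset)$-definable $\cC^1$-function $F$ reduces, via the Taylor expansion used to define the $T$-structure on $K$, to the Leibniz rule for $\der$ together with compatibility of $\der_\k$ with $F$ on~$\k$.

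The remaining properties are direct. Small derivation: $\der$ preserves the set of series with support in $\Gamma^>$, so $\der\smallo\subseteq\smallo$. Monotonicity: for $a\in K^\times$ with leading term $f_\gamma t^\gamma$, $f_\gamma\in\k^\times$, one computes $a^\dagger\equiv \der_\k(f_\gamma)/f_\gamma+c(\gamma)\pmod{\smallo}$, so $a^\dagger\in\cO$. The derivation induced on $\cO/\smallo\cong\k$ agrees with $\der_\k$: for $f\in\k$ viewed as $f\cdot t^0$, $\der(f)=\der_\k(f)+c(0)f=\der_\k(f)$, since $\Lambda$-linearity of $c$ gives $c(0)=0$. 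Finally, $s(\gamma)\coloneqq t^\gamma$ defines a $\Lambda$-linear section with $v(s(\gamma))=\gamma$ and $s(\gamma)^\dagger=c(\gamma)$, so $\res(s(\gamma)^\dagger)=c(\gamma)$, as required.

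The main obstacle is the verification in the second step that $\der$ is a $T$-derivation rather than merely a field derivation, since this requires compatibility with every $\cL(\emptyset)$-definable $\cC^1$-function. This rests on $K$ having been expanded genuinely to a model of $T$ (so that sufficient Taylor-approximation data is available), on $\der_\k$ being a $T$-derivation, and on $\der$ respecting the infinite sums through which elements of $K$ are expressed.
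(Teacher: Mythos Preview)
Your approach has a genuine gap: the Hahn-field construction you sketch is specific to $T_{\an}$ and does not carry over to an arbitrary power-bounded theory $T$. The two ingredients you invoke from~\cite{Ka21}---the expansion of $\k\llp t^\Gamma\rrp$ to a model of $T$ via Taylor expansion (Proposition~2.13) and the verification that the series-wise derivation is a $T$-derivation (Proposition~3.14)---both rely on the fact that every restricted analytic function extends to the Hahn field by substituting into its Taylor series. For a general power-bounded $T$ there is no such explicit description of how $\cL(\emptyset)$-definable functions act on Hahn series, so neither the expansion of $K$ to a model of $T$ nor the compatibility of your $\der$ with all $\cL(\emptyset)$-definable $\cC^1$-functions can be checked in this way. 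You essentially flag this yourself in your final paragraph (``sufficient Taylor-approximation data''), but you do not supply an argument that works beyond the analytic case.

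The paper circumvents this by building a much smaller field. It chooses a $\Lambda$-basis $(\gamma_\alpha)$ for $\Gamma$ and takes $K = \k\langle (t_\alpha)\rangle$, the definable closure of $\k$ together with one positive element $t_\alpha$ of each valuation $\gamma_\alpha$; this is automatically a model of $T$ with a $T$-convex valuation ring by iterating the basic extension lemma~\cite[Lemma~2.3]{Ka23}. The derivation is then obtained from the general extension result Fact~\ref{fact:transext} by prescribing $t_\alpha' = c(\gamma_\alpha)t_\alpha$, which guarantees it is a $T$-derivation without any Taylor-series computation. Monotonicity is verified by writing an arbitrary element as $G(a,t)$ for $\cL(\emptyset)$-definable $G$ and bounding each partial-derivative term using Fact~\ref{fact:smallderivsim}. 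None of these steps requires knowing how definable functions behave on infinite series, which is precisely what your Hahn-field argument needs and cannot get for general~$T$.
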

\begin{proof}
Let $(\gamma_\alpha)_{\alpha<\beta}$ be a $\Lambda$-basis for $\Gamma$, so we may write each $\gamma \in \Gamma$ uniquely as a sum $\gamma = \sum_{\alpha<\beta}\lambda_\alpha\gamma_\alpha$ where each $\lambda_\alpha$ is in $\Lambda$ and only finitely many $\lambda_\alpha$ are nonzero. Let $K\coloneqq \k\langle(t_\alpha)_{\alpha<\beta}\rangle$ be a $\TO$-extension of $\k$, equipped with the convex hull of $\k$ as its $T$-convex valuation ring and ordered so that $t_\alpha >0$ and $v(t_\alpha) = \gamma_\alpha$ for each $\alpha$ (such an extension can be built by a transfinite construction, using repeated applications of~\cite[Lemma 2.3]{Ka23}). Then $K$ has residue field $\k$ and value group $\Gamma$, and the sequence $(t_\alpha)$ is necessarily $\cL(\k)$-independent. Let $s\colon\Gamma\to K^>$ be the section mapping $\gamma = \sum_{\alpha<\beta}\lambda_\alpha\gamma_\alpha\in \Gamma$ to $\prod_{\alpha<\beta}t_\alpha^{\lambda_\alpha}\in K^>$.

Using Fact~\ref{fact:transext}, we extend the $T$-derivation on $\k$ to a $T$-derivation on $K$ by setting $t_\alpha' = c(\gamma_{\alpha})t_\alpha$. Since $(t_\alpha^\lambda)^\dagger = \lambda t_\alpha^\dagger$ by Fact~\ref{fact:powerderivative}, we have $s(\gamma)^\dagger =c(\gamma) \in \k$ for all $\gamma \in \Gamma$. Thus, we need only check that $K$ is monotone. Each element of $K$ is of the form $G(a,t)$ where $G\colon K^{m+n}\to K$ is $\cL(\emptyset)$-definable, $a = (a_1,\ldots,a_m)$ is an $\cL(\emptyset)$-independent tuple from $\k$, and $t = (t_{\alpha_1},\ldots,t_{\alpha_n})$ for some distinct $\alpha_1,\ldots,\alpha_n<\beta$. We fix such an element $G(a,t)$, and we need to show that $G(a,t)' \preceq G(a,t)$. Viewing $G$ as a function of the variables $X_1,\ldots,X_m,Y_1,\ldots,Y_n$, we have
\[
G(a,t)'\ =\ \frac{\partial G}{\partial X_1}(a,t)a_1'+\cdots+\frac{\partial G}{\partial X_m}(a,t)a_m'+\frac{\partial G}{\partial Y_1}(a,t)t_{\alpha_1}'+\cdots+\frac{\partial G}{\partial Y_n}(a,t)t_{\alpha_n}'.
\]
We will show that $\frac{\partial G}{\partial X_i}(a,t)a_i',\frac{\partial G}{\partial Y_j}(a,t)t_{\alpha_j}' \preceq G(a,t)$ for all $i \in \{1,\ldots,m\}$ and $j \in \{1,\ldots,n\}$. By symmetry, it suffices to handle the case $i = j = 1$. We start with $\frac{\partial G}{\partial X_1}(a,t)a_1'$. Since $a_1' \preceq 1$, it suffices to show that $\frac{\partial G}{\partial X_1}(a,t)\preceq G(a,t)$. Set $E \coloneqq \dclL(a_2,\ldots,a_m,t)$, so $E$ is an $\LO$-substructure of $K$, and set $E_0\coloneqq \dclL(a_2,\ldots,a_m) \subseteq E$. Since $E_0$ is trivially valued and $a_1 \not\in E_0$, we have $a_1 \not\sim f$ for any $f \in E_0$. The Wilkie inequality gives $\res(E)= \res(E_0)$, so $a_1 \not\sim f$ for any $f \in E$. Applying Fact~\ref{fact:smallderivsim} with
$a_1$ in place of $a$ and the function $x\mapsto G(x,a_2,\ldots,a_m,t)$ in place of $F$ gives 
\[
\frac{\partial G}{\partial X_1}(a,t)\ \preceq\ a_1\inv G(a,t)\ \asymp\ G(a,t),
\]
as desired. Next, we show that $\frac{\partial G}{\partial Y_1}(a,t)t_{\alpha_1}' \preceq G(a,t)$. This time, we set $E\coloneqq \dclL(a,t_{\alpha_2},\ldots,t_{\alpha_n})$, so $t_{\alpha_1}\not\asymp f$ for any $f \in E$. In particular, $t_{\alpha_1}\not\sim f$ for any $f \in E$, so Fact~\ref{fact:smallderivsim} (this time with $x\mapsto G(a,x,t_{\alpha_2},\ldots,t_{\alpha_n})$ in place of $F$) gives 
\[
\frac{\partial G}{\partial Y_1}(a,t)\ \preceq\ t_{\alpha_1}\inv G(a,t).
\]
Thus, $\frac{\partial G}{\partial Y_1}(a,t)t_{\alpha_1}'\preceq\ t_{\alpha_1}^\dagger G(a,t) \preceq G(a,t)$, since $t_{\alpha_1}^\dagger = c(\gamma_{\alpha_1})\preceq 1$. 
\end{proof}

\begin{lemma}\label{lem:acmap}
Let $K$ be a monotone $\TdO$-model, let $c\colon \Gamma\to \k$ be a $\Lambda$-linear map, and suppose that for each $\gamma \in \Gamma$, there is $a \in K^>$ with $va= \gamma$ and $\res(a^\dagger) = c(\gamma)$. Then there is a section $s\colon \Gamma\to K^>$ such that $\res\!\big(s(\gamma)^\dagger\big) = c(\gamma)$ for all $\gamma \in \Gamma$. The corresponding angular component map $\ac\colon K\to \k$ induced by $s$ satisfies the equality $\ac(a)^\dagger =\res(a^\dagger)- c(va)$ for all $a \in K^\times$.
\end{lemma}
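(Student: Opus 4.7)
The plan is to deduce both assertions directly from Lemma~\ref{lem:sectionexists} together with $\Lambda$-linearity of $c$ and the identity $(a^{\lambda})^{\dagger} = \lambda a^{\dagger}$ from Fact~\ref{fact:powerderivative}.

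For the first assertion, I would set
\[
A\ \coloneqq\ \bigl\{\, a \in K^> : \res(a^{\dagger}) = c(va) \,\bigr\}.
\]
Since $K$ is monotone, $a^{\dagger} \in \cO$ for all $a \in K^{\times}$, so $\res(a^{\dagger})$ is a well-defined element of $\k$ and the definition of $A$ makes sense. Viewing $K^>$ as a (multiplicative) $\Lambda$-vector space, $A$ is a $\Lambda$-subspace: for $a, b \in A$, we have $(ab)^{\dagger} = a^{\dagger} + b^{\dagger}$, so $\res((ab)^{\dagger}) = c(va) + c(vb) = c(v(ab))$ by $\Lambda$-linearity of $c$; and for $\lambda \in \Lambda$, $(a^{\lambda})^{\dagger} = \lambda a^{\dagger}$ gives $\res((a^{\lambda})^{\dagger}) = \lambda c(va) = c(v(a^{\lambda}))$. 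By hypothesis, $\{va : a \in A\} = \Gamma$, so Lemma~\ref{lem:sectionexists} supplies a section $s \colon \Gamma \to K^>$ with image in $A$, and by construction $\res(s(\gamma)^{\dagger}) = c(\gamma)$ for all $\gamma \in \Gamma$.

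For the second assertion, I would first note the general fact that $\res(u^{\dagger}) = \res(u)^{\dagger}$ for any $u \in \cO^{\times}$: indeed, the derivation on $\k$ is induced from that on $K$, so $\res(u)^{\dagger} = \res(u)'/\res(u) = \res(u')/\res(u) = \res(u^{\dagger})$. Now let $a \in K^>$ and write $u \coloneqq a/s(va) \in \cO^{\times}$, so $\ac(a) = \res(u)$. Since both $a^{\dagger}$ and $s(va)^{\dagger}$ lie in $\cO$ by monotonicity, so does $u^{\dagger} = a^{\dagger} - s(va)^{\dagger}$, and the preceding observation combined with the choice of $s$ gives
\[
\ac(a)^{\dagger}\ =\ \res(u)^{\dagger}\ =\ \res(u^{\dagger})\ =\ \res(a^{\dagger}) - \res(s(va)^{\dagger})\ =\ \res(a^{\dagger}) - c(va).
\]
For $a \in K^<$, apply this to $-a$ and use $\ac(a) = -\ac(-a)$ and $a^{\dagger} = (-a)^{\dagger}$ to conclude.

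There is no real obstacle here; the only thing to notice is that the natural candidate $A$ really is a $\Lambda$-subspace, which is precisely where $\Lambda$-linearity of $c$ and Fact~\ref{fact:powerderivative} are used, and that passage to the residue commutes with the logarithmic derivative on units.
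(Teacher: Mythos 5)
Your proof is correct and follows essentially the same route as the paper: both define $A = \{a \in K^> : \res(a^\dagger) = c(va)\}$, verify it is a $\Lambda$-subspace of $K^>$ with $v(A)=\Gamma$, apply Lemma~\ref{lem:sectionexists}, and then compute $\ac(a)^\dagger = \res(a^\dagger - s(va)^\dagger) = \res(a^\dagger) - c(va)$. The only difference is that you spell out the subspace verification and the compatibility of $\res$ with $\dagger$ on units, which the paper leaves implicit.
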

\begin{proof}
Let $A \subseteq K^>$ be the set of all $a \in K^>$ with $\res(a^\dagger) = c(va)$. Then $A$ is a multiplicative $\Lambda$-subspace of $K^>$ and $\{va:a \in A\} = \Gamma$, so there is a section $s\colon \Gamma\to K^>$ with image contained in $A$ by Lemma~\ref{lem:sectionexists}. Let $\ac\colon K\to \k$ be the angular component map induced by $s$. Then for $a \in K^\times$, we have
\[
\ac(a)^\dagger\ =\ \res\!\big(a/s(va)\big)^\dagger\ =\ \res(a^\dagger - s(va)^\dagger)\ =\ \res(a^\dagger) - c(va).\qedhere
\]
\end{proof}


In the case of $T_{\an}$, we can now show that every spherically complete monotone $\TdO_{\an}$-model with linearly surjective differential residue field is isomorphic to a Hahn field model in the sense of the introduction.
\begin{theorem}\label{thm:sphcompHahniso}
Suppose that $K \models \TdO_{\an}$ is spherically complete and $\k$ is linearly surjective.
Then there is an $\LdO_{\an}$-isomorphism $K \to \k\llp t^{\Gamma}\rrp_{\an, c}$ for some additive $c \colon \Gamma \to \k$.
\end{theorem}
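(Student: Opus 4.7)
Since $K$ is spherically complete with linearly surjective $\k$, Corollary~\ref{cor:sphcompTdh} makes $K$ $\Td$-henselian, and Theorem~\ref{thm:reslift} yields a $\Td$-lift $\tilde{\k} \subseteq \cO$ of $\k$; let $\iota \colon \tilde{\k} \to \k$ be the $\Ld$-isomorphism induced by the residue map. The first main step is to build a section $s \colon \Gamma \to K^>$ whose image lies in $A \coloneqq \{a \in K^> : a^\dagger \in \tilde{\k}\}$, which is a multiplicative $\Lambda$-subspace of $K^>$ by Fact~\ref{fact:powerderivative}. Given $\gamma \in \Gamma$ and any $b \in K^>$ with $v(b)=\gamma$, monotonicity yields $b^\dagger \in \cO$; writing $b^\dagger = \tilde{k} + \epsilon$ with $\tilde{k} \in \tilde{\k}$ lifting $\res(b^\dagger)$ and $\epsilon \in \smallo$, Corollary~\ref{cor:7.1.9} supplies $u \in 1+\smallo$ with $u^\dagger = -\epsilon$, so $bu \in A$ has valuation $\gamma$. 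Hence $v(A) = \Gamma$, and Lemma~\ref{lem:sectionexists} produces the desired $s$.

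I next define $c \colon \Gamma \to \k$ by $c(\gamma) \coloneqq \iota(s(\gamma)^\dagger)$, which is $\Lambda$-linear (hence additive) because $s$ is a $\Lambda$-embedding and $\iota$ is $\cL$-elementary. Set $L \coloneqq \k\llp t^\Gamma\rrp_{\an,c}$, which is monotone and spherically complete. To produce an isomorphism $K \to L$, I identify a common $\LdO$-substructure: fix a $\Lambda$-basis $(\gamma_\alpha)_{\alpha<\beta}$ of $\Gamma$ and let
\[
E \coloneqq \dclL\!\big(\tilde{\k} \cup \{s(\gamma_\alpha) : \alpha < \beta\}\big) \subseteq K, \qquad E' \coloneqq \dclL\!\big(\k \cup \{t^{\gamma_\alpha} : \alpha < \beta\}\big) \subseteq L.
\]
By the Wilkie inequality (Fact~\ref{fact:wilkieineq}), both $(s(\gamma_\alpha))$ and $(t^{\gamma_\alpha})$ are $\cL$-independent over the respective residue-field lifts, so there is a unique $\cL$-isomorphism $\phi \colon E \to E'$ extending $\iota$ and sending each $s(\gamma_\alpha)$ to $t^{\gamma_\alpha}$. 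Since $s(\gamma_\alpha)' = \iota^{-1}(c(\gamma_\alpha)) \cdot s(\gamma_\alpha)$ corresponds under $\phi$ to $(t^{\gamma_\alpha})' = c(\gamma_\alpha) \cdot t^{\gamma_\alpha}$, Fact~\ref{fact:transext} upgrades $\phi$ to an $\Ld$-isomorphism; it also preserves the valuation ring because $\cO_E = \cO \cap E$ is the convex hull in $E$ of $\tilde{\k}$ and $\cO_{E'}$ is the convex hull in $E'$ of $\k$.

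By construction $v(E^\times) = \Gamma$ and $\res(E) = \k$, so $K$ is an immediate $\TdO$-extension of $E$; symmetrically, $L$ is an immediate $\TdO$-extension of $E'$. Both extensions are spherically complete and monotone with common linearly surjective differential residue field $\k$, so identifying $E$ with $E'$ via $\phi$ and applying Theorem~\ref{thm:sphcompuniquemono} yields an $\LdO$-isomorphism $K \to L$ extending $\phi$. The delicate step is the section construction in the first paragraph: it genuinely requires both $\Td$-henselianity (to absorb infinitesimal logarithmic-derivative errors via Corollary~\ref{cor:7.1.9}) and monotonicity (so that $b^\dagger \in \cO$ in the first place, making the residue meaningful); after that, identifying the common base and invoking the uniqueness of spherically complete immediate extensions is routine.
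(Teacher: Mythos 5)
Your proposal is correct and follows essentially the same route as the paper: take a $\Td$-lift via Corollary~\ref{cor:reslift}/Theorem~\ref{thm:reslift}, build a section with logarithmic derivatives in the lift using Corollary~\ref{cor:7.1.9} and Lemma~\ref{lem:sectionexists}, define $c$ from that section, match $K_0\langle s(\Gamma)\rangle$ with $\k\langle t^\Gamma\rangle$, and finish with the uniqueness of spherically complete immediate extensions (Theorem~\ref{thm:sphcompuniquemono}). The only cosmetic difference is that the paper justifies the isomorphism on the common substructure by iterating \cite[Lemma~2.3]{Ka23} rather than via the Wilkie inequality.
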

\begin{proof}
Since $K$ is $\Td_{\an}$-henselian by Corollary~\ref{cor:sphcompTdh}, we can equip it with a $\Td_{\an}$-lift $K_0$ of $\k$ by Theorem~\ref{thm:reslift}.
Now let $A \coloneqq \{ a \in K^> : a^{\dagger} \in K_0 \}$, a multiplicative $\Q$-subspace of $K^>$.
We claim that $\{ va : a \in A \}=\Gamma$.
To see this, let $b \in K^>$. Since $K$ is monotone, we have $\epsilon \in \smallo$ with $b^{\dagger} + \epsilon \in K_0$.
Then Corollary~\ref{cor:7.1.9} yields $u \in 1+\smallo$ with $u^{\dagger}=\epsilon$, so $bu \in K^>$ satisfies $v(bu)=vb$ and $(bu)^{\dagger} \in K_0$, proving the claim.
By Lemma~\ref{lem:sectionexists}, we get a section $s\colon \Gamma \to K^>$ such that $s(\gamma)^{\dagger} \in K_0$ for all $\gamma \in \Gamma$.
Let $\imath \colon K_0 \to \k$ be the $\Ld_{\an}$-isomorphism induced by the residue map.
Also identifying $s(\Gamma)$ with the monomial group $t^{\Gamma}$ in the obvious way, we obtain an expansion of the Hahn field $\k\llp t^{\Gamma}\rrp$ to $\k\llp t^{\Gamma}\rrp_{\an, c}$ as in the introduction, where $c \colon \Gamma \to \k$ is defined by $c(\gamma)\coloneqq\imath(s(\gamma)^{\dagger})$ for $\gamma \in \Gamma$.
Iterating~\cite[Lemma 2.3]{Ka23}, we extend $\imath$ to an $\LO_{\an}$-isomorphism $K_0\langle s(\Gamma)\rangle \to \k\langle t^{\Gamma}\rangle$. This map is even an $\LdO_{\an}$-isomorphism by Fact~\ref{fact:transext}, where we construe $K_0\langle s(\Gamma)\rangle$ and $\k\langle t^{\Gamma}\rangle$ as $\LdO_{\an}$-substructures of $K$ and $\k\llp t^{\Gamma}\rrp_{\an, c}$, respectively.
It remains to appeal to Theorem~\ref{thm:sphcompuniquemono}.
\end{proof}

\section{When \texorpdfstring{$T=\RCF$}{T = RCF}}\label{sec:RCF}

In the setting of real closed fields we investigate whether the notions and results in the previous sections specialize to the analogous notions and results of \cite[Chapter~7]{ADH17}, providing a full positive answer in the monotone case.
First, it is worth clarifying the relationship between the version of $\d$-henselianity considered in~\cite[Section~5]{Sc00} and the version considered in~\cite[Chapter~7]{ADH17}, as these two notions are slightly different. Indeed, the notion in~\cite{Sc00} is more restrictive:\ if a valued differential field $E$ with small derivation is $\d$-henselian in the sense of~\cite{Sc00}, then it is clearly $\d$-henselian in the sense of~\cite{ADH17}. In the case that $E$ is \emph{monotone} (the only case considered in~\cite{Sc00}), these two notions coincide by~\cite[Corollary 4.5.4 and Lemma 7.2.2]{ADH17}. However, the less restrictive definition given in~\cite{ADH17} seems to be more useful in the non-monotone case; for example, this is the version used in~\cite{Pyn20} to study immediate extensions of asymptotic fields.

If $T=\RCF$ (in the language of ordered rings) and $\k$ has nontrivial derivation,  then $K$ has no proper immediate $\TdO$-extension if and only if it has no proper immediate valued differential field extension with small derivation by Fact~\ref{fact:sphcomp} and \cite[Corollary~6.9.4]{ADH17}.
Additionally:
\begin{lemma}\label{lem:Tdalgmaxiffdalgmax}
Let $T=\RCF$.
The $\TdO$-model $K$ is $\Td$-algebraically maximal if and only if the valued differential field $K$ is $\d$-algebraically maximal in the sense of \cite[Chapter~7]{ADH17}.
\end{lemma}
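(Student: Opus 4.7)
The plan is to identify $\Td$-notions with the corresponding $\d$-notions of \cite{ADH17} when $T=\RCF$, and then to transfer between the two sides via the real closure operation. The identification rests on three observations. First, for $T=\RCF$, a $T$-convex valuation ring is the same as a proper convex subring, as noted in \cite{DL95} (this uses that $\RCF$ is power-bounded with field of exponents $\Q$). Second, a $T$-derivation on $K\models\RCF$ is the same as an ordinary derivation: compatibility with $+$ and $\cdot$ is the Leibniz/sum rule, and compatibility with an arbitrary $\cL(\0)$-definable $\cC^1$-function $F$ at a point $u$ follows by applying $\der$ to a local defining polynomial relation $P(Y, F(Y))=0$ with $\partial P/\partial Z(u, F(u))\neq 0$ and comparing with the implicit-function formula for $\partial F/\partial Y_i(u)$. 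Third, $\Td$-algebraicity of $a$ over $K$, i.e., $\cL(K)$-dependence of some $\jet^r(a)$, is the same as $\d$-algebraicity in the usual sense, because $\dclL$ for $\RCF$ coincides with the relative algebraic closure in the field-theoretic sense.

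Granted this dictionary, the backward implication is formal: any proper immediate $\Td$-algebraic $\TdO$-extension of $K$ is \emph{a fortiori} a proper immediate $\d$-algebraic valued differential field extension of $K$ with small derivation, so it witnesses failure of $\d$-algebraic maximality in the sense of \cite{ADH17}. For the forward implication, assume $L$ is a proper immediate $\d$-algebraic valued differential field extension of $K$ with small derivation. Let $L^{\rc}$ denote the real closure of $L$. Because $\Gamma_K$ is divisible and $\k_K$ is real closed, the extension $L^{\rc}/L$ is immediate as valued fields, hence so is $L^{\rc}/K$. Since $L^{\rc}/L$ is algebraic in characteristic zero, $\der$ extends uniquely to $L^{\rc}$, and smallness is preserved under immediate algebraic extensions by standard results on valued differential fields (cf.\ \cite{ADH17}). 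Equipped with its convex valuation ring, $L^{\rc}$ is then, via the dictionary, a proper immediate $\Td$-algebraic $\TdO$-extension of $K$, contradicting the $\Td$-algebraic maximality hypothesis.

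The only non-bookkeeping step is checking that the unique extension of $\der$ to $L^{\rc}$ remains small. This is the step to be most careful about, but it is standard: one analyzes $\der\alpha$ for $\alpha\in L^{\rc}$ via the minimal polynomial of $\alpha$ over $L$, and the immediacy of $L^{\rc}/L$ forces the result to land in $\smallo_{L^{\rc}}$ whenever $\alpha$ does. Once this is granted, the equivalence is simply a matter of unpacking definitions in both directions through the $T=\RCF$ dictionary.
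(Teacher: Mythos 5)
Your argument is correct and follows essentially the same route as the paper: the backward direction is formal once one identifies $\Td$-algebraic with $\d$-algebraic (and convex valuation rings with $T$-convex ones, derivations with $T$-derivations), and the forward direction passes from $L$ to an immediate algebraic real closed extension with small derivation. The paper uses the henselization $L^{\h}$ where you use the real closure $L^{\rc}$, but since $\Gamma$ is divisible and $\k$ is real closed these coincide, and the smallness of the extended derivation is exactly \cite[Proposition~6.2.1]{ADH17}.
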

\begin{proof}
Since $\cL(K)$-definable functions are semialgebraic, the notions $\Td$-algebraic and $\d$-algebraic coincide.
Thus the right-to-left direction is trivial.
Conversely, let $L$ be a $\d$-algebraic immediate valued differential field extension of $K$ with small derivation.
Since $\Gamma$ is divisible and $\k$ is real closed, the henselization $L^{\h}$ of $L$ is real closed, and it has small derivation by \cite[Proposition~6.2.1]{ADH17}.
Additionally, its valuation ring is convex, and hence $T$-convex by (see \cite[Proposition~4.2]{DL95}).
Thus $L^{\h}$ is a $\Td$-algebraic immediate $\TdO$-extension of~$K$.
\end{proof}

Combining this lemma with the observation preceding it shows that when $T=\RCF$, Theorem~\ref{thm:sphcompuniquemono} follows immediately from \cite[Theorem~7.4.3]{ADH17}.


Now we show that in the monotone case, $\Td$-henselianity implies $\d$-henselianity. 
Recall the terminology and notation for a class of $\TdO$-models from Section~\ref{sec:Tdhcclass}.
\begin{lemma}\label{lem:Tdhtodh}
Let $\cC$ be a class of $\TdO$-models containing $\cI(K)$ and every elementary extension of $K$. Suppose that $K$ is $\Td$-henselian and $\cC$ has the  $\Td$-hensel configuration property. Then $K$ is $\d$-henselian.
\end{lemma}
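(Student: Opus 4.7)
The plan is to verify the two axioms defining $\d$-henselianity in the sense of \cite[Chapter~7]{ADH17}: linear surjectivity of the differential residue field and a zero-finding axiom for differential polynomials in $\d$-hensel configuration. The first is exactly axiom ($\Td$H1), so it is immediate from the hypothesis that $K$ is $\Td$-henselian.

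For the substantive axiom, I would take a differential polynomial $P \in K\{Y\}$ of order $r$ in $\d$-hensel configuration at a pair $(a,\gamma)$, meaning roughly that its linear part $L_P \in K[\der]^{\neq}$ dominates the nonlinear error on $B(a,\gamma)$ and $vP(a) > v_{L_P}(\gamma)$. Since $P$ is in particular $\cL$-definable, after factoring out the coefficient of $Y^{(r)}$ — which, under the $\d$-hensel configuration assumption, is a nonzero scalar modulo the nonlinear terms on $B(a,\gamma)$ — I can associate to $P$ an $\cL(K)$-definable function $F\colon K^{1+r} \to K$ in implicit form whose zeros in $B(a,\gamma)$ coincide with those of $P$. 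The valuation bound $vF(\jet^r a) > v_{L_P}(\gamma)$ transcribes directly from the $\d$-hensel configuration. The aim is to place $(F,L_P,a,\gamma)$ in $\Td$-hensel configuration in $K$ and then apply ($\Td$H2$'$) to extract a zero of $F$, hence of $P$, in $B(a,\gamma)$.

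The key step is to show that $L_P$ linearly approximates $F$ on $B(a,\gamma)$. Suppose for contradiction that $P$ has no zero in $B(a,\gamma)\cap K$. Then a standard Newton-style iteration driven by $L_P$ produces a divergent pc-sequence $(a_\rho)$ in $B(a,\gamma)\cap K$ with $P(a_\rho) \leadsto 0$; this sequence is of $\Td$-algebraic type over $K$, with $F$ (or a related implicit-form function obtained by reducing to minimal order) serving as a minimal $\Ld$-function. Let $K^*$ be a $|\Gamma|^+$-saturated elementary $\TdO$-extension of $K$, so $K^* \in \cC$ by hypothesis, $K^*$ is $\Td$-henselian by elementary equivalence, and $(a_\rho)$ has a pseudolimit in $K^*$ by saturation. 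The $\Td$-hensel configuration property of $\cC$ supplies an $A \in K[\der]^{\neq}$ that linearly approximates $F$ on $B(a_{\rho+1},\gamma_\rho)^{K^*}$ eventually; then Lemma~\ref{lem:Tdhc} puts $(F,A,(a_\rho))$ in $\Td$-hensel configuration in $K^*$, and Lemma~\ref{lem:mindiffzero} (applicable because $K^*$ is $\Td$-henselian) produces $b \in K^*$ with $a_\rho \leadsto b$ and $F(\jet^r b)=0$. Since the existence of a zero of $F$ in $B(a,\gamma)$ is a first-order statement over $K$ in the $\LdO$-theory of $K$ and $K \preceq K^*$, such a zero already lies in $K$, contradicting our assumption.

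The main obstacle is to carry out the pc-sequence construction cleanly, matching the minimal $\Ld$-function of the constructed sequence to the implicit-form function attached to $P$ so that the zero produced in $K^*$ really is a zero of $P$. Both of the closure hypotheses on $\cC$ are essential: $\cI(K) \subseteq \cC$ is implicitly used in the pc-sequence machinery of Section~\ref{sec:sphcompunique} (the $\Td$-hensel configuration property is stated using extensions of this type), while closure under elementary extensions is what makes the saturation step and the descent back to $K$ possible.
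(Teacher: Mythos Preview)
Your plan has a genuine gap at precisely the point you flag as ``the main obstacle.'' The $\Td$-hensel configuration property of $\cC$ supplies an $A\in K[\der]^{\neq}$ that linearly approximates the \emph{minimal} $\Ld$-function of $(a_\rho)$ over $K$, not the function $F$ you derive from $P$; there is no reason these coincide, as the minimal function may have strictly lower order or be otherwise unrelated to $P$. Consequently Lemma~\ref{lem:Tdhc} and Lemma~\ref{lem:mindiffzero} produce a pseudolimit $b\in K^*$ that is a zero of the minimal function, not of $F$ or of $P$, so the first-order statement you would descend to $K$ is not ``$P$ has a zero in $B(a,\gamma)$.'' Your parenthetical suggestion of ``reducing to minimal order'' does not help: a zero of a lower-order minimal function is simply not a zero of $P$. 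A secondary issue is the conversion of $P$ to implicit form: a differential polynomial in $\d$-hensel configuration need not be linear in $Y^{(r)}$, so ``factoring out the coefficient of $Y^{(r)}$'' does not in general yield an implicit-form function whose zeros on $B(a,\gamma)$ coincide with those of $P$.

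The paper avoids both difficulties by arguing at a higher level. Rather than converting $P$ or locating a specific zero, it shows that $K$ is existentially closed in every immediate $\TdO$-extension $M$: pick a $|\Gamma|^+$-saturated elementary $\TdO$-extension $L$ of $K$ (so $L\in\cC$ and $L$ is $\Td$-henselian), then Theorem~\ref{thm:Tdhcclassembed} embeds $M$ into $L$ over $K$, and $K\preceq L$ gives existential closedness. Now take $M$ spherically complete via Fact~\ref{fact:sphcomp}; then $M$ is $\d$-henselian by \cite[Corollary~7.0.2]{ADH17}, and since the $\d$-hensel zero-finding condition is existential, it transfers from $M$ down to $K$. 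Your saturation-and-descent instinct is the right one; what changes is that the paper descends the entire property of $\d$-henselianity from a spherically complete immediate extension rather than trying to manufacture a single zero of $P$ in $K^*$.
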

\begin{proof}
First, we argue that $K$ is existentially closed in any immediate $\TdO$-extension $M$ of $K$; fix such an $M$.
Take an elementary $\TdO$-extension $L$ of $K$ that is $|\Gamma|^+$-saturated.
Then we can arrange by Theorem~\ref{thm:Tdhcclassembed} that $K \subseteq M \subseteq L$.
Since $L$ is an elementary $\TdO$-extension of $K$, it follows that $K$ is existentially closed in $M$ as $\TdO$-models.
By Fact~\ref{fact:sphcomp}, we can take $M$ to be a spherically complete immediate $\TdO$-extension of $K$, which is $\d$-henselian by \cite[Corollary~7.0.2]{ADH17}.
Since $K$ is existentially closed in $M$, $K$ itself is $\d$-henselian.
\end{proof}

For the converse in the case $T=\RCF$ we need a variant of Theorem~\ref{thm:Tdhcclassembed} for valued differential fields with small derivation. The statement involves the differential-henselian configuration property (abbreviated dh-configuration property) from \cite{DPC19} and indeed its proof is almost the same as that of \cite[Theorem~4.2]{DPC19}, but using saturation instead of spherical completeness to obtain pseudolimits (cf.\ \cite[Corollary~7.4.5]{ADH17}).
\begin{lemma}\label{lem:dhcpclassembed}
Let $E$ be a valued differential field with small derivation and suppose that $\k_E$ is linearly surjective and $\Gamma_E\neq\{0\}$.
Suppose that every immediate valued differential field extension of $E$ with small derivation has the dh-configuration property.
Let $L$ be a $\d$-henselian valued differential field extension of $E$.
If $L$ is $|\Gamma_E|^+$-saturated, then any immediate valued differential field extension of $E$ with small derivation can be embedded in $L$ over~$E$.
\end{lemma}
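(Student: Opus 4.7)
The plan is to follow the outline of Theorem~\ref{thm:Tdhcclassembed}, whose proof in turn adapts Theorem~\ref{thm:sphcompunique}, but now translated into the purely valuation-theoretic framework of \cite{ADH17,DPC19}. By Zorn's Lemma, pick a maximal embedding $\mu \colon F \to L$ of valued differential fields over $E$, where $F$ is a valued differential subfield of the given immediate extension $M$ of $E$ with small derivation, with $E \subseteq F \subseteq M$. Note that $F$ is itself an immediate extension of $E$ with small derivation, since $F$ inherits small derivation from $M$ and its value group and residue field are sandwiched between those of $E$ and $M$, which coincide. Suppose toward a contradiction that $F \neq M$, and pick $\ell \in M \setminus F$. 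Since $M$ is immediate over $F$, there is a divergent pc-sequence $(a_\rho)$ in $F$ with $a_\rho \leadsto \ell$.

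Split into cases according to whether $(a_\rho)$ is of d-transcendental or d-algebraic type over $F$. In the d-transcendental case, use the $|\Gamma_E|^+$-saturation of $L$ to produce a pseudolimit $b \in L$ of $\mu(a_\rho)$; this is possible because the type describing pseudolimits of $\mu(a_\rho)$ is determined by the cut $v(\ell - F) \subseteq \Gamma_E$, of cardinality at most $|\Gamma_E|$. Then apply the standard extension lemma for d-transcendental pc-sequences (the valued differential field analog of Fact~\ref{prop:Ka226.1} from \cite[Chapter~6]{ADH17}) to extend $\mu$ to an embedding of $F\langle\!\langle\ell\rangle\!\rangle$ into $L$ sending $\ell$ to $b$, contradicting maximality of~$\mu$.

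In the d-algebraic case, let $P$ be a minimal differential polynomial of $(a_\rho)$ over $F$. By the classical analog of Lemma~\ref{lem:vanishpconv}, after passing to an equivalent pc-sequence in $F$ we may assume $P(a_\rho) \leadsto 0$. The dh-configuration property of $F$, which holds by hypothesis since $F$ is an immediate extension of $E$ with small derivation, supplies a linear differential operator $A \in F[\der]^{\neq}$ placing $(P, A, (a_\rho))$ in dh-configuration. Transferring this configuration to $L$ via $\mu$ and using $|\Gamma_E|^+$-saturation to pick a pseudolimit of $\mu(a_\rho)$ in $L$, the d-henselianity of $L$ (via the classical analog of Lemma~\ref{lem:mindiffzero}) yields $b \in L$ with $\mu(a_\rho) \leadsto b$ and $\mu(P)(b) = 0$. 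The standard extension lemma for d-algebraic pc-sequences (the analog of Fact~\ref{prop:Ka226.2}) then extends $\mu$ to $F\langle\!\langle\ell\rangle\!\rangle$ sending $\ell$ to $b$, again contradicting maximality of~$\mu$.

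The main subtleties are verifying that saturation genuinely substitutes for spherical completeness in producing pseudolimits, and invoking the correct valuation-theoretic analogs of our Facts and Lemmas; the former works uniformly in both cases because all pseudolimit types live over parameter sets of cardinality at most $|\Gamma_E|$, and the latter are all available in \cite[Chapters~6--7]{ADH17} and \cite{DPC19}. Apart from this bookkeeping, the argument is essentially identical to that of Theorem~\ref{thm:Tdhcclassembed} in this paper and to that of \cite[Theorem~4.2]{DPC19}.
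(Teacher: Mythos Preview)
Your argument has a gap in the $\d$-algebraic case. You write that the extension lemma (the analog of Fact~\ref{prop:Ka226.2}) ``extends $\mu$ to $F\langle\!\langle\ell\rangle\!\rangle$ sending $\ell$ to $b$,'' but that lemma requires \emph{both} endpoints to be zeros of the minimal differential polynomial $P$. You have correctly arranged $\mu(P)(b)=0$ on the $L$ side using $\d$-henselianity of $L$, but $\ell$ is just an arbitrary element of $M\setminus F$ chosen at the outset; nothing forces $P(\ell)=0$. And since $M$ is not assumed $\d$-henselian or spherically complete, you cannot invoke the analog of Lemma~\ref{lem:mindiffzero} on the $M$ side to replace $\ell$ by a pseudolimit that \emph{is} a zero of $P$. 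This is precisely the asymmetry between \cite[Theorem~4.2]{DPC19}, where both sides are spherically complete, and the present lemma, where only $L$ carries the needed structure.

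The standard repair is a preliminary reduction: it suffices to embed a spherical completion $M^{c}$ of $M$ into $L$ (such $M^{c}$ exists and is still an immediate extension of $E$ with small derivation, and $M\subseteq M^{c}$). Now $M^{c}$ is $\d$-henselian, so in the $\d$-algebraic step you can apply the analog of Lemma~\ref{lem:mindiffzero} inside $M^{c}$ to obtain $b_0\in M^{c}$ with $a_\rho\leadsto b_0$ and $P(b_0)=0$, and then extend $\mu$ to $F\langle b_0\rangle$ via $b_0\mapsto b$. With this reduction in place, your outline becomes a faithful translation of the proof of \cite[Theorem~4.2]{DPC19}, with saturation substituting for spherical completeness on the $L$ side only --- exactly as the paper indicates.
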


To apply Lemma~\ref{lem:dhcpclassembed} to $\TdO$-models in the case $T=\RCF$, note that $E$ is a $\TdO$-model if and only if $E$ is a real closed field equipped with a nontrivial convex valuation ring and a derivation that is small, by \cite[Proposition~4.2]{DL95} and \cite[Proposition~2.8]{FK21}.
In case $E\models \TdO$, an immediate valued differential field extension of $E$ can be uniquely ordered making it an \emph{ordered} valued differential field extension of $K$ with convex valuation ring (see for example \cite[Corollary~3.5.12]{ADH17}); below we always construe immediate extensions in this way.
Thus if in Lemma~\ref{lem:dhcpclassembed} $L$ is also an \emph{ordered} valued differential field extension of $E$ with convex valuation ring, then the embeddings into $L$ in the lemma are in fact \emph{ordered} valued differential field embeddings.

\begin{lemma}\label{lem:dhtoTdh}
Let $T=\RCF$ and suppose that $K$ is $\d$-henselian and every immediate valued differential field extension of $K$ with small derivation has the dh-configuration property.
Then $K$ is $\Td$-henselian.
\end{lemma}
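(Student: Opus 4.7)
The plan is to mirror the proof of Lemma~\ref{lem:Tdhtodh} but with the dh-variant Lemma~\ref{lem:dhcpclassembed} substituted for Theorem~\ref{thm:Tdhcclassembed}: the idea is to sandwich a spherically complete immediate $\TdO$-extension $M$ of $K$ inside a saturated elementary $\TdO$-extension $L$ of $K$, and then transfer axiom \ref{TdH2} from $M$ down to $K$. Linear surjectivity of $\k$ is automatic from $\d$-henselianity, so \ref{TdH2} is the only thing requiring verification.

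Concretely, take a $|\Gamma|^+$-saturated elementary $\TdO$-extension $L$ of $K$; then $L$ is also $\d$-henselian since $\d$-henselianity is axiomatized in $\LdO$. Since $\k$ is linearly surjective it has nontrivial induced derivation, so Fact~\ref{fact:sphcomp} yields a spherically complete immediate $\TdO$-extension $M$ of $K$, which is $\Td$-henselian by Corollary~\ref{cor:sphcompTdh}. Regarding $M$ as an immediate valued differential field extension of $K$ with small derivation, the dh-configuration property hypothesis lets Lemma~\ref{lem:dhcpclassembed} embed $M$ into $L$ over $K$; as noted in the paragraph immediately preceding the current lemma, since $L$ is an ordered valued differential field with convex valuation ring, this embedding is in fact an $\LdO$-embedding. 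We may therefore arrange $K \subseteq M \subseteq L$ as $\LdO$-structures with $K \preceq L$.

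To verify \ref{TdH2} in $K$, take $F$ in implicit form over $K$ and $A \in K[\der]$ satisfying the $K$-hypothesis. The condition that $A$ linearly approximates $F$ on $\smallo$ is a universal $\LdO(K)$-statement, so by $K \preceq L$ it holds in $L$, and because $M \subseteq L$ is an $\LdO$-substructure (so $\smallo^M = \smallo^L \cap M$), it restricts to hold in $M$. The condition $vF(0)>vA$ involves only $K$-parameters and transfers trivially. Thus the $M$-hypothesis is satisfied, so $\Td$-henselianity of $M$ produces $\epsilon \in \smallo^M$ with $F(\jet^r \epsilon)=0$ and $vA_{\times \epsilon} \geq vF(0)$, and the existence of such $\epsilon$---being $\LdO$-existential with parameters in $K$---ascends from $M$ to $L$ and descends from $L$ to $K$ by elementarity. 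The only real obstacle is confirming that Lemma~\ref{lem:dhcpclassembed} provides an $\LdO$-embedding rather than only a valued differential field embedding (needed for the transfer of the universal linear-approximation condition), but this is precisely what the paragraph before the current lemma records, so the argument is structurally a close analogue of Lemma~\ref{lem:Tdhtodh}.
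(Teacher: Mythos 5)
Your proposal is correct and follows essentially the same route as the paper's proof: sandwiching a spherically complete (hence $\Td$-henselian) immediate extension $M$ between $K$ and a saturated elementary extension $L$ via Lemma~\ref{lem:dhcpclassembed}, then transferring the universal linear-approximation hypothesis down to $M$ and the existential zero-finding conclusion back to $K$. The paper packages this transfer as "$K$ is existentially closed in $M$" and works with the ($\Td$H2$'$) formulation rather than ($\Td$H2), but these are only cosmetic differences.
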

\begin{proof}
First, we argue that $K$ is existentially closed as an ordered valued differential field in any immediate ordered valued differential field extension $M$ of $K$ with small derivation; fix such an $M$.
Take an elementary ordered valued differential field extension $L$ of $K$ that is $|\Gamma|^+$-saturated.
Then $L$ is in fact a $\TdO$-extension of $K$.
We can arrange by Lemma~\ref{lem:dhcpclassembed} that $K \subseteq M \subseteq L$ as ordered valued differential fields.
Since $L$ is an elementary $\TdO$-extension of $K$, it follows that $K$ is existentially closed in $M$ as ordered valued differential fields.
By Fact~\ref{fact:sphcomp}, we can take $M$ to be a spherically complete immediate $\TdO$-extension of $K$, which is $\Td$-henselian by Corollary~\ref{cor:sphcompTdh}.
We claim that $K$ itself is $\Td$-henselian. Let $(F,A,a,\gamma)$ be in $\Td$-hensel configuration. Since $\TO$ is model complete, the statements that $A$ linearly approximates $F$ on $B(a,\gamma)$ and that $vF(\jet^ra)>v_A(\gamma)$ are equivalent to universal sentences. Thus, both statements hold in $M$ as well, since $K$ is existentially closed in $M$. Using $\Td$-henselianity, we find $b \in B(a,\gamma)^M$ with $F(\jet^rb) = 0$ and $vA_{\times(b-a)}\geq vF(\jet^ra)$. Existential closedness gives an element in $K$ with the same properties.
\end{proof}


\begin{corollary}
Suppose that $T=\RCF$ and $K$ is monotone.
Then $K$ is $\Td$-henselian if and only if $K$ is $\d$-henselian.
\end{corollary}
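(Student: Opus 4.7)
The plan is to derive both implications directly from Lemmas~\ref{lem:Tdhtodh} and \ref{lem:dhtoTdh}, once one observes that monotonicity is preserved under both immediate extensions and elementary extensions, and that monotone fields enjoy the corresponding configuration properties.

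For the forward direction, I would take $\cC$ to be the class of all monotone $\TdO$-models and apply Lemma~\ref{lem:Tdhtodh}. Two containments need to be checked: first, $\cI(K) \subseteq \cC$, because every immediate $\TdO$-extension of a monotone field is again monotone by \cite[Corollary~6.3.6]{ADH17} (the same observation used in the proof of Corollary~\ref{cor:sphcompembed}); second, every elementary $\LdO$-extension of $K$ is in $\cC$, because monotonicity is expressible by the universal $\LdO$-sentence $\forall x\,(x \neq 0 \to x' \preceq x)$. That $\cC$ itself has the $\Td$-hensel configuration property is precisely Corollary~\ref{cor:monotoneTdhc2}. Hence Lemma~\ref{lem:Tdhtodh} delivers $\d$-henselianity.

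For the backward direction, with $T = \RCF$ and $K$ monotone and $\d$-henselian, I would apply Lemma~\ref{lem:dhtoTdh}. Its nontrivial hypothesis is that every immediate valued differential field extension of $K$ with small derivation has the dh-configuration property of \cite{DPC19}. Such an extension is again monotone by \cite[Corollary~6.3.6]{ADH17}, so it suffices to cite the fact that monotone valued differential fields with small derivation have the dh-configuration property; this is the valued-differential analogue of Corollary~\ref{cor:monotoneTdhc2} and is established in \cite{DPC19} (and implicit in the treatment of monotone $\d$-henselian fields in \cite[Chapter~7]{ADH17}).

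The main obstacle is just locating and quoting the right classical result for the dh-configuration property in the monotone case; everything else is straightforward bookkeeping with the lemmas of Section~\ref{sec:RCF}. Note that linear surjectivity of $\k$ need not be assumed separately, since it is built into both notions of henselianity, and the spherically complete pseudolimits needed inside Lemmas~\ref{lem:Tdhtodh} and \ref{lem:dhtoTdh} are produced by Fact~\ref{fact:sphcomp} (and its valued-differential counterpart) without additional work.
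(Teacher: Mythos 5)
Your proposal is correct and follows essentially the same route as the paper: both directions are obtained from Lemmas~\ref{lem:Tdhtodh} and~\ref{lem:dhtoTdh} exactly as you describe, with Corollary~\ref{cor:monotoneTdhc2} and \cite[Corollary~6.3.6]{ADH17} supplying the configuration-property and monotonicity-preservation hypotheses. The only cosmetic difference is that the paper pins down the classical dh-configuration fact for monotone fields as \cite[Proposition~7.4.1]{ADH17} rather than leaving the citation to \cite{DPC19} at large.
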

\begin{proof}
We assume that the derivation on $\k$ is linearly surjective (otherwise, $K$ is neither $\Td$-henselian nor $\d$-henselian). 
For the left-to-right direction, note that if $\cC$ is the class of monotone $\TdO$-models, then $\cC$ has the $\Td$-hensel configuration property by Corollary~\ref{cor:monotoneTdhc2}, and $\cC$ contains every immediate $\TdO$-extension of $K$ \cite[Corollary~6.3.6]{ADH17} and every elementary $\TdO$-extension of $K$.
It remains to apply Lemma~\ref{lem:Tdhtodh}.

For the right-to-left direction, note that every immediate valued differential field extension of $K$ with small derivation is monotone as before.
Additionally, every monotone valued differential field $E$ with small derivation such that $\Gamma_E\neq\{0\}$ and the induced derivation on $\k_E$ is nontrivial has the dh-configuration property by \cite[Proposition~7.4.1]{ADH17}. 
It remains to apply Lemma~\ref{lem:dhtoTdh}.
\end{proof}

Combining this with Lemma~\ref{lem:Tdalgmaxiffdalgmax} shows that when $T=\RCF$, Theorem~\ref{thm:TdhTdalgmaxmono} follows immediately from \cite[Theorems~7.0.1 and 7.0.3]{ADH17} and Theorem~\ref{thm:Tdhenselizationmono} from \cite[Theorem~3.7]{Pyn20} (the latter also uses \cite[Corollary~6.3.6]{ADH17}).

\section{An AKE theorem for monotone \texorpdfstring{$T^\der$}{T∂}-henselian fields}\label{sec:AKE}

To establish our Ax--Kochen/Ershov theorem for monotone $\Td$-henselian fields, we construe a monotone $\Td$-henselian field $K$ as a three-sorted structure $\cK = (K, \k, \Gamma; \pi, v, c, \ac)$ with a sort $\f$ for $K$ as a structure in the language $\cL_{\f} = \Ld$, a sort $\r$ for $\k$ as a structure in the language $\cL_{\r} = \Ld$, and a sort $\val$ for $\Gamma$ as a structure in the  (one-sorted) language $\cL_{\val}$ of ordered $\Lambda$-vector spaces, together with symbols for maps $\pi, v, c, \ac$ connecting the sorts as follows.
Suppose that:
\begin{enumerate}
	\item $K \models \Td$;
	\item $\k \models \Td$;
	\item $\Gamma$ is an ordered $\Lambda$-vector space;
	\item $v \colon K^{\times} \to \Gamma$ is a (surjective) valuation making $K$ a monotone $\Td$-henselian $\TdO$-model such that $v(a^{\lambda})=\lambda va$ for all $a \in K^>$ and $\lambda \in \Lambda$;
	\item $\pi \colon \cO \to \k$ is a map with kernel $\smallo$ such that the map $\res(K) \to \k$ induced by $\pi$ is an $\Ld$-isomorphism;
	\item $c \colon \Gamma \to \k$ is $\Lambda$-linear and for every $\gamma \in \Gamma$, there is $a \in K^>$ with $va=\gamma$ and $\pi(a^{\dagger})=c(\gamma)$;
	\item $\ac \colon K \to \k$ is an angular component map such that $\ac(a)^{\dagger}=\pi(a^{\dagger})-c(va)$ for all $a \in K^{\times}$.
\end{enumerate}
Let $\cL_3$ be this three-sorted language of $\cK$, where we extend $v$ and $\pi$ to $K$ by $v(0)=0$ and $\pi(K\setminus\cO)=\{0\}$, respectively.
Note that in $\cL_3$ we have two distinct copies of the language $\Ld$, one for the sort $\f$ and one for the sort $\r$.
Let $\TdOmonac$ be the theory whose models are such $\cK$.
Note that by Section~\ref{sec:cmap}, any monotone $\Td$-henselian $\TdO$-model can be expanded to a $\TdOmonac$-model. For the remainder of this section, let $\cK = (K, \k, \Gamma; \pi, v, c, \ac)$ and $\cK^* = (K^*, \k^*, \Gamma^*; \pi^*, v^*, c^*, \ac^*)$ be $\TdOmonac$-models.

\subsection{Back-and-forth}
Our goal is to construct a back-and-forth system between $\cK$ and $\cK^*$, when they are 
appropriately saturated.
A \textbf{good substructure} of $\cK$ is a triple $\cE = (E, \k_{\cE}, \Gamma_{\cE})$ such that:
\begin{enumerate}[label=(GS\arabic*)]
	\item $E$ is a $\Td$-submodel of~$K$;
	\item $\k_{\cE}$ is a $\Td$-submodel of $\k$ with $\ac(E) \subseteq \k_{\cE}$ (so $\pi(E) \subseteq \k_{\cE}$);
	\item $\Gamma_{\cE}$ is an ordered $\Lambda$-subspace of $\Gamma$ with $v(E^{\times}) \subseteq \Gamma_{\cE}$ and $c(\Gamma_{\cE}) \subseteq \k_{\cE}$.
\end{enumerate}
Note that in this definition we neither require $\ac(E) = \k_E$, let alone $\pi(E) = \k_E$, nor $v(E^{\times})=\Gamma_E$.
When needed we construe $E$ as a $\TdO$-model $(E, \cO_E)$ with the induced valuation ring $\cO_E \coloneqq \cO \cap E$.
If $\cE_1 = (E, \k_{\cE_1}, \Gamma_{\cE_1})$ and $\cE_2 = (E_2, \k_{\cE_2}, \Gamma_{\cE_2})$ are good substructures of $\cK$, then $\cE_1 \subseteq \cE_2$ means $E_1 \subseteq E_2$, $\k_{\cE_1} \subseteq \k_{\cE_2}$, and $\Gamma_{\cE_1} \subseteq \Gamma_{\cE_2}$.
Now let $\cE = (E, \k_{\cE}, \Gamma_{\cE})$ and $\cE^* = (E^*, \k_{\cE^*}, \Gamma_{\cE^*})$ be good substructures of $\cK$ and $\cK^*$, respectively.
A \textbf{good map} $\bf \colon \cE \to \cE^*$ is a triple $\bf = (f, f_{\r}, f_{\val})$ consisting of $\Ld$-isomorphisms $f \colon E \to E^*$ and $f_{\r} \colon \k_{\cE} \to \k_{\cE^*}$ and an isomorphism $f_{\val} \colon \Gamma_{\cE} \to \Gamma_{\cE^*}$ of ordered $\Lambda$-vector spaces such that:
\begin{enumerate}[label=(GM\arabic*)]
	\item $f_{\r}\big( \ac(a) \big) = \ac^*\big( f(a) \big)$ for all $a \in E$;
	\item $f_{\val}\big( v(a) \big) = v^*\big( f(a) \big)$ for all $a \in E^{\times}$;
	\item $(f_{\r}, f_{\val})$ is a partial elementary map $(\k, \Gamma; c) \to (\k^*, \Gamma^*; c^*)$ (so $f_{\r}\big(c(\gamma)\big) = c^*\big(f_{\val}(\gamma)\big)$ for all $\gamma \in \Gamma_{\cE}$).
\end{enumerate}

This lemma handles residue field extensions.
\begin{lemma}\label{lem:resext}
Let $\bf \colon \cE \to \cE^*$ be a good map and let $d \in \k_{\cE} \setminus \pi(E)$. Then there are $b \in \cO$ with $\pi(b) = d$ and a good map $\bg \colon (E\llangle b \rrangle, \k_{\cE}, \Gamma_{\cE}) \to \cK^*$ extending $\bf$.
\end{lemma}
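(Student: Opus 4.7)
The plan is to apply Proposition~\ref{prop:resext} to $E \subseteq K$ and $d \in \k \setminus \pi(E)$ (identifying $\pi(E)$ with $\res(E)$, and noting $d\neq 0$ since $0 \in \pi(E)$) to produce $b \in \cO$ with $\pi(b)=d$ satisfying properties (i)--(iii) of that proposition, and then to use~(iii) to extend $f$. Since $\k_{\cE}$ is a $\Td$-submodel of $\k$ containing $\pi(E) \cup \{d\}$, it contains $\pi(E)\llangle d \rrangle$, so $\imath \coloneqq f_{\r}|_{\pi(E)\llangle d \rrangle}$ is an $\Ld(\pi(E))$-embedding into $\k^*$. Viewing $K^*$ as a $\Td$-henselian $\TdO$-extension of $E$ via $f$, Proposition~\ref{prop:resext}(iii) lifts $\imath$ to an $\LdO(E)$-embedding $g \colon E\llangle b \rrangle \to K^*$. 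I would then propose $\bg \coloneqq (g, f_{\r}, f_{\val})$ as the desired good map.

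Next I would verify that $(E\llangle b \rrangle, \k_{\cE}, \Gamma_{\cE})$ is a good substructure of $\cK$ and that $\bg$ is a good map. Combining Proposition~\ref{prop:resext}(i)--(ii) with Fact~\ref{fact:wilkieineq}, applied to each finite subextension $E\langle \jet^{n-1} b \rangle$, yields $\res(E\llangle b \rrangle) = \pi(E)\llangle d \rrangle \subseteq \k_{\cE}$. For any $a \in E\llangle b \rrangle^{\times}$, property~(ii) provides $e \in E^{\times}$ with $v(e)=v(a)$ and the same sign as $a$; the decomposition $a = e \cdot (a/e)$, with $a/e \asymp 1$ and $a/e > 0$, then yields $\ac(a) = \ac(e) \cdot \res(a/e) \in \k_{\cE}$, settling (GS2); (GS3) is immediate from~(ii). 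Condition (GM2) follows from $g$ being an $\LdO$-embedding extending $f$ together with $v(E\llangle b \rrangle^{\times}) = v(E^{\times})$, while (GM3) holds trivially since $g_{\r}$ and $g_{\val}$ are unchanged.

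For (GM1), the same multiplicative decomposition, combined with the fact that $g$ lifts $\imath$ (so $\res^*(g(x)) = f_{\r}(\res(x))$ for $x \asymp 1$ in $E\llangle b \rrangle$), yields
\[
\ac^*(g(a)) \ = \ \ac^*(g(e)) \cdot \res^*(g(a/e)) \ = \ f_{\r}(\ac(e)) \cdot f_{\r}(\res(a/e)) \ = \ f_{\r}(\ac(a)).
\]
The main obstacle I expect is the bookkeeping around the angular component:\ Proposition~\ref{prop:resext}(iii) controls only the field and its residues, not $\ac$, so the crux is pinning down $\res(E\llangle b \rrangle) = \pi(E)\llangle d \rrangle$ exactly via Wilkie so that every element of $E\llangle b \rrangle^{\times}$ decomposes as an $E^{\times}$-scalar times a unit whose residue is controlled by $\imath$.
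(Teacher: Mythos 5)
Your proposal is correct and follows essentially the same route as the paper: apply Proposition~\ref{prop:resext} to obtain $b$ and the lifted embedding $g$, set $\bg=(g,f_{\r},f_{\val})$, and verify the angular-component conditions by writing each $a\in E\llangle b\rrangle^{\times}$ as an element of $E^{\times}$ of the same value times a unit, whose residue is controlled by the lift. The paper's verification is the same decomposition $a=uy$ with $y\in E$, $u\asymp 1$; your extra care in pinning down $\res(E\llangle b\rrangle)=\pi(E)\llangle d\rrangle$ via the Wilkie inequality is a detail the paper leaves implicit.
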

\begin{proof}
Proposition~\ref{prop:resext} gives $b \in \cO$ with $\pi(b) = d$ and $v\big( E\llangle b \rrangle^\times \big) = v(E^\times)$, as well as an $\LdO$-embedding $g\colon E\llangle b \rrangle\to K^*$ extending $f$ such that $\pi^*\big(g(a)\big) =f_{\r}\big(\pi(a))$ for all $a \in E\llangle b \rrangle$.
Let $\bg = (g,f_{\r},f_{\val})$. To see that $(E\llangle b \rrangle, \k_{\cE}, \Gamma_{\cE})$ is a good substructure and that $\bg$ is a good map, we only need to show that $\ac(a) \in \k_{\cE}$ and that $\ac^*\!\big(g(a)\big) =f_{\r}\big(\ac(a))$ for all $a \in E\llangle b \rrangle$. This holds by~\cite[Lemma 4.2]{Ha18B}, but we repeat the short argument here.
Take $y \in E$ with $va = vy$ and take $u \in E\llangle b \rrangle$ with $a = uy$. Then $u \asymp 1$, so $\ac(a) = \ac(u)\ac(y) = \pi(u)\ac(y) \in \k_{\cE}$ and 
\[
f_{\r}\big(\ac(a))\ =\ f_{\r}\big(\pi(u)\big)f_{\r}\big(\ac(y)\big)\ =\ \pi^*\big(g(u)\big)\ac^*\!\big(f(y)\big) \ =\ \ac^*\!\big(g(a)\big).\qedhere
\]
\end{proof}

The next lemma handles value group extensions.
\begin{lemma}\label{lem:valext}
Let $\bf \colon \cE \to \cE^*$ be a good map and 
suppose that $\pi(E)=\k_{\cE}$ and $E$ is equipped with a $\Td$-lift of $\k_{\cE}$. 
Let $\gamma \in \Gamma_{\cE} \setminus v(E^{\times})$.
Then there are $b \in K^{\times}$ with 
$vb=\gamma$ and a good map $\bg \colon (E\llangle b \rrangle, \k_{\cE}, \Gamma_{\cE}) \to \cK^*$ extending $\bf$. 
\end{lemma}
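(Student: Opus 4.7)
The plan is to produce $b \in K^{>}$ with $vb = \gamma$, $b^{\dagger} \in E$ (so that $E\llangle b\rrangle = E\langle b\rangle$), and $\ac(b) = 1$ (needed for the good-substructure condition); construct an analogous $b^{*} \in (K^{*})^{>}$; and then extend $f$ via $b \mapsto b^{*}$. Let $E_0 \subseteq E$ be the given $\Td$-lift of $\k_{\cE}$ and let $\tilde c \in E_0$ be the unique element with $\pi(\tilde c) = c(\gamma)$. First invoke axiom~(6) of $\TdOmonac$ to find $b_0 \in K^{>}$ with $vb_0 = \gamma$ and $\pi(b_0^{\dagger}) = c(\gamma)$, then apply Corollary~\ref{cor:7.1.9} to perturb $b_0$ by a factor in $1+\smallo$, obtaining $b_1$ with $b_1^{\dagger} = \tilde c$ exactly. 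By axiom~(7), $\ac(b_1)^{\dagger} = \pi(\tilde c) - c(\gamma) = 0$, so $\ac(b_1) \in C_{\k}^{\times}$. Lift $\ac(b_1)^{-1}$ to some $u_1 \in \cO^{\times}$ (automatically $u_1^{\dagger} \in \smallo$, since the residue is a $\k$-constant and $K$ is monotone) and set $b_2 \coloneqq u_1 b_1$; then $\ac(b_2) = 1$ at the cost of shifting $b_2^{\dagger}$ by $u_1^{\dagger} \in \smallo$. A final application of Corollary~\ref{cor:7.1.9} produces $u_2 \in 1+\smallo$ with $u_2^{\dagger} = -u_1^{\dagger}$, and $b \coloneqq u_2 b_2$ meets all three requirements. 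Executing the same three-step construction in $\cK^{*}$ with $f(\tilde c) \in f(E_0)$ in place of $\tilde c$ yields $b^{*} \in (K^{*})^{>}$ with $v^{*}b^{*} = f_{\val}(\gamma)$, $(b^{*})^{\dagger} = f(\tilde c)$, and $\ac^{*}(b^{*}) = 1$.

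Since $vb = \gamma \notin v(E^{\times})$ and $E = \dclL(E)$, the element $b$ is $\cL(E)$-transcendental, and similarly for $b^{*}$. By o-minimality the $\cL$-type of $b$ over $E$ is determined by its cut in $E$, which is in turn determined by the cut of $\gamma$ in $v(E^{\times})$; since $f_{\val}$ preserves this cut, $f$ extends to an $\cL$-isomorphism $g \colon E\langle b\rangle \to E^{*}\langle b^{*}\rangle$ sending $b$ to $b^{*}$. As $\gamma \notin v(E^{\times})$ forces $v(E\langle b\rangle^{\times}) = v(E^{\times}) \oplus \Lambda\gamma$, the extended valuation on $E\langle b\rangle$ is the unique one with $vb = \gamma$ (via the standard extension result underlying Proposition~\ref{prop:fieldbuilding}), so $g$ is in fact an $\LO$-isomorphism whose induced value-group map agrees with the restriction of $f_{\val}$. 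Since $b' = b\tilde c \in E\langle b\rangle$, the field $E\langle b\rangle$ is already closed under $\der$, so $E\llangle b\rrangle = E\langle b\rangle$; the identity $g(b') = b^{*} f(\tilde c) = (b^{*})'$ together with Fact~\ref{fact:transext} then shows that $g$ is an $\LdO$-isomorphism.

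Fact~\ref{fact:wilkieineq} combined with $\dim_{\Lambda}\!\big(v(E\langle b\rangle^{\times})/v(E^{\times})\big) = 1$ forces $\k_{E\langle b\rangle} = \k_{\cE}$. Writing any $a \in E\langle b\rangle^{\times}$ as $a = \alpha b^{\lambda} u$ with $\alpha \in E^{\times}$, $\lambda \in \Lambda$, and $u \in E\langle b\rangle$ satisfying $u \asymp 1$, we get $\ac(a) = \ac(\alpha)\ac(b)^{\lambda} \pi(u) = \ac(\alpha)\pi(u) \in \k_{\cE}$, so $(E\llangle b\rrangle, \k_{\cE}, \Gamma_{\cE})$ is a good substructure; applying $g$ to this decomposition and using $\ac^{*}(b^{*})=1$ then yields $\ac^{*}(g(a)) = f_{\r}(\ac(a))$, confirming that $\bg \coloneqq (g, f_{\r}, f_{\val})$ is a good map. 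The hardest step is arranging $vb = \gamma$, $b^{\dagger} \in E$, and $\ac(b) \in \k_{\cE}$ simultaneously---needed respectively for the prescribed value-group element, closure of $E\langle b\rangle$ under $\der$, and the good-substructure condition---which is precisely what the alternating adjustment via axiom~(6), Corollary~\ref{cor:7.1.9}, and monotonicity of $K$ achieves.
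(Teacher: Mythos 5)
Your proof is correct and follows essentially the same route as the paper's: construct $b$ with $vb=\gamma$, $b^\dagger\in E_0$, and $\ac(b)=1$, do the same in $\cK^*$, extend $f$ along the cut of $b$ over $E$ (which is determined by the cut of $\gamma$ over $v(E^\times)$), and verify preservation of $\ac$ via the decomposition of elements of $E\langle b\rangle^\times$ as $\alpha b^\lambda u$ with $u\asymp 1$. The only local difference is in arranging $\ac(b)=1$: the paper extends $E_0$ to a full $\Td$-lift of $\k$ via Corollary~\ref{cor:reslift} and divides by a genuine constant $e$ with $\pi(e)=\ac(a)$, whereas you lift the residue constant $\ac(b_1)^{-1}$ to a unit $u_1\in\cO^\times$ and then repair the resulting $\smallo$-perturbation of the logarithmic derivative with a second application of Corollary~\ref{cor:7.1.9}; both are valid.
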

\begin{proof}
Let $E_0 \subseteq E$ be a $\Td$-lift of $\k_{\cE}$.
We will  find $b \in K^>$ with $b^{\dagger} \in E_0$, $\pi(b^{\dagger} ) = c(\gamma)$, $vb=\gamma$, and $\ac(b)=1$.
By assumption, we have $a \in K^>$ with $\pi(a^{\dagger})=c(\gamma)$ and $va=\gamma$. Take $u \in E_0$ with $\pi(u) = \pi(a^\dagger) = c(\gamma)$, and let $\varepsilon \coloneqq a^{\dagger}-u \in \smallo$. By Corollary~\ref{cor:7.1.9}, we have $\delta \in \smallo$ with $\varepsilon=(1+\delta)^{\dagger}$, so by replacing $a$ with $a/(1+\delta)$, we arrange that $a^\dagger \in E_0$.
By Corollary~\ref{cor:reslift}, extend $E_0$ to a $\Td$-lift $E_1 \subseteq K$ of $\k$. Now, take $e \in E_1$ with $\pi(e) = \ac(a)$. We have
\[
\pi(e^\dagger)\ =\ \pi(e)^\dagger\ =\ \ac(a)^\dagger\ =\ \pi(a^{\dagger})-c(\gamma)\ =\ 0.
\]
Since $e^\dagger \in E_1$, it follows that $e^\dagger = 0$. Let $b\coloneqq a/e$, so $b^\dagger = a^\dagger \in E_0$, $vb = va = \gamma$, and $\ac(b) = \ac(a)/\pi(e) = 1$.
Now $f(E_0)\subseteq E^*$ is a $\Td$-lift of $\k_{\cE^*}$, so as above take $b^* \in (K^*)^>$ with $(b^*)^{\dagger} \in f(E_0)$, $\pi^*((b^*)^{\dagger} ) = c^*(f_{\val}\gamma)$, $vb^*=f_{\val}\gamma$, and $\ac^*(b^*)=1$.

Since $b^\dagger \in E_0 \subseteq E$, we have $E\llangle b \rrangle = E\langle b \rangle$.
Then $\res(E\langle b \rangle)=\res(E)$ and $v(E\langle b \rangle^{\times}) = v(E^{\times}) \oplus \Lambda \gamma \subseteq \Gamma_{\cE}$ by the Wilkie inequality.
Since $b$ and $b^*$ have the same sign and realize the same cut over $v(E^\times)$, we may use~\cite[Lemma~2.3]{Ka23} to get an $\LO$-embedding $g \colon E\langle b \rangle \to K^*$ extending $f$ and satisfying $gb=b^*$. Note that $(b^*)^{\dagger} = f(b^\dagger)$, so $g$ is even an $\LdO$-embedding by Fact~\ref{fact:transext}.
To verify that $\bg \coloneqq (g, f_{\r}, f_{\val})$ is a good map, it remains to check that $\bg$ preserves $\ac$.
For this, use that for $\cL(E)$-definable $F \colon K \to K$ with $F(b)\neq 0$, we have $\lambda \in \Lambda$ and $d \in E^{\times}$ with $F(b) \sim b^{\lambda}d$, so $\ac(F(b)) = \ac(b^{\lambda}d) = \ac(b)^{\lambda}\ac(d) = \ac(d)$.
\end{proof}

\begin{theorem}\label{thm:equiv}
Any good map $\cE \to \cE^*$ is a partial elementary map $\cK \to \cK^*$.
\end{theorem}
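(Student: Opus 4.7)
I would prove this by a back-and-forth argument between sufficiently saturated elementary extensions of $\cK$ and $\cK^*$. Fix a regular cardinal $\kappa > |\cE|$ large enough that Corollary~\ref{cor:sphcompembed} applies throughout, and pass to $\kappa$-saturated elementary extensions. It then suffices to show that the good maps $\bf \colon \cE \to \cE^*$ with $|\cE|<\kappa$ form a back-and-forth system, since this will imply that the original good map is partial elementary. The back-and-forth reduces to three extension moves: given $\bf$ and an element $x$ of $\cK$ in any one of the three sorts, extend $\bf$ to a good map whose corresponding sort contains $x$ (symmetrically on the $\cK^*$ side).

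The moves in the residue and value sorts will follow from condition (GM3) together with saturation of $\cK^*$ in the respective sorts: realize the image of $\tp(x/\k_\cE \cup \Gamma_\cE)$ in $\cK^*$ and extend $f_{\r}$ or $f_{\val}$ accordingly, leaving the field sort untouched. The crucial move is for $x = a \in K$ in the field sort. Setting $F \coloneqq E\llangle a\rrangle$, I would first apply the residue and value moves to arrange $\k_\cE \supseteq \pi(F)$ and $\Gamma_\cE \supseteq v(F^\times)$. Using Theorem~\ref{thm:reslift}, fix a $\Td$-lift $K_0 \subseteq K$ of $\k$ and set $L_0 \coloneqq K_0 \cap \pi^{-1}(\k_\cE)$, a $\Td$-lift of $\k_\cE$ inside $K$. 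Iterate Lemma~\ref{lem:resext}, choosing each new lift from $L_0$, to extend $E$ to some $E_1 \subseteq K$ with $\pi(E_1) = \k_\cE$ and $L_0 \subseteq E_1$; then iterate Lemma~\ref{lem:valext}, using $L_0 \subseteq E_1$ as the required $\Td$-lift, to extend to $E_2$ with $v(E_2^\times) = \Gamma_\cE$. The extension $F/E_2$ is now immediate, so Corollary~\ref{cor:sphcompembed} will produce an $\LdO(E_2)$-embedding $g \colon F \to K^*$ extending $f$. Setting $\bg \coloneqq (g, f_{\r}, f_{\val})$, immediacy of $F/E_2$ ensures $\pi(F) \subseteq \k_\cE$ and $v(F^\times) \subseteq \Gamma_\cE$, so (GS2), (GS3), and (GM3) transfer; the $\ac$-compatibility on new $b \in F^\times$ follows from the factorization $b = ue$ with $e \in E_2^\times$, $ve = vb$, and $u \asymp 1$, giving $\ac(b) = \pi(u)\ac(e)$, which is preserved by $\bg$.

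The hard part will be arranging $L_0 \subseteq E_1$ during the iteration of Lemma~\ref{lem:resext}. In the $\Td$-transcendental case any lift suffices and one may freely choose it in $L_0$. In the $\Td$-algebraic case, inspecting the proof of Proposition~\ref{prop:resext}, the lift $b$ must satisfy $H(\jet^n b) = 0$ for an operator $H$ built from an auxiliary $T$-lift $E_0 \subseteq E$ of $\res(E)$. Choosing $E_0 \subseteq L_0 \cap E$ forces $H$ to have coefficients in $L_0$, so that for any $b \in L_0$ lifting the target residue we have $H(\jet^n b) \in L_0 \cap \smallo = \{0\}$ automatically. Ensuring that $L_0 \cap E$ remains a $T$-lift of $\pi(E)$ throughout the iteration requires ordering the sequence of lifts carefully (processing first those residues whose $L_0$-lifts are needed to saturate $E_0$), and is the main technical obstacle of the argument.
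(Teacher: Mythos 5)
Your overall architecture matches the paper's: pass to saturated elementary extensions, build a back-and-forth system of good maps with small domain, handle the residue and value sorts by saturation, and reduce the field-sort move to an immediate extension that Corollary~\ref{cor:sphcompembed} can embed. But there is a genuine gap in the field-sort move. After you arrange $\pi(E\llangle a\rrangle)\subseteq\k_{\cE}=\pi(E_2)$ and $v(E\llangle a\rrangle^{\times})\subseteq\Gamma_{\cE}=v(E_2^{\times})$, the extension $E_2\llangle a\rrangle/E_2$ need \emph{not} be immediate: the new elements $b$ adjoined to form $E_2$ interact with $a$, and differences such as $a-b$ can produce residues and values that lie in neither $\pi(E\llangle a\rrangle)\cup\k_{\cE}$ nor $v(E\llangle a\rrangle^{\times})\cup\Gamma_{\cE}$. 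One round of closing off is therefore not enough. The paper resolves this by iterating the whole procedure $\omega$ times --- at stage $n+1$ one absorbs $\pi(E_n\llangle a\rrangle)$ and $v(E_n\llangle a\rrangle^{\times})$ into $\k_{n+1}=\pi(E_{n+1})$ and $\Gamma_{n+1}=v(E_{n+1}^{\times})$ --- and only the union $E_{\omega}$ satisfies $\pi(E_{\omega}\llangle a\rrangle)=\pi(E_{\omega})$ and $v(E_{\omega}\llangle a\rrangle^{\times})=v(E_{\omega}^{\times})$, making $E_{\omega}\llangle a\rrangle/E_{\omega}$ immediate. (Wilkie's inequality, which you implicitly use, only bounds the \emph{cardinality} of the new residues and values at each stage; it does not prevent them from appearing.)

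The second problem is your treatment of the hypothesis of Lemma~\ref{lem:valext}, which requires $E$ itself to contain a $\Td$-lift of $\k_{\cE}$. Your plan of fixing a global lift $L_0=K_0\cap\pi^{-1}(\k_{\cE})$ and forcing $L_0\subseteq E_1$ during the iteration of Lemma~\ref{lem:resext} is exactly the complication you flag as ``the main technical obstacle,'' and you do not resolve it; moreover Lemma~\ref{lem:resext} as stated gives you no control over which lift $b$ is produced in the $\Td$-algebraic case. The paper sidesteps this entirely by inserting an extra extension procedure: first make $(E,\cO_E)$ itself $\Td$-henselian (take a spherically complete immediate $\TdO$-extension via Fact~\ref{fact:sphcomp}, embed it into both $K$ and $K^*$ over $E$ by Corollary~\ref{cor:sphcompembed}, and check goodness via \cite[Corollary~4.4]{Ha18B}), and then apply Theorem~\ref{thm:reslift} to $E$ to produce the required $\Td$-lift inside $E$. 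With that step added and the $\omega$-iteration in place, the rest of your argument (including the $\ac$-compatibility check for immediate extensions) goes through as in the paper.
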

\begin{proof}
Let $\kappa$ be an uncountable cardinal with $\max\{ |\k_{\cE}|, |\Gamma_{\cE}| \}<\kappa$.
By passing to elementary extensions we arrange that $\cK$ and $\cK^*$ are $\kappa^+$-saturated.
We call a good substructure $(E_1, \k_{1}, \Gamma_{1})$ of $\cK$ \textbf{small} if $\max\{ |\k_1|, |\Gamma_1| \}<\kappa$.
It suffices to show that the set of good maps with small domain is a back-and-forth system from $\cK$ to $\cK^*$.

First, we describe several extension procedures.
\begin{enumerate}
	\item\label{akeext:ressort} Given $d \in \k$, arranging that $d \in \k_{\cE}$: By the saturation assumption, we can extend $f_{\r}$ to a map $g_{\r} \colon \k_{\cE}\llangle d \rrangle \to \k^*$ so that $(g_{\r}, f_{\val})$ is a partial elementary map. Then $(f, g_{\r}, f_{\val})$ is the desired extension of~$\bf$.
	\item\label{akeext:valsort} Given $\gamma \in \Gamma$, arranging that $\gamma \in \Gamma_{\cE}$: First use \ref{akeext:ressort} to arrange $c(\gamma) \in \k_{\cE}$, then use saturation as before to extend $f_{\val}$ to $g_{\val} \colon \Gamma_{\cE} \oplus \Lambda\gamma \to \Gamma^*$ with the desired properties.
	\item\label{akeext:res} Arranging $\pi(E)=\k_{\cE}$: If $d \in \k_{\cE} \setminus \pi(E)$, then Lemma~\ref{lem:resext} yields $b \in K$ and an extension of $\bf$ to a good map $(g, f_{\r}, f_{\val})$ with small domain $(E\llangle b\rrangle, \k_{\cE}, \Gamma_{\cE})$. Iterate this procedure to arrange $\pi(E)=\k_{\cE}$.
	\item\label{akeext:Tdh} Arranging that $(E, \cO_E)$ is $\Td$-henselian: By \ref{akeext:ressort} and \ref{akeext:res} we can assume that $\k_{\cE}$ is linearly surjective and $\pi(E)=\k_{\cE}$. Now use Fact~\ref{fact:sphcomp} to take a spherically complete immediate $\TdO$-extension $L$ of $E$. Then $L$ is $\Td$-henselian by Corollary~\ref{cor:sphcompTdh}, and $L$ embeds over $E$ into both $K$ and $K^*$ by Corollary~\ref{cor:sphcompembed}. Let $g$ be the extension of $f$ to an $\LdO$-isomorphism between these images of $L$ in $K$ and $K^*$, respectively. Then $(g,f_{\r},f_{\val})$ is a good map by \cite[Corollary~4.4]{Ha18B}.
	\item\label{akeext:val} Arranging $v(E^{\times})=\Gamma_{\cE}$: We can assume that $\pi(E)=\k_{\cE}$ and that $\cE$ is $\Td$-henselian and is equipped with a $\Td$-lift of $\k_{\cE}$ by Theorem~\ref{thm:reslift}. If $\gamma \in \Gamma_{\cE} \setminus v(E^{\times})$, then Lemma~\ref{lem:valext} yields $b \in K$ and an extension of $\bf$ to a good map $(g, f_{\r}, f_{\val})$ with small domain $(E\llangle b \rrangle, \k_{\cE}, \Gamma_{\cE})$. Iterate this procedure to arrange $v(E^{\times})=\Gamma_{\cE}$.
\end{enumerate}

Given $a \in K$, we need to extend $\bf$ to a good map with small domain containing $a$.
By the above, we can assume that $\pi(E)=\k_{\cE}$ and $v(E^{\times})=\Gamma_{\cE}$.
From
\[
\rkL\big(\pi(E\llangle a \rrangle) | \pi(E)\big)\ \leq\ \rkL(E\llangle a \rrangle | E)\ \leq\ \aleph_0,
\]
we get $|\pi(E\llangle a \rrangle)|<\kappa$, and from 
\[
\dim_{\Lambda}\big( v(E\llangle a \rrangle^{\times}) | v(E^{\times}) \big)\ \leq\ \rkL(E\llangle a \rrangle | E)\ \leq\ \aleph_0,
\]
we get $|v(E\llangle a \rrangle^{\times})| < \kappa$.
Hence by \ref{akeext:ressort}--\ref{akeext:val} we extend $\bf$ to a good map $\bf_1 = (f_1, f_{1, \r}, f_{1, \val})$ with small domain $\cE_1 = (E_1, \k_1, \Gamma_1) \supseteq \cE$ such that $\k_1$ is linearly surjective and
\[
\pi\big(E\llangle a \rrangle\big) \subseteq \k_1 = \pi(E_1) \qquad \text{and} \qquad v\big(E\llangle a \rrangle^{\times}\big) \subseteq \Gamma_1 = v(E_1^{\times}).
\]
In the same way, we extend $\bf_1$ to a good map $\bf_2$ with small domain $\cE_2 = (E_2, \k_2, \Gamma_2) \supseteq \cE_1$ such that $\k_2$ is linearly surjective and
\[
\pi\big(E_1\llangle a \rrangle\big) \subseteq \k_2 = \pi(E_2) \qquad \text{and} \qquad v\big(E_1\llangle a \rrangle^{\times}\big) \subseteq \Gamma_2 = v(E_2^{\times}).
\]
Iterating this procedure and taking unions yields an extension of $\bf$ to a good map $\bf_{\omega} = (f_{\omega}, f_{\omega, \r}, f_{\omega, \val})$ with small domain $\cE_{\omega} = (E_{\omega}, \k_{\omega}, \Gamma_{\omega}) \supseteq \cE$ such that $\k_{\omega}$ is linearly surjective and
\[
\pi\big(E_{\omega}\llangle a \rrangle\big) = \k_{\omega} = \pi(E_{\omega}) \qquad \text{and} \qquad v\big(E_{\omega}\llangle a \rrangle^{\times}\big) = \Gamma_{\omega} = v(E_{\omega}^{\times}).
\]
This makes $\big(E_{\omega}\llangle a \rrangle, \cO_{E_{\omega}\llangle a \rrangle}\big)$ an immediate $\TdO$-extension of $(E_{\omega}, \cO_{E_{\omega}})$, so by Fact~\ref{fact:sphcomp} and Corollary~\ref{cor:sphcompembed} we can take a spherically complete immediate $\TdO$-extension $(E_{\omega+1}, \cO_{E_{\omega+1}})$ of $\big(E_{\omega}\llangle a \rrangle, \cO_{E_{\omega}\llangle a \rrangle}\big)$ inside $\cK$, which is also an immediate $\TdO$-extension of $(E_{\omega}, \cO_{E_{\omega}})$.
Then $\cE_{\omega+1} = (E_{\omega+1}, \k_{\omega}, \Gamma_{\omega})$ is a good substructure of $\cK$.
Likewise taking a spherically complete immediate $\TdO$-extension of $\big(f_{\omega}(E_{\omega}), f_{\omega}(\cO_{E_{\omega}})\big)$ inside $\cK^*$, Theorem~\ref{thm:sphcompuniquemono} and \cite[Corollary~4.4]{Ha18B} yield an extension of $\bf_{\omega}$ to a good map with small domain $\cE_{\omega+1}$ containing~$a$.
\end{proof}

For the next result, we construe $(\k, \Gamma; c)$ as a structure in the two-sorted language $\cL_{\r\!\val, c} = \cL_{\r} \cup \cL_{\val} \cup \{c\}$.
\begin{corollary}\label{cor:elementaryequivalence}
We have $\cK \equiv \cK^*$ if and only if $(\k, \Gamma; c) \equiv (\k^*, \Gamma^*; c^*)$.
\end{corollary}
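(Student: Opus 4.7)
The forward direction is immediate: the $\r$ and $\val$ sorts of $\cK$ are interpreted by $\k$ and $\Gamma$ and $c$ is a distinguished symbol of $\cL_3$, so every $\cL_{\r\!\val,c}$-sentence is equivalent (by relativizing quantifiers to these sorts) to an $\cL_3$-sentence with the same truth value in the respective structures.

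For the converse, the plan is to exhibit a good map between canonical ``prime'' good substructures and invoke Theorem~\ref{thm:equiv}. I would set $E_0 \coloneqq \dclL(\emptyset) \subseteq K$ and analogously $E_0^* \subseteq K^*$. The first technical step is to verify that $E_0 \subseteq \cO$, $v \equiv 0$ on $E_0^{\times}$, and $\der$ restricts to the zero derivation on $E_0$. All three follow from a single observation: for each $a \in E_0$ the constant function $F \colon K \to K$ with $F(x) \equiv a$ is $\cL(\emptyset)$-definable and $\cC^1$ with $F' \equiv 0$, so $T$-convexity of $\cO$ together with $0 \in \cO$ gives $a = F(0) \in \cO$, and compatibility of $\der$ with $F$ at $0$ yields $a' = F(0)' = F'(0) \cdot 0' = 0$. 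Consequently $\pi(E_0)$ identifies with $\dclL(\emptyset) \subseteq \k$ as an $\Ld$-submodel with trivial induced derivation, so $\cE_0 \coloneqq (E_0, \pi(E_0), \{0\})$ is a good substructure of $\cK$, and likewise $\cE_0^* \coloneqq (E_0^*, \pi^*(E_0^*), \{0\})$ of $\cK^*$.

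I would then take $\bf_0 \coloneqq (f, f_{\r}, f_{\val})$ with $f \colon E_0 \to E_0^*$ and $f_{\r} \colon \pi(E_0) \to \pi^*(E_0^*)$ the canonical $\Ld$-isomorphisms of prime models, and $f_{\val} \colon \{0\} \to \{0\}$ the trivial map. Conditions (GM1) and (GM2) are automatic because $\ac = \pi$ on $E_0$ and $v \equiv 0$ there. For (GM3), every element of $\pi(E_0)$ is $\cL_{\r}$-definable over $\emptyset$ in $\k$ and $\Gamma_{\cE_0} = \{0\}$ is $\cL_{\val}$-definable over $\emptyset$, so any $\cL_{\r\!\val,c}$-formula over the domain of $(f_{\r}, f_{\val})$ is equivalent to an $\cL_{\r\!\val,c}$-sentence; the hypothesis $(\k, \Gamma; c) \equiv (\k^*, \Gamma^*; c^*)$ then supplies the equivalence and in particular forces $f_{\r}$ to send each $\emptyset$-definable element of $\pi(E_0)$ to its counterpart in $\pi^*(E_0^*)$. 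Theorem~\ref{thm:equiv} then upgrades $\bf_0$ to a partial elementary map $\cK \to \cK^*$, and specializing to $\cL_3$-sentences gives $\cK \equiv \cK^*$.

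The only real subtlety is the bookkeeping in the first step --- confirming that $E_0$ carries both the trivial derivation and the trivial valuation so that $\cE_0$ really is a good substructure. Everything after that is an abstract consequence of the back-and-forth already encoded in Theorem~\ref{thm:equiv}.
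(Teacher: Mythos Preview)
Your proof is correct and follows essentially the same approach as the paper: both take the prime model $\PP = \dclL(\emptyset)$ (with trivial derivation and trivial induced valuation) as a common good substructure on each side, build the obvious good map between these copies, and invoke Theorem~\ref{thm:equiv}. Your version just spells out in more detail why $E_0 \subseteq \cO$, why $\der$ vanishes on $E_0$, and why (GM3) reduces to the hypothesis $(\k,\Gamma;c)\equiv(\k^*,\Gamma^*;c^*)$, whereas the paper states these points tersely (relying on the remark before Theorem~\ref{thm:reslift} that $\PP$ with trivial derivation is a partial $\Td$-lift of~$\k$).
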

\begin{proof}
The left-to-right direction is obvious.
For the converse, suppose that $(\k, \Gamma; c) \equiv (\k^*, \Gamma^*; c^*)$.
Let $\PP$ be the prime model of $T$.
Then $\cE = (\PP, \PP, \{0\})$ is a good substructure of $\cK$ and $\cE^* = (\PP, \PP, \{0\})$ is a good substructure of $\cK^*$, and we have a good map $\cE \to \cE^*$, which is partial elementary by Theorem~\ref{thm:equiv}.
\end{proof}

For $T=T_{\an}$, this yields a theorem claimed in the introduction.
\begin{corollary}\label{cor:elemequivHahn}
Any $\Td_{\an}$-henselian monotone $\TdO_{\an}$-model is elementarily equivalent to $\k\llp t^{\Gamma}\rrp_{\an,c}$ for some $\k \models \Td_{\an}$, divisible ordered abelian group $\Gamma$, and additive map $c \colon \Gamma \to \k$.
\end{corollary}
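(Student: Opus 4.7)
The plan is to apply the Ax--Kochen/Ershov result Corollary~\ref{cor:elementaryequivalence} to $K$ and to a Hahn companion that will share the same two-sorted data $(\k,\Gamma;c)$, thereby reducing the problem to the trivial observation that $(\k,\Gamma;c)\equiv(\k,\Gamma;c)$.

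First I would expand the given $K$ to a $\TdOmonac$-model $\cK = (K,\k,\Gamma;\pi,v,c,\ac)$. Since $K$ is $\Td_{\an}$-henselian, $\k$ is linearly surjective, which forces the induced derivation on $\k$ to be nontrivial. Applying Lemma~\ref{lem:sectionexists} with $A=K^>$ produces a section $s\colon\Gamma\to K^>$; let $\ac$ be the angular component map it induces and define $c(\gamma)\coloneqq\res(s(\gamma)^\dagger)$. This $c$ is $\Lambda$-linear by Fact~\ref{fact:powerderivative}, the existence clause for $c$ is witnessed by $s(\gamma)$ itself, and the compatibility $\ac(a)^\dagger=\pi(a^\dagger)-c(va)$ for $a\in K^\times$ follows by writing $a=s(va)u$ with $u\in\cO^\times$ and taking residues in $a^\dagger=s(va)^\dagger+u^\dagger$.

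Next I would form the Hahn field $K' \coloneqq \k\llp t^\Gamma\rrp_{\an,c}$ using the same $\k$, $\Gamma$, and $c$ as in $\cK$, following example~(3) of the introduction. Then $K'$ is a monotone $\TdO_{\an}$-model, spherically complete as a Hahn series field, so by Corollary~\ref{cor:sphcompTdh} it is $\Td_{\an}$-henselian (linear surjectivity of $\k$ is inherited from $\cK$). Equipping $K'$ with the tautological section $s'(\gamma)\coloneqq t^\gamma$ and the induced angular component $\ac'$, the identity $(t^\gamma)'=c(\gamma)t^\gamma$ built into the Hahn construction gives $\res(s'(\gamma)^\dagger)=c(\gamma)$, so $\cK'\coloneqq(K',\k,\Gamma;\pi',v',c,\ac')$ is a $\TdOmonac$-model whose associated two-sorted data is identical to that of $\cK$.

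Finally, Corollary~\ref{cor:elementaryequivalence} immediately gives $\cK\equiv\cK'$, and taking the $\TdO_{\an}$-reduct yields $K\equiv\k\llp t^\Gamma\rrp_{\an,c}$ as required. There is no real obstacle: all the heavy lifting has been done by Corollary~\ref{cor:elementaryequivalence} and Corollary~\ref{cor:sphcompTdh}; the remaining task is the bookkeeping verification of the two $\TdOmonac$-axioms involving $c$ and $\ac$ for $\cK'$, which reduce to the formula $(t^\gamma)^\dagger=c(\gamma)$.
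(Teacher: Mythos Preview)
Your proposal is correct and follows essentially the same route as the paper: the corollary is presented there as an immediate consequence of Corollary~\ref{cor:elementaryequivalence}, and you have simply spelled out the two ingredients that make this work---expanding $K$ to a $\TdOmonac$-model as in Section~\ref{sec:cmap}, and verifying that the companion Hahn field $\k\llp t^{\Gamma}\rrp_{\an,c}$ is $\Td_{\an}$-henselian (via spherical completeness and Corollary~\ref{cor:sphcompTdh}) and carries the identical two-sorted data $(\k,\Gamma;c)$.
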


\begin{corollary}\label{cor:elementarysubstructure}
Let $\cE \subseteq \cK$ be a $\TdOmonac$-model such that $(\k_{\cE}, \Gamma_{\cE}; c) \preceq (\k, \Gamma; c)$. Then $\cE \preceq \cK$.
\end{corollary}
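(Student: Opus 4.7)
The strategy is to deduce the corollary directly from Theorem~\ref{thm:equiv} by taking the codomain model to be $\cE$ itself rather than $\cK$, thereby turning the identity triple into a witness for $\cE \preceq \cK$.

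First, I would verify that $\cE$ is a good substructure of $\cK$. Since $\cE$ is a $\TdOmonac$-submodel of $\cK$, $E$ is a $\Td$-submodel of $K$, $\k_{\cE}$ is a $\Td$-submodel of $\k$, and $\Gamma_{\cE}$ is an ordered $\Lambda$-subspace of $\Gamma$; moreover, the inclusions $\ac(E) \subseteq \k_{\cE}$, $v(E^{\times}) \subseteq \Gamma_{\cE}$, and $c(\Gamma_{\cE}) \subseteq \k_{\cE}$ are immediate from the fact that the connecting maps of $\cE$ are the restrictions of those of $\cK$. So (GS1)--(GS3) hold. Of course, $\cE$ is also a good substructure of itself.

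Next, I would check that the identity triple $\mathrm{id} = (\mathrm{id}_E, \mathrm{id}_{\k_{\cE}}, \mathrm{id}_{\Gamma_{\cE}})$ is a good map from $\cE$ viewed as a good substructure of $\cK$ to $\cE$ viewed as a good substructure of itself. Conditions (GM1) and (GM2) hold trivially, since $\cE$ is a $\TdOmonac$-submodel and so the $\ac$ and $v$ of $\cE$ are the restrictions of those of $\cK$. The content of the claim is (GM3), which demands that $(\mathrm{id}_{\k_{\cE}}, \mathrm{id}_{\Gamma_{\cE}})$ be a partial elementary map $(\k, \Gamma; c) \to (\k_{\cE}, \Gamma_{\cE}; c)$ in the language $\cL_{\r\!\val, c}$, and this is precisely the hypothesis $(\k_{\cE}, \Gamma_{\cE}; c) \preceq (\k, \Gamma; c)$.

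Applying Theorem~\ref{thm:equiv} then gives that $\mathrm{id}$ is a partial elementary map $\cK \to \cE$. Unpacking, for every $\cL_3$-formula $\varphi(\bar{x})$ and every tuple $\bar{a}$ from $\cE$, $\cK \models \varphi(\bar{a})$ if and only if $\cE \models \varphi(\bar{a})$, which is precisely $\cE \preceq \cK$. There is no genuine obstacle: the only subtlety is interpreting Theorem~\ref{thm:equiv} asymmetrically, with distinct ambient $\TdOmonac$-models on each side, and once this is recognized the argument is immediate.
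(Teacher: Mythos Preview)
Your proof is correct and essentially identical to the paper's. The paper's one-line proof likewise verifies that the identity on $(E,\k_{\cE},\Gamma_{\cE})$ is a good map and invokes Theorem~\ref{thm:equiv}; the only cosmetic difference is that the paper phrases it as a good map from the ambient model $\cE$ to the ambient model $\cK$, whereas you orient it from $\cK$ to $\cE$, but this is symmetric and immaterial.
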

\begin{proof}
The identity map on $(E, \k_{\cE}, \Gamma_{\cE})$ is a good map from $\cE$ to $\cK$, so $\cE \preceq \cK$ by Theorem~\ref{thm:equiv}.
\end{proof}

We can eliminate angular components from the previous corollary.
\begin{corollary}\label{cor:elementarysubstructure2}
Let $(E, \k_{\cE}, \Gamma_{\cE}; \pi, v, c) \subseteq (K, \k, \Gamma; \pi, v, c)$ be $\Td$-henselian such that $(\k_{\cE}, \Gamma_{\cE}; c) \preceq (\k, \Gamma; c)$. Then $(E, \k_{\cE}, \Gamma_{\cE}; \pi, v, c) \preceq (K, \k, \Gamma; \pi, v, c)$.
\end{corollary}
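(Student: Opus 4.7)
The strategy is to reduce to Corollary~\ref{cor:elementarysubstructure} by equipping $E$ and $K$ with compatible angular components. Concretely, I will build sections $s_E\colon\Gamma_\cE\to E^>$ and $s_K\colon\Gamma\to K^>$, both satisfying the $c$-compatibility $\pi(s(\gamma)^\dagger)=c(\gamma)$ from Section~\ref{sec:cmap}, such that $s_K$ extends $s_E$. The induced angular components $\ac_E$ and $\ac_K$ then agree on $E$, making $(E,\k_\cE,\Gamma_\cE;\pi,v,c,\ac_E)\subseteq(K,\k,\Gamma;\pi,v,c,\ac_K)$ an inclusion of $\TdOmonac$-models; Corollary~\ref{cor:elementarysubstructure} gives elementarity in $\cL_3$, and forgetting $\ac$ yields the conclusion.

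The key technical step, and the main obstacle, is verifying that $E$ itself satisfies axiom~(6) of $\TdOmonac$: for every $\gamma\in\Gamma_\cE$ there exists $a\in E^>$ with $va=\gamma$ and $\pi(a^\dagger)=c(\gamma)$. I would fix any $b\in E^>$ with $vb=\gamma$ and note that $b^\dagger\in\cO_E$ by monotonicity of $E$ (inherited from $K$), so $d\coloneqq c(\gamma)-\pi(b^\dagger)$ lies in $\k_\cE$. Applying axiom~(6) in $K$ produces $a^*\in K^>$ with $va^*=\gamma$ and $\pi((a^*)^\dagger)=c(\gamma)$, and $y^*\coloneqq\pi(a^*/b)\in\k^\times$ satisfies $(y^*)^\dagger=d$. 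Hence the $\Ld$-formula stating that some nonzero $Y$ satisfies $Y^\dagger=d$ holds in $\k$ over the parameter $d\in\k_\cE$. By the hypothesis $(\k_\cE,\Gamma_\cE;c)\preceq(\k,\Gamma;c)$, this formula also holds in $\k_\cE$, yielding $y\in\k_\cE^\times$ with $y^\dagger=d$; lifting $y$ to $u\in\cO_E^\times$ via $\pi(\cO_E)=\k_\cE$ and setting $a\coloneqq bu$ completes the step, since then $\pi(a^\dagger)=\pi(b^\dagger)+\pi(u)^\dagger=\pi(b^\dagger)+d=c(\gamma)$.

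With axiom~(6) secured for $E$, Lemma~\ref{lem:acmap} yields a $c$-compatible section $s_E$. To extend $s_E$ to $s_K$, I would fix a $\Lambda$-basis of $\Gamma$ extending one of $\Gamma_\cE$, use axiom~(6) in $K$ to lift each new basis element to a suitable element of $K^>$, and extend $s_E$ by $\Lambda$-linearity. Since $\{a\in K^>:\pi(a^\dagger)=c(va)\}$ is a multiplicative $\Lambda$-subspace containing both the image of $s_E$ and the new lifts, the resulting $s_K$ is automatically $c$-compatible; the induced angular components then restrict correctly by construction. The remainder is a direct appeal to Corollary~\ref{cor:elementarysubstructure} and passage to $\ac$-free reducts.
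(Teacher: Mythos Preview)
Your approach is correct and follows the paper's overall strategy: build a section $s\colon\Gamma\to K^>$ restricting to a section $s_E\colon\Gamma_\cE\to E^>$, pass to the induced angular component, and invoke Corollary~\ref{cor:elementarysubstructure}. The paper is terser---it simply takes any $s_E$ via Lemma~\ref{lem:sectionexists} and extends it using $\Lambda$-complements $\Gamma=\Gamma_\cE\oplus\Delta$ and $K^>=E^>\cdot B$---without addressing $c$-compatibility of $s$. Your additional steps (verifying axiom~(6) for $E$ via the hypothesis $(\k_\cE,\Gamma_\cE;c)\preceq(\k,\Gamma;c)$, then choosing $c$-compatible sections through Lemma~\ref{lem:acmap}) ensure that the induced $\ac$ satisfies axiom~(7) with the \emph{given} $c$, which is what makes $(K,\ldots,c,\ac)$ a genuine $\TdOmonac$-model and legitimizes the appeal to Corollary~\ref{cor:elementarysubstructure}; the paper's write-up glosses over this point, so your extra care is not redundant.
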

\begin{proof}
Let $\Delta$ be a $\Lambda$-subspace of $\Gamma$ such that $\Gamma = \Gamma_{\cE} \oplus \Delta$ and let $B$ be a $\Lambda$-subspace of $K^>$ such that $K^>=E^>\cdot B$ is the direct sum of $E^>$ and $B$.
By Lemma~\ref{lem:sectionexists}, take a section $s_E \colon \Gamma_{\cE} \to E^>$.
By the proof of the same lemma, take a $\Lambda$-vector space embedding $s_B \colon \Delta \to B$ such that $v(s_B(\delta))=\delta$ for all $\delta \in \Delta$.
Then $s\colon \Gamma \to K^>$ defined by $s(\gamma+\delta)=s_E(\gamma)s_B(\delta)$ for $\gamma \in \Gamma_{\cE}$ and $\delta \in \Delta$ is a section for $K$.
Letting $\ac$ be the angular component induced by $s$, the result now follows from Corollary~\ref{cor:elementarysubstructure}. 
\end{proof}

Combining the previous corollary with Theorem~\ref{thm:sphcompHahniso} yields the following improvement of Corollary~\ref{cor:elemequivHahn}.
\begin{corollary}
Any $\Td_{\an}$-henselian monotone $\TdO_{\an}$-model is isomorphic to an elementary substructure of $\k\llp t^{\Gamma}\rrp_{\an,c}$ for some $\k \models \Td_{\an}$, divisible ordered abelian group $\Gamma$, and additive map $c \colon \Gamma \to \k$.
\end{corollary}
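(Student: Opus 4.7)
The plan is to pass through a spherically complete immediate extension of $K$: Theorem~\ref{thm:sphcompHahniso} identifies it as a Hahn field of the required form, and Corollary~\ref{cor:elementarysubstructure2} upgrades the inclusion $K \subseteq \widehat{K}$ to an elementary substructure. Given a $\Td_{\an}$-henselian monotone $\TdO_{\an}$-model $K$ with residue field $\k$ and value group $\Gamma$, $\Td_{\an}$-henselianity guarantees that $\k$ is linearly surjective. Assuming the induced derivation on $\k$ is nontrivial (the other case can be handled separately, e.g., via saturation and a back-and-forth akin to Theorem~\ref{thm:equiv}), Fact~\ref{fact:sphcomp} produces a spherically complete immediate $\TdO_{\an}$-extension $\widehat{K}$ of $K$, which remains monotone by \cite[Corollary~6.3.6]{ADH17} and has the same residue field $\k$ and value group $\Gamma$.

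Next, Theorem~\ref{thm:sphcompHahniso} applied to $\widehat{K}$ supplies an $\LdO_{\an}$-isomorphism $\widehat{K} \cong \k\llp t^{\Gamma}\rrp_{\an,c}$ for some additive $c\colon \Gamma \to \k$. Inspecting the proof of that theorem, the section producing $c$ may be chosen to take values in $K$: since $K$ and $\widehat{K}$ share the same value group, Lemma~\ref{lem:sectionexists} applied inside $K$ yields such a section, so the resulting $c$-map is already determined by the derivation on $K$. Hence $K$ and $\widehat{K}$ carry compatible three-sorted structures $(\cdot, \k, \Gamma; \pi, v, c)$ using this common $c$.

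I would then apply Corollary~\ref{cor:elementarysubstructure2} to the three-sorted inclusion $(K, \k, \Gamma; \pi, v, c) \subseteq (\widehat{K}, \k, \Gamma; \pi, v, c)$. The substructure $K$ is $\Td_{\an}$-henselian by hypothesis, and $(\k, \Gamma; c) \preceq (\k, \Gamma; c)$ holds trivially. The corollary then gives $K \preceq \widehat{K}$, and composing with the Hahn field isomorphism above exhibits $K$ as an elementary substructure of $\k\llp t^{\Gamma}\rrp_{\an,c}$. The only step requiring care is the coordination of sections so that the $c$-map on $\widehat{K}$ produced by Theorem~\ref{thm:sphcompHahniso} agrees with the intended $c$-map on $K$; once this is arranged, the conclusion is a purely formal composition of the two inputs.
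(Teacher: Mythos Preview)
Your approach is essentially the paper's: take a spherically complete immediate extension, identify it with a Hahn field via Theorem~\ref{thm:sphcompHahniso}, and invoke Corollary~\ref{cor:elementarysubstructure2} for the elementary inclusion. However, you have inserted two unnecessary detours. First, the case split on whether the derivation of $\k$ is trivial is not needed: $\Td_{\an}$-henselianity requires $\k$ to be linearly surjective, and the operator $\der\in\k[\der]^{\neq}$ cannot be surjective if it is the zero map, so the induced derivation on $\k$ is automatically nontrivial and Fact~\ref{fact:sphcomp} applies directly. Second, the ``coordination of sections'' is a red herring. The map $c\colon\Gamma\to\k$ produced by Theorem~\ref{thm:sphcompHahniso} is a map between $\Gamma_{\widehat K}$ and $\k_{\widehat K}$; since the extension $\widehat K\supseteq K$ is immediate, these are literally $\Gamma_K$ and $\k_K$, so the \emph{same} $c$ already equips both $K$ and $\widehat K$ with compatible three-sorted structures $(\,\cdot\,,\k,\Gamma;\pi,v,c)$. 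Corollary~\ref{cor:elementarysubstructure2} then applies with $(\k,\Gamma;c)\preceq(\k,\Gamma;c)$ trivially, and there is no need to re-choose the section inside $K$. With these simplifications your argument becomes identical to the paper's.
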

\begin{proof}
Let $E \models \TdO_{\an}$ be $\Td_{\an}$-henselian and monotone.
By Fact~\ref{fact:sphcomp}, $E$ has a spherically complete immediate $\TdO_{\an}$-extension $L$, which is again monotone by \cite[Corollary~6.3.6]{ADH17}.
By Theorem~\ref{thm:sphcompHahniso}, $L$ is isomorphic to $\k\llp t^{\Gamma}\rrp_{\an, c}$ for some additive map $c \colon \Gamma \to \k$.
Finally, $E$ is isomorphic to an elementary substructure of this $\k\llp t^{\Gamma}\rrp_{\an, c}$ by Corollary~\ref{cor:elementarysubstructure2}.
\end{proof}

\subsection{Relative quantifier elimination and preservation of NIP}

In this subsection we eliminate quantifiers relative to the two-sorted structure $(\k, \Gamma; c)$ in the language $\cL_{\r\!\val, c} \coloneqq \cL_{\r} \cup \cL_{\val} \cup \{c\}$.
Let $l, s, t \in \N$.
Let $x$ be an $l$-tuple of variables of sort $\f$, $y$ be an $m$-tuple of variables of sort $\r$, and $z$ be an $n$-tuple of variables of sort $\val$.
A formula in $(x,y,z)$ is \textbf{special} if it is of the form
\[
\psi\big(\ac(F_1(\jet^{r}x)), \dots, \ac(F_{s}(\jet^{r}x)), y,  v(G_1(\jet^{r}x)), \dots, v(G_{t}(\jet^{r}x)), z\big)
\]
where $F_1, \dots, F_s, G_1, \dots, G_t \colon K^{(1+r)l} \to K$ are $\cL(\0)$-definable functions and $\psi(u_1, \dots, u_s, y, v_1, \dots, v_t, z)$ is an $\cL_{\r\!\val, c}$-formula with $u_1, \dots, u_s$ variables of sort $\r$ and $v_1, \dots, v_t$ variables of sort $\val$.

\begin{theorem}\label{thm:special}
Every $\cL_3$-formula is equivalent to a special formula, modulo $\TdOmonac$.
\end{theorem}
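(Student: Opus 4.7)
The plan is to prove this relative quantifier elimination via the standard separation-of-types technique, using Theorem~\ref{thm:equiv} as the main engine. First observe that the class of special formulas is closed under Boolean combinations: given finitely many special formulas, one takes the union of their lists of $\cL(\0)$-definable functions $F_i$ and $G_j$ (enlarging $r$ as needed) and absorbs any Boolean structure into the $\cL_{\r\!\val,c}$-formula $\psi$. A routine compactness argument then reduces the theorem to the following separation claim: if $\cK$ and $\cK^*$ are $\TdOmonac$-models and $(a,b,z)$ and $(a^*,b^*,z^*)$ are tuples in the appropriate sorts of $\cK$ and $\cK^*$ satisfying the same special formulas, then they realize the same complete $\cL_3$-type.

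To prove this claim, I would build a good substructure $\cE$ of $\cK$ containing $(a,b,z)$ together with a good map $\bf\colon\cE\to\cE^*$ sending $(a,b,z)\mapsto(a^*,b^*,z^*)$, and then apply Theorem~\ref{thm:equiv}. On the field sort, set $E\coloneqq\dclL(\jet^\infty a)$, which by definable Skolem functions is an elementary $\cL$-substructure of $K$ and which by standard properties of $T$-derivations is closed under $\der$, so is a $\Td$-submodel of $K$; define $E^*$ analogously. Each atomic $\Ld$-formula on $a$ has the form $F(\jet^r a)=0$ or $F(\jet^r a)>0$ for some $\cL(\0)$-definable $F$, and since $\ac(x)=0\iff x=0$ and, for $x\neq 0$, $\ac(x)>0\iff x>0$, these are equivalent to the special formulas $\ac(F(\jet^r a))=0$ and $\ac(F(\jet^r a))>0$. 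Hence the hypothesis forces $\jet^\infty a$ and $\jet^\infty a^*$ to share the same quantifier-free $\cL$-type; by model completeness of $T$ and definable Skolem functions, this upgrades to an elementary $\cL$-isomorphism $f\colon E\to E^*$ with $f(a^{(i)})=(a^*)^{(i)}$, and $f$ is an $\Ld$-isomorphism since it commutes with $\der$ on generators.

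For the other sorts, let $\Gamma_{\cE}$ be the ordered $\Lambda$-subspace of $\Gamma$ generated by the entries of $z$ together with $\{v(G(\jet^r a)):G\text{ $\cL(\0)$-definable},\ r\in\N\}$, and let $\k_{\cE}$ be the $\Td$-submodel of $\k$ generated by the entries of $b$, the family $\{\ac(F(\jet^r a)):F,r\}$, and the $c$-images of the chosen generators of $\Gamma_{\cE}$; by $\Lambda$-linearity of $c$ the latter suffices for $c(\Gamma_{\cE})\subseteq\k_{\cE}$, and one checks $v(E^\times)\subseteq\Gamma_{\cE}$ and $\ac(E)\subseteq\k_{\cE}$, so $(E,\k_{\cE},\Gamma_{\cE})$ is a good substructure. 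Build $\cE^*$ analogously and define $f_{\r}$ and $f_{\val}$ on generators by the evident assignment to their starred counterparts, extending by $\cL$-terms and $\Lambda$-linearity. Well-definedness, bijectivity, and partial elementariness of $(f_{\r},f_{\val})$ as a map into $(\k^*,\Gamma^*;c^*)$ all follow from the hypothesis, since any $\cL_{\r\!\val,c}$-formula on these generators becomes, upon substitution, a special formula on $(a,b,z)$ (applying $c$ internally inside $\psi$ as needed). Conditions (GM1)--(GM3) are then immediate from the construction, so $\bf=(f,f_{\r},f_{\val})$ is a good map, and Theorem~\ref{thm:equiv} delivers $(a,b,z)\equiv(a^*,b^*,z^*)$ in $\cL_3$.

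The main subtlety is arranging $\k_{\cE}$ and $\Gamma_{\cE}$ so that all the closure conditions of a good substructure hold simultaneously, particularly the nontrivial $c(\Gamma_{\cE})\subseteq\k_{\cE}$, which is what motivates adjoining the $c$-images of the $\Gamma_{\cE}$-generators to $\k_{\cE}$ at the outset. Once the setup is in place, the substantive work has already been done in Theorem~\ref{thm:equiv}, and what remains is bookkeeping.
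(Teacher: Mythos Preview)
Your proposal is correct and follows essentially the same approach as the paper: reduce via compactness and closure under Boolean combinations to showing that tuples satisfying the same special formulas have the same $\cL_3$-type, then package each tuple into a good substructure and invoke Theorem~\ref{thm:equiv}. The paper's construction of $\cE=(E,\k_{\cE},\Gamma_{\cE})$ matches yours almost verbatim (it takes $E$ to be the $\Ld$-substructure generated by $a$, $\Gamma_{\cE}$ the $\Lambda$-subspace generated by $\gamma$ and $v(E^\times)$, and $\k_{\cE}$ the $\Ld$-substructure generated by $\ac(E)$, $c(\Gamma_{\cE})$, and $d$), and the verification that $\bf$ is a good map is the same; your explicit handling of the order via $\ac(x)>0\iff x>0$ is a detail the paper leaves implicit.
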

\begin{proof}
For 
$a \in K^{l}$, $d \in \k^m$, and $\gamma \in \Gamma^n$, define the \textbf{special type} of $(a, d, \gamma)$ to be
\[
\textrm{sptp}(a, d, \gamma)\ \coloneqq\ \{ \theta(x,y,z) : \cK \models \theta(a, d, \gamma)\ \text{and}\ \theta\ \text{is special}\}.
\]
Now let 
$a \in K^{l}$, $a^* \in (K^*)^{l}$, $d \in \k^m$, $d^* \in (\k^*)^m$, $\gamma \in \Gamma^n$, $\gamma^* \in (\Gamma^*)^n$, and suppose that $(a, d, \gamma)$ and $(a^*, d^*, \gamma^*)$ have the same special type.
It suffices to show that $(a, d, \gamma)$ and $(a^*, d^*, \gamma^*)$ have the same type.

Let $E$ be the $\Ld$-substructure of $K$ generated by $a$, $\Gamma_{\cE}$ be the ordered $\Lambda$-subspace of $\Gamma$ generated by $\gamma$ and $v(E^{\times})$, and $\k_{\cE}$ be the $\Ld$-substructure of $\k$ generated by $\ac(E)$, $c(\Gamma_{\cE})$, and $d$.
Then $\cE = (E, \k_{\cE}, \Gamma_{\cE})$ is a good substructure of $\cK$.
Likewise define a good substructure $\cE^* = (E^*, \k_{\cE^*}, \Gamma_{\cE^*})$ of $\cK^*$.
Note that for an $\cL(\0)$-definable $F \colon K^{(1+r)l} \to K$, we have $F(\jet^{r}a)=0$ if and only if $\ac(F(\jet^{r}a))=0$ , and likewise in $\cK^*$.
From this and the assumption that $(a, d, \gamma)$ and $(a^*, d^*, \gamma^*)$ have the same special type we obtain an $\Ld$-isomorphism $f \colon E \to E^*$ with $f(a)=a^*$.
We next get an isomorphism $f_{\val} \colon \Gamma_{\cE} \to \Gamma_{\cE^*}$ of ordered $\Lambda$-vector spaces such that $f_{\val}(\gamma)=\gamma^*$ and $f_{\val}(vb)=v^*(f(b))$ for all $b \in E^{\times}$.
Finally, we get an $\Ld$-isomorphism $f_{\r} \colon \k_{\cE} \to \k_{\cE^*}$ such that $f_{\r}(d)=d^*$, $f_{\r}(\ac(b))=\ac^*(f(b))$ for all $b \in E$, and $f_{\r}(c(\delta))=c^*(f_{\val}(\delta))$ for all $\delta \in \Gamma_{\cE}$.
By the assumption on special types, $(f_{\r}, f_{\val})$ is a partial elementary map $(\k, \Gamma; c) \to (\k^*, \Gamma^*; c^*)$, so $(f, f_{\r}, f_{\val})$ is a good map $\cE \to \cE^*$.
The result now follows from Theorem~\ref{thm:equiv}.
\end{proof}

It follows that $(\k, \Gamma; c)$ is stably embedded in $\cK$:
\begin{corollary}
Any subset of $\k^m \times \Gamma^n$ definable in $\cK$ is definable in $(\k, \Gamma; c)$.
\end{corollary}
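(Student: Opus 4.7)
The approach is to derive this as a direct consequence of the relative quantifier elimination result Theorem~\ref{thm:special}. Suppose $S \subseteq \k^m \times \Gamma^n$ is defined in $\cK$ by some $\cL_3$-formula $\varphi(y, z; w)$, where $y$ is an $m$-tuple of $\r$-sort variables, $z$ an $n$-tuple of $\val$-sort variables, and $w = (a, e, \delta)$ is the parameter tuple decomposed into $a \in K^l$, $e \in \k^{l'}$, $\delta \in \Gamma^{l''}$.

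First I would treat the parameters as fresh variables $x$ (of sort $\f^l$), $y'$ (of sort $\r^{l'}$), $z'$ (of sort $\val^{l''}$) and apply Theorem~\ref{thm:special} to the resulting $\cL_3$-formula $\varphi(y, z; x, y', z')$, producing an equivalent special formula
\[
\psi\big(\ac(F_1(\jet^r x)), \dots, \ac(F_s(\jet^r x)), y, y', v(G_1(\jet^r x)), \dots, v(G_t(\jet^r x)), z, z'\big),
\]
where each $F_i, G_j \colon K^{(1+r)l} \to K$ is $\cL(\0)$-definable and $\psi$ is an $\cL_{\r\!\val, c}$-formula. Substituting back $x = a$, $y' = e$, $z' = \delta$, I would set $e_i \coloneqq \ac(F_i(\jet^r a)) \in \k$ and $\delta_j \coloneqq v(G_j(\jet^r a)) \in \Gamma$, yielding
\[
S\ =\ \big\{ (d, \gamma) \in \k^m \times \Gamma^n : (\k, \Gamma; c) \models \psi(e_1, \dots, e_s, d, e, \delta_1, \dots, \delta_t, \gamma, \delta)\big\}.
\]
This exhibits $S$ as defined by an $\cL_{\r\!\val, c}$-formula with parameters from $\k \cup \Gamma$, hence definable in the two-sorted structure $(\k, \Gamma; c)$.

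There is no serious obstacle here: the substantive work has already been done in Theorem~\ref{thm:special}, and this corollary merely records the observation that the $\f$-sort variables enter a special formula only through the terms $\ac(F_i(\jet^r x))$ and $v(G_j(\jet^r x))$, which take values in the residue and value sorts. Once these $\f$-sort arguments are frozen as parameters, what remains is a formula of $\cL_{\r\!\val, c}$ that can be evaluated in the reduct $(\k, \Gamma; c)$.
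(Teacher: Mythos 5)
Your proposal is correct and is exactly the argument the paper intends: the corollary is stated without proof as an immediate consequence of Theorem~\ref{thm:special}, and your write-up (absorbing the field-sort parameters into the terms $\ac(F_i(\jet^r a))$ and $v(G_j(\jet^r a))$, which land in $\k$ and $\Gamma$, and observing that what remains is an $\cL_{\r\!\val,c}$-formula with parameters from $\k\cup\Gamma$) is the standard derivation of stable embeddedness from relative quantifier elimination.
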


Now we turn to preservation of NIP. Consider the following languages, each of which has the same three sorts as $\cL_3$:
\begin{enumerate}
\item The language $\cL'$, where we drop the derivation on the field sort $\f$. Explicitly, the field sort is an $\cL$-structure, the residue field sort is still an $\Ld$-structure, the value group sort is still an ordered $\Lambda$-vector space, and we keep the maps $\pi$, $v$, $c$, and $\ac$. We let $T'$ be the restriction of $\TdOmonac$ to the language $\cL'$; that is, $T'$ consists of all $\cL'$-sentences that hold in all models of $\TdOmonac$. 
\item The language $\cL^{\ac}$, where we drop the derivation on both the field sort $\f$ and the residue field sort $\r$, as well as $c$. Explicitly, the field sort and the residue field sort are both $\cL$-structures, the value group sort is still an ordered $\Lambda$-vector space, and we keep the maps $\pi$, $v$, and $\ac$. We let $T^{\ac}$ be the restriction of $\TdOmonac$ to the language $\cL^{\ac}$.
\end{enumerate}

\begin{lemma}\label{lem:acNIP}
The theory $T^{\ac}$ is complete and has NIP. The residue field and value group are stably embedded in any model of $T^{\ac}$.
\end{lemma}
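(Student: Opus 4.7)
The plan is to mirror the back-and-forth argument of Theorem~\ref{thm:equiv} in the derivation-free setting of $T^{\ac}$, where the argument simplifies considerably: there is no $\Td$-henselianity to track, the residue field sort is just a $T$-model, and no $c$-map needs to be preserved. First I would define a good substructure of a model $(K, \k, \Gamma; \pi, v, \ac) \models T^{\ac}$ to be a triple $\cE = (E, \k_{\cE}, \Gamma_{\cE})$ where $E$ is a $T$-submodel of $K$ (equipped with the induced $T$-convex valuation ring), $\k_{\cE}$ is a $T$-submodel of $\k$ containing $\ac(E)$, and $\Gamma_{\cE}$ is an ordered $\Lambda$-subspace of $\Gamma$ containing $v(E^{\times})$. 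A good map between two such substructures consists of compatible $\cL$-isomorphisms on the field and residue field parts and an ordered $\Lambda$-vector space isomorphism on the value group part, preserving $\pi$, $v$, and $\ac$.

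The core back-and-forth lemma asserts that any good map between sufficiently saturated models of $T^{\ac}$ is partial elementary. The extension procedures are the derivation-free analogs of those in the proof of Theorem~\ref{thm:equiv}: extending by a residue field element is handled as in Lemma~\ref{lem:resext} with trivial derivation (the $\Td$-henselianity used there becomes automatic, since $T$-convex valuation rings are henselian); extending by a value group element is handled as in Lemma~\ref{lem:valext}, which becomes considerably easier since no derivation condition on $b^{\dagger}$ must be matched; and arbitrary field elements are absorbed by passing to spherically complete immediate $\TO$-extensions, using the classical existence and uniqueness (up to isomorphism over the base) of such extensions in the $T$-convex setting~\cite{DL95,vdD97}. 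Completeness of $T^{\ac}$ then follows by applying the back-and-forth to good substructures generated by the prime model of $T$, which embeds as such a good substructure in every model of $T^{\ac}$.

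The same back-and-forth yields a derivation-free analog of Theorem~\ref{thm:special}: every $\cL^{\ac}$-formula is equivalent modulo $T^{\ac}$ to a special formula of the form $\psi\big(\ac(F_1(x)),\dots,\ac(F_s(x)), y, v(G_1(x)),\dots,v(G_t(x)), z\big)$ where $F_1, \dots, F_s, G_1, \dots, G_t$ are $\cL(\emptyset)$-definable and $\psi$ is a formula in the disjoint two-sorted language $\cL \cup \cL_{\val}$ (on the residue field and value group sorts, with no $c$ connecting them). Stable embedding of $\k$ and $\Gamma$ is immediate from this. Since the two-sorted reduct of $T^{\ac}$ to $(\k, \Gamma)$ is precisely the disjoint union of $T$ (o-minimal, hence NIP) and the theory of ordered $\Lambda$-vector spaces (also o-minimal, hence NIP), this reduct has NIP; the relative quantifier elimination then transfers NIP to the full structure, giving NIP for $T^{\ac}$.

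The main obstacle is the spherical-completion step of the back-and-forth, where one must invoke existence and uniqueness of maximal immediate $T$-convex extensions in the absence of a derivation; this is classical but needs to be cited carefully (alternatively, it can be recovered from the present paper's machinery by running the arguments of Section~\ref{sec:sphcompTdh} with the trivial derivation and observing that the $\Td$-hensel configuration property is vacuous in that case). Once this is in hand, the remaining verifications---residue-field and value-group extension steps, relative quantifier elimination, and NIP transfer---are routine and strictly simpler than the corresponding steps for $\TdOmonac$.
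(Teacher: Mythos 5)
Your proposal takes a genuinely different route from the paper. The paper's proof is a two-line reduction: it extends $\cL^{\ac}$ by a function symbol $s$ for a section $\Gamma \to K^>$, lets $T^s$ be the resulting theory in which $\ac$ is induced by $s$ (so $\ac$ is definable via $\ac(y)=\pi(y/s(vy))$), observes via Corollary~\ref{cor:acinduced} that every model of $T^{\ac}$ expands to a model of $T^s$, and then cites \cite{KK24} for completeness, NIP, and stable embeddedness of $T^s$; all three properties pass down to the reduct $T^{\ac}$. What the paper's approach buys is brevity: the entire back-and-forth you describe (residue field lifts, value group extensions, spherically complete immediate extensions and their uniqueness, relative quantifier elimination, and the Delon-style NIP transfer) is exactly the content of \cite{KK24}, so redoing it here would amount to reproving that paper. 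What your approach would buy, if carried out, is self-containedness and an explicit relative quantifier elimination for $T^{\ac}$ in the $\ac$-language rather than the section language.

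Two points in your sketch need repair. First, your parenthetical fallback for the spherical-completion step --- ``run the arguments of Section~\ref{sec:sphcompTdh} with the trivial derivation'' --- does not work as stated: Fact~\ref{fact:sphcomp} and essentially all of Sections~\ref{sec:sphcompTdh} and~\ref{sec:sphcompunique} assume the induced derivation on $\k$ is \emph{nontrivial}, and the notion of $\Td$-algebraic type degenerates rather than becomes vacuous when $\der=0$. You genuinely need the classical existence and uniqueness of spherically complete immediate extensions for power-bounded $T$-convex valued fields from outside this paper (it is available, e.g.\ through the results used in \cite{KK24}, but it is not a consequence of the machinery developed here). Second, ``the relative quantifier elimination then transfers NIP to the full structure'' compresses a nontrivial step: one needs either the criterion of \cite[Theorem~2.3]{JS20} or the augmentation-of-indiscernibles argument that the paper itself carries out in Section~5.2 when passing from $\cL'$ to $\cL_3$; it is standard but should be spelled out rather than asserted. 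With those two repairs your argument would go through, but it is substantially longer than the paper's citation-based proof.
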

\begin{proof}
For this proof, let $\cL^s$ extend $\cL^{\ac}$ by an additional map $s$ from the value group sort to the field sort, and let $T^s$ be the $\cL^s$-theory that extends $T^{\ac}$ by axioms stating that $s$ is a section and that $\ac$ is induced by $s$. By Corollary~\ref{cor:acinduced}, any model of $T^{\ac}$ admits an expansion to a model of $T^s$. The theory $T^s$ is complete and has NIP~\cite[Corollary 2.2 and Proposition 4.2]{KK24}, and the residue field and value group are stably embedded in any model of $T^s$~\cite[Corollary 2.4]{KK24}, so the lemma follows. Note that in~\cite{KK24}, the language $\cL^s$ does not contain a function symbol for the angular component, but this is nonetheless definable by $\ac(y) = \pi(y/s(vy))$ for nonzero~$y$.
\end{proof}

\begin{theorem}
Suppose that $\Th(\k,\Gamma;c)$ has NIP. Then $\Th(\cK)$ has NIP.
\end{theorem}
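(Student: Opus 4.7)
The plan is to prove NIP is preserved by leveraging Theorem~\ref{thm:special}, which reduces any $\cL_3$-formula to one in which the field-sort variables enter only through angular-component and valuation values of their jets. Because these derived values lie in $\k \cup \Gamma$ and the extracted ``outer'' formula is of the reduct language $\cL_{\r\!\val, c}$, NIP of $(\k, \Gamma; c)$ will pass up to $\cK$ by a standard indiscernibility argument. I use the characterization that $\phi(\bar u; \bar v)$ has NIP iff for every $\bar u$-indiscernible sequence $(\bar v_i)_{i<\omega}$ the truth values $\phi(\bar u, \bar v_i)$ alternate only finitely often, and verify this for every $\cL_3$-formula.

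Fix such a $\phi(\bar u; \bar v)$ (parameter-free, without loss of generality), and split each tuple by sort as $\bar u = (u_1, u_2, u_3)$ and $\bar v = (v_1, v_2, v_3)$ with subscripts $1, 2, 3$ denoting the field-, residue-field-, and value-group-components respectively. Applying Theorem~\ref{thm:special} to the tuple of all free variables yields $\cL(\emptyset)$-definable functions $F_1, \dots, F_s, G_1, \dots, G_t$ and an $\cL_{\r\!\val, c}$-formula $\psi$ such that, modulo $\TdOmonac$,
\[
\phi(\bar u; \bar v) \ \leftrightarrow\ \psi\big( \ac(F_k(\jet^r(u_1, v_1)))_{k \leq s},\ u_2,\ v_2,\ v(G_j(\jet^r(u_1, v_1)))_{j \leq t},\ u_3,\ v_3 \big).
\]

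Given any $\bar u$-indiscernible sequence $(\bar v_i)_{i<\omega}$ in $\cK$, with $\bar v_i = (v_{i,1}, v_{i,2}, v_{i,3})$, I would define
\[
\bar w_i \ \coloneqq\ \big( \ac(F_k(\jet^r(u_1, v_{i,1})))_{k \leq s},\ v_{i,2},\ v(G_j(\jet^r(u_1, v_{i,1})))_{j \leq t},\ v_{i,3} \big),
\]
a tuple in $\k \cup \Gamma$. Each coordinate of $\bar w_i$ is $\cL_3$-definable from $\bar u$ and $\bar v_i$, so $(\bar w_i)_{i<\omega}$ is $\bar u$-indiscernible in $\cK$, and in particular $(u_2, u_3)$-indiscernible in the reduct $(\k, \Gamma; c)$, since $\cL_{\r\!\val, c}$ embeds as a sublanguage of $\cL_3$ restricted to the sorts $\r$ and $\val$. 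The displayed equivalence then reads $\phi(\bar u, \bar v_i) \leftrightarrow \psi(\bar w_i; u_2, u_3)$, and NIP of $\psi$ in $(\k, \Gamma; c)$ forces only finitely many alternations on the right-hand side, hence on the left.

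The substantive content is all packaged inside Theorem~\ref{thm:special}; taking that as given, the only point requiring care is that indiscernibility of a sequence of $(\k \cup \Gamma)$-tuples in $\cK$ descends to the reduct $(\k, \Gamma; c)$, which is immediate from the sublanguage inclusion. I therefore do not anticipate any serious obstacle beyond invoking Theorem~\ref{thm:special}.
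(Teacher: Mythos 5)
There is a genuine gap, and it sits exactly where the paper's proof has to do real work. Your plan is to push everything down to $(\k,\Gamma;c)$ via Theorem~\ref{thm:special}, but the special form does not eliminate the field sort: the arguments $\ac\big(F_k(\jet^r(u_1,v_{i,1}))\big)$ and $v\big(G_j(\jet^r(u_1,v_{i,1}))\big)$ mix the field-sort parts of $\bar u$ and $\bar v_i$ inside an $\cL(\emptyset)$-definable function before applying $\ac$ and $v$. Consequently the tuples $\bar w_i$ depend on $\bar u$. In the correct characterization of NIP the sequence $(\bar v_i)$ is indiscernible over $\emptyset$ (or over a base not containing $\bar u$); with that hypothesis, $(\bar w_i)$ need not be indiscernible in $(\k,\Gamma;c)$ at all, so NIP of $\psi$ there gives you nothing. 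If instead you really take $(\bar v_i)$ indiscernible \emph{over} $\bar u$, as written, then $\phi(\bar u,\bar v_i)$ has constant truth value for \emph{every} formula $\phi$, NIP or not, and the alternation condition is vacuous -- it no longer characterizes NIP. Either way the reduction to $(\k,\Gamma;c)$ collapses.

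What is missing is the step the paper cannot avoid: establishing NIP for the derivation-free reduct $\cK|_{\cL'}$ (field sort as an $\cL$-structure, with $\pi$, $v$, $\ac$, $c$ and the full structure on $\k$ and $\Gamma$). The paper gets this from Lemma~\ref{lem:acNIP} -- which itself rests on choosing a section (Corollary~\ref{cor:acinduced}) and the results of \cite{KK24} -- together with the Jahnke--Simon transfer \cite[Proposition 2.5]{JS20} allowing one to expand the stably embedded residue field and value group by the extra NIP structure $(\der,c)$. Theorem~\ref{thm:special} is then used only to replace an $\cL_3$-formula by an $\cL'$-formula evaluated at jets, so that an $\cL_3$-indiscernible IP witness $(a_i)$ becomes an $\cL'$-indiscernible IP witness $(\jet^r a_i)$ for the reduct, with the parameter $b^*$ staying on the parameter side (no sequence of $\k\cup\Gamma$-tuples depending on the parameter is ever formed). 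Your argument never addresses the NIP of the underlying valued field structure, and knowing only that $(\k,\Gamma;c)$ has NIP together with relative quantifier elimination is not sufficient to conclude.
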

\begin{proof}
By Lemma~\ref{lem:acNIP}, the theory of the reduct $\cK|_{\cL^{\ac}}$ has NIP, with stably embedded residue field and value group. By~\cite[Proposition 2.5]{JS20}, we may expand the residue field and value group by any additional NIP structure, and the theory of the resulting expansion of $\cK|_{\cL^{\ac}}$ still has NIP. In particular, the theory of the intermediate reduct $\cK|_{\cL'}$ has NIP. Suppose now that $\Th(\cK)$ has IP. After replacing $\cK$ with an elementary extension, we find an $\cL_3$-indiscernible sequence $(a_i)_{i<\omega}$, a tuple of parameters $b$, and an $\cL_3$-formula $\phi(x,y)$ such that $\cK \models \phi(a_i,b)$ if and only if $i$ is even. Note that $x$ and $y$ may span multiple sorts, and we let $x_{\f}$, $x_{\r}$, and $x_{\val}$ denote the parts of $x$ coming from the field sort, residue field sort, and value group sort; likewise for $y_{\f}$, $y_{\r}$, and $y_{\val}$. By Theorem~\ref{thm:special}, we may assume that $\phi(x,y)$ is of the form
\[
\psi\big(\ac(F_1(\jet^{r}x_{\f},\jet^{r}y_{\f})), \dots, \ac(F_{s}(\jet^{r}x_{\f},\jet^{r}y_{\f})), x_{\r},y_{\r},  v(G_1(\jet^{r}x_{\f},\jet^{r}y_{\f})), \dots, v(G_t(\jet^{r}x_{\f},\jet^{r}y_{\f})), x_{\val},y_{\val}\big),
\]
where $\psi$ is an $\cL_{\r\!\val,c}$-formula. Now, we augment each $a_i$ to a tuple $a_i^*$ as follows: write $a_i = a_{i,\f}a_{i,\r}a_{i,\val}$ where  $a_{i,\f}$, $a_{i,\r}$, and $a_{i,\val}$ denote the parts of $a_i$ coming from the relevant sorts,  and put $a_i^*\coloneqq \jet^r(a_{i,\f})a_{i,\r}a_{i,\val}$. We augment $b$ to $b^*$ similarly. Let $x^*_{\f}$ be a field sort variable whose length is $1+r$ times that of $x_{\f}$, likewise for $y^*_{\f}$, and let $\phi^*$ be the $\cL'$-formula
\[
\phi^*(x^*,y^*)\ \coloneqq\ \psi\big(\ac(F_1(x^*_{\f},y^*_{\f})), \dots, \ac(F_{s}(x^*_{\f},y^*_{\f})), x_{\r},y_{\r},  v(G_1(x^*_{\f},y^*_{\f})), \dots, v(G_t(x^*_{\f},y^*_{\f})), x_{\val},y_{\val}\big).
\]
Then $\cK \models \phi^*(a_i^*,b^*)$ if and only if $\cK\models \phi(a_i,b)$ (so  if and only if $i$ is even). Since the sequence $(a_i)_{i<\omega}$ is $\cL_3$-indiscernible, the augmented sequence $(a_i^*)_{i<\omega}$ is $\cL'$-indiscernible, so this contradicts the fact that the theory of $\cK|_{\cL'}$ has NIP.
\end{proof}
\begin{remark}
Instead of appealing to the theory $T^s$ to show that $T^{\ac}$ has NIP, and then using~\cite[Proposition 2.5]{JS20} to deduce that  the $\cL'$-reduct of $\cK$  has NIP, one could likely show that $\Th(\cK|_{\cL'})$ has NIP directly, by building an appropriate back-and-forth system between appropriate substructures of models of $T'$ and applying~\cite[Theorem 2.3]{JS20}. From there, the rest of the proof would proceed as above. It seems likely that distality and NTP$_2$ also transfer from $\Th(\k,\Gamma;c)$ to $\Th(\cK)$, but in both cases, certain aspects of the above proof do not work.
\end{remark}

\section{The model completion of monotone \texorpdfstring{$T$}{T}-convex \texorpdfstring{$T$}{T}-differential fields}\label{sec:modelcompletion}
In this section, we return to the one-sorted setting. Let $\TdOmon$ be the theory of monotone $T$-convex $T$-differential fields, in the language $\LdO$. Note that unlike in $\TdO$, we do not require the $T$-convex valuation ring to be proper, and that unlike in $\TdOmonac$, we do not require $\Td$-henselianity. Let $\TdOmonG$ be the theory of $\Td$-henselian $T$-convex $T$-differential fields with many constants, nontrivial valuation, and generic $T$-differential residue field (that is, with differential residue field a model of $\TdG$; see Section~\ref{sec:Td}). Note that any model of $\TdOmonG$ is monotone, as it has many constants and small derivation. The purpose of this section is to show that $\TdOmonG$ is the model completion of $\TdOmon$. For this, we need the following general proposition on residue field extensions.

In this section, $K=(K,\cO,\der)$ is a $T$-convex $T$-differential field with small derivation. The only difference with the earlier standing assumption is that we may have $\cO=K$ (equivalently, $\Gamma=\{0\}$).
\begin{proposition}\label{prop:residueext}
Let $E$ be a $\Td$-extension of $\k$. Then there is a $T$-convex $T$-differential field extension $L$ of $K$ with small derivation and the following properties:
\begin{enumerate}
\item\label{prop:residueexti} $\k_L$ is $\Ld(\k)$-isomorphic to $E$;
\item\label{prop:residueextii} $\Gamma_L= \Gamma$;
\item\label{prop:residueextiii} If $K^*$ is any $\Td$-henselian $\TdO$-extension of $K$, then any $\Ld(\k)$-embedding $\imath\colon \k_L \to \res(K^*)$ lifts to an $\LdO(K)$-embedding $\jmath\colon L \to K^*$.
\end{enumerate}
\end{proposition}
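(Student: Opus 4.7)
The plan is to construct $L$ by a transfinite induction that lifts the jets of $\Td$-generators of $E$ over $\k$. By Zorn's lemma it suffices to treat the case $E = \k\llangle a \rrangle$ for a single element $a \in E$, which I now assume; fix also a $T$-lift $\k_0 \subseteq K$ of $\k$ via \cite[Theorem~2.12]{DL95}, to be used for transporting $\cL(\k)$-definable data to $K$.

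First I would build $L$ as a chain $K = L_{-1} \subseteq L_0 \subseteq L_1 \subseteq \cdots$ of $T$-convex valued field extensions of $K$, where $L_n = L_{n-1}\langle \hat b_n\rangle$ is obtained by passing to a sufficiently saturated elementary $\TO$-extension $\hat K_n$ of $L_{n-1}$, realizing the $\cL$-type of $a^{(n)}$ over $\k_{L_{n-1}}$ (computed via the natural $\cL(\k)$-isomorphism $\k_{L_{n-1}} \cong \k\langle \jet^{n-1}a\rangle$ coming from the induction) by some $\alpha_n \in \res(\hat K_n)$, and lifting to $\hat b_n \in \cO_{\hat K_n}$ with $\res(\hat b_n) = \alpha_n$. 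The Wilkie inequality (Fact~\ref{fact:wilkieineq}) forces $\Gamma_{L_n} = \Gamma_{L_{n-1}}$ and $\k_{L_n} = \k_{L_{n-1}}\langle \alpha_n\rangle$. If $a$ is $\Td$-transcendental over $\k$, I iterate for all $n \in \N$ and set $L \coloneqq \bigcup_n L_n$; if $a$ is $\Td$-algebraic with $n$ minimal satisfying $a^{(n)} \in \dclL(\k \cup \jet^{n-1}a)$, I truncate at $L \coloneqq L_{n-1}$.

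Next I would endow $L$ with a $T$-derivation extending that on $K$ via Fact~\ref{fact:transext} applied to the $\dclL$-basis $(\hat b_i)$ of $L$ over $K$: in the transcendental case, set $\hat b_n' \coloneqq \hat b_{n+1}$ for all $n$; in the algebraic case, write the defining relation in $E$ as $a^{(n)} = G(\jet^{n-1}a)$ for some $\cL(\k)$-definable $G$, lift the parameters of $G$ to $\k_0$ to obtain an $\cL(K)$-definable $G^*$, and set $\hat b_i' \coloneqq \hat b_{i+1}$ for $i<n-1$ and $\hat b_{n-1}' \coloneqq G^*(\hat b_0, \dots, \hat b_{n-1})$. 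A direct check shows the induced derivation on $\k_L$ matches that of $E$ under the natural isomorphism, giving property~(i); property~(ii) is built into the construction via Wilkie. For property~(iii), given a $\Td$-henselian $\TdO$-extension $K^*$ of $K$ and an $\Ld(\k)$-embedding $\imath\colon \k_L \to \res(K^*)$, I would invoke Proposition~\ref{prop:resext} inside $K^*$ with $K$ playing the role of the base subfield: this yields $b_* \in \cO_{K^*}$ lifting $\imath(\bar{\hat b_0})$ together with an $\LdO(K)$-isomorphism $K\llangle b_*\rrangle \to L$ (sending $b_*$ to $\hat b_0$), and composing its inverse with the inclusion $K\llangle b_*\rrangle \hookrightarrow K^*$ gives the desired~$\jmath$.

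The main obstacle I anticipate is verifying smallness of $\der$ on $L$: for $x \in \smallo_L$ written as $F(\hat b, c)$ for an $\cL(\emptyset)$-definable $F$, a tuple $\hat b$ from the generators, and a tuple $c$ from $K$, the chain rule expresses $\der x$ as a sum of terms involving $\hat b_i'$ (which lie in $\cO_L$ by design) and $c_j'$ (small by hypothesis on $K$), and each summand must be bounded by $x$ in valuation via Fact~\ref{fact:smallderivsim}, applied at each stage of the induction to an appropriate trivially-valued subfield as in the monotonicity verification in the proof of Proposition~\ref{prop:fieldbuilding}. Once smallness is in hand, properties~(i)--(iii) are all either immediate from the construction or reduce to a direct application of Proposition~\ref{prop:resext}.
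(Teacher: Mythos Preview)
Your overall strategy matches the paper's: reduce to $E=\k\llangle a\rrangle$, build $L$ by successively lifting the jets $a^{(i)}$ and extending the derivation via Fact~\ref{fact:transext}, read off (ii) from the Wilkie inequality, and handle (iii) by the same mechanism as in Proposition~\ref{prop:resext}. The construction via saturated elementary extensions is heavier than the paper's direct cut realization from \cite[Main Lemma~3.6]{DL95}, but is fine.

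The genuine gap is in your smallness argument. Writing $x=F(\hat b,c)$ with an $\cL(\emptyset)$-definable $F$ and $c$ from $K$, the chain rule produces terms $\tfrac{\partial F}{\partial C_j}(\hat b,c)\,c_j'$. Your parenthetical ``$c_j'$ small by hypothesis on $K$'' is not justified: small derivation only gives $\der\smallo\subseteq\smallo$, so for $c_j\succeq 1$ nothing bounds $c_j'$. Fact~\ref{fact:smallderivsim} does not help either, since its hypothesis requires the argument to be $\not\sim$ any element of the parameter field, and the $c_j$ lie in $K$; the analogy with Proposition~\ref{prop:fieldbuilding} breaks down precisely because there the base $\k$ is trivially valued, whereas here $K$ is not.

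The paper's remedy is to treat the $K$-parameters not as separate variables but as absorbed into an $\cL(K)$-definable function $F\colon K^r\to K$ of $\jet^{r-1}b$ alone, and to invoke \cite[Lemma~2.12]{FK21} to write
\[
F(\jet^{r-1}b)'\ =\ F^{[\der]}(\jet^{r-1}b)+\sum_{i<r}\frac{\partial F}{\partial Y_i}(\jet^{r-1}b)\,b^{(i+1)},
\]
where $F^{[\der]}$ is again $\cL(K)$-definable and packages all contributions from differentiating the hidden $K$-parameters. The $\partial F/\partial Y_i$ terms are handled by Fact~\ref{fact:smallderivsim} exactly as you suggest, since each $b^{(i)}\asymp 1$ is not $\sim$ anything in $K\langle (b^{(j)})_{j\neq i}\rangle$. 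For the remaining term, if $F^{[\der]}(\jet^{r-1}b)\succeq 1$ then the $\LO(K)$-definable set $U=\{u\in\cO^r: F^{[\der]}(u)\succeq 1,\ F(u)\prec 1,\ \partial_i F(u)\prec 1\}$ is nonempty in $L$, hence in $K$ by model completeness of $\TO$; any $u\in U$ then yields $F(u)'\asymp F^{[\der]}(u)\succeq 1$ with $F(u)\prec 1$, contradicting small derivation on $K$. This transfer step via $\TO$-model completeness is the missing idea in your outline.
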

\begin{proof}
If $\Gamma = \{0\}$, then we may take $L = E$, so we will assume that $\Gamma \neq \{0\}$. We may further assume that $E = \k\llangle a\rrangle$, where $a \not\in \k$. First, we build the extension $L$, and then we verify that it is indeed a $\TdO$-extension $L$ of $K$ satisfying \ref{prop:residueexti}--\ref{prop:residueextiii}. Consider the case that  $a$ is $\Td$-transcendental over $\k$, so $E = \k\langle a,a',\ldots\rangle$. We define $\TO$-extensions $K = L_0 \subseteq L_1\subseteq \cdots$ as follows:\ for each $i$, let $L_{i+1}\coloneqq L_i\langle b_i\rangle$, where $b_i$ realizes the cut
\[
\{y\in L_i: y <\cO_{L_i} \text{ or } y \in \cO_{L_i}\text{ and } \bar{y}<a^{(i)}\},
\]
and equip $L_{i+1}$ with the $T$-convex valuation ring
\[
\cO_{L_{i+1}}\ \coloneqq \ \big\{y\in L_{i+1}:|y|<d\text{ for all }d \in L_i\text{ with } d >\cO_{L_i}\big\}.
\]
By~\cite[Main Lemma 3.6]{DL95}, each $L_{i+1}$ is indeed a $\TO$-extension of $L_i$. Let $L\coloneqq \bigcup_{i}L_i$, and using Fact~\ref{fact:transext}, extend $\der$ to a $T$-derivation on $L$ such that $\der b_n = b_{n+1}$ for each $n$. Now, consider the case that $a$ is $\Td$-algebraic over $\k$. Let $r>0$ be minimal with $E = \k\langle \jet^{r-1}a \rangle$ and build $\TO$-extensions $K = L_0 \subseteq L_1\subseteq \cdots\subseteq L_r$ as in the $\Td$-transcendental case, so $L_{i+1}\coloneqq L_i\langle b_i\rangle$ for each $i<r$, where $b_i$ is a lift of $a^{(i)}$. This time, we put $L\coloneqq  L_r = K\langle b_0,\ldots,b_{r-1}\rangle$. Let $K_0 \subseteq K$ be a $T$-lift of $\k$, so there is an $\cL$-isomorphism $K_0\langle b_0,\ldots,b_{r-1}\rangle\to E$ that agrees with the residue map on $K_0$ and sends $b_i$ to $a^{(i)}$ for each $i<r$. Let $b_{r} \in K_0\langle b_0,\ldots,b_{r-1}\rangle$  be the unique element that maps to $a^{(r)} \in E$, and again using Fact~\ref{fact:transext}, extend $\der$ to a $T$-derivation on $L$ such that $\der b_i = \der b_{i+1}$ for $i<r$.

We claim that $L$ has small derivation. Put $b\coloneqq b_0$, so $L = K\llangle b \rrangle$. If $a$ is $\Td$-algebraic over $\k$, then let $r$ be as above, and if $a$ is $\Td$-transcendental over $\k$, then let $r$ be arbitrary. Let $F\colon K^r\to K$ be an $\cL(K)$-definable function with $F(\jet^{r-1}b)\prec 1$. We need to verify that $F(\jet^{r-1}b)' \prec 1$. By~\cite[Lemma 2.12]{FK21}, there is some $\cL(K)$-definable function $F^{[\der]}\colon K^r\to K$ such that 
\[
F(\jet^{r-1}b)'\ =\ F^{[\der]}(\jet^{r-1}b) + \frac{\partial F}{\partial Y_0}(\jet^{r-1}b)b'+\cdots+\frac{\partial F}{\partial Y_{r-1}}(\jet^{r-1}b)b^{(r)}.
\] 
For each $i<r$, we note that $b^{(i)} \asymp 1$ and that $b^{(i)} \not\sim g$ for any $g \in K\langle (b^{(j)})_{j<r,j\neq i}\rangle$, so Fact~\ref{fact:smallderivsim} gives us that $\frac{\partial F}{\partial Y_i}(\jet^{r-1}b)\preceq F(\jet^{r-1}b)\prec 1$. Since $b^{(i+1)} \preceq 1$, we conclude that $\frac{\partial F}{\partial Y_i}(\jet^{r-1}b)b^{(i+1)}\prec 1$, so it remains to show that $F^{[\der]}(\jet^{r-1}b)\prec 1$. Suppose that this is not the case, and let $U \subseteq K^r$ be the $\LO(K)$-definable set
\[
U\ \coloneqq\ \Big\{u \in \cO^r: F^{[\der]}(u) \succeq 1\text{ and }F(u),\frac{\partial F}{\partial Y_0}(u),\ldots,\frac{\partial F}{\partial Y_{r-1}}(u)\prec 1\Big\}.
\]
Note that $U$ is nonempty, since $L$ is an elementary $\TO$-extension of $K$ and $\jet^{r-1}(b) \in U^L$. Take a tuple $u = (u_0,\ldots,u_{r-1}) \in U$. Then $u_0',\ldots,u_{r-1}' \preceq 1$ since $K$ has small derivation. But then 
\[
F(u)' \ =\ F^{[\der]}(u) + \frac{\partial F}{\partial Y_0}(u)u_0'+\cdots+\frac{\partial F}{\partial Y_{r-1}}(u)u_{r-1}'\ \asymp\ F^{[\der]}(u)\ \succeq\ 1,
\]
contradicting that $K$ has small derivation. This concludes the proof of the claim.

With the claim taken care of, we see that $L$ is a $\TdO$-extension of $K$ and that the differential residue field $\k_L$ is $\Ld(\k)$-isomorphic to $E$. By the Wilkie inequality, we have that $\Gamma_L = \Gamma$. Let $K^*$ be a $\Td$-henselian $\TdO$-extension of $K$ and let $\imath\colon \k_L\to  \res(K^*)$ be an $\Ld(\k)$-embedding. The argument that $\imath$ lifts to an $\LdO(K)$-embedding $L \to K^*$ is very similar to the proof of part \ref{prop:resextiii} of Proposition~\ref{prop:resext}. If $a$ is $\Td$-transcendental over $\k$, then for any lift $b^*$ of $\imath(a)$, the unique $\LdO(K)$-embedding $L \to K^*$ that sends $b$ to $b^*$ is a lift of $\imath$. Suppose that $a$ is $\Td$-algebraic over $\k$. Then by construction $b^{(r)} = F(\jet^{r-1}b)$ for some minimal $r$ and some $\cL(K_0)$-definable function $F$, where $K_0\subseteq K$ is a lift of $\k$. As $K^*$ is $\Td$-henselian, we find $b^* \in K^*$ that lifts $\imath(a)$ and satisfies the identity $(b^*)^{(r)} = F(\jet^{r-1}b^*)$. Then the unique $\LdO(K)$-embedding $L \to K^*$ that sends $b$ to $b^*$ is a lift of $\imath$. 
\end{proof}

\begin{corollary}\label{cor:genericextension}
Let $K\models\TdOmon$. Then $K$ has a $\TdO$-extension $L\models \TdOmonG$ that embeds over $K$ into any $|K|^+$-saturated $K^*\models\TdOmonG$ extending~$K$.
\end{corollary}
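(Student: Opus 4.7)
The plan is to build $L$ in four stages: enrich $\k \coloneqq \res(K)$ to a generic $\Td$-model, lift this enrichment to an extension of $K$, ensure the valuation is nontrivial, and take a spherical completion to achieve $\Td$-henselianity. First, apply Fact~\ref{fact:tdextend} to $\k$ to obtain a generic $\TdG$-extension $\tilde{\k}$ of $\k$ with $|\tilde{\k}|\leq|K|$. Applying genericity with $r=1$ on $U=\tilde{\k}^>$ to the functions $F=1$ and $F(Y_0)=bY_0$ (for each $b\in\tilde{\k}$) shows that $\tilde{\k}$ has nontrivial derivation and $(\tilde{\k}^\times)^\dagger=\tilde{\k}$. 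Second, construct a monotone $\TdO$-extension $L_1$ of $K$ with $\res(L_1)\cong_{\Ld(\k)}\tilde{\k}$ and $\Gamma_{L_1}=\Gamma_K$ by iterating the construction from the proof of Proposition~\ref{prop:residueext}, adapted to produce an extension rather than a lift inside a $\Td$-henselian ambient field: in the $\Td$-algebraic case (with $n$ minimal such that $a^{(n)}=G(\jet^{n-1}a)$ in $\tilde{\k}$), adjoin $\cL$-independent lifts $b_0,\ldots,b_{n-1}$ of $a,\ldots,a^{(n-1)}$ and extend the derivation via Fact~\ref{fact:transext} by $b_i'=b_{i+1}$ for $i<n-1$ and $b_{n-1}'=G(\jet^{n-1}b)$. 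Monotonicity of $L_1$ follows from the argument in the proof of Proposition~\ref{prop:fieldbuilding}: each $a\in L_1^\times$ has $va\in\Gamma_K$, so $a=ub$ for some $b\in K^\times$ and $u\asymp 1$, giving $a^\dagger=u^\dagger+b^\dagger\preceq 1$.

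Third, if $\Gamma_K=0$ then $L_1$ is trivially valued and $\LdO$-isomorphic to $\tilde{\k}$; apply Proposition~\ref{prop:fieldbuilding} to $(\tilde{\k},\Lambda,0)$ to obtain a monotone $\TdO$-extension $L_2$ of $L_1$ with $\res(L_2)=\tilde{\k}$, $\Gamma_{L_2}=\Lambda$, and a distinguished constant $t$ with $vt>0$. Otherwise set $L_2=L_1$. Fourth, use Fact~\ref{fact:sphcomp} (applicable since the derivation on $\tilde{\k}$ is nontrivial) to take a spherically complete immediate $\TdO$-extension $L$ of $L_2$. Then $L$ is monotone by \cite[Corollary~6.3.6]{ADH17}, nontrivially valued, and $\Td$-henselian by Corollary~\ref{cor:sphcompTdh} (as generic $\tilde{\k}$ is linearly surjective). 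Since $(\tilde{\k}^\times)^\dagger=\tilde{\k}$, Corollary~\ref{cor:7.1.11} yields that $L$ has many constants, so $L\models\TdOmonG$.

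For the embedding, let $K^*\models\TdOmonG$ be $|K|^+$-saturated extending $K$. Then $\res(K^*)$ is a generic $\Td$-model inheriting $|K|^+$-saturation (the residue field is interpretable), so Fact~\ref{fact:tdextend} gives an $\Ld(\k)$-embedding $\tilde{\k}\to\res(K^*)$. By Proposition~\ref{prop:residueext}(iii) applied to $K^*$, which is $\Td$-henselian, this lifts to an $\LdO(K)$-embedding $\imath_1\colon L_1\to K^*$. In the case $\Gamma_K=0$, use $|K|^+$-saturation together with the many constants and nontrivial valuation of $K^*$ to find $t^*\in C_{K^*}$ with $t^*>0$ and $vt^*>0$, and extend $\imath_1$ to $\imath_2\colon L_2\to K^*$ by $t\mapsto t^*$ (well-defined by the standard extension lemma used in the proof of Lemma~\ref{lem:valext}, together with Fact~\ref{fact:transext} for the derivation). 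Finally, since $|\Gamma_{L_2}|\leq|K|$, Corollary~\ref{cor:sphcompembed} extends $\imath_2$ to the required $\LdO(K)$-embedding $L\to K^*$.

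The main obstacle is Stage~2: Proposition~\ref{prop:residueext} as stated requires the ambient field to be $\Td$-henselian, whereas $K$ is only monotone. The adaptation sketched above produces the extension by abstract adjunction of lifts and derivation values rather than by solving an implicit-form equation inside a given $\Td$-henselian $K$; this is a straightforward modification of the original argument. A secondary delicate point is verifying consistency of the type for $t^*$ in the embedding, which follows from the axioms of $\TdOmonG$ ensuring that elementary extensions admit constants of arbitrary positive valuation.
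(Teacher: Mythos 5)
Your proof is correct and follows essentially the same route as the paper's: extend the residue field to a model of $\TdG$ via Fact~\ref{fact:tdextend}, lift that extension to a $\TdO$-extension, force the valuation to be nontrivial by adjoining a constant, pass to a spherically complete immediate extension (Fact~\ref{fact:sphcomp}, Corollary~\ref{cor:sphcompTdh}, Corollary~\ref{cor:7.1.11}), and obtain the embedding from Fact~\ref{fact:tdextend}, Proposition~\ref{prop:residueext}(iii), and Corollary~\ref{cor:sphcompembed}. The only substantive difference is the order of the first two field-building steps (the paper adjoins the new constant \emph{before} extending the residue field, and adjoins an infinite constant rather than an infinitesimal one), which is immaterial. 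One correction: the ``main obstacle'' you identify is not there. Proposition~\ref{prop:residueext} is stated under the standing assumption of Section~\ref{sec:modelcompletion}, namely that $K$ is merely a $T$-convex $T$-differential field with small derivation (trivial valuation allowed); $\Td$-henselianity is required only of the \emph{target} $K^*$ in part~(iii). You have confused it with Proposition~\ref{prop:resext}, which does assume the ambient field is $\Td$-henselian but serves a different purpose (extending a lift inside a fixed field). So your ``adaptation'' in Stage~2 is just a re-derivation of Proposition~\ref{prop:residueext}, which you could instead cite directly.
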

\begin{proof}
We construct $L$ in three steps. First, we build a nontrivially valued  $\TdOmon$-model $K_1$ extending $K$. If $K$ itself is nontrivially valued, then we take $K_1\coloneqq K$. If $K$ is trivially valued, then let $K_1\coloneqq K\langle a\rangle$ where $a>K$. In this case, we equip $K_1$ with the convex hull of $K$ as its $T$-convex valuation ring and, using Fact~\ref{fact:transext}, we uniquely extend the $T$-derivation on $K$ to a $T$-derivation on $K_1$ such that $\der a = 0$. Then $\res(K_1) = \k$, and it is fairly easy to check that $K_1$ is monotone, arguing as in Proposition~\ref{prop:fieldbuilding}. In fact, one can build $K_1$ explicitly using Proposition~\ref{prop:fieldbuilding} by taking $K$ in place of $\k$, taking $\Lambda$ in place of $\Gamma$, and taking $c\colon \Lambda \to K$ to be the zero map. Next we use Fact~\ref{fact:tdextend} to take a $\TdG$-model $E$ extending $\k$ with $|E| = |\k|$. Take a $\TdO$-extension $K_2$ of $K_1$ as in  Proposition~\ref{prop:residueext}, so $\Gamma_{K_2}= \Gamma_{K_1}$ and $\res(K_2)$ is $\Ld(\k)$-isomorphic to $E$. Finally, we use Fact~\ref{fact:sphcomp} to take a spherically complete immediate $\TdO$-extension $L$ of $K_2$. Then $\Gamma_L= \Gamma_{K_1}$, so $L$ is monotone by~\cite[Corollary 6.3.6]{ADH17}. Moreover, $L$ is $\Td$-henselian by Corollary~\ref{cor:sphcompTdh}, so it has many constants by Corollary~\ref{cor:7.1.11} (it follows easily from the axioms of $\TdG$ that $(\k_L^\times)^\dagger = \k_L$).

Now let $K^*\models \TdOmonG$ be a $|K|^+$-saturated $\TdO$-extension of $K$. In the case that $K_1 = K\langle a \rangle$ where $a>K$ and $a' = 0$, we take $a^* \in C_{K^*}$ with $a^*> \cO_{K^*}$ and we let $\imath_1\colon K_1\to K^*$ be the $\cL(K)$-embedding that sends $a$ to $a^*$.  By Fact~\ref{fact:transext} and~\cite[Corollary 3.7]{DL95}, $\imath_1$ is even an $\LdO$-embedding. If $K_1 = K$, then we just take $\imath_1$ to be the identity on $K$. Next, as $\res(K^*)$ is a $|\k|^+$-saturated model of $\TdG$, the inclusion $\k \subseteq \res(K^*)$ extends to an $\Ld$-embedding $\res(K_2) \to \res(K^*)$ by Fact~\ref{fact:tdextend}. By Proposition~\ref{prop:residueext}, we may lift this to an $\LdO(K)$-embedding $\imath_2\colon K_2\to K^*$ that extends $\imath_1$. Finally, note that $K^*$ is $|\Gamma_{K_2}|^+$-saturated, since $|\Gamma_{K_2}| = \max\{|\Gamma_K|,|\Lambda|\}\leq |K|$, so $\imath_2$ extends to an $\LdO(K)$-embedding $\jmath\colon L\to K^*$ by Corollary~\ref{cor:sphcompembed}.
\end{proof}

\begin{theorem}\label{thm:modelcompletion}
$\TdOmonG$ is the model completion of $\TdOmon$. Consequently, $\TdOmonG$ is complete, and if $T$ has quantifier elimination and a universal axiomatization, then $\TdOmonG$ has quantifier elimination. 
\end{theorem}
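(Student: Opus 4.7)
The plan is to verify the two defining conditions for $\TdOmonG$ to be the model completion of $\TdOmon$: (a) every model of $\TdOmon$ embeds into a model of $\TdOmonG$, and (b) for every $K \models \TdOmon$, the theory $\TdOmonG \cup \mathrm{Diag}_{\LdO}(K)$ is complete. Condition (a) is immediate from Corollary~\ref{cor:genericextension}.

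For (b), fix $K \models \TdOmon$ and $L_1, L_2 \models \TdOmonG$ extending $K$; the goal is $L_1 \equiv_K L_2$. Replacing each $L_i$ by a $|K|^+$-saturated elementary extension $L_i^*$, Corollary~\ref{cor:genericextension} produces a $\TdOmonG$-extension $N$ of $K$ together with $\LdO(K)$-embeddings $\iota_i \colon N \to L_i^*$. I now bring in the back-and-forth machinery of Section~\ref{sec:AKE}. Since $N$ has many constants, Lemma~\ref{lem:sectionexists} applied with $A = C_N^>$ yields a section $s_N \colon \Gamma_N \to C_N^>$, which induces an angular component on $N$ whose associated $c$-map is zero. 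Using that each $L_i^*$ also has many constants, I extend $\iota_i \circ s_N$ to a section $s_i \colon \Gamma_{L_i^*} \to C_{L_i^*}^>$ by splitting the relevant short exact sequence of $\Lambda$-vector spaces over the splitting on $\iota_i(\Gamma_N)$. This expands each $L_i^*$ to a $\TdOmonac$-model with $c_i = 0$ in which the angular component restricts on $\iota_i(N)$ to the pushforward along $\iota_i$ of the angular component on $N$. Consequently $(\iota_i(N), \iota_i(\k_N), \iota_i(\Gamma_N))$ is a good substructure of $L_i^*$, and $\iota_2 \circ \iota_1^{-1}$, together with its induced maps on residue field and value group, is a good map between these good substructures that fixes $K$ pointwise. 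Theorem~\ref{thm:equiv} then shows this good map is partial elementary in $\cL_3$; since $\LdO \subseteq \cL_3$, $L_1^* \equiv_K L_2^*$ and hence $L_1 \equiv_K L_2$.

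The main technical point is arranging compatible angular components on the images of $N$, and the many-constants assumption in $\TdOmonG$ is essential here: it allows the sections to be chosen inside the constant fields, making each $c_i = 0$ and reducing condition (GM3) to the known partial elementariness between $\TdG$-models (by model completeness of $\TdG$) and between nontrivial divisible ordered $\Lambda$-vector spaces. Once the model completion statement is established, completeness of $\TdOmonG$ follows by applying (b) with $K$ the prime model of $T$ equipped with the trivial derivation and with $\cO = K$. Finally, if $T$ has quantifier elimination and a universal axiomatization, then the defining axioms of $\TdOmon$ (convexity of $\cO$, closure of $\cO$ under $\cL(\0)$-definable continuous functions, compatibility of $\der$ with $\cL(\0)$-definable $\cC^1$-functions, monotonicity, and small derivation) are all universal, so $\TdOmon$ itself is universally axiomatizable, and the standard result that the model completion of a universally axiomatized theory has quantifier elimination yields the final statement.
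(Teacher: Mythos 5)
Your proof is correct and follows essentially the same route as the paper: both rest on Corollary~\ref{cor:genericextension} for the amalgamation step and on the Ax--Kochen/Ershov back-and-forth of Theorem~\ref{thm:equiv} with $c=0$ for the elementarity, with the same prime-model and universal-axiomatization arguments for completeness and quantifier elimination. The only cosmetic difference is that you inline the section-extension argument of Corollary~\ref{cor:elementarysubstructure2} and apply Theorem~\ref{thm:equiv} directly to the good map $\iota_2\circ\iota_1^{-1}$, whereas the paper replaces the base by $L$ and then simply cites model completeness of $\TdOmonG$ (which is that same corollary).
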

\begin{proof}
Let $E\models \TdOmon$, and let $K$ and $K^*$ be models of $\TdOmonG$ extending $E$. In light of  Corollary~\ref{cor:genericextension}, it is enough to show that $K$ and $K^*$ are $\LdO(E)$-elementarily equivalent. We may assume that both $K$ and $K^*$ are $|E|^+$-saturated. Let $L\models \TdOmonG$ be the $\TdO$-extension of $E$ constructed in Corollary~\ref{cor:genericextension}. Then $L$ admits an $\LdO(E)$-embedding into both $K$ and $K^*$, so by replacing $E$ with $L$, we may assume that $E \models \TdOmonG$. It remains to note that $\TdOmonG$ is model complete by Corollary~\ref{cor:elementarysubstructure2}, since $\TdG$ and the theory of nontrivial ordered $\Lambda$-vector spaces are both model complete (in invoking Corollary~\ref{cor:elementarysubstructure2}, we take $c$ to be the zero map).

Let $\PP$ be the prime model of $T$. Then $\PP$, equipped with the $T$-convex valuation ring $\cO_\PP = \PP$ and the trivial $T$-derivation, is a model of $\TdOmon$ that embeds into every model of $\TdOmonG$, so $\TdOmonG$ is complete. 
Suppose that $T$ has quantifier elimination and a universal axiomatization. Then $\TdOmon$ has a universal axiomatization, so $\TdOmonG$ has quantifier elimination.
\end{proof}

\begin{corollary}\label{cor:formulaform}
For every $\LdO$-formula $\varphi$ there is some $r$ and some $\LO$-formula $\tilde{\varphi}$ such that
\[
\TdOmonG\ \vdash\ \forall x\big( \varphi(x) \leftrightarrow \tilde{\varphi}\big(\jet^r(x)\big)\big).
\]
\end{corollary}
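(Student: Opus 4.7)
The plan is to deduce this corollary from Theorem~\ref{thm:modelcompletion} (that $\TdOmonG$ is the model completion of $\TdOmon$) via a standard compactness argument, the key observation being that $\dclL$-closures of $\der$-closed sets are themselves closed under $\der$.

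The core claim is: for $K, K' \models \TdOmonG$ and tuples $a \in K^n$, $a' \in (K')^n$, if $\tp_{\LO}^K\!\big(\jet^\infty(a)\big) = \tp_{\LO}^{K'}\!\big(\jet^\infty(a')\big)$, then $\tp_{\LdO}^K(a) = \tp_{\LdO}^{K'}(a')$. To prove this, set $E \coloneqq \dclL^K\!\big(\jet^\infty(a)\big)$. Then $E$ is $\der$-closed, since for any $\cL(\emptyset)$-definable function $F$ that is $\mathcal{C}^1$ at a tuple $c$ from $\jet^\infty(a)$, the identity $\der F(c) = \sum_i \frac{\partial F}{\partial Y_i}(c)\,c_i'$ expresses $\der F(c)$ as an $\cL(\emptyset)$-definable combination of elements of $\jet^\infty(a)$ (each $c_i'$ being another term of the infinite jet), with points of non-differentiability handled by $\mathcal{C}^1$-cell decomposition. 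Equipping $E$ with $\cO \cap E$ and the restriction of $\der$ makes it a $\TdOmon$-substructure of $K$ (both monotonicity and small derivation descend). Define $E'$ analogously inside $K'$. The assumed $\LO$-type equality yields an $\LO$-isomorphism $E \to E'$ sending $a_i^{(j)} \mapsto (a_i')^{(j)}$, and this isomorphism automatically intertwines the two derivations, since $\der$ on $E$ is determined by its values on the generating set $\jet^\infty(a)$ via the same compatibility identity. Identifying $E = E'$, both $K$ and $K'$ become $\TdOmonG$-extensions of the same $\TdOmon$-model $E$; Theorem~\ref{thm:modelcompletion} gives that $\TdOmonG \cup \mathrm{diag}(E)$ is complete, so $K \equiv_E K'$ and the claim follows.

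Given the core claim, the corollary is a compactness exercise. Fix $\varphi(x)$ and introduce fresh constants $\vec{c}$. Let $\Sigma$ consist of all $\LO$-formulas $\psi\big(\jet^r(\vec c)\big)$ (over varying $r \in \N$) such that $\TdOmonG \cup \{\varphi(\vec c)\} \vdash \psi\big(\jet^r(\vec c)\big)$. I claim that $\TdOmonG \cup \Sigma \vdash \varphi(\vec c)$: otherwise a model $(K, \vec c)$ of $\TdOmonG \cup \Sigma \cup \{\neg\varphi(\vec c)\}$ would, by the definition of $\Sigma$ via compactness, have $\tp_{\LO}^K\!\big(\jet^\infty(\vec c)\big)$ consistent with $\TdOmonG \cup \{\varphi(\vec c)\}$; realising this type in some $(K', \vec{c}\,')$ with $\varphi(\vec{c}\,')$ and invoking the core claim would contradict $\neg\varphi(\vec c)$. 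A final compactness pass compresses $\Sigma$ into a single $\LO$-formula $\tilde\varphi\big(\jet^r(\vec c)\big)$, as required.

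\textbf{Main obstacle.} There is no substantive obstacle — the corollary is the syntactic shadow of the model completion theorem already proved. The only point that warrants any care is verifying that the $\dclL$-closure of a $\der$-closed set is again $\der$-closed and that the resulting $\LO$-isomorphism respects $\der$; both are routine from the compatibility identity defining $T$-derivations.
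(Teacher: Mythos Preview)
Your proof is correct and follows essentially the same approach the paper defers to in \cite[Lemma~4.11]{FK21}: use the model completion (Theorem~\ref{thm:modelcompletion}) to show that the $\LO$-type of $\jet^\infty(a)$ determines the $\LdO$-type of $a$, then extract a single formula by compactness. The paper's remark that ``every $\LO$-term is an $\cL$-term'' is just the observation that adding the predicate $\cO$ changes nothing at the term level, so the adaptation from the $\Td_{\cG}$ setting is immediate; you have spelled out the details the paper leaves implicit.
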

\begin{proof}
Argue as in~\cite[Lemma 4.11]{FK21}, using that every $\LO$-term is an $\cL$-term.
\end{proof}

\begin{corollary}
$\TdOmonG$ is distal.
\end{corollary}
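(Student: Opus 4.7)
The plan is to reduce distality of $\TdOmonG$ to distality of its $\LO$-reduct via Corollary~\ref{cor:formulaform}. My argument has two parts: first, to invoke distality of the theory $\TO$ of (proper) $T$-convex valued fields; second, to transfer this property through the equivalence $\varphi \leftrightarrow \tilde\varphi(\jet^r(x))$ from Corollary~\ref{cor:formulaform}.

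For the transfer, fix $K \models \TdOmonG$, an $\LdO$-indiscernible sequence $(a_i)_{i \in I}$ of tuples from $K$, a finite parameter tuple $b \in K$, and an index $i_0 \in I$ with $\{ i \in I : i < i_0 \}$ having no last element and $\{ i \in I : i > i_0 \}$ having no first element, such that $(a_i)_{i \in I \setminus \{i_0\}}$ is $\LdO$-indiscernible over $b$. I want to show $(a_i)_{i \in I}$ is $\LdO$-indiscernible over $b$. Since $\der \in \LdO$, for each $r \in \N$ the jet sequence $(\jet^r(a_i))_{i \in I}$ is $\LdO$-indiscernible, hence $\LO$-indiscernible, and its restriction to $I \setminus \{i_0\}$ is $\LO$-indiscernible over $\jet^r(b)$. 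The $\LO$-reduct of $K$ is a model of $\TO$; assuming $\TO$ is distal, it follows that $(\jet^r(a_i))_{i \in I}$ is $\LO$-indiscernible over $\jet^r(b)$. Now for any $\LdO$-formula $\varphi(x, y)$, Corollary~\ref{cor:formulaform} yields some $r$ and some $\LO$-formula $\tilde\varphi$ with $\varphi$ being $\TdOmonG$-equivalent to $\tilde\varphi(\jet^r(x), \jet^r(y))$, and hence the truth value of $\varphi$ on any increasing tuple from $(a_i)_{i \in I}$ together with $b$ is determined by the $\LO$-type of the corresponding jets over $\jet^r(b)$, which by the above does not depend on the choice of increasing tuple. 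This yields $\LdO$-indiscernibility of $(a_i)_{i \in I}$ over $b$, as required.

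The principal obstacle is distality of $\TO$. For power-bounded o-minimal $T$, both the residue field theory $T$ and the value group theory (divisible ordered $\Lambda$-vector spaces) are o-minimal, and hence distal. Combined with the Ax--Kochen/Ershov-style relative quantifier elimination of $\TO$ in the three-sorted language with angular components (after \cite{DL95,Yi17}), distality of $\TO$ should follow from a transfer theorem for Henselian valued fields in the style of Aschenbrenner--Chernikov--Gehret--Ziegler, or alternatively via a direct distal cell decomposition for $T$-convex structures. The proof of the corollary thus reduces to this single valued-field input, which I expect to isolate as a separate lemma rather than reprove from scratch.
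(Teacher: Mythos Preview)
Your approach is correct and essentially identical to the paper's, whose proof reads in full: ``Argue as in~\cite[Theorem 4.15]{FK21}, using Corollary~\ref{cor:formulaform} and the fact that $\TO$ is distal.'' The jet-transfer argument you spell out is precisely the content of the cited result from \cite{FK21}, and distality of $\TO$ is treated as a known input rather than an obstacle---it follows already from weak o-minimality of $\TO$ (established in \cite{DL95}), so the heavier valued-field transfer machinery you propose is not needed.
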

\begin{proof}
Argue as in~\cite[Theorem 4.15]{FK21}, using Corollary~\ref{cor:formulaform} and the fact that $\TO$ is distal. 
\end{proof}

\begin{corollary}
Let $K\models\TdOmonG$ and let $C$ be the constant field of $K$. For every $\LdO(K)$-definable set $A \subseteq C^n$, there is an $\LO(K)$-definable set $B\subseteq K^n$ such that $A = B\cap C^n$. In particular, the induced structure on $C$ is weakly o-minimal.
\end{corollary}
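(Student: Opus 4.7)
The plan is to reduce the question to the $\LO$-structure by exploiting Corollary~\ref{cor:formulaform}, and then invoke weak o-minimality of $\TO$.

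First, let $A \subseteq C^n$ be $\LdO(K)$-definable, say by an $\LdO(K)$-formula $\varphi(x)$. By Corollary~\ref{cor:formulaform} (applied to $\varphi$ together with parameters treated uniformly), there is $r \in \N$ and an $\LO(K)$-formula $\tilde{\varphi}(y_0,\ldots,y_r)$ (with each $y_i$ an $n$-tuple) such that in every model of $\TdOmonG$, and in particular in $K$, the formula $\varphi(x)$ is equivalent to $\tilde{\varphi}(x, x', \ldots, x^{(r)})$. For $x \in C^n$ we have $x^{(i)} = 0$ for all $i \geq 1$, so $\varphi$ holds of $x$ if and only if $\tilde{\varphi}(x, 0, \ldots, 0)$ holds. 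Setting
\[
B\ \coloneqq\ \{x \in K^n : K \models \tilde{\varphi}(x,0,\ldots,0)\},
\]
which is $\LO(K)$-definable since $0 \in K$ is $\LdO(\emptyset)$-definable, we obtain $A = B \cap C^n$.

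For the second assertion, the induced structure on $C$ has as its primitive relations exactly the sets of the form $A \subseteq C^n$ that are $\LdO(K)$-definable; by what we just proved, each such $A$ is the trace on $C^n$ of an $\LO(K)$-definable set. So the induced structure on $C$ is a reduct of the structure induced on $C$ from the $\LO(K)$-definable sets of $K$. Since the theory $\TO$ is weakly o-minimal (every $\LO(K)$-definable subset of $K$ is a finite union of convex sets, by \cite{DL95}), the same is true for the traces of $\LO(K)$-definable subsets of $K$ on $C$, and hence for the $\LdO(K)$-definable subsets of $C$. This is exactly weak o-minimality of the induced structure on $C$.

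I expect no serious obstacle; the essential ingredient is the jet reduction in Corollary~\ref{cor:formulaform}, which is already stated, together with the trivial observation that jets of constants collapse. The only point that requires any care is making sure that parameters from $K$ (not from $C$) are handled correctly, but since Corollary~\ref{cor:formulaform} already accommodates parameters from~$K$ and $0$ is $\LdO(\emptyset)$-definable, the substitution $y_i \mapsto 0$ for $i \geq 1$ causes no difficulty.
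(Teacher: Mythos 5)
Your proof is correct and follows essentially the same route as the paper, whose proof is simply the instruction to argue as in \cite[Lemma 4.23]{FK21} using Corollary~\ref{cor:formulaform}: that argument is exactly the jet reduction you give, combined with the observation that $\jet^r(x)=(x,0,\dots,0)$ for $x$ a constant, and the handling of parameters by treating them as extra variables is the standard and intended move. The final appeal to weak o-minimality of $(K,\cO)$ to get that traces on $C$ of $\LO(K)$-definable subsets of $K$ are finite unions of convex sets is likewise what the cited argument does.
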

\begin{proof}
Argue as in~\cite[Lemma 4.23]{FK21}, using Corollary~\ref{cor:formulaform}.
\end{proof}

\section*{Acknowledgements}
Research for this paper was conducted in part at the Fields Institute for Research in Mathematical Sciences.
This material is based upon work supported by the National Science Foundation under Grants DMS-2103240 (Kaplan) and DMS-2154086 (Pynn-Coates).
This research was funded in whole or in part by the Austrian Science Fund (FWF) 10.55776/ESP450 (Pynn-Coates). For open access purposes, the authors have applied a CC BY public copyright licence to any author accepted manuscript version arising from this submission. We thank the referee for their helpful feedback, which in particular stimulated improvements to Section~\ref{sec:RCF}.


\end{document}